\newtheorem{Thm}{Theorem}[section]
\newtheorem{corollary}[Thm]{Corollary}
\newtheorem{lemma}[Thm]{Lemma}
\newtheorem{proposition}[Thm]{Proposition}
\newtheorem{theorem}[Thm]{Theorem}
\theoremstyle{definition}
\newtheorem{definition}[Thm]{Definition}
\newtheorem{example}[Thm]{Example}
\newtheorem{problem}[Thm]{Problem}
\theoremstyle{remark}
\newtheorem{remark}[Thm]{Remark}
\numberwithin{equation}{section}
\DeclareMathOperator*{\spn}{\text{\rm span}}
\def\ldots{\mathinner{\ldotp\ldotp\ldotp}}
\def\ldots{\mathinner{\cdotp\cdotp\cdotp}}
\def \H{\mathbb H}
\def\cW{\mathcal{W}}
\def\G{\mathcal{G}}
\def \N{\mathbb N}
\def \Z{\mathbb Z}
\def \R{\mathbb R}
\def \C{\mathbb C}
\def \p{\varphi}
\def \HN{{\mathbb H}_{N}}
\def\<{\langle}
\def\>{\rangle}
\def \beq{\begin{eqnarray*}}
\def \eeq{\end{eqnarray*}}
\newcommand{\norm}[1]{\left\| #1 \right\|}
\DeclareMathOperator*{\ran}{ran}
\DeclareMathOperator*{\rank}{rank}
\renewcommand{\phi}{\varphi}
\renewcommand{\hl}[1]{#1\index{#1}}
\begin{document}

\title[Frames and their Applications]
{A Brief Introduction to Hilbert Space Frame Theory
and its Applications 
}
\author[P.G. Casazza and R.G. Lynch
 ]{Peter G. Casazza and Richard G. Lynch}
\address{Department of Mathematics, University
of Missouri, Columbia, MO 65211-4100}

\thanks{The authors were supported by NSF DMS 1307685; and  NSF ATD 1042701 and 1321779; AFOSR  DGE51:  FA9550-11-1-0245}

\email{casazzap@missouri.edu, rglz82@mail.missouri.edu}


\begin{abstract}
This is a short introduction to Hilbert space 
frame theory and its applications for those outside the
area who want to enter the subject.  We will emphasize finite frame
theory since it is the easiest way to get into the subject.
\end{abstract}

\maketitle
\tableofcontents

\newpage
\section{Reading List}

 For a more complete treatment of frame
theory we recommend the books of Han, Kornelson, Larson, and Weber
\cite{HKLW}, Christensen \cite{Ch},
the book of Casazza and Kutyniok \cite{CK12},
the tutorials of Casazza \cite{CA,C1} and the memoir 
of Han and Larson \cite{HL}.  For a
complete treatment of
frame theory in time-frequency analysis we recommend the book
of Gr\"ochenig \cite{G1}.  For an introduction to frame
theory and filter banks plus applications to engineering
we recommend 
Kova\v{c}evi\'{c} and Chebira \cite{K}.  Also, a wealth of information can
be found at the Frame Research Center's website \cite{FRC}.


\section{The Basics of Hilbert Space Theory}

Given a positive integer $N$, we denote by $\H^N$ the real or complex
Hilbert space of dimension $N$.  This is either $\R^N$ or $\C^N$ with
the {\bfseries {inner product}\index{inner product}} given by
\[ 
\langle x,y\rangle = \sum_{i=1}^Na_i\overline{b_i}
\]
for $x=(a_1,a_2,\ldots,a_N)$ and 
$y=(b_1,b_2,\ldots,b_N)$ and the {\bf {norm}\index{norm}} of a vector $x$ is
\[ \|x\|^2 = \langle x,x\rangle.\]
For $x,y \in \H^N$, $\|x-y\|$ is the {\bfseries distance\index{distance}} from the vector $x$ to the
vector $y$.  For future reference, note that in the real case,
\begin{align*}
 \|x-y\|^2 &= \langle x-y,x-y\rangle \\
&= \langle x,x\rangle - \langle x,y\rangle-  \langle y,x \rangle + \langle 
y,y\rangle\\
&= \|x\|^2 - 2 \langle x,y\rangle + \|y\|^2
\end{align*}
and in the complex case we have
\[ \langle x,y\rangle + \langle y,x\rangle = \langle x,y\rangle + \overline{
\langle x,y\rangle} = 2 Re\, \langle x,y\rangle,\]
where $Re \, c$ denotes the real part of the complex number c.
We will concentrate on finite dimensional Hilbert spaces 
 since it is the
easiest way to get started on the subject of frames.  Most of these
results hold for infinite dimensional Hilbert spaces and at the end we
will look at the infinite dimensional case.

The next lemma contains a {\bf standard trick} for calculations.

\begin{lemma}
If $x\in \H^N$ and $\langle x,y\rangle =0$ for all $y \in \H^N$ then
$x=0$.
\end{lemma}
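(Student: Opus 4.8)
The plan is to exploit the fact that the hypothesis quantifies over \emph{all} $y \in \H^N$, which in particular licenses a single well-chosen substitution. The natural choice is $y = x$ itself, since this converts the inner-product condition into a statement about the norm of $x$, which is exactly the quantity we wish to control.

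Concretely, I would argue as follows. By hypothesis $\langle x, y \rangle = 0$ for every $y \in \H^N$; specializing to $y = x$ gives $\langle x, x \rangle = 0$. By the definition of the norm recalled above, $\|x\|^2 = \langle x, x \rangle = 0$, hence $\|x\| = 0$. Finally, invoking the positive-definiteness of the inner product (equivalently, the fact that $\|x\| = 0$ forces $x$ to be the zero vector, which one can also see coordinatewise: writing $x = (a_1, \ldots, a_N)$, the relation $\sum_{i=1}^N |a_i|^2 = 0$ forces each $a_i = 0$), we conclude $x = 0$.

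There is essentially no obstacle here; the only ``trick'' — and the reason this is flagged as a standard device — is recognizing that a condition holding for all test vectors $y$ should be probed by substituting the vector $x$ under scrutiny, turning orthogonality-to-everything into orthogonality-to-self. No appeal to the earlier norm-expansion identities is needed, though one could equally phrase the last step as: if $x \neq 0$ then $\langle x, x \rangle = \|x\|^2 > 0$, contradicting the hypothesis applied at $y = x$.
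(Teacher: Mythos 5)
Your proposal is correct and uses exactly the same argument as the paper: substitute $y=x$ to get $\langle x,x\rangle = \|x\|^2 = 0$, then conclude $x=0$ by positive-definiteness. The extra coordinatewise justification of the last step is fine but not needed.
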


\begin{proof}
Letting $y=x$ we have
\[ 0 = \langle x,y\rangle = \langle x,x\rangle = \|x\|^2,\]
and so $x=0$.
\end{proof}

\begin{definition}
A set of vectors
$\{e_i\}_{i=1}^M$ in $\mathbb{H}^N$ is called:
\begin{enumerate}
\item {\bf linearly independent\index{linearly independent}} if for any scalars $\{a_i\}_{i=1}^M$,
\[ \sum_{i=1}^M a_ie_i=0 \quad  \Rightarrow \quad a_i=0,\mbox{ for all }i=1,2,\ldots,M.\]
Note that this requires $e_i\not= 0$ for all $i=1,2,\ldots,M$.

\item {\bf complete\index{complete}} (or a {\bf spanning set\index{spanning set}}) if 
\[ \overline{\spn} \{e_i\}_{i=1}^M = \H^N. \]

\item {\bf orthogonal\index{orthogonal}} if for all $i\not= j$, $\langle e_i,e_j\rangle=0.$

\item {\bf orthornormal\index{orthonormal}} if it is orthogonal and unit norm.

\item  an {\bf orthonormal basis\index{orthonormal!basis}} if it is complete and orthonormal.
\end{enumerate}
\end{definition}

The following is immediate from the definitions.

\begin{proposition}
If $\{e_i\}_{i=1}^N$ is an orthonormal basis for $\H^N$, then for every
$x\in \H$ we have
\[ x = \sum_{i=1}^N\langle x,e_i\rangle e_i.\]
\end{proposition}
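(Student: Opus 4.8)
The plan is to expand the vector $x$ in terms of the given orthonormal basis and then identify the coefficients using inner products. Since $\{e_i\}_{i=1}^N$ is complete, $x$ lies in $\overline{\spn}\{e_i\}_{i=1}^N$; in the finite-dimensional setting the span is already closed, so we may write $x = \sum_{i=1}^N a_i e_i$ for some scalars $a_1,\ldots,a_N$. The goal is to show $a_i = \langle x,e_i\rangle$ for each $i$.

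First I would fix an index $j \in \{1,2,\ldots,N\}$ and compute $\langle x,e_j\rangle$ by substituting the expansion: $\langle x,e_j\rangle = \langle \sum_{i=1}^N a_i e_i, e_j\rangle = \sum_{i=1}^N a_i \langle e_i,e_j\rangle$, using linearity (and conjugate-linearity in the second slot, which contributes nothing here since the $a_i$ sit in the first slot). Then I would invoke orthonormality: $\langle e_i,e_j\rangle = 0$ when $i\neq j$ and $\langle e_j,e_j\rangle = \|e_j\|^2 = 1$, so the sum collapses to the single term $a_j$. Hence $a_j = \langle x,e_j\rangle$, and since $j$ was arbitrary, substituting back gives $x = \sum_{i=1}^N \langle x,e_i\rangle e_i$.

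There is no serious obstacle here; the only point requiring a word of care is the very first step — knowing that $x$ actually has \emph{some} finite expansion $\sum a_i e_i$. This follows because completeness gives $x$ as a limit of finite linear combinations of the $e_i$, and in $\H^N$ every subspace is closed, so the limit is itself such a combination; alternatively, one notes that $N$ orthonormal vectors in an $N$-dimensional space are linearly independent and therefore a basis in the linear-algebra sense. Once the expansion exists, the computation above is purely mechanical. (If one wanted to avoid even mentioning the finite expansion, an alternative is to set $y = x - \sum_{i=1}^N \langle x,e_i\rangle e_i$, check that $\langle y,e_j\rangle = 0$ for all $j$ by the same orthonormality computation, conclude $\langle y,z\rangle = 0$ for every $z\in\H^N$ by linearity and completeness, and then apply the preceding lemma to get $y=0$.)
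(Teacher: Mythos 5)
Your proof is correct. The paper itself offers no argument for this proposition --- it is simply stated as ``immediate from the definitions'' --- and what you have written (expanding $x=\sum_i a_ie_i$, which exists since in $\H^N$ the span of the $e_i$ is closed and hence equals the whole space, and then identifying $a_j=\langle x,e_j\rangle$ by orthonormality) is precisely the standard justification being taken for granted; your parenthetical alternative, applying the preceding lemma to $y=x-\sum_i\langle x,e_i\rangle e_i$, is equally valid.
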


From the previous proposition, we can immediately deduce an essential identity called Parseval's Identity\index{Parseval's Identity}.

\begin{proposition}[Parseval's Identity]\label{parsevalid}
If $\{e_i\}_{i=1}^N$ is an orthonormal basis for $\H^N$, then for every
$x\in \H^N$, we have
\[ \|x\|^2 = \sum_{i=1}^N|\langle x,e_i\rangle|^2.\]
\end{proposition}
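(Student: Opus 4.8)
The plan is to derive Parseval's Identity directly from the preceding proposition, which already expands an arbitrary vector $x \in \H^N$ in terms of the orthonormal basis $\{e_i\}_{i=1}^N$. Since that proposition gives $x = \sum_{i=1}^N \langle x, e_i\rangle e_i$, the natural first step is to compute $\|x\|^2 = \langle x, x\rangle$ by substituting this expansion into \emph{both} slots of the inner product.

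Concretely, I would write $\langle x, x\rangle = \left\langle \sum_{i=1}^N \langle x, e_i\rangle e_i, \sum_{j=1}^N \langle x, e_j\rangle e_j \right\rangle$ and then expand by linearity in the first variable and conjugate-linearity in the second, producing the double sum $\sum_{i=1}^N \sum_{j=1}^N \langle x, e_i\rangle \overline{\langle x, e_j\rangle} \langle e_i, e_j\rangle$. The orthonormality of $\{e_i\}_{i=1}^N$ then collapses this: $\langle e_i, e_j\rangle = 0$ when $i \neq j$ and $\langle e_i, e_i\rangle = 1$, so only the diagonal terms $i = j$ survive, leaving $\sum_{i=1}^N \langle x, e_i\rangle \overline{\langle x, e_i\rangle} = \sum_{i=1}^N |\langle x, e_i\rangle|^2$, which is exactly the claimed identity.

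There is really no substantive obstacle here — the proof is a one-line computation once the expansion from the previous proposition is invoked. The only point requiring a modicum of care is bookkeeping with the conjugates when expanding the inner product in the complex case (one must remember that the inner product is conjugate-linear in the second argument, as fixed by the convention $\langle x, y\rangle = \sum a_i \overline{b_i}$ given earlier), and ensuring the double-indexing is done cleanly so that the orthonormality relation can be applied. Alternatively, one could sidestep the double sum entirely: apply linearity in the first slot only to get $\|x\|^2 = \langle x, x\rangle = \left\langle \sum_i \langle x, e_i\rangle e_i, x\right\rangle = \sum_i \langle x, e_i\rangle \langle e_i, x\rangle = \sum_i \langle x, e_i\rangle \overline{\langle x, e_i\rangle} = \sum_i |\langle x, e_i\rangle|^2$, which is even shorter and avoids orthonormality of the $e_j$ in the second slot altogether. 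I would present this second, more economical version.
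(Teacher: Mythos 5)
Your proof is correct and follows exactly the route the paper intends: the paper gives no written proof, stating only that Parseval's Identity is ``immediately deduced'' from the expansion $x = \sum_{i=1}^N \langle x,e_i\rangle e_i$ of the preceding proposition, and your computation (in either the double-sum form or the more economical single-slot form) is precisely that deduction, with the conjugate bookkeeping handled correctly.
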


Some more basic identities and inequalities for Hilbert space that are frequently used are contained in the next proposition.

\begin{proposition}
Let $x,y\in \H^N$.
\begin{enumerate}
\item  {\bf Cauchy-Schwarz Inequality\label{Cauchy-Schwarz Inequality}}:
\[ |\langle x,y\rangle| \le \|x\|\|y\|,\]
with equality if and only if $x=cy$ for some constant $c$.

\item  {\bf Triangle Inequality\index{Triangle Inequality}}:

\[ \|x+y\|\le \|x\|+\|y\|.\]

\item  {\bf Polarization Identity\index{Polarization Identity}}: Assuming $\mathbb{H}^N$ is real,
\[ \langle x,y\rangle = \frac{1}{4}\left [ \|x+y\|^2 -\|x-y\|^2 
\right ].\]
If $\mathbb{H}^N$ is complex, then
{\hfil\begin{align*}
\langle x,y\rangle = \frac{1}{4}\big [ \|x+y\|^2 -\|x-y\|^2 + i\|x+iy\|^2 -i\|x-iy\|^2
\big ].
\end{align*}}

\item  {\bf Pythagorean Theorem\index{Pythagorean Theorem}}:  Given pairwise orthogonal
vectors $\{x_i\}_{i=1}^M$,
\[ \left \| \sum_{i=1}^Mx_i\right \|^2 = \sum_{i=1}^M\|x_i\|^2.\]
\end{enumerate}
\end{proposition}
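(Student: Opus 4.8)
The plan is to establish the four items in turn, using only the inner-product expansions displayed just above the statement together with the Lemma at the start of this section. The substantive item is the Cauchy--Schwarz inequality~(1). If $y = 0$ both sides vanish, so assume $y \ne 0$ and use nonnegativity of a squared norm: put $t = \langle x,y\rangle/\|y\|^2$ and expand $0 \le \|x - ty\|^2 = \|x\|^2 - \overline{t}\,\langle x,y\rangle - t\,\langle y,x\rangle + |t|^2\|y\|^2$. With this choice of $t$ the last three terms collapse — each equals $|\langle x,y\rangle|^2/\|y\|^2$ up to sign — leaving $0 \le \|x\|^2 - |\langle x,y\rangle|^2/\|y\|^2$; rearranging and taking square roots gives $|\langle x,y\rangle| \le \|x\|\,\|y\|$, and the argument is uniform over the real and complex cases. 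For the equality clause, equality forces $\|x - ty\| = 0$, hence $x = ty$ (the inner product being positive definite), so $x = cy$ with $c = t$; conversely, if $x = cy$ a direct computation gives $|\langle x,y\rangle| = |c|\,\|y\|^2 = \|x\|\,\|y\|$. In the real case one may instead observe that $t \mapsto \|x - ty\|^2$ is a nonnegative quadratic in $t$ and read the inequality off its discriminant.

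The triangle inequality~(2) and the polarization identity~(3) are then bookkeeping with identities already recorded. For~(2) I would square: $\|x + y\|^2 = \|x\|^2 + 2\,\mathrm{Re}\langle x,y\rangle + \|y\|^2$ (valid in both cases, since $\langle x,y\rangle + \langle y,x\rangle = 2\,\mathrm{Re}\langle x,y\rangle$), bound $\mathrm{Re}\langle x,y\rangle \le |\langle x,y\rangle|$, apply~(1) to obtain $\|x+y\|^2 \le \|x\|^2 + 2\|x\|\,\|y\| + \|y\|^2 = (\|x\| + \|y\|)^2$, and take square roots. For~(3) I would substitute the expansions of $\|x \pm y\|^2$ — and, in the complex case, of $\|x \pm iy\|^2$, using $\langle x, iy\rangle = -i\langle x,y\rangle$ and $\langle iy, x\rangle = i\langle y,x\rangle$ — into the right-hand side: the $\|x\|^2$ and $\|y\|^2$ terms cancel, while the surviving cross terms produce $4\langle x,y\rangle$ in the real case, and $4\,\mathrm{Re}\langle x,y\rangle$ from the first pair plus $4i\,\mathrm{Im}\langle x,y\rangle$ from the $i$-weighted pair in the complex case, summing to $4\langle x,y\rangle$.

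Finally, for the Pythagorean theorem~(4) I would expand $\big\|\sum_{i=1}^M x_i\big\|^2 = \sum_{i=1}^M \sum_{j=1}^M \langle x_i, x_j\rangle$ by (conjugate-)bilinearity of the inner product; pairwise orthogonality annihilates every term with $i \ne j$, leaving $\sum_{i=1}^M \langle x_i, x_i\rangle = \sum_{i=1}^M \|x_i\|^2$. (Equivalently, induct on $M$ from the two-vector case, which is the expansion of $\|x + y\|^2$ with $\langle x,y\rangle = 0$.) I expect the only genuinely delicate point in the whole proposition to be the equality characterization in Cauchy--Schwarz — pinning down the choice of $t$, equivalently the choice of phase in the complex case, and verifying the converse — while everything else follows by direct substitution into identities already in hand.
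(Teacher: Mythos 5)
Your proof is correct and follows essentially the same route as the paper: for Cauchy--Schwarz you expand $\|x-ty\|^2\ge 0$ with $t=\langle x,y\rangle/\|y\|^2$, which is exactly the paper's computation after its normalization $\|y\|=1$, and items (2)--(4) proceed by the same squaring, substitution, and cross-term cancellation arguments. The only difference is that you treat both directions of the equality case in (1) explicitly, which is in fact more complete than the paper's one-line remark.
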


\begin{proof}
(1)  The inequality is trivial if $y = 0$. If $y \neq 0$, we may assume
$\|y\|=1$ by dividing through the inequality with $\|y\|$.  Now we compute:
\begin{align*}
0 &< \|x-\langle x,y\rangle y\|^2\\
&= \|x\|^2 - 2 \langle x,y\rangle \overline{\langle x,y\rangle} +|\langle x,y\rangle|^2
\|y\|^2\\
&= \|x\|^2 - |\langle x,y\rangle|^2\\
&= \|x\|^2 \|y\|^2 - |\langle x,y\rangle|^2.
\end{align*}
Note that the strict inequality would be equality if $x = cy$.

(2)  Applying (1) to obtain the second inequality:
\begin{align*}
\|x+y\|^2 &= \|x\|^2 + 2 Re\, \langle x,y\rangle + \|y\|^2\\
&\le \|x\|^2 + 2 |\langle x,y\rangle| + \|y\|^2\\
&\le \|x\|^2 + 2 \|x\|\|y\|+\|y\|^2\\
&= (\|x\| + \|y\|)^2.
\end{align*}

(3)  We compute assuming $\mathbb{H}^N$ is a real Hilbert space:
\begin{align*}
\|x+y\|^2-\|x-y\|^2 &= \|x\|^2 + 2 \langle x,y\rangle + \|y\|^2 
-(\|x\|^2 - 2  \langle x,y\rangle +\|y\|^2)\\
&= 4 \langle x,y\rangle.
\end{align*}
The proof in the complex case is similar.

(4)  Since $\langle x_i,x_j\rangle =0$ for all $i\not= j$, we have
\begin{align*}
\left \|\sum_{i=1}^M x_i\right \|^2 &= \left \langle \sum_{i=1}^M x_i,
\sum_{j=1}^M x_j\right \rangle\\
&= \sum_{i,j=1}^M \langle x_i,x_j\rangle\\
&= \sum_{i=1}^M \langle x_i,x_i\rangle\\
&= \sum_{i=1}^M \|x_i\|^2. \qedhere
\end{align*}
\end{proof}

We now look at subspaces of the Hilbert space.

\begin{definition}
Let $W,V$ be subspaces of $\H^N$.
\begin{enumerate}
\item A vector $x\in \H^N$ is {\bf orthogonal to a subspace\index{orthogonal!to a subspace}}  $W$, denoted $x \perp W$
if
\[ \langle x,y\rangle =0 \mbox{ for all }y\in W.\]
The {\bf orthogonal complement\index{orthogonal!complement}} of $W$ is
\[ W^{\perp}= \{x\in \H:x\perp W\}.\]

\item The subspaces $W,V$ are {\bf orthogonal subspaces\index{orthogonal!subspaces}}, denoted $W\perp V$
if $W \subset V^{\perp}$, that is,
\[ \langle x,y\rangle =0 \mbox{ for all }x\in W,\ \ y\in V.\]
\end{enumerate}
\end{definition}

A simple calculation shows that $W^{\perp}$
is always closed and so if $W$ is closed then $W^{\perp \perp}=W$. Fundamental to Hilbert space theory are {\bf orthogonal projections} as defined next.

\begin{definition}
An operator $P:\H^N \rightarrow \H^N$ is called a {\bf projection\index{projection}} if $P^2=P$.  It is an
{\bf orthogonal projection\index{projection!orthogonal}} if $P$ is also self-adjoint
(See definition \ref{deff}).
\end{definition}

For any subspace $W \subset \H^N$, there is an orthogonal projection of
$\H$ onto $W$ called the {\bf nearest point projection\index{projection!nearest point}}.  One way to define
it is to pick any orthonormal basis $\{e_i\}_{i=1}^K$ for $W$ and define
\[ Px = \sum_{i=1}^K\langle x,e_i\rangle e_i.\]
Note that for all $j=1,2,\ldots,K$ we have
\[ \langle Px,e_j\rangle = \bigg\langle \sum_{i=1}^K\langle x,e_i\rangle e_i,e_j\bigg\rangle
= \sum_{i=1}^K \langle x,e_i\rangle \langle e_i,e_j\rangle = \langle x,e_j\rangle.\]

We need to check that this operator is well defined, that is, we must show it is independent of the choice of basis.

\begin{lemma}
If $\{e_i\}_{i=1}^K$ and $\{g_i\}_{i=1}^K$ are orthonormal bases for $W$, then
\[ \sum_{i=1}^K\langle x,e_i\rangle e_i = \sum_{i=1}^K \langle x,g_i\rangle g_i.\]
\end{lemma}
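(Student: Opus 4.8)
The plan is to show that both sums agree by testing them against an arbitrary vector from $W$, then invoking the standard trick from the first lemma. Specifically, write $Px = \sum_{i=1}^K \langle x, e_i\rangle e_i$ and $Qx = \sum_{i=1}^K \langle x, g_i\rangle g_i$. Both $Px$ and $Qx$ lie in $W$, so their difference $Px - Qx$ lies in $W$. If I can show $\langle Px - Qx, y\rangle = 0$ for all $y \in W$, then in particular $\langle Px - Qx, Px - Qx\rangle = 0$, hence $Px = Qx$.

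First I would expand $\langle Px, g_j\rangle$ for a fixed basis vector $g_j$ of $W$. Using linearity and the computation already displayed in the excerpt for the nearest point projection, $\langle Px, g_j\rangle = \sum_{i=1}^K \langle x, e_i\rangle \langle e_i, g_j\rangle$. On the other hand, expanding $g_j$ in the orthonormal basis $\{e_i\}_{i=1}^K$ via the Proposition on orthonormal bases gives $g_j = \sum_{i=1}^K \langle g_j, e_i\rangle e_i$, so $\langle x, g_j\rangle = \langle x, \sum_{i=1}^K \langle g_j, e_i\rangle e_i \rangle = \sum_{i=1}^K \overline{\langle g_j, e_i\rangle}\langle x, e_i\rangle = \sum_{i=1}^K \langle e_i, g_j\rangle \langle x, e_i\rangle$. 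These two expressions coincide, so $\langle Px, g_j\rangle = \langle x, g_j\rangle$, and by symmetry $\langle Qx, g_j\rangle = \langle x, g_j\rangle$ as well. Hence $\langle Px - Qx, g_j\rangle = 0$ for every $j$.

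Next, since $\{g_j\}_{j=1}^K$ spans $W$, linearity of the inner product in the second slot gives $\langle Px - Qx, y\rangle = 0$ for all $y \in W$. Because $Px - Qx \in W$, I may take $y = Px - Qx$ and conclude $\|Px - Qx\|^2 = 0$, so $Px = Qx$. This is exactly the pattern of the opening lemma, just applied within the subspace $W$ rather than all of $\H^N$.

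I do not expect any real obstacle here; the only thing to be careful about is the complex conjugation when moving scalars out of the second argument of the inner product, and the bookkeeping that $\langle e_i, g_j\rangle$ is precisely the coefficient appearing on both sides. An alternative, essentially equivalent route is to observe that $x \mapsto \sum_i \langle x, e_i\rangle e_i$ is the identity on $W$ and zero on $W^\perp$, which characterizes it independently of the basis; but the direct computation above is cleaner and self-contained given only the results already available in the excerpt.
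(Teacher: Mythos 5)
Your argument is correct, but it is organized differently from the paper's. The paper proves the identity by a direct chain of equalities: it expands each $e_i$ in the basis $\{g_j\}_{j=1}^K$ inside $\langle x,e_i\rangle$, interchanges the two finite sums, and recognizes the result as the expansion of the vector $\sum_{j=1}^K\langle x,g_j\rangle g_j\in W$ in the orthonormal basis $\{e_i\}_{i=1}^K$, which reproduces that vector. You instead characterize both sums at once: you show $\langle Px,g_j\rangle=\langle x,g_j\rangle=\langle Qx,g_j\rangle$ for every $j$, note $Px-Qx\in W$ is orthogonal to the spanning set $\{g_j\}_{j=1}^K$ and hence to all of $W$, and then apply the ``standard trick'' with $y=Px-Qx$ to conclude $Px=Qx$. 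The underlying computation (expanding one basis in the other and moving conjugates correctly) is the same double sum, but your logical packaging is different and arguably more conceptual: it exhibits the common value as the unique vector in $W$ whose inner products against a spanning set of $W$ match those of $x$, i.e.\ the defining property of the orthogonal projection, and this formulation carries over verbatim to more general settings. The paper's version is shorter and purely computational. One small point of care in your write-up: with this paper's convention the inner product is conjugate-linear, not linear, in the second slot, so the passage from $\langle Px-Qx,g_j\rangle=0$ for all $j$ to $\langle Px-Qx,y\rangle=0$ for all $y\in W$ uses conjugate-linearity; the conclusion is unaffected since the vanishing terms are merely multiplied by conjugated scalars.
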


\begin{proof}
We compute:
\begin{align*}
\sum_{i=1}^K \langle x,e_i\rangle e_i &= \sum_{i=1}^K\left  \langle x, \sum_{j=1}^K
\langle e_i,g_j\rangle g_j\right \rangle e_i\\
&=\left ( \sum_{i,j=1}^K \overline{\langle e_i,g_j\rangle} \langle x,g_j\rangle\right )e_i\\
&= \sum_{i=1}^K \left \langle \sum_{j=1}^K \langle x,g_j\rangle g_j,e_i \right \rangle e_i\\
&= \sum_{j=1}^K\langle x,g_j\rangle g_j.  \qedhere
\end{align*}
\end{proof}

In this case, $Id-P$ is also an orthogonal projection onto $W^{\perp}$.
This projection maps each vector in $\H^N$ onto the unique nearest vector
to $x$ in $W$.  In particular, $\|Px\|\le \|x\|$ for all $x\in \H$.

\begin{proposition}
Let $P$ be an orthogonal projection onto a subspace $W$.  Then
\[ \|x-Px\|\le \|x-y\|, \mbox{ for all } y\in W.\]
\end{proposition}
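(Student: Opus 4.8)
\emph{Proof plan.} The plan is to decompose $x-y$ into two orthogonal pieces and then invoke the Pythagorean Theorem proved above. For an arbitrary $y\in W$, write
\[ x-y = (x-Px) + (Px-y). \]
Since $Px\in \ran P = W$ and $y\in W$ by hypothesis, the second summand $Px-y$ lies in $W$. The crucial step is therefore to establish that the first summand $x-Px$ is orthogonal to $W$; once that is in hand, $x-Px$ is in particular orthogonal to $Px-y$, and the Pythagorean Theorem gives $\|x-y\|^2 = \|x-Px\|^2 + \|Px-y\|^2 \ge \|x-Px\|^2$, so taking square roots finishes the argument.

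To prove $x-Px\perp W$, I would use that $P$ is a projection, $P^2=P$, and is self-adjoint, $P^*=P$. Every vector of $W=\ran P$ has the form $Pz$ for some $z\in\H^N$, and
\[ \langle x-Px, Pz\rangle = \langle P(x-Px), z\rangle = \langle Px - P^2x, z\rangle = 0. \]
Equivalently, using the explicit formula $Px=\sum_{i=1}^K\langle x,e_i\rangle e_i$ relative to an orthonormal basis $\{e_i\}_{i=1}^K$ of $W$, one computes $\langle x-Px,e_j\rangle = \langle x,e_j\rangle - \langle x,e_j\rangle = 0$ for every $j$, and hence $x-Px$ is orthogonal to every vector of $W$.

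The only point that requires any care is this orthogonality $\ran(Id-P)\perp \ran P$, which is exactly where the self-adjointness of $P$ (or the orthonormal-basis computation) enters; the remainder is a direct application of the Pythagorean Theorem. Note finally that the asserted inequality becomes an equality when $y=Px$, which is consistent with the claim that $Px$ is the nearest vector in $W$ to $x$.
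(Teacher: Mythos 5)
Your proof is correct, but it takes a genuinely different route from the paper's. You use the classical orthogonal decomposition $x-y=(x-Px)+(Px-y)$, establish that $x-Px\perp W$ (either from $P^*=P$ and $P^2=P$, or from the identity $\langle Px,e_j\rangle=\langle x,e_j\rangle$ that the paper records right after defining the nearest point projection), and then invoke the Pythagorean Theorem. The paper instead argues computationally: it bounds $Re\,\langle x,y\rangle\le\|Px\|\|y\|$ via Cauchy--Schwarz applied to the frame coefficients over an orthonormal basis of $W$, then uses $\|Px\|\|y\|\le\frac{1}{2}\left(\|Px\|^2+\|y\|^2\right)$ and expands $\|x-y\|^2$ directly, never isolating the orthogonality $x-Px\perp W$ as a separate fact. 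Your version buys more: it makes the geometry transparent, and it exhibits the equality case $y=Px$ immediately, which gives uniqueness of the nearest point in $W$ (something the paper's chain of inequalities obscures). The paper's version buys a self-contained computation that needs only Cauchy--Schwarz and the already-noted identity $\langle Px,e_j\rangle=\langle x,e_j\rangle$, without a separate orthogonality lemma. Both are complete; the one point you should make sure is airtight is that the Pythagorean Theorem in the complex case requires $\langle x-Px,\,Px-y\rangle=0$ exactly, which your orthogonality claim does deliver since $Px-y\in W$.
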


\begin{proof}
Choose an orthonormal basis $\{e_i\}_{i=1}^K$ for $W$.  Then
\begin{align*}
Re \, \langle x,y \rangle &\leq |\langle x,y\rangle| \\
&= \bigg|\sum_{i=1}^K \langle x,e_i\rangle \langle y,e_i\rangle\bigg|\\
&= \bigg|\sum_{i=1}^K \langle Px,e_i\rangle \langle y,e_i\rangle\bigg|\\
&\le \left ( \sum_{i=1}^K|\langle Px,e_i\rangle|^2\right )^{1/2}\left ( \sum_{i=1}^K|\langle y,e_i\rangle|^2 \right )^{1/2}\\
&\le \|Px\|\|y\|\\
&\le \frac{1}{2}(\|Px\|^2 + \|y\|^2)\\
\end{align*}
Therefore,
\begin{align*}
\|x-y\|^2 &= \|x\|^2 + \|y\|^2 - 2Re\, \langle x,y\rangle\\
&\ge \|x\|^2 + \|y\|^2 - (\|Px\|^2+\|y\|^2)\\
&= \|x\|^2 -\|Px\|^2\\
&= \|x\|^2 +\|Px\|^2 -2\|Px\|^2\\
&= \|x\|^2 + \|Px\|^2 -2\langle Px,Px\rangle\\
&= \|x\|^2 + \|Px\|^2 -2 \langle Px,x\rangle\\
&= \|x-Px\|^2. \qedhere
\end{align*}
\end{proof}

Another important concept is taking orthogonal sums of subspaces of a Hilbert space.

\begin{definition}
If $\{W_i\}_{i\in I}$ (the index set $I$ is allowed to be infinite) are subspaces of a Hilbert space $\H^N$, their {\bf orthogonal direct sum \index{orthogonal!
direct sum}\index{direct sum}} is
\[ \left ( \sum_{i\in I}\oplus W_i\right )_{\ell_2}= \big\{(x_i)_{i\in I}:
x_i \in W_i, \mbox{ and } \sum_{i\in I}\|x_i\|^2
< \infty \big\}\]
and inner product defined by
\[ \big\langle (x_i)_{i\in I},(y_i)_{i\in I}\big\rangle = \sum_{i\in I}\langle x_i,y_i\rangle.\]
It follows that
\[ \big\|(x_i)_{i\in I}\big\|^2= \sum_{i\in I}\|x_i\|^2.\]
\end{definition}

\section{The Basics of Operator Theory}

\begin{definition}
A {\bf linear operator\index{linear operator}\index{operator!linear}} $T:\H^N \rightarrow \H^K$ between Hilbert
spaces $\H^N$ and $\H^K$ satisfies:
\[ T(ax+by) = aT(x) + bT(y)\mbox{ for all }x,y\in \H^N \mbox{ and scalars a,b}.\]
The {\bf operator norm\index{operator norm}\index{norm!operator}\index{operator!norm}}  is
\begin{align*}
\|T\|&= \sup_{\|x\|=1}\|Tx\|\\
&= \sup_{\|x\|\le 1}\|Tx\|\\
&= \sup_{x \not= 0}\frac{\|Tx\|}{\|x\|}.
\end{align*}
\end{definition}

From the definition, for all $x\in \H^N$ we have  $\|Tx\|\le \|T\|\|x\|$. Furthermore, if $T:\H^{N_1} \rightarrow \H^{N_2}$ and $S:\H^{N_2}\rightarrow \H^{N_3}$
then
\[ \|STx\| \le \|S\|\|Tx\|\le \|S\|\|T\|\|x\|\]
showing that
\[ \|ST\|\le \|S\|\|T\|.\]

Linear operators on finite spaces can always be represented as a matrix, so we can work with linear operators and their matrices interchangeably. 

\begin{definition}
Let $T:\H^N \rightarrow \H^K$ be a linear operator, let $\{e_i\}_{i=1}^N$ be
an orthonormal basis for $\H^N$ and let $\{g_j\}_{j=1}^K$ be an orthonormal
basis for $\H^K$.  The {\bf matrix representation\index{matrix representation}} of $T$ (with respect to
these orthonormal bases) is $T=[a_{ij}]_{1\le i \le N,1\le j \le K}$ where
\[ a_{ij}=\langle Te_i,g_j\rangle.\]
\end{definition}

The following definition holds some fundamental concepts that we often work with when dealing with linear operators.

\begin{definition}\label{deff}
Let $T:\H^N \rightarrow \H^K$ be a linear operator.
\begin{enumerate}
\item The {\bf kernel\index{kernel} of T} is
\[ \ker T = \{x\in \H^N:Tx=0\}.\]
The {\bf range \index{range} of T} (or {\bf image\index{image} of T}) is
\[ \ran T = \{Tx:x\in \H^N\}.\]
The {\bf rank\index{rank} of T}, denoted rank T is the dimension of the $\ran T$. A standard result from linear algebra known as
the {\bf \hl{rank-nullity theorem}} states
\[ N=\dim\ker T + \rank T.\]

\item $T$ is {\bf {injective}\index{operator!injective}} if $\ker T = \{0\}$, and {\bf {surjective}\index{operator!surjective}}  if $\ran T = \H^K$.
It is {\bf {bijective}\index{operator!bijective}} if it is both injective and surjective. 

\item The {\bf \hl{adjoint operator}\index{operator!adjoint}} $T^*:\H^K \rightarrow \H^N$ is defined by:
\[ \langle Tx,y \rangle = \langle x,T^*y\rangle \mbox{ for all }x\in \H^N,\ y\in \H^K.\]
Note that $T^{**} =T$ and $(S+T)^* = S^*+T^*$.

\item $T$ is {\bf {bounded}\index{operator!bounded}} if $\|T\|<\infty$.

\item  If $\mathbb{H}^N = \mathbb{H}^K$, then $T$ is {\bf invertible\index{operator!invertible}} if it bounded and there is a bounded linear map $S: \mathbb{H}^N \to \mathbb{H}^N$ so that $TS = ST = Id$. If such an $S$ exists, it is unique and we call it the {\bf \hl{inverse operator}\index{operator!inverse}} of $T$ and denote it by $S = T^{-1}$. In the finite setting, the existence of an inverse is equivalent to $T$ being any of bounded, injective, or surjective.
\end{enumerate}

\end{definition}

A useful alternative method that we will use later for calculating $\|T\|$ is given in the following proposition.

\begin{proposition}\label{lem1}
If $T:\H^N \rightarrow \H^K$ is a linear operator, then
\[ \|T\| = \sup\{ |\langle Tx,y\rangle|:\|x\|=\|y\|=1\}.\]
\end{proposition}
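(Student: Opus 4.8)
The plan is to prove the two inequalities $\|T\| \le \sup\{|\langle Tx,y\rangle| : \|x\|=\|y\|=1\}$ and $\|T\| \ge \sup\{|\langle Tx,y\rangle| : \|x\|=\|y\|=1\}$ separately, denoting the supremum on the right by $M$ for brevity.

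For the inequality $M \le \|T\|$, I would take any unit vectors $x,y$ and apply the Cauchy-Schwarz Inequality together with the basic estimate $\|Tx\| \le \|T\|\|x\|$: one gets $|\langle Tx,y\rangle| \le \|Tx\|\,\|y\| \le \|T\|\,\|x\|\,\|y\| = \|T\|$. Taking the supremum over all such $x,y$ yields $M \le \|T\|$.

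For the reverse inequality $\|T\| \le M$, I would fix a unit vector $x \in \H^N$ with $Tx \ne 0$ and make the natural choice $y = Tx/\|Tx\|$, which is a unit vector in $\H^K$. Then $\langle Tx, y\rangle = \langle Tx, Tx/\|Tx\|\rangle = \|Tx\|^2/\|Tx\| = \|Tx\|$, so $\|Tx\| = |\langle Tx, y\rangle| \le M$. For unit vectors $x$ with $Tx = 0$ the inequality $\|Tx\| = 0 \le M$ is trivial (and $M \ge 0$ automatically). Taking the supremum over unit vectors $x$ gives $\|T\| = \sup_{\|x\|=1}\|Tx\| \le M$.

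Combining the two inequalities gives $\|T\| = M$, as claimed. I do not expect any genuine obstacle here; the only mild subtlety is handling the case $Tx = 0$ in the second part so that the choice $y = Tx/\|Tx\|$ makes sense, but that case is immediate. One should also note $M \ge 0$ so the statement is not vacuous, which is clear since it is a supremum of nonnegative quantities (and the supremum is over a nonempty set as $N, K \ge 1$).
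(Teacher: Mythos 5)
Your proof is correct and follows essentially the same route as the paper's: Cauchy--Schwarz for the inequality $M \le \|T\|$, and the choice $y = Tx/\|Tx\|$ for the reverse. Your explicit handling of the case $Tx=0$ is a minor tidying of a point the paper treats implicitly by restricting the supremum to $Tx \neq 0$.
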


\begin{proof}
For $\|x\|=1=\|y\|$, Cauchy-Schwarz gives
\[ |\langle Tx,y\rangle| \le \|Tx\|\|y\|\le \|T\|\|x\|= \|T\|.\]
Hence,
\[ \|T\| \ge \sup\{ |\langle Tx,y\rangle|:\|x\|=\|y\|=1\}.\]
Conversely,
\begin{align*}
\sup\{|\langle Tx,y\rangle| : \|&x\|=\|y\|=1\} \\
&\geq \sup \bigg \{ \left |\left \langle Tx,\frac{Tx}{\|Tx\|}\right \rangle \right |:\|x\|=1\mbox{ and } Tx \not=0 \bigg\}\\
&= \sup_{\|x\|=1}\frac{\|Tx\|^2}{\|Tx\|}\\
&= \sup_{\|x\|=1}\|Tx\|=\|T\|. \qedhere
\end{align*}

\end{proof}

The following gives a way to identify operators.

\begin{proposition}\label{idfyop}
If $S,T:\H^N \rightarrow \H^K$ are operators satisfying
\[ \langle Tx,y\rangle =0,\mbox{ for all }x,y\in \H^N,\]
then $T=0$.  Hence, if
\[ \langle Tx,y\rangle = \langle Sx,y\rangle\mbox{ for all } x,y\in \H^N,\]
then $S=T$.
\end{proposition}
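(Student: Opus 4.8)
The plan is to reduce both assertions to the ``standard trick'' lemma at the start of Section~2 (the one stating that a vector orthogonal to everything is zero), applied in $\H^K$ rather than $\H^N$. Note first that, although the hypothesis is written with $y \in \H^N$, the natural reading is $y \in \H^K$, since $Tx \in \H^K$; this is the setting in which the argument proceeds.

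For the first statement, I would fix an arbitrary $x \in \H^N$ and consider the single vector $Tx \in \H^K$. The hypothesis says precisely that $\langle Tx, y\rangle = 0$ for all $y \in \H^K$, so the standard trick lemma (applied in $\H^K$) forces $Tx = 0$. Since $x \in \H^N$ was arbitrary, $Tx = 0$ for every $x$, which is exactly the statement $T = 0$.

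For the second statement, I would apply the first statement to the operator $S - T$. Linearity guarantees $S - T : \H^N \to \H^K$ is again an operator, and for all $x, y$ we have
\[
\langle (S-T)x, y\rangle = \langle Sx, y\rangle - \langle Tx, y\rangle = 0
\]
by hypothesis. Hence $S - T = 0$ by the first part, i.e.\ $S = T$.

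There is essentially no obstacle here: the only thing to be careful about is applying the scalar lemma in the correct space ($\H^K$), and observing that the difference of two linear operators is linear so that the first part is applicable. No estimates or choices of bases are needed.
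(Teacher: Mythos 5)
Your proof is correct and matches the paper's argument: the paper simply takes $y = Tx$ to get $\|Tx\|^2 = 0$, which is exactly the content of the ``standard trick'' lemma you invoke in $\H^K$, and the second claim is handled (implicitly in the paper) by the same reduction to $S-T$ that you spell out. Your remark that $y$ should range over $\H^K$ is a correct reading of the statement's typo, consistent with how the paper itself uses the hypothesis.
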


\begin{proof}
Given $x \in \H^N$, by letting $y=Tx$ we obtain:
\[ 0 =\langle Tx,Tx\rangle = \|Tx\|^2 =0,\]
and so $Tx=0$ and $T=0$. 
\end{proof}

There are important relationships between the kernel of $T$ and the range of $T^*$ and vice versa.

\begin{proposition}\label{lem21}
Let $T:\H^N \rightarrow \H^K$ be a linear operator.  Then
\begin{enumerate}
\item $\ker T = [\ran T^*]^{\perp}$.

\item $[\ker T]^{\perp}= \ran T^*$.

\item  $\ker T^*= [\ran \ T]^{\perp}$.

\item $[\ker T^*]^{\perp}=\ran T$.
\end{enumerate}
\end{proposition}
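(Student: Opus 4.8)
The four statements are linked in pairs: (2) is the orthogonal complement of (1), and (4) is the orthogonal complement of (3); moreover (3) is just (1) applied to the operator $T^*$ in place of $T$. So the plan is to prove (1) from first principles, then obtain (2), (3), (4) by the two formal manipulations just described.

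\textbf{Step 1: Prove (1).} The key tool is the defining relation of the adjoint, $\langle Tx,y\rangle = \langle x,T^*y\rangle$, together with Lemma 2.1 (the ``standard trick''). I would chase equivalences: for $x\in\H^N$,
\[
x\in\ker T \iff Tx=0 \iff \langle Tx,y\rangle=0 \ \text{for all }y\in\H^K,
\]
where the last equivalence is Lemma 2.1 applied in $\H^K$. Now rewrite $\langle Tx,y\rangle=\langle x,T^*y\rangle$, so the condition becomes $\langle x,T^*y\rangle=0$ for all $y\in\H^K$, i.e. $x\perp T^*y$ for every $y$, i.e. $x\perp\ran T^*$, i.e. $x\in[\ran T^*]^{\perp}$. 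Stringing these together gives $\ker T=[\ran T^*]^{\perp}$.

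\textbf{Step 2: Deduce (2), (3), (4).} For (2), take orthogonal complements of both sides of (1): $[\ker T]^{\perp}=\big[[\ran T^*]^{\perp}\big]^{\perp}$, and since $\ran T^*$ is a subspace of the finite-dimensional space $\H^N$ it is closed, so $W^{\perp\perp}=W$ gives $[\ker T]^{\perp}=\ran T^*$. For (3), apply the already-proved identity (1) with $T^*:\H^K\to\H^N$ in the role of $T$; since $(T^*)^*=T^{**}=T$ this reads $\ker T^*=[\ran T]^{\perp}$. Finally (4) is the orthogonal complement of (3), again using that $\ran T$ is a closed subspace of $\H^K$ so that $[[\ran T]^{\perp}]^{\perp}=\ran T$.

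\textbf{Anticipated obstacle.} There is no real difficulty here; the only point that must not be glossed over is the use of $W^{\perp\perp}=W$, which needs $W$ closed. In the finite-dimensional setting every subspace is closed (as already noted in the text), so this is automatic; I would state it explicitly so the argument also makes clear what would need extra care in infinite dimensions. Everything else is a direct unwinding of definitions.
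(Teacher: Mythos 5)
Your proposal is correct and follows essentially the same route as the paper: prove (1) by the adjoint relation together with the "only vector orthogonal to everything is zero" lemma, get (2) by taking orthogonal complements (using that subspaces of a finite-dimensional space are closed), and obtain (3) and (4) by substituting $T^*$ for $T$. Your explicit remark about where closedness is used is a nice touch but does not change the argument.
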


\begin{proof}
(1) We have $x \in \ker T$ if and only if $Tx=0$ then for all $y\in \H^K$ if and only if $\langle Tx,y\rangle = 0 = \langle x, T^* y\rangle$ for all $y \in H^K$ if and only if $x \in [\ran T^*]^\perp$.

(2)  Observe from (1) that:
\[  [\ker T]^{\perp} = [T^*(\H^K)]^{\perp \perp} = T^*(\H^K) = \ran T^*.\]
The relations (3) and (4) follow by replacing $T$ by $T^*$ in (1) and (2).
\end{proof}

We also have the following relationships for adjoint operators.

\begin{proposition}
For linear operators $T:\H^{N_1} \rightarrow H^{N_2}$ and $S:\H^{N_2} \rightarrow
\H^{N_3}$, we have:
\begin{enumerate}
\item  $(ST)^* = T^*S^*$.

\item  $\|T^*\|=\|T\|$.

\item $\|T^*T\|= \|T\|^2$.
\end{enumerate}
\end{proposition}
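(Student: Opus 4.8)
The plan is to prove the three identities in order, each one building on the previous, using Proposition \ref{idfyop} as the main tool for identifying operators.

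For part (1), the claim $(ST)^* = T^*S^*$ is proved by the standard ``move the operator across the inner product'' trick. Given $x \in \H^{N_1}$ and $z \in \H^{N_3}$, I would compute $\langle STx, z\rangle$ two ways: on one hand it equals $\langle x, (ST)^* z\rangle$ by definition of the adjoint of $ST$; on the other hand, peeling off $S$ first gives $\langle Tx, S^* z\rangle = \langle x, T^* S^* z\rangle$ by two applications of the defining property of adjoints. Since these agree for all $x$ and $z$, Proposition \ref{idfyop} forces $(ST)^* = T^* S^*$.

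For part (2), $\|T^*\| = \|T\|$, I would invoke Proposition \ref{lem1}, which characterizes the operator norm as $\|T\| = \sup\{|\langle Tx, y\rangle| : \|x\| = \|y\| = 1\}$. The key observation is that $\langle Tx, y\rangle = \overline{\langle y, Tx\rangle} = \overline{\langle T^* y, x\rangle}$, so $|\langle Tx, y\rangle| = |\langle T^* y, x\rangle|$. Taking the supremum over the unit sphere in both variables (which is symmetric in the roles of $x$ and $y$) then gives $\|T\| = \sup\{|\langle Tx,y\rangle|\} = \sup\{|\langle T^* y, x\rangle|\} = \|T^*\|$, again by Proposition \ref{lem1} applied to $T^*$.

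For part (3), $\|T^* T\| = \|T\|^2$, I would prove the two inequalities separately. The easy direction uses submultiplicativity: $\|T^* T\| \le \|T^*\| \|T\| = \|T\|^2$ by part (2). For the reverse, I would use the variational formula $\|T\|^2 = \sup_{\|x\|=1} \|Tx\|^2 = \sup_{\|x\|=1} \langle Tx, Tx\rangle = \sup_{\|x\|=1} \langle T^* T x, x\rangle$, and then bound this by Cauchy–Schwarz: $\langle T^* T x, x\rangle \le \|T^* T x\| \|x\| \le \|T^* T\|$ when $\|x\| = 1$. Combining the two inequalities gives equality. The only mild subtlety — and the step most worth stating carefully — is the reverse inequality in part (3), since it requires recognizing $\|Tx\|^2$ as $\langle T^* T x, x\rangle$ and then applying Cauchy–Schwarz in the right direction; everything else is a direct unwinding of definitions.
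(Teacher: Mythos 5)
Your proposal is correct and follows essentially the same route as the paper: part (1) by moving the operator across the inner product and invoking Proposition \ref{idfyop}, part (2) via the supremum characterization of the norm in Proposition \ref{lem1}, and part (3) by combining submultiplicativity with the identity $\|Tx\|^2 = \langle T^*Tx,x\rangle$ and Cauchy--Schwarz. No gaps to report.
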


\begin{proof}
(1) We compute:
\begin{align*}
\langle x,(ST)^*y\rangle &= \langle STx,y\rangle\\
&= \langle Tx,S^*y\rangle\\
&= \langle x,T^*S^*y\rangle.
\end{align*}

(2)  We use Proposition \ref{lem1} to get:
\begin{align*}
\|T\| &= \sup\{|\langle Tx,y\rangle|:\|x\|=\|y\|=1\}\\
&= \sup\{|\langle x,T^*y\rangle|:\|x\|=\|y\|=1\}\\
&= \|T^*\|.
\end{align*}

(3)  We know that $\|T^*T\|\le \|T^*\|\|T\|= \|T\|^2$.
On the other hand, for all $\|x\|=1$ via Cauchy-Schwarz,
\[ \|Tx\|^2 = \langle Tx,Tx\rangle = \langle T^*Tx,x\rangle
\le \|T^*Tx\|\|x\| \le  \|T^*T\|\|x\|=\|T^*T\|.\]
Hence, $\|T\|^2 \le \|T^*T\|$.
\end{proof}

\begin{definition}
Let $T:\H^N \rightarrow \H^K$ be an injective linear operator.  Then $(T^*T)^{-1}$ exists and the
{\bf \hl{Moore-Penrose inverse} of T}, denoted by $T^{\dagger}$, is defined by
\[ T^{\dagger} := (T^*T)^{-1}T^*.\]
The map $T^{\dagger}$ is a left inverse, that is, $T^{\dagger}T=I$.
\end{definition}

\begin{definition}
A linear operator $T:\H^N \rightarrow \H^K$ is called:
\begin{enumerate}
\item {\bf {self-adjoint}\index{operator!self-adjoint}}, if $\mathbb{H}^N = \mathbb{H}^K$ and  $T=T^*$.

\item {\bf {normal}\index{operator!normal}}, if $\mathbb{H}^N = \mathbb{H}^K$ and  $T^*T=TT^*$.

\item  an {\bf \hl{isometry}}, if $\|Tx\|=\|x\|$ for all $x\in \H^N$.

\item a {\bf \hl{partial isometry}} if $T$ restricted to $[\ker T]^{\perp}$
is an isometry.

\item  {\bf positive \index{operator!positive}}, if $\mathbb{H}^N = \mathbb{H}^K$, $T$ is self-adjoint, and
\[ \langle Tx,x\rangle \ge 0 \mbox{ for all }x\in \H.\]
In this case we write $T\ge 0$.

\item  {\bf unitary\index{operator!unitary}}, if $TT^* = T^*T = Id$.
\end{enumerate}
\end{definition}

It follows that $TT^*$ and $T^* T$ are self-adjoint for any operator $T$. 

\begin{example}\label{frameop}
A fundamental example of a positive, self-adjoint operator important in frame theory is to take 
vectors $\Phi = \{\p_i\}_{i=1}^M$ in $\H^N$ and define the operator:
\[ Sx = \sum_{i=1}^M\langle x,\p_i\rangle \p_i.\]
It follows that 
\[ \langle Sx,x\rangle = \sum_{i=1}^M\langle x,\p_i\rangle \langle \p_i,x\rangle
= \sum_{i=1}^M|\langle x,\p_i\rangle|^2 = \langle x, Sx \rangle\]
showing that it is positive and self-adjoint. This operator is called the {\bfseries frame operator} of the sequence $\Phi$.
\end{example}

Note that given two positive operators $S$ and $T$, the sum $S+T$ is a positive operator but $ST$ may
not be as the next example shows. 

\begin{example}\label{ssnotpos}
Take $S:\mathbb{R}^2 \to \mathbb{R}^2$ to be the operator defined by
\[
S\left[
\begin{array}{c}
x_1 \\ x_2
\end{array}
\right]
=
\left[
\begin{array}{rr}
0 & -1\\
1 & 0
\end{array}
\right]\left[
\begin{array}{c}
x_1 \\ x_2
\end{array}
\right]
=
\left[
\begin{array}{c}
-x_2 \\ x_1
\end{array}
\right]
\]
Then $\langle Sx, x \rangle = 0$ for all $x \in \mathbb{R}^2$ so that $S$ is positive. However, 
\[
S^2 \left[
\begin{array}{c}
x_1 \\ x_2
\end{array}
\right]
=
\left[
\begin{array}{rr}
-1 & 0\\
0 & -1
\end{array}
\right]\left[
\begin{array}{c}
x_1 \\ x_2
\end{array}
\right]
=
\left[
\begin{array}{c}
-x_1 \\ -x_2
\end{array}
\right]
\]
so that $\langle S^2x,x\rangle = -\|x\|^2$ for all $x \in \mathbb{R}^2$ and hence $S^2$ is not positive. 
\end{example}

Nevertheless, we can define inequalities for positive operators:

\begin{definition}
If $S,T$ are positive operators on $\H^N$, we write $S\le T$ if
$T-S\ge 0$.
\end{definition}

\begin{proposition}\label{propp2}
Let $T:\H^N \rightarrow \H^K$ be a linear operator.
\begin{enumerate}
\item  If $\H^N = \H^K$, the following are equivalent:
\begin{enumerate}
\item  $T$ is self-adjoint.

\item $\langle Tx,y\rangle = \langle x,Ty\rangle,\mbox{ for all }x,y \in \H^N.$
\end{enumerate}

\item The following are equivalent:
\begin{enumerate}
\item $T$ is an isometry.

\item  $T^*T=Id$.

\item $ \langle Tx,Ty\rangle = \langle x,y\rangle,\mbox{ for all }x,y\in \H^N.$
\end{enumerate}

\item  The following are equivalent:
\begin{enumerate}
\item $T$ is unitary.

\item  $T$ and $T^*$ are isometries.

\item  $T$ is an isometry and $T^*$ is injective.

\item  $T$ is a surjective isometry.

\item  $T$ is bijective and $T^{-1}=T^*$.
\end{enumerate}

\item If $U$ is any unitary operator, then $\|T\|=\|TU\|=\|UT\|.$
\end{enumerate}

\end{proposition}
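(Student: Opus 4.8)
The plan is to prove each item by reducing statements about operators to statements about inner products, using Proposition~\ref{idfyop} (an operator is determined by the bilinear form $\langle Tx,y\rangle$) and Proposition~\ref{propp2}(1)(b) as the bridge between self-adjointness and the identity $\langle Tx,y\rangle = \langle x,Ty\rangle$.

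For (1), the implication (a)$\Rightarrow$(b) is immediate: if $T=T^*$, then $\langle Tx,y\rangle = \langle x,T^*y\rangle = \langle x,Ty\rangle$. For (b)$\Rightarrow$(a), from $\langle Tx,y\rangle = \langle x,Ty\rangle = \langle T^*x,y\rangle$ for all $x,y$, Proposition~\ref{idfyop} gives $T=T^*$. For (2), I would run the cycle (a)$\Rightarrow$(c)$\Rightarrow$(b)$\Rightarrow$(a). Assuming $T$ is an isometry, I recover $\langle Tx,Ty\rangle = \langle x,y\rangle$ from $\|Tx\|=\|x\|$ via the Polarization Identity applied to both sides (this is the one place a short computation is needed, and it is the only mildly delicate point since the complex case carries the extra $i$-terms, but each term $\|Tx\pm Ty\|^2$, $\|Tx\pm iTy\|^2$ equals $\|x\pm y\|^2$, $\|x\pm iy\|^2$ because $T$ is linear and norm-preserving). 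Then (c) says $\langle T^*Tx,y\rangle = \langle x,y\rangle = \langle \mathrm{Id}\,x,y\rangle$ for all $y$, so $T^*T=\mathrm{Id}$ by Proposition~\ref{idfyop}; and (b)$\Rightarrow$(a) is just $\|Tx\|^2 = \langle T^*Tx,x\rangle = \langle x,x\rangle = \|x\|^2$.

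For (3), I would prove (a)$\Leftrightarrow$(b) first, then close the loop (b)$\Rightarrow$(c)$\Rightarrow$(d)$\Rightarrow$(e)$\Rightarrow$(a). By definition $T$ is unitary iff $TT^*=T^*T=\mathrm{Id}$, which by part (2) (applied to $T$ and to $T^*$, using $T^{**}=T$ and $\|T^*\|=\|T\|<\infty$ so both are bona fide operators) says exactly that $T$ and $T^*$ are isometries; so (a)$\Leftrightarrow$(b). For (b)$\Rightarrow$(c): an isometry is injective, and $T^*$ being an isometry is in particular injective. For (c)$\Rightarrow$(d): $T^*$ injective gives $\ker T^* = \{0\}$, hence by Proposition~\ref{lem21}(4), $\ran T = [\ker T^*]^\perp = \{0\}^\perp = \H^N$, so $T$ is a surjective isometry. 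For (d)$\Rightarrow$(e): a surjective isometry is injective (isometries are injective) hence bijective; from part (2), $T^*T=\mathrm{Id}$, so $T^* = T^*(TT^{-1}) = (T^*T)T^{-1} = T^{-1}$. For (e)$\Rightarrow$(a): $T^{-1}=T^*$ means $TT^*=T^*T=\mathrm{Id}$, which is unitarity. The one subtlety worth a sentence is that several of these steps silently use finite-dimensionality — e.g. that an injective operator $\H^N\to\H^N$ is automatically surjective (rank-nullity) and has a bounded inverse, as recorded in Definition~\ref{deff}(5) — so I would remark that we work throughout in $\H^N$.

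For (4), I would use Proposition~\ref{lem1}: for unitary $U$, since $U^*$ is also unitary hence an isometry (in particular a bijection of the unit sphere onto itself), $\|UT\| = \sup\{|\langle UTx,y\rangle| : \|x\|=\|y\|=1\} = \sup\{|\langle Tx,U^*y\rangle| : \|x\|=\|y\|=1\} = \sup\{|\langle Tx,z\rangle| : \|x\|=\|z\|=1\} = \|T\|$, where the substitution $z=U^*y$ is a bijection of the unit sphere because $U^*$ is a surjective isometry. Similarly $\|TU\| = \|(TU)^*\| = \|U^*T^*\| = \|T^*\| = \|T\|$, using part (2) of the previous proposition together with the case just proved applied to $U^*$ and $T^*$. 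The main obstacle, such as it is, is purely organizational — choosing implication orders that never require an argument before it is available — rather than any single hard step; the only genuine computation is the polarization argument in (2).
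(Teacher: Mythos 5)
Your proposal is correct and follows essentially the same route as the paper: part (1) via Proposition \ref{idfyop}, part (2) via the Polarization Identity (you merely traverse the cycle (a)$\Rightarrow$(c)$\Rightarrow$(b)$\Rightarrow$(a) instead of the paper's (a)$\Rightarrow$(b)$\Rightarrow$(c)$\Rightarrow$(a), with the same computations), part (3) by the identical chain (a)$\Leftrightarrow$(b), (b)$\Rightarrow$(c)$\Rightarrow$(d)$\Rightarrow$(e)$\Rightarrow$(a) using Proposition \ref{lem21}, and part (4) via Proposition \ref{lem1} and unitarity of $U$, where you simply spell out the substitution and adjoint details that the paper leaves as ``immediate.''
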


\begin{proof}
(1)  Applying Proposition \ref{idfyop}, we have $T^* = T$ if and only if $\langle Tx,y \rangle = \langle T^*x,y \rangle$ for all $x,y$ if and only if $\langle Tx,y \rangle = \langle x, Ty \rangle$ for all $x,y$.

(2)  $(a) \Rightarrow (b)$: We prove the real case, but the complex case is similar using the complex version of the Polarization Identity. We have by the  (real) Polarization Identity that for any $x,y \in \H^N$,
\begin{align*} 
\langle x,y\rangle 
&= \frac{1}{4}\left (\|x+y\|^2 - \|x-y\|^2\right )\\
&= \frac{1}{4}\left ( \|T(x+y)\|^2-\|T(x-y)\|^2
\right )\\
&= \frac{1}{4}\left ( \|Tx\|^2 +\|Ty\|^2+2\langle Tx,Ty
\rangle -[\|Tx\|^2+\|Ty\|^2 - 2 \langle Tx,Ty\rangle]
\right )\\
&= \langle Tx,Ty\rangle \\
&= \langle T^*Tx,y\rangle.
\end{align*}
So, $T^*T=Id$ by Proposition \ref{idfyop}.

$(b)\Rightarrow (c)$:  For any $x,y\in \H^N$ we have
\[ \langle Tx,Ty\rangle = \langle T^*Tx,y\rangle = \langle x,y\rangle.\]

$(c)\Rightarrow (a)$:
(c) implies,
\begin{align*}
\|Tx\|^2 &=  \langle Tx,Tx\rangle \\
&= \langle x,x\rangle \\
&= \|x\|^2.
\end{align*}

(3)  $(a) \Leftrightarrow (b)$:  This is immediate by the definitions
and (2). 

$(b)\Rightarrow (c)$:  By (2, b) $T^*$ is injective.

$(c) \Rightarrow (d)$:  We observe:
\[ \H^K = \{0\}^{\perp} =  [\ker T^*]^{\perp} = T(\H^N) = \ran T.\]

$(d)\Rightarrow (e)$:  Since $T$ is bijective by assumption,
$S=T^{-1}$ exists.  We compute using (2, b):
\[ T^* = T^*I = T^*(TS) = (T^*T)S = IS = S.\]

$(e)\Rightarrow (a)$:  Immediate by the definitions.

(4) Since $U$ is an isometry by (3), the result is now immediate from Lemma \ref{lem1}.
\end{proof}

\begin{definition}
A linear operator $T:\H^N \rightarrow \H^K$ is a {\bf \hl{Hilbert space isomorphism}} if
for all $x,y\in \H^N$ we have
\[ \langle Tx,Ty\rangle = \langle x,y\rangle.\]
Two Hilbert spaces $\mathbb{H}^N$ and $\mathbb{H}^M$ are {\bfseries \hl{isomorphic}} if there is a Hilbert space isomorphism $T:\mathbb{H}^N \to \mathbb{H}^M$.
\end{definition}

Proposition \ref{propp2}(2) implies that $T$ is an isometry and $T^*T=Id$ for any Hilbert space isomorphism $T$. Thus, it is automatically injective. We see in the next proposition that every two Hilbert spaces of the same dimension are isomorphic.

\begin{proposition}
Every two $N$-dimensional Hilbert spaces are Hilbert space isomorphic. Thus, any $N$-dimensional Hilbert space is isomorphic to $\mathbb{C}^N$.
\end{proposition}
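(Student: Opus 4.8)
The plan is to build an explicit Hilbert space isomorphism by sending an orthonormal basis of one space to an orthonormal basis of the other. First I would let $\mathbb{H}^N$ and $\mathbb{K}^N$ be any two $N$-dimensional Hilbert spaces (over the same scalar field), choose an orthonormal basis $\{e_i\}_{i=1}^N$ for $\mathbb{H}^N$ and an orthonormal basis $\{g_i\}_{i=1}^N$ for $\mathbb{K}^N$, and define the linear map $T:\mathbb{H}^N \to \mathbb{K}^N$ on basis vectors by $Te_i = g_i$ and extend by linearity, so that $T\big(\sum_{i=1}^N a_i e_i\big) = \sum_{i=1}^N a_i g_i$. This is well-defined and linear since every vector has a unique expansion in an orthonormal basis (by the Proposition following the definition of orthonormal basis).

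Next I would verify that $T$ is a Hilbert space isomorphism, i.e. that $\langle Tx, Ty\rangle = \langle x,y\rangle$ for all $x,y$. Writing $x = \sum_i a_i e_i$ and $y = \sum_j b_j e_j$, I compute $\langle x,y\rangle = \sum_{i,j} a_i \overline{b_j}\langle e_i,e_j\rangle = \sum_i a_i\overline{b_i}$ using orthonormality of $\{e_i\}$, and similarly $\langle Tx,Ty\rangle = \big\langle \sum_i a_i g_i, \sum_j b_j g_j\big\rangle = \sum_i a_i \overline{b_i}$ using orthonormality of $\{g_i\}$. These agree, so $T$ is a Hilbert space isomorphism, which proves $\mathbb{H}^N$ and $\mathbb{K}^N$ are isomorphic. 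Taking $\mathbb{K}^N = \mathbb{C}^N$ with its standard orthonormal basis (the coordinate vectors) gives the second assertion immediately.

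There is no serious obstacle here; the only point requiring a little care is the existence of an orthonormal basis in an arbitrary abstract $N$-dimensional Hilbert space, which is the standard Gram--Schmidt construction applied to any algebraic basis and can be invoked as a known fact from linear algebra. One should also note that the statement implicitly assumes both spaces are real or both are complex (the scalar field must match), since a real and a complex space of the same complex dimension are not isomorphic as Hilbert spaces over a common field; with that understood, the argument above is complete.
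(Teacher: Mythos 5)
Your proof is correct and follows the same route as the paper: choose orthonormal bases $\{e_i\}_{i=1}^N$ and $\{g_i\}_{i=1}^N$ and send $e_i \mapsto g_i$, the paper simply declaring the resulting map to be ``clearly'' a Hilbert space isomorphism while you verify the inner-product preservation explicitly. Your added caveat about the scalar fields matching is a reasonable clarification but does not change the argument.
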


\begin{proof}
Let $\{e_i\}_{i=1}^N$ be an orthonormal basis of $\H^N_1$ and let $\{g_i\}_{i=1}^N$ be
an orthonormal basis of $\H^N_2$.  The operator $T$ given by:  $Te_i=g_i$ for
all $i=1,2,\ldots,N$ is clearly a Hilbert space isomorphism.
\end{proof}

In the complex case, an operator $T$ is self-adjoint if and only if $\langle Tx, x \rangle \in \mathbb{R}$ for any $x$. We need a lemma to prove this.

\begin{lemma}\label{tx0}
If $\mathbb{H}^N$ is a complex Hilbert space and $T:\mathbb{H}^N \to \mathbb{H}^N$ is a linear operator satisfying $\langle Tx,x \rangle = 0$ for all $x \in \mathbb{H}^N$, then $T = 0$, the zero operator.
\end{lemma}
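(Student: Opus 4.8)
The plan is to exploit the fact that the hypothesis $\langle Tx, x\rangle = 0$ holds for \emph{every} vector $x$, not just basis vectors, by substituting cleverly chosen combinations into it. First I would replace $x$ with $x+y$ to obtain
\[
0 = \langle T(x+y), x+y\rangle = \langle Tx,x\rangle + \langle Tx, y\rangle + \langle Ty, x\rangle + \langle Ty, y\rangle = \langle Tx,y\rangle + \langle Ty, x\rangle,
\]
so that $\langle Tx, y\rangle = -\langle Ty, x\rangle$ for all $x, y \in \mathbb{H}^N$. This is where the complex scalars become essential: in the real case this identity alone only forces $T$ to be skew-adjoint (as Example \ref{ssnotpos} shows, a nonzero such operator exists), so I must use a second substitution that is unavailable over $\mathbb{R}$.

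Next I would substitute $iy$ in place of $y$ in the identity just derived. Using conjugate-linearity in the second slot of the inner product, $\langle Tx, iy\rangle = \bar{i}\langle Tx, y\rangle = -i\langle Tx, y\rangle$, while $\langle T(iy), x\rangle = i\langle Ty, x\rangle$. Hence the identity becomes $-i\langle Tx, y\rangle = -i\langle Ty, x\rangle$, i.e. $\langle Tx, y\rangle = \langle Ty, x\rangle$ for all $x, y$. Combining this with the earlier relation $\langle Tx, y\rangle = -\langle Ty, x\rangle$ gives $\langle Tx, y\rangle = -\langle Tx, y\rangle$, so $\langle Tx, y\rangle = 0$ for all $x, y \in \mathbb{H}^N$. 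By Proposition \ref{idfyop}, $T = 0$.

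The only real obstacle is keeping the conjugation bookkeeping straight — one must be careful that the inner product is conjugate-linear in the second argument (as fixed by the paper's convention $\langle x, y\rangle = \sum a_i \overline{b_i}$), so that pulling the scalar $i$ out of the second slot produces $-i$ rather than $i$; getting this sign wrong would make the argument collapse. A clean alternative, if one prefers to avoid the two-step substitution, is to first derive $\langle Tx, y\rangle + \langle Ty, x\rangle = 0$ by the $x+y$ expansion, then separately expand $\langle T(x+iy), x+iy\rangle = 0$ to get another relation among the same terms, and solve the resulting two-equation linear system for $\langle Tx, y\rangle$; either route is routine once the first polarization-type step is in place.
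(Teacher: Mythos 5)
Your proof is correct and follows essentially the same route as the paper: both derive $\langle Tx,y\rangle + \langle Ty,x\rangle = 0$ from the $x+y$ substitution and then use a second, complex-scalar substitution (your $y\mapsto iy$ is just a repackaging of the paper's expansion of $\langle T(x+iy),x+iy\rangle$, which you yourself note as the alternative) to force $\langle Tx,y\rangle = 0$ and conclude via Proposition \ref{idfyop}. Your conjugation bookkeeping is right, and in fact more careful than the paper's own writeup, which contains minor sign/notation slips in the displayed expansions.
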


\begin{proof}
We prove this by showing that $\langle Tx,y \rangle$ for all $x,y \in \mathbb{H}^N$ and then apply Proposition \ref{idfyop} to conclude that $T = 0$. Note by assumption that we have for all $x,y$, 
\[
0 = \langle T(x + y), x + y \rangle = \langle Tx, y \rangle + \langle y, Tx \rangle
\]
and 
\[
0 = \langle T(x + iy), x + iy \rangle = i \langle Tx, y \rangle - i \langle y, Tx \rangle.
\]
Thus, $\langle Tx, y \rangle = 0$ for all $x,y$ and therefore $T = 0$.
\end{proof}

Now we formally state the theorem and then prove it.

\begin{theorem}\label{tx0thm}
An operator $T:\mathbb{H}^N \to \mathbb{H}^N$ on a complex Hilbert space $\mathbb{H}^N$ is self-adjoint if and only if $\langle Tx, x \rangle \in \mathbb{R}$ for any $x \in \mathbb{H}^N$.  
\end{theorem}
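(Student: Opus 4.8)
The plan is to prove both directions of the equivalence, using Lemma \ref{tx0} as the engine for the harder direction. The forward direction is the routine one: if $T = T^*$, then for any $x \in \mathbb{H}^N$ we have $\langle Tx, x \rangle = \langle x, Tx \rangle = \overline{\langle Tx, x \rangle}$, so $\langle Tx, x \rangle$ equals its own conjugate and is therefore real. Here I would just invoke the definition of the adjoint together with the conjugate-symmetry of the inner product; no lemma is needed.

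For the converse, suppose $\langle Tx, x \rangle \in \mathbb{R}$ for every $x \in \mathbb{H}^N$. The strategy is to show that $T - T^*$ is the zero operator by applying Lemma \ref{tx0} to it. So I would compute $\langle (T - T^*)x, x \rangle = \langle Tx, x \rangle - \langle T^*x, x \rangle = \langle Tx, x \rangle - \langle x, Tx \rangle = \langle Tx, x \rangle - \overline{\langle Tx, x \rangle}$, and since $\langle Tx, x \rangle$ is real by hypothesis, this difference is $0$ for all $x$. By Lemma \ref{tx0} (applied to the operator $T - T^*$, which is a linear operator on the complex Hilbert space $\mathbb{H}^N$), we conclude $T - T^* = 0$, i.e. $T = T^*$, so $T$ is self-adjoint.

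The main thing to be careful about — really the only nontrivial point — is that Lemma \ref{tx0} genuinely requires the Hilbert space to be complex (the real version fails, as the operator in Example \ref{ssnotpos} shows), so one must flag that the hypothesis ``complex'' is being used in an essential way precisely at the step invoking the lemma. Apart from that, the whole argument is two short inner-product computations bracketing a citation of the lemma, and there is no real obstacle; the work has already been front-loaded into Lemma \ref{tx0}. I would present the two directions in separate short paragraphs and close with the observation that, combined with Lemma \ref{tx0}, this also recovers the familiar fact that a self-adjoint operator has real ``numerical range.''
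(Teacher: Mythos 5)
Your proof is correct and follows essentially the same route as the paper's: both reduce the substantive direction to Lemma \ref{tx0} applied to the operator $T - T^*$, with the easy direction being the conjugate-symmetry computation. The paper merely compresses the two directions into a single chain of equivalences, whereas you separate them, which changes nothing mathematically.
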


\begin{proof}
We have that $\langle Tx, x \rangle \in \mathbb{R}$ for all $x$ if and only if 
\[
\langle Tx, x \rangle = \overline{\langle Tx, x \rangle} = \langle x, Tx \rangle = \langle T^* x, x \rangle \quad \mbox{for all } x 
\]
if and only if $\langle (T - T^*)x, x \rangle = 0$ for all $x$, which by Lemma \ref{tx0} happens if and only if $T=T^*$.
\end{proof}

\begin{corollary}\label{posgsa}
If $T:\mathbb{H}^N \to \mathbb{H}^N$ is a positive operator on a complex Hilbert space $\mathbb{H}^N$, then $T$ is self-adjoint.
\end{corollary}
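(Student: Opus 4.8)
The plan is to deduce this immediately from the preceding theorem, since the definition of a positive operator (as stated in the excerpt) bundles together two requirements, and in the complex case the second requirement is redundant. First I would recall that by definition, $T$ being positive on $\mathbb{H}^N$ means that $T$ is self-adjoint and $\langle Tx, x \rangle \ge 0$ for all $x \in \mathbb{H}^N$. So strictly speaking the statement is trivial under that definition. However, the intended content of the corollary is that the self-adjointness hypothesis is automatic in the complex case: if one merely knows $\langle Tx, x \rangle \ge 0$ for all $x$, then $T$ is automatically self-adjoint, hence positive in the full sense. That is the version I would actually prove.

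The key observation is that the hypothesis $\langle Tx, x \rangle \ge 0$ for all $x \in \mathbb{H}^N$ in particular forces $\langle Tx, x \rangle \in \mathbb{R}$ for every $x$. Then Theorem \ref{tx0thm} applies verbatim and yields $T = T^*$, i.e., $T$ is self-adjoint. That is the entire argument — one line invoking the previous theorem. So the proof is: assume $\langle Tx, x \rangle \ge 0$ for all $x$; then $\langle Tx, x \rangle \in \mathbb{R}$ for all $x$; then by Theorem \ref{tx0thm}, $T$ is self-adjoint.

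There is no real obstacle here; the only subtlety is purely expository — whether to read the corollary as a tautology (under the excerpt's stated definition of ``positive'') or as the substantive statement that nonnegativity of the quadratic form already implies self-adjointness over $\mathbb{C}$. I would phrase the proof so it reads correctly either way, by noting that the positivity assumption gives $\langle Tx, x\rangle \in \mathbb{R}$ for all $x$ and then appealing to Theorem \ref{tx0thm}. It is also worth remarking that this phenomenon is genuinely complex: over $\mathbb{R}$, Example \ref{ssnotpos} shows an operator with $\langle Sx, x\rangle = 0 \ge 0$ for all $x$ that is not self-adjoint, so the real analogue of the corollary fails, which is precisely why ``self-adjoint'' is built into the definition of positivity in general.

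\begin{proof}
By definition of a positive operator, $\langle Tx, x \rangle \ge 0$ for all $x \in \mathbb{H}^N$; in particular $\langle Tx, x \rangle \in \mathbb{R}$ for all $x \in \mathbb{H}^N$. Hence $T$ is self-adjoint by Theorem \ref{tx0thm}.
\end{proof}
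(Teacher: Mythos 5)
Your proof is correct and matches the paper's intent exactly: the corollary is stated as an immediate consequence of Theorem \ref{tx0thm}, since positivity gives $\langle Tx,x\rangle \ge 0$, hence $\langle Tx,x\rangle \in \mathbb{R}$ for all $x$, and the theorem then yields $T = T^*$. Your remark about the definitional redundancy (self-adjointness being built into the paper's definition of positive) is a fair observation and does not affect the validity of the argument.
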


\begin{remark}
Notice that Theorem \ref{tx0thm} fails when the Hilbert space is real. This is mainly due to the fact that Lemma \ref{tx0} fails, in particular, the operator given in Example \ref{ssnotpos} is a specific counterexample.
\end{remark} 

Next we introduce the concept of diagonalization, which is fundamental to studying the behavior of operators.

\begin{definition}
Let $T:\H^N \rightarrow \H^N$ be a linear operator.  A nonzero vector
$x\in \H^N$ is an {\bf \hl{eigenvector}} of $T$ with {\bf \hl{eigenvalue}} $\lambda$ if
\[ Tx = \lambda x.\]
The operator $T$ is {\bf \hl{diagonalizable}} if there exists an orthonormal basis
for $\H^N$ consisting of eigenvectors for $T$.
\end{definition}

The above definition is not the {\it standard one}
which states that an operator is diagonalizable if there is
some basis consisting of eigenvectors for $T$.

As an example, if $P$ is a projection from $\H^N$, we have 
\[ \langle Px,x\rangle =\langle P^2x,x\rangle = \langle Px,Px\rangle = \|Px\|^2,\]
so that $P$ is a positive operator. If $P:\H^N \rightarrow W$ and 
dim $W =m$, then $P$ has eigenvalue 1 with multiplicity $m$ and eigenvalue $0$ of multiplicity
$N-m$.

\begin{definition}
Let $T:\H^N \rightarrow \H^N$ be an invertible positive operator with eigenvalues
$\lambda_1 \ge \lambda_2 \ge  \cdots \ge \lambda_N >0$.  The {\bf \hl{condition number}}
is $\frac{\lambda_1}{\lambda_N}$.
\end{definition}

The condition number is an important concept to frame theorists because its gives a way to measure  {\bfseries redundancy}, as we will see later.

The following relationship holds for $T^*T$ and $TT^*$.

\begin{proposition}
If $T:\H^N \rightarrow \H^K$ is any linear operator, then $T^*T$ and 
$TT^*$ have the same nonzero eigenvalues, including multiplicity.
\end{proposition}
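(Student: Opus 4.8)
The plan is to push eigenvectors between $T^*T$ and $TT^*$ using $T$ and $T^*$, and to track the dimensions of the eigenspaces in order to obtain the multiplicity statement.

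First I would handle the eigenvalues as sets. Suppose $\lambda \neq 0$ is an eigenvalue of $T^*T$ with eigenvector $x \neq 0$, so $T^*Tx = \lambda x$. Applying $T$ to both sides gives
\[
TT^*(Tx) = T(T^*Tx) = T(\lambda x) = \lambda (Tx),
\]
and since $T^*Tx = \lambda x \neq 0$ we must have $Tx \neq 0$; hence $\lambda$ is an eigenvalue of $TT^*$ with eigenvector $Tx$. This shows every nonzero eigenvalue of $T^*T$ is a nonzero eigenvalue of $TT^*$, and the reverse inclusion follows by replacing $T$ with $T^*$ and using $T^{**} = T$.

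For the multiplicities, fix a nonzero eigenvalue $\lambda$ and let $E = \{x \in \H^N : T^*Tx = \lambda x\}$ and $F = \{y \in \H^K : TT^*y = \lambda y\}$ be the corresponding eigenspaces. I would show that $T|_E \colon E \to F$ is a linear isomorphism. The computation above shows $T(E) \subseteq F$. It is injective on $E$: if $x \in E$ and $Tx = 0$, then $\lambda x = T^*Tx = 0$, so $x = 0$ since $\lambda \neq 0$. It is onto $F$: given $y \in F$, set $x = \lambda^{-1} T^* y$; then $Tx = \lambda^{-1} TT^*y = y$, and
\[
T^*Tx = \lambda^{-1} T^*(TT^*y) = \lambda^{-1} T^*(\lambda y) = T^*y = \lambda x,
\]
so $x \in E$ and $Tx = y$. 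Hence $\dim E = \dim F$, which is precisely the statement that $\lambda$ occurs with the same multiplicity for $T^*T$ and $TT^*$.

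The only point requiring care is the assertion $Tx \neq 0$ (equivalently, the injectivity of $T|_E$), which is exactly where the hypothesis $\lambda \neq 0$ enters; everything else is a direct computation from the definition of the adjoint. I do not expect a genuine obstacle here — the argument is short once one decides to compare eigenspaces rather than just eigenvalues.
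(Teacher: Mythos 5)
Your proof is correct, and its first half is exactly the paper's argument: apply $T$ to $T^*Tx=\lambda x$ to get $TT^*(Tx)=\lambda Tx$, note $Tx\neq 0$ because $\lambda x = T^*Tx \neq 0$, and obtain the reverse inclusion by swapping $T$ and $T^*$. Where you go beyond the paper is the multiplicity claim: the paper's proof stops at the level of eigenvalues as sets and never justifies the phrase ``including multiplicity,'' whereas you establish it by showing that $T$ restricts to a linear isomorphism between the $\lambda$-eigenspaces $E$ and $F$ (injectivity from $\lambda\neq 0$, surjectivity via $x=\lambda^{-1}T^*y$), so $\dim E=\dim F$. That computation is correct, and since $T^*T$ and $TT^*$ are self-adjoint and hence diagonalizable, equality of the geometric multiplicities $\dim E$ and $\dim F$ is indeed the multiplicity statement being claimed. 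So your write-up is a complete proof of the proposition as stated, while the paper's is only a proof of the weaker set-equality statement.
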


\begin{proof}
If $T^{*}Tx = \lambda x$ with $\lambda \neq 0$ and $x \not= 0$, then
$Tx \not=0$ and
\begin{align*}
TT^{*}(Tx) &= T(T^{*}Tx) = T(\lambda x) = \lambda Tx. \qedhere
\end{align*}
\end{proof}

We will see in the next section that $T^*T$ may have zero eigenvalues even if $TT^*$ does not have zero eigenvalues.

Further restrictions on $T$ gives more information on the eigenvalues. The following result is a corollary of Proposition \ref{propp2}.

\begin{corollary}
Let $T:\H^N \rightarrow \H^N$ be a linear operator.

(1)  If $T$ is unitary, then its eigenvalues have modulus one.

(2)  If $T$ is self-adjoint, then its eigenvalues are real.

(3)  If $T$ is positive, then its eigenvalues are nonnegative.
\end{corollary}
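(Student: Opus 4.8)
The plan is to prove each of the three parts by applying the appropriate equivalence from Proposition~\ref{propp2} (together with the already-established Corollary that self-adjoint operators have real eigenvalues, once that part is in hand) to a putative eigenpair $(x,\lambda)$ with $x\neq 0$. The unifying trick in all three parts is the \textbf{standard trick}: compute $\langle Tx,x\rangle$ or $\langle Tx,Tx\rangle$ in two ways, once using the eigenvalue equation $Tx=\lambda x$, and once using the structural hypothesis on $T$.

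For (1), suppose $Tx=\lambda x$ with $x\neq 0$ and $T$ unitary. By Proposition~\ref{propp2}(3), $T$ is an isometry, so $\|Tx\|=\|x\|$; but $\|Tx\|=\|\lambda x\|=|\lambda|\,\|x\|$, and since $\|x\|\neq 0$ we may divide to conclude $|\lambda|=1$. For (2), suppose $Tx=\lambda x$ with $x\neq 0$ and $T$ self-adjoint. Then
\[
\lambda\|x\|^2 = \langle \lambda x,x\rangle = \langle Tx,x\rangle = \langle x,Tx\rangle = \overline{\langle Tx,x\rangle} = \overline{\lambda}\,\|x\|^2,
\]
using Proposition~\ref{propp2}(1) for the middle equality, so $\lambda=\overline{\lambda}$, i.e. $\lambda\in\mathbb{R}$. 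For (3), suppose $Tx=\lambda x$ with $x\neq 0$ and $T$ positive; in particular $T$ is self-adjoint, so by (2) $\lambda$ is real, and by positivity
\[
0 \le \langle Tx,x\rangle = \langle \lambda x,x\rangle = \lambda\|x\|^2,
\]
whence $\lambda\ge 0$ since $\|x\|^2>0$.

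I do not expect any genuine obstacle here: all three arguments are two-line computations resting on results already proved in the excerpt, and the only thing one must be slightly careful about is that an eigenvector is nonzero by definition, so that dividing by $\|x\|$ or $\|x\|^2$ is legitimate. The mild subtlety worth flagging is that part (2) should be invoked inside the proof of part (3) rather than re-derived, and that in the real-Hilbert-space setting part (2) is vacuous in the direction of producing complex eigenvalues but is exactly what guarantees the eigenvalues of a self-adjoint (hence, e.g., positive) operator live in $\mathbb{R}$ so that the inequality $\lambda\ge 0$ in part (3) even makes sense.
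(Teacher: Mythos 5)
Your proof is correct and is exactly the argument the paper intends: the paper states this result without proof, remarking only that it is a corollary of Proposition \ref{propp2}, and your three computations (the isometry property for (1), $\langle Tx,x\rangle=\langle x,Tx\rangle$ for (2), and nonnegativity of $\langle Tx,x\rangle$ for (3), applied to a nonzero eigenvector) supply precisely the intended details. No gaps.
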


Whenever $T$ is diagonalizable, it has a nice representation using its eigenvalues and eigenvectors. It also gives an easy way to compute its norm.

\begin{theorem}
 If $T$ is diagonalizable, so that there exists an orthonormal basis $\{e_i\}_{i=1}^N$ of eigenvectors for
$T$ with respective eigenvalues $\{\lambda_i\}_{i=1}^N$, then 
\[ Tx = \sum_{i=1}^N \lambda_i \langle x,e_i\rangle e_i\mbox{ for all }x\in \H^N,\]
and
\[  \|T\|= \max_{1\le i \le N}|\lambda_i|.\]
\end{theorem}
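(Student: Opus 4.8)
The plan is to use the proposition that an orthonormal basis reproduces every vector, and then push $T$ through that expansion using linearity. First I would fix $x \in \H^N$ and write $x = \sum_{i=1}^N \langle x, e_i\rangle e_i$ by the reproducing formula for orthonormal bases. Applying $T$ and using linearity together with $Te_i = \lambda_i e_i$ gives
\[
Tx = T\Big(\sum_{i=1}^N \langle x, e_i\rangle e_i\Big) = \sum_{i=1}^N \langle x, e_i\rangle Te_i = \sum_{i=1}^N \lambda_i \langle x, e_i\rangle e_i,
\]
which is the first claimed identity.

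For the norm formula, the idea is to compute $\|Tx\|^2$ using the Pythagorean theorem, since the vectors $\lambda_i \langle x, e_i\rangle e_i$ are pairwise orthogonal (the $e_i$ are orthonormal). This yields
\[
\|Tx\|^2 = \sum_{i=1}^N |\lambda_i|^2 |\langle x, e_i\rangle|^2 \le \Big(\max_{1 \le i \le N} |\lambda_i|^2\Big) \sum_{i=1}^N |\langle x, e_i\rangle|^2 = \Big(\max_{1 \le i \le N} |\lambda_i|\Big)^2 \|x\|^2,
\]
where the last equality is Parseval's Identity (Proposition \ref{parsevalid}). Taking square roots and the supremum over $\|x\| = 1$ gives $\|T\| \le \max_i |\lambda_i|$. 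For the reverse inequality, I would pick an index $j$ achieving the maximum $|\lambda_j| = \max_i |\lambda_i|$ and test on the unit vector $e_j$: since $Te_j = \lambda_j e_j$, we get $\|Te_j\| = |\lambda_j| \|e_j\| = |\lambda_j|$, so $\|T\| \ge |\lambda_j| = \max_i |\lambda_i|$. Combining the two inequalities gives equality.

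I do not anticipate a serious obstacle here — every ingredient (the reproducing formula, the Pythagorean theorem, Parseval's Identity) is already available in the excerpt, and the argument is a direct computation. The only point requiring a moment of care is ensuring the maximum is actually attained, which is automatic since the index set $\{1, \ldots, N\}$ is finite (so the reverse inequality is genuinely an equality rather than just a supremum); this is where finite-dimensionality is quietly used.
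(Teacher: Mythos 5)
Your proposal is correct and follows essentially the same route as the paper: expand $x$ in the eigenbasis, apply $T$ by linearity, compute $\|Tx\|^2 = \sum_{i=1}^N |\lambda_i|^2 |\langle x,e_i\rangle|^2$ (via Pythagoras/Parseval), bound by the maximum eigenvalue modulus, and test on $e_j$ for the attaining index. The only cosmetic difference is that you cite the Pythagorean theorem where the paper cites Parseval's identity for the same computation.
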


\begin{proof}
Since $\{e_i\}_{i = 1}^N$ is orthonormal, we can write any $x$ as
\[
x = \sum_{i = 1}^N \langle x,e_i \rangle e_i
\]
and therefore
\begin{align*}
Tx &= T \bigg( \sum_{i = 1}^N \langle x,e_i \rangle e_i \bigg)
= \sum_{i = 1}^N \langle x,e_i \rangle Te_i
= \sum_{i = 1}^N \langle x,e_i \rangle \lambda_i e_i = \sum_{i = 1}^N \lambda_i \langle x,e_i \rangle e_i.
\end{align*}
From this combined with Parseval's identity, we get
\[
\|Tx\|^2 = \sum_{i = 1}^N |\lambda_i|^2 |\langle x, e_i \rangle|^2 \leq \max_{1 \leq i \leq N} |\lambda_i|^2 \sum_{i = 1}^N |\langle x, e_i \rangle|^2 = \max_{1 \leq i \leq N} |\lambda_i|^2 \|x\|^2.
\]
To see that there is an $x$ for which $\|Tx\|$ obtains this maximum, let $j$ be so that $|\lambda_j|$ is the maximum across all $|\lambda_i|$ and then take $x = e_j$ above.
\end{proof}

%



We can classify the diagonalizable operators. In the infinite dimensional
setting this is called the {\bf \hl{Spectral Theorem}}.  We will do a special case which is more instructive
than the general case. Specifically, we will show that all self-adjoint operators are diagonalizable.  We need a series of lemmas.

\begin{lemma}\label{lem5}
If $T$ is a self-adjoint operator on a Hilbert space $\mathbb{H}^N$, then
\[ \|T\|= \sup_{\|x\|=1}|\langle Tx,x\rangle|.\]
\end{lemma}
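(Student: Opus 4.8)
The plan is to write $M := \sup_{\|x\|=1}|\langle Tx,x\rangle|$ and prove the two inequalities $M \le \|T\|$ and $\|T\| \le M$ separately. The first is immediate: for a unit vector $x$, Cauchy--Schwarz gives $|\langle Tx,x\rangle| \le \|Tx\|\,\|x\| \le \|T\|$, so taking the supremum yields $M \le \|T\|$. All the content is in the reverse inequality, and the natural tool is Proposition \ref{lem1}, which reduces the problem to showing $|\langle Tx,y\rangle| \le M$ for every pair of unit vectors $x,y$.

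For that, I would first note by homogeneity that $|\langle Tz,z\rangle| \le M\|z\|^2$ for every $z \in \mathbb{H}^N$. Then I would use self-adjointness through the expansion
\[
4\,\mathrm{Re}\,\langle Tx,y\rangle = \langle T(x+y),x+y\rangle - \langle T(x-y),x-y\rangle,
\]
valid because $\langle Ty,x\rangle = \langle y,Tx\rangle = \overline{\langle Tx,y\rangle}$. Bounding each of the two terms on the right in modulus by the estimate above and applying the parallelogram identity $\|x+y\|^2 + \|x-y\|^2 = 2\|x\|^2 + 2\|y\|^2$ (an elementary inner-product computation of the kind recorded at the beginning of Section 2) gives $4\,\mathrm{Re}\,\langle Tx,y\rangle \le 2M(\|x\|^2 + \|y\|^2)$, hence $\mathrm{Re}\,\langle Tx,y\rangle \le M$ whenever $\|x\| = \|y\| = 1$.

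The final step is to upgrade $\mathrm{Re}\,\langle Tx,y\rangle$ to $|\langle Tx,y\rangle|$. In the real case this is trivial: replacing $y$ by $-y$ if necessary, we may assume $\langle Tx,y\rangle \ge 0$. In the complex case, given unit vectors $x,y$ with $\langle Tx,y\rangle = e^{i\theta}|\langle Tx,y\rangle|$, I would replace $y$ by the unit vector $e^{i\theta}y$, for which $\langle Tx, e^{i\theta}y\rangle = e^{-i\theta}\langle Tx,y\rangle = |\langle Tx,y\rangle| \ge 0$; the bound from the previous paragraph then gives $|\langle Tx,y\rangle| \le M$, and Proposition \ref{lem1} completes the argument. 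I expect the only mildly delicate point to be this real-part-to-modulus passage (keeping the phase bookkeeping straight), together with recognizing that the parallelogram identity needed in the middle step is already available from the elementary expansions at the start of the paper; everything else is a routine calculation.
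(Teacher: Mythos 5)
Your proposal is correct and follows essentially the same route as the paper's proof: the same self-adjointness identity for $4\,\mathrm{Re}\,\langle Tx,y\rangle$, the same parallelogram bound, and the same phase-rotation step to pass from the real part to the modulus. The only cosmetic difference is that you finish by citing Proposition \ref{lem1} while the paper concludes directly with the choice $y = Tx/\|Tx\|$, which is exactly how that proposition is proved, so the arguments coincide.
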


\begin{proof}
We once again give the proof in the complex setting, but the same general idea works in the real case. To begin, set
\[M:= \sup\limits_{\|x\| = 1}\{|\langle Tx,x\rangle|\}.\] 
We will first show that $M \leq \|T\|$: For any $y \in \H^N$, we have 
\[|\langle Tx, y \rangle | \leq \|Tx\| \|y\| \leq \|T\|\|x\|\|y\|,\]
and thus if $\|x\| = 1$, then $|\langle Tx, x \rangle |\leq \|T\|$. Taking the supremum over all $x \in \H^N$ such that $\|x\|= 1$, we get $M \leq \|T\|$.

Next we will show that $\|T\|\leq M$: For any $x,y \in \mathbb{H}^N$, we have 
\begin{align*}
4\mbox{Re}\,\langle Tx,y\rangle &= \big(\langle Tx,x\rangle + 2 \mbox{Re}\,\langle Tx,y \rangle
+ \langle Ty,y \rangle \big)\\ &\quad - \big(\langle Tx,x \rangle - 2\mbox{Re}\,\langle Tx,y\rangle + \langle Ty,y\rangle \big)\\
 &= \langle T(x+y),x+y\rangle - \langle T(x-y), x-y \rangle
\end{align*}
where the fact that $T$ is self-adjoint was used in the second equality. Hence, 
\begin{align*}
4\mbox{Re}\,\langle Tx,y\rangle &= \| x + y\|^2 \bigg\langle T\left(\dfrac{x + y}{\|x + y\|}\right), \dfrac{x+y}{\|x +y\|} \bigg\rangle \\
&\quad- \| x - y\|^2 \bigg\langle T\left(\dfrac{x - y}{\|x - y\|}\right), \dfrac{x-y}{\|x -y\|} \bigg\rangle\\
&\leq M(\|x+y\|^2 + \|x-y\|^2)=2M(\|x\|^2+\|y\|^2).
\end{align*}
Note that there exists a $\theta \in \left[0,2\pi\right)$ such that $e^{i\theta}\langle Tx,y\rangle = |\langle Tx,y\rangle|$. Now, replace $y$ with $e^{-i\theta}y$ to obtain:
\[4|\langle Tx, y\rangle|\leq 2M(\|x\|^2+\|y\|^2).\]
Finally, if $\|x\| = 1$ and $y =Tx/\|Tx\|$, then
\[\|Tx\|=\bigg\langle Tx, \frac{Tx}{\|Tx\|}\bigg\rangle \leq M.\]
Combining both steps gives $\|T\|=M$ as desired. 
\end{proof}

\begin{lemma}\label{Nor}
If $T$ is normal then $\|Tx\|=\|T^*x\|$ for all $x\in \H^N$.
\end{lemma}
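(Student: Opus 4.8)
The plan is to verify the identity $\|Tx\| = \|T^*x\|$ directly from the defining property $T^*T = TT^*$ of a normal operator, using the adjoint relationship to rewrite both sides as the same inner product. Specifically, for any $x \in \H^N$, I would compute $\|Tx\|^2 = \langle Tx, Tx \rangle = \langle T^*Tx, x \rangle$ by the definition of the adjoint, and similarly $\|T^*x\|^2 = \langle T^*x, T^*x \rangle = \langle TT^*x, x \rangle = \langle T^{**}T^*x, x\rangle$, using $T^{**} = T$. Since $T$ is normal, $T^*T = TT^*$, so the two expressions $\langle T^*Tx, x\rangle$ and $\langle TT^*x, x\rangle$ coincide, giving $\|Tx\|^2 = \|T^*x\|^2$ and hence $\|Tx\| = \|T^*x\|$.

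The key steps in order are: (1) expand $\|Tx\|^2$ as an inner product and push one copy of $T$ across to get $T^*T$; (2) do the analogous expansion for $\|T^*x\|^2$ to get $TT^*$ (being careful to invoke $T^{**} = T$, which is recorded in Definition \ref{deff}); (3) invoke normality to equate $T^*T = TT^*$; (4) conclude by taking square roots, which is legitimate since both quantities are nonnegative. This is a short direct computation with no case analysis and no appeal to diagonalization.

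There is essentially no serious obstacle here — the only thing to be careful about is not to conflate "self-adjoint" with "normal": the proof must use only $T^*T = TT^*$, not $T = T^*$. A minor bookkeeping point is the application of $T^{**} = T$ when handling $\|T^*x\|^2 = \langle TT^*x, x\rangle$; writing it out explicitly as $\langle T^*x, T^*x\rangle = \langle (T^*)^* T^* x, x \rangle = \langle TT^*x, x\rangle$ makes the step transparent. I would present the whole argument as a single short chain of displayed equalities, something like
\[
\|Tx\|^2 = \langle Tx, Tx\rangle = \langle T^*Tx, x\rangle = \langle TT^*x, x\rangle = \langle T^*x, T^*x\rangle = \|T^*x\|^2,
\]
followed by one sentence noting that normality was used in the middle equality and that taking square roots finishes the proof.
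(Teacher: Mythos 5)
Your proposal is correct and follows the same argument as the paper: the identical chain $\|Tx\|^2 = \langle Tx,Tx\rangle = \langle T^*Tx,x\rangle = \langle TT^*x,x\rangle = \langle T^*x,T^*x\rangle = \|T^*x\|^2$, with normality used in the middle step. Your extra remarks about $T^{**}=T$ and taking square roots are fine but add nothing beyond the paper's computation.
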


\begin{proof}
We compute:
\begin{align*}
\|Tx\|^2 &= \langle Tx,Tx\rangle\\
&= \langle T^*Tx,x\rangle\\
&= \langle TT^*x,x\rangle\\
&= \langle T^*x,T^*x\rangle\\
&= \|T^*x\|^2. \qedhere
\end{align*}
\end{proof}

\begin{lemma}
If $T$ is normal and
\[ Tx = \lambda x\]
 for some $x\in \H^N$ and scalar $\lambda$, then
\[ T^*x = \overline{\lambda}x.\]
\end{lemma}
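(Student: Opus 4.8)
The plan is to apply the previous lemma (Lemma \ref{Nor}), which says that $\|Sx\| = \|S^*x\|$ for any normal operator $S$, to a cleverly chosen normal operator built from $T$. The natural candidate is $S = T - \lambda I$. First I would verify that $S$ is normal: since $T$ is normal, $T^*T = TT^*$, and a direct computation shows $(T-\lambda I)^*(T-\lambda I) = T^*T - \overline{\lambda}T - \lambda T^* + |\lambda|^2 I$, while $(T-\lambda I)(T-\lambda I)^* = TT^* - \lambda T^* - \overline{\lambda}T + |\lambda|^2 I$; these agree precisely because $T^*T = TT^*$. Note also that $(T-\lambda I)^* = T^* - \overline{\lambda}I$, using $(S+T)^* = S^* + T^*$ and the fact that $(\lambda I)^* = \overline{\lambda}I$.

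Next, I would invoke Lemma \ref{Nor} applied to $S = T - \lambda I$: this gives $\|(T-\lambda I)x\| = \|(T^* - \overline{\lambda}I)x\|$ for all $x \in \H^N$. Now specialize to the given eigenvector $x$ with $Tx = \lambda x$. The left-hand side is $\|(T-\lambda I)x\| = \|Tx - \lambda x\| = \|0\| = 0$. Therefore the right-hand side vanishes as well: $\|(T^* - \overline{\lambda}I)x\| = 0$, which forces $T^*x - \overline{\lambda}x = 0$, i.e. $T^*x = \overline{\lambda}x$, as claimed.

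I do not anticipate a serious obstacle here; the only point requiring any care is confirming that $S = T - \lambda I$ is genuinely normal, which is the conceptual heart of the argument and reduces to the commuting relation $T^*T = TT^*$ together with the elementary fact that scalar multiples of the identity commute with everything and behave predictably under adjoints. Everything else is the routine observation that the norm of a vector is zero if and only if the vector is zero (Lemma 2.1 in the excerpt, in its obvious form). So the proof is essentially: check normality of $T - \lambda I$, apply Lemma \ref{Nor}, and read off the conclusion from $\|(T-\lambda I)x\| = 0$.
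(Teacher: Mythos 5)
Your proposal is correct and follows exactly the same route as the paper's proof: show $T-\lambda\cdot Id$ is normal, apply Lemma \ref{Nor} to conclude $\|(T-\lambda\cdot Id)x\|=\|(T^*-\overline{\lambda}\cdot Id)x\|$, and read off $T^*x=\overline{\lambda}x$ from the vanishing of the left-hand side. You even spell out the normality verification that the paper merely asserts, so there is no gap.
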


\begin{proof}
If $T$ is normal, then $T-\lambda \cdot Id$ is normal.  Applying Lemma \ref{Nor}  gives
\begin{align*}
Tx=\lambda x &\Leftrightarrow \|(T-\lambda\cdot Id)x\|=0\\
&\Leftrightarrow \|(T-\lambda \cdot Id)^*x\|=0\\
&\Leftrightarrow \|T^*x-\overline{\lambda}x\|=0\\
&\Leftrightarrow T^*x = \overline{\lambda}x. \qedhere
\end{align*}
\end{proof}

Finally, we will need:

\begin{lemma}\label{lem6}
If $T$ is normal and $Tx = \lambda x$, then
\[ T\left ( [x]^{\perp} \right ) \subset [x]^{\perp}.\]
\end{lemma}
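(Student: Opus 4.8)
The plan is to show that if $y \perp x$, then $Ty \perp x$, using the previous lemma which tells us that $T^*x = \overline{\lambda} x$. First I would take an arbitrary $y \in [x]^\perp$, so that $\langle y, x \rangle = 0$. The goal is to verify $\langle Ty, x \rangle = 0$, which would place $Ty \in [x]^\perp$ as required.

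The key computation is to move $T$ across the inner product: $\langle Ty, x \rangle = \langle y, T^*x \rangle$. By the preceding lemma (the one stating that normality and $Tx = \lambda x$ force $T^*x = \overline{\lambda} x$), we may substitute to get $\langle y, T^*x \rangle = \langle y, \overline{\lambda} x \rangle = \lambda \langle y, x \rangle = 0$, where the last equality uses $y \in [x]^\perp$. Hence $Ty \in [x]^\perp$, and since $y$ was arbitrary, $T([x]^\perp) \subset [x]^\perp$.

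There is essentially no obstacle here; the work has been front-loaded into the previous lemma, and this statement is just the one-line consequence that a normal operator leaves the orthogonal complement of an eigenvector invariant. The only thing to be mildly careful about is the placement of the conjugate: one should pull the scalar $\overline{\lambda}$ out of the second slot of the inner product as $\lambda$ (not $\overline{\lambda}$), matching the convention $\langle x, cy \rangle = \overline{c}\langle x,y\rangle$ fixed at the start of the paper. This is the last lemma needed before assembling the induction that diagonalizes self-adjoint (indeed normal) operators: one peels off an eigenvector, restricts $T$ to its orthogonal complement — which is legitimate precisely by this lemma — and repeats.
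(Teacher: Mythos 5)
Your proof is correct and follows exactly the paper's argument: both use the preceding lemma to replace $T^*x$ by $\overline{\lambda}x$ and compute $\langle Ty,x\rangle = \langle y,T^*x\rangle = \lambda\langle y,x\rangle = 0$ for $y\in [x]^{\perp}$. Your remark about the conjugate convention is also consistent with the paper's inner product.
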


\begin{proof}
We have $y \perp x$ if and only if
\[ \langle Ty,x\rangle = \langle y,T^*x\rangle= \langle y,\overline\lambda x\rangle =0\]
if and only if $Ty \perp x$.
\end{proof}

Now we are ready to prove the main theorem on diagonalization.

\begin{theorem}\label{spec}
If $T$  is a self-adjoint,
then $T$ is diagonalizable.
\end{theorem}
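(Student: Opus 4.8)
The plan is to argue by induction on $N=\dim\mathbb{H}^N$. The base case $N=1$ is immediate, since every unit vector is an eigenvector. For the inductive step the heart of the matter is to produce a single eigenvector; once one is in hand, Lemma \ref{lem6} lets us split off a one-dimensional invariant piece and recurse on its orthogonal complement.

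To find an eigenvector, first I would use Lemma \ref{lem5} together with compactness of the unit sphere in the finite-dimensional space $\mathbb{H}^N$: pick unit vectors $x_n$ with $|\langle Tx_n,x_n\rangle|\to\|T\|$, pass to a convergent subsequence, and obtain a unit vector $x$ with $|\langle Tx,x\rangle|=\|T\|$. Since $T$ is self-adjoint, Theorem \ref{tx0thm} (in the complex case; automatic in the real case) gives $\langle Tx,x\rangle\in\mathbb{R}$, so $\lambda:=\langle Tx,x\rangle$ equals $\|T\|$ or $-\|T\|$; in particular $|\lambda|=\|T\|$. Then I would verify $Tx=\lambda x$ by a "complete the square" computation: using $\|Tx\|\le\|T\|$, $\|x\|=1$, and the fact that $\langle Tx,x\rangle=\lambda$ is real,
\[
\|Tx-\lambda x\|^2=\|Tx\|^2-2\lambda\langle Tx,x\rangle+\lambda^2\|x\|^2\le \|T\|^2-2\lambda^2+\lambda^2=\|T\|^2-\lambda^2=0.
\]
(When $T=0$ this still applies, giving $\lambda=0$.) Hence $x$ is a unit eigenvector of $T$ with real eigenvalue $\lambda$.

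Now set $e_1:=x$ and $W:=[x]^\perp$, a Hilbert space of dimension $N-1$. Since $T$ is self-adjoint it is normal, so Lemma \ref{lem6} gives $T(W)\subset W$; thus $T$ restricts to an operator $T|_W:W\to W$, which is self-adjoint on $W$ by Proposition \ref{propp2}(1) (for $u,v\in W$ we have $\langle T|_W u,v\rangle=\langle Tu,v\rangle=\langle u,Tv\rangle=\langle u,T|_W v\rangle$). By the inductive hypothesis $T|_W$ is diagonalizable, so there is an orthonormal basis $\{e_2,\ldots,e_N\}$ of $W$ consisting of eigenvectors of $T|_W$, and hence of $T$. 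Then $\{e_1,e_2,\ldots,e_N\}$ is an orthonormal basis of $\mathbb{H}^N$ consisting of eigenvectors of $T$, completing the induction.

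I expect the one genuinely substantive step to be the existence of the eigenvector: extracting the norm-attaining unit vector $x$ (this is exactly where finite-dimensionality, via compactness, is used) and then checking $Tx=\lambda x$. Everything after that is bookkeeping with the already-established lemmas. A secondary point to handle carefully is that $\langle Tx,x\rangle$ is genuinely real — this is where Theorem \ref{tx0thm} enters and is the reason the argument is for self-adjoint, rather than merely normal, operators.
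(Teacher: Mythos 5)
Your proof is correct and follows essentially the same route as the paper: use Lemma \ref{lem5} (with norm attainment on the unit sphere) and Theorem \ref{tx0thm} to produce a unit vector $x$ with real $\lambda=\langle Tx,x\rangle$, $|\lambda|=\|T\|$, verify $Tx=\lambda x$ by expanding $\|Tx-\lambda x\|^2$, and then pass to $[x]^\perp$ via Lemma \ref{lem6} and iterate. Your explicit appeal to compactness and your check that the restriction remains self-adjoint are just slightly more careful versions of steps the paper leaves implicit, and handling both signs at once via $\lambda=\langle Tx,x\rangle$ merely streamlines the paper's case split.
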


\begin{proof}By Lemma \ref{lem5}, we can choose
$x\in \H^N$ with $\|x\|=1$ and $\|T\|=|\langle Tx,x\rangle|$. By Theorem \ref{tx0thm} we know that $\langle Tx, x \rangle \in \mathbb{R}$, even if the Hilbert space is complex. Therefore, $\|T\| = \langle Tx, x \rangle$ or $\|T\| = - \langle Tx,x \rangle$. If $\|T\| = \langle Tx,x \rangle$, then
\begin{align*}
\|(T-\|T\|\cdot Id)x\|^2 &=
\langle Tx-\|T\|x,Tx - \|T\|x\rangle\\
&= \langle Tx,Tx\rangle + \|T\|^2 -2 \|T\|\langle Tx,x\rangle\\
&\leq2\|T\|(\|T\|-\langle Tx,x\rangle)\\
&=0
\end{align*}
and so $\lambda_1=\|T\|$ is an eigenvalue for $T$ with eigenvector $e_1=x$. On the other hand, if $\|T\| = -\langle Tx, x \rangle$, then $\|(T + \|T\|\cdot Id) x \|= 0$ by a similar argument, so that $\lambda_1 = -\|T\|$.

Now, if we
restrict the operator $T$ to $[e_1]^{\perp}$, we have by Lemma \ref{lem6} that
\[ T|_{[e_1]^{\perp}}:[e_1]^{\perp}\rightarrow [e_1]^{\perp},\]
and is still self-adjoint.  So we can repeat the above argument to
get a second orthogonal eigenvector of norm one and eigenvalue of the form:
\[ \lambda_2 = \big\|T|_{[e_1]^{\perp}}\big\| \quad \mbox{ or } \quad \lambda_2 = -\big\|T|_{[e_1]^{\perp}}\big\|,\ \ e_2\in [e_1]^{\perp}.\]
Continuing, we get a sequence of eigenvalues $\{\lambda_i\}_{i=1}^N$ so that
\[ \lambda_j = \big\|T|_{[\spn\, \{e_i\}_{i=1}^{j-1}]^{\perp}}\big\| \quad \mbox{ or } \quad \lambda_j = -\big\|T|_{[\spn\, \{e_i\}_{i=1}^{j-1}]^{\perp}}\big\|\]
with corresponding orthonormal eigenvectors $\{e_i\}_{i = 1}^N$. 
\end{proof}

It is possible to take well-defined {\bf \hl{powers of an operator}\index{operator!powers of}} when it is positive, invertible, and diagonalizable.

\begin{corollary}
Let $T$ be an positive, invertible, diagonalizable operator on $\H^N$ with eigenvectors 
$\{e_i\}_{i=1}^N$ and respective eigenvalues $\{\lambda_i\}_{i=1}^N$.
For any nonnegative $a\in \R$ we define the operator $T^a$ by
\[ T^ax = \sum_{i=1}^N \lambda_i^a\langle x,e_i\rangle e_i,\mbox{ for all }
x\in \H^N.\]
\end{corollary}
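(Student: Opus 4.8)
The statement to prove is really a well-definedness claim: the operator $T^a$ defined by the formula $T^a x = \sum_{i=1}^N \lambda_i^a \langle x, e_i \rangle e_i$ does not depend on the choice of eigenbasis, and genuinely behaves like an $a$-th power. The plan is to verify three things in order. First, the formula makes sense: since $T$ is positive and invertible, its eigenvalues satisfy $\lambda_i > 0$ by the corollary on eigenvalues of positive operators together with invertibility, so $\lambda_i^a$ is a well-defined nonnegative real number for any $a \ge 0$. Second, $T^a$ is a well-defined linear operator independent of the chosen orthonormal eigenbasis; this is the point that needs a small argument and is the main (though modest) obstacle. Third, $T^a$ reconstructs $T$ when $a = 1$ and is multiplicative in the exponent, i.e.\ $T^a T^b = T^{a+b}$, which justifies calling it a power.

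For the well-definedness in step two, I would argue as follows. If $a$ is a rational number $p/q$ with $p, q$ positive integers, then the operator $S$ defined by $Sx = \sum_{i=1}^N \lambda_i^{1/q} \langle x, e_i \rangle e_i$ satisfies $S^q = T$ by a direct computation using orthonormality (each application of $S$ multiplies the $i$-th coordinate by $\lambda_i^{1/q}$, and $q$ applications give $\lambda_i$), so $S = T^{1/q}$ is characterized as the unique positive operator whose $q$-th power is $T$ — uniqueness here follows because any positive operator is self-adjoint (Corollary \ref{posgsa}) hence diagonalizable (Theorem \ref{spec}), and its eigenvalues are forced on each eigenspace of $T$. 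Then $T^{p/q} := S^p$ is basis-independent because $S$ is. For general real $a \ge 0$, independence is cleanest to see directly: given two orthonormal eigenbases $\{e_i\}$ and $\{g_j\}$, group the indices by the distinct eigenvalues $\mu_1, \dots, \mu_r$ of $T$; on each eigenspace $E_{\mu_k}$ both formulas reduce to $\mu_k^a$ times the nearest-point projection onto $E_{\mu_k}$ (here one invokes the earlier lemma that the projection formula $\sum \langle x, e_i \rangle e_i$ over an orthonormal basis of a subspace is basis-independent), and since $\mathbb{H}^N = \bigoplus_k E_{\mu_k}$ orthogonally, the two total operators agree.

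For step three, multiplicativity $T^a T^b = T^{a+b}$ is an immediate coordinate computation: applying $T^b$ sends $\langle x, e_i \rangle$ to $\lambda_i^b \langle x, e_i \rangle$ as the $i$-th coefficient, and then $T^a$ multiplies by $\lambda_i^a$, giving coefficient $\lambda_i^{a+b} \langle x, e_i \rangle$; one uses $\langle T^b x, e_i \rangle = \lambda_i^b \langle x, e_i \rangle$, valid because $\{e_i\}$ is orthonormal. Taking $a = 1$, $b = 0$ and comparing with the diagonalization theorem's formula $Tx = \sum_i \lambda_i \langle x, e_i \rangle e_i$ confirms $T^1 = T$ and $T^0 = Id$. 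I expect the only real subtlety to be articulating the uniqueness of positive $q$-th roots cleanly — everything else is bookkeeping with orthonormal expansions — and in a survey of this level it would be acceptable to state the basis-independence via the eigenspace-decomposition argument and leave the routine verifications to the reader.
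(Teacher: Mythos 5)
Your proposal is correct, but there is nothing in the paper to compare it against: the paper states this ``corollary'' as a bare definition and offers no proof, immediately remarking (without argument) that $T^a$ is positive and that $T^aT^b=T^{a+b}$. Your eigenspace-decomposition argument for basis-independence --- grouping indices by the distinct eigenvalues and invoking the earlier lemma that $\sum_i\langle x,e_i\rangle e_i$ taken over an orthonormal basis of a subspace is independent of that basis --- is exactly the right way to make the definition honest, and your coordinate computation of $T^aT^b=T^{a+b}$ and $T^1=T$ supplies the justification the paper omits. One caution: the detour through rational exponents and ``uniqueness of the positive $q$-th root'' is not only unnecessary (your direct argument already covers all real $a\ge 0$) but slightly unsound as stated in the real case, since you invoke Corollary~\ref{posgsa} (positive implies self-adjoint), which the paper proves only for complex Hilbert spaces; Example~\ref{ssnotpos} is precisely a positive operator on $\mathbb{R}^2$ that is not self-adjoint and has no real eigenvalues, so positivity alone does not force diagonalizability, and uniqueness of positive roots would need the extra hypothesis of diagonalizability to run as you describe. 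Drop that paragraph and keep the direct eigenspace argument, which uses only the hypotheses actually given (positive, invertible, \emph{and} diagonalizable), and the proof is complete.
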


Thus, $T^a$ is also positive operator and $T^aT^b = T^{a+b}$ for all $a,b \in \mathbb{R}$. In particular, $T^{-1}$ and $T^{-1/2}$ are positive operators. We also note that the definition makes sense for nonnegative powers when $T$ is not invertible.

Notice that Corollary \ref{posgsa} combined with Theorem \ref{spec} gives that all positive operators on a complex Hilbert space are diagonalizable. However, it is possible for a positive operator on a real Hilbert space to not be diagonalizable and hence we cannot obtain powers as defined above. One again, the operator $S$ as given in Example \ref{ssnotpos} provides a counterexample since this operator has no eigenvalues over the reals.

We conclude this section with the concept of trace. In order for it to be well-defined, we need the following proposition.

\begin{proposition}
Let $T$ be a linear operator on $\H^N$ and let $\{e_i\}_{i=1}^N$ and $\{g_i\}_{i=1}^N$
be orthonormal bases for $\H^N$. Then
\[ \sum_{i=1}^N\langle Te_i,e_i\rangle = \sum_{i=1}^N \langle Tg_i,g_i\rangle.\]
\end{proposition}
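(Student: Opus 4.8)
The plan is to show that both sums equal a quantity that is manifestly basis-independent, namely $\sum_{i,j} \langle Te_i, g_j \rangle \overline{\langle e_i, g_j\rangle}$ (or equivalently, to relate the two sums directly by a double-expansion argument in the style of the well-definedness lemma for the nearest point projection proved earlier). Concretely, I would fix the two orthonormal bases $\{e_i\}_{i=1}^N$ and $\{g_j\}_{j=1}^N$, and expand each $e_i$ in terms of the $g_j$'s using Proposition~\ref{parsevalid}'s underlying expansion $e_i = \sum_{j=1}^N \langle e_i, g_j\rangle g_j$.

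The key steps, in order, are as follows. First, write $\langle Te_i, e_i \rangle = \langle Te_i, \sum_{j} \langle e_i, g_j\rangle g_j \rangle = \sum_{j} \overline{\langle e_i, g_j \rangle}\, \langle Te_i, g_j\rangle$, and also expand the first argument $Te_i$ by linearity: $Te_i = T(\sum_k \langle e_i, g_k\rangle g_k) = \sum_k \langle e_i, g_k\rangle Tg_k$. Substituting both expansions gives
\[
\sum_{i=1}^N \langle Te_i, e_i\rangle = \sum_{i=1}^N \sum_{j=1}^N \sum_{k=1}^N \langle e_i, g_k\rangle \overline{\langle e_i, g_j\rangle} \langle Tg_k, g_j\rangle.
\]
Second, swap the order of summation to perform the sum over $i$ first; by Parseval's Identity applied to the vector $g_k$ against the orthonormal basis $\{e_i\}$ — more precisely, by the expansion $g_k = \sum_i \langle g_k, e_i\rangle e_i$ paired with $g_j$ — one gets $\sum_{i=1}^N \langle e_i, g_k\rangle \overline{\langle e_i, g_j\rangle} = \sum_i \langle g_k, e_i\rangle \langle e_i, g_j\rangle = \langle g_k, g_j\rangle = \delta_{kj}$. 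Third, this collapses the triple sum to $\sum_{k=1}^N \langle Tg_k, g_k\rangle$, which is exactly the right-hand side, completing the proof.

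The main obstacle — really the only place requiring care — is the orthonormal-expansion identity $\sum_{i=1}^N \langle e_i, g_k\rangle \overline{\langle e_i, g_j\rangle} = \delta_{kj}$, i.e. justifying that summing a product of "change of basis" coefficients over one index yields the identity matrix entry. This is just Parseval's Identity (Proposition~\ref{parsevalid}) in its polarized form, or a direct consequence of the expansion in the proposition preceding it, so it poses no real difficulty; one simply has to be careful about which slot carries the conjugate when the Hilbert space is complex. An alternative, perhaps cleaner, packaging is to define the unitary change-of-basis operator $U$ by $Ug_i = e_i$ and observe that $\sum_i \langle Te_i, e_i\rangle = \sum_i \langle TUg_i, Ug_i\rangle = \sum_i \langle U^*TUg_i, g_i\rangle$, then show $\sum_i \langle Sg_i, g_i\rangle$ depends only on $S$ for a fixed basis — but this merely relocates the computation, so I would go with the direct double-expansion argument above, mirroring the style already used in the excerpt for the projection lemma.
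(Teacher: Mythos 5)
Your proof is correct. It reaches the same conclusion by a slightly different route than the paper: the paper expands $Te_i$ in the basis $\{g_j\}_{j=1}^N$, moves $T$ across the inner product via the adjoint $T^*$, and then resums over $i$ using the expansion of $g_j$ in $\{e_i\}_{i=1}^N$, so it only ever handles a double sum; you instead expand $e_i$ in the $g$-basis in both of its occurrences (inside $T$ and in the second slot), producing a triple sum that you collapse with the resolution-of-identity relation $\sum_{i=1}^N \langle e_i,g_k\rangle\overline{\langle e_i,g_j\rangle}=\delta_{kj}$. Your version never invokes the adjoint, which is a small economy of prerequisites, at the cost of one extra summation index; the paper's version is a line or two shorter. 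One cosmetic point: in your intermediate step you rewrite $\sum_i \langle e_i,g_k\rangle\overline{\langle e_i,g_j\rangle}$ as $\sum_i\langle g_k,e_i\rangle\langle e_i,g_j\rangle$, which is actually the complex conjugate of the left-hand side (the correct rewriting is $\sum_i\langle g_j,e_i\rangle\langle e_i,g_k\rangle=\langle g_j,g_k\rangle$); since both expressions equal $\delta_{kj}$, which is real, the slip is harmless, and you flagged exactly this conjugate bookkeeping as the one place needing care.
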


\begin{proof}
We compute:
\begin{align*}
\sum_{i=1}^N\langle Te_i,e_i\rangle&= \sum_{i=1}^N \left \langle
\sum_{j=1}^N \langle Te_i,g_j\rangle g_j,e_i\right \rangle\\
&= \sum_{i=1}^N\sum_{j=1}^N\langle Te_i,g_j\rangle \langle g_j,e_i\rangle\\
&=\sum_{i=1}^N\sum_{j=1}^N\langle e_i,T^*g_j\rangle \langle g_je_i\rangle\\
&= \sum_{j=1}^N \left \langle \sum_{i=1}^N \langle g_j,e_i\rangle e_i,T^*g_j\right \rangle\\
&= \sum_{j=1}^N \langle g_j,T^*g_j\rangle\\
&= \sum_{j=1}^N\langle Tg_j,g_j\rangle. \qedhere
\end{align*}
\end{proof}

\begin{definition}
Let $T:\H^N \rightarrow \H^N$ be an operator.  The {\bf \hl{trace}\index{operator!trace} of T} is
\[ Tr\ T = \sum_{i=1}^N \langle Te_i,e_i\rangle,\]
where $\{e_i\}_{i=1}^N$ is any orthonormal basis of $\H^N$. The previous proposition shows that this quantity is independent of the choice of orthonormal basis and therefore the trace is well-defined.
\end{definition}

When $T$ is diagonalizable, the trace can be easily computed using its eigenvalues.

\begin{corollary} \label{cor20}
If $T:\mathbb{H}^N \to \mathbb{H}^N$ is a diagonalizable operator  with eigenvalues
$\{\lambda_i\}_{i=1}^N$, then
\[ Tr\ T = \sum_{i=1}^N \lambda_i.\]
\end{corollary}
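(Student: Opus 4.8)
The plan is to use the diagonalizability of $T$ to choose a particularly convenient orthonormal basis, namely one consisting of eigenvectors, and then evaluate the trace directly from its definition using that basis. Since the previous proposition guarantees the trace is independent of the choice of orthonormal basis, we are free to make this choice.

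First I would invoke the hypothesis: $T$ is diagonalizable, so by definition there exists an orthonormal basis $\{e_i\}_{i=1}^N$ of $\H^N$ consisting of eigenvectors of $T$, with $Te_i = \lambda_i e_i$ for each $i$. Next I would apply the definition of trace with this particular basis:
\[
Tr\ T = \sum_{i=1}^N \langle Te_i, e_i \rangle = \sum_{i=1}^N \langle \lambda_i e_i, e_i \rangle = \sum_{i=1}^N \lambda_i \langle e_i, e_i \rangle = \sum_{i=1}^N \lambda_i,
\]
where the last equality uses $\|e_i\|^2 = \langle e_i, e_i \rangle = 1$. The preceding proposition ensures this value equals $Tr\ T$ computed with any other orthonormal basis, so the formula holds unconditionally.

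There is essentially no obstacle here; the corollary is an immediate consequence of the definition of trace once one is permitted to pick an eigenbasis, and that permission is exactly what the well-definedness proposition provides. The only point worth a word of care is that diagonalizability as defined in this paper (an \emph{orthonormal} basis of eigenvectors) is precisely what makes the computation legal — if we only had some basis of eigenvectors, not necessarily orthonormal, the definition of trace would not directly apply to it, though the conclusion would still be true by a change-of-basis argument. Since the hypothesis gives the stronger orthonormal version, the short proof above suffices.
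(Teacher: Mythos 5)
Your proof is correct and follows exactly the paper's argument: choose the orthonormal eigenbasis guaranteed by the paper's definition of diagonalizability, apply the definition of trace in that basis, and use well-definedness of the trace to conclude. Nothing further is needed.
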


\begin{proof}
If $\{e_i\}_{i=1}^N$ is an orthonormal basis of eigenvectors associated to $\{\lambda_i\}_{i =1}^N$, then
\begin{align*}Tr\ T = \sum_{i=1}^N\langle Te_i,e_i\rangle = \sum_{i=1}^M\langle \lambda_i e_i,
e_i\rangle &= \sum_{i=1}^N \lambda_i. \qedhere\end{align*}
\end{proof}

\section{Hilbert Space Frames}\label{Intro}
\setcounter{equation}{0}

Hilbert space frames were introduced by Duffin and Schaeffer in 
1952 \cite{DS} to address some deep questions in non-harmonic Fourier
series. 
The idea was to weaken Parseval's Identity as given in Proposition \ref{parsevalid}.
We do not need to have an orthonormal sequence to have equality
in Parseval's identity.  For example, if $\{e_i\}_{i=1}^N$ and
$\{g_i\}_{i=1}^N$ are orthonormal bases for a Hilbert space
$\H^N$ then 
\[
\bigg\{\frac{1}{\sqrt{2}}e_1,\frac{1}{\sqrt{2}}g_1,\frac{1}{\sqrt{2}}e_2,\frac{1}{\sqrt{2}}g_2, \ldots, \frac{1}{\sqrt{2}}e_N,\frac{1}{\sqrt{2}}g_N\bigg\}_{i=1}^N
\]
satisfies Parseval's identity.


\begin{definition}
A family of vectors $\{\p_i\}_{i=1}^M$ 
is a {\bf \hl{frame}} for a Hilbert space $\H^N$ if
there are constants $0<A\le B <\infty$ so that for all $x\in \H^N$
\begin{align}\label{E1f}
 A \|x\|^2 \le \sum_{i=1}^M |\langle x,\p_i\rangle|^2
\le B \|x\|^2.
\end{align}
We include some common, often used terms:
\begin{itemize}
\item The constants $A$ and $B$ are called {\bf lower and upper} {\bf {frame bounds}\index{frame!bounds}}, respectively,
for the frame. The largest lower frame bound and the smallest upper frame bound are called the {\bfseries {optimal frame bounds}\index{frame!optimal frame bounds}}.

\item If $A=B$ this is an {\bf $A$-{tight frame}\index{frame!tight}} and if $A=B=1$ this is a 
{\bf {Parseval frame}\index{frame!Parseval}}. 

\item If $\|\p_i\| = \|\p_j\|$ for
all $i,j\in I$, this is an {\bf {equal norm frame}\index{frame!equal norm}} and if
$\|\p_i\|=1$ for all $i\in I$
this is a {\bf {unit norm frame}\index{frame!unit norm}}.

\item If $\|\phi_i\|=1$ for $i\in I$ and there exists a constant $c$ so that $|\langle \p_i,\p_j\rangle| = c$ for all $i \neq j$, then the frame is called an {\bfseries {equiangular frame}\index{frame!equiangular}}.

\item The values $\{\langle x, \p_i \rangle\}_{i = 1}^M$ are called the {\bfseries {frame coefficients}\index{frame!coefficients}} of the vector $x$ with respect to the frame.

\item The frame is a {\bf {bounded frame}\index{frame!bounded}} if $\min_{1\le i \le M}\|\p_i\| >0$. 

\item If only the right hand inequality holds in (\ref{E1f}) we call $\{\p_i\}_{i\in I}$ a $B${\bf -\hl{Bessel}} sequence or simply {\bf {Bessel}} if explicit reference to the constant is not needed.  
\end{itemize}
\end{definition}

It follows
from the left-hand-side of inequality
(\ref{E1f}), that the closed linear span of
a frame must equal the Hilbert space and so $M \geq N$.  In the finite dimensional case, spanning
is equivalent to being a frame.

\begin{proposition}\label{lem102}
$\Phi = \{\p_i\}_{i=1}^M$ is a frame for $\H^N$ if and only if $\spn\,\Phi = \H^N$.
\end{proposition}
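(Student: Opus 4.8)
The plan is to prove the two implications separately, using the dimension‑$N$ hypothesis crucially for the nontrivial direction. The forward direction is essentially immediate from the remark already made in the text: if $\Phi$ is a frame with lower bound $A>0$, then for any $x \perp \spn\,\Phi$ we have $\sum_{i=1}^M |\langle x,\p_i\rangle|^2 = 0 \ge A\|x\|^2$, forcing $x = 0$; hence $(\spn\,\Phi)^\perp = \{0\}$, and since $\H^N$ is finite dimensional this gives $\spn\,\Phi = \H^N$. I would write this out in one or two lines.

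For the reverse direction, assume $\spn\,\Phi = \H^N$. I first need the upper (Bessel) bound: for any $x \in \H^N$, by Cauchy–Schwarz, $\sum_{i=1}^M |\langle x,\p_i\rangle|^2 \le \sum_{i=1}^M \|\p_i\|^2 \|x\|^2$, so $B := \sum_{i=1}^M \|\p_i\|^2 < \infty$ works (this uses finiteness of $M$, which follows since a spanning set in $\H^N$ is finite — or is simply part of the hypothesis that $M$ is a fixed integer). The lower bound is the substantive part. The approach I favor is a compactness argument: the function $f(x) = \sum_{i=1}^M |\langle x,\p_i\rangle|^2$ is continuous on the unit sphere $S = \{x \in \H^N : \|x\| = 1\}$, which is compact since $\H^N$ is finite dimensional, so $f$ attains a minimum $A$ on $S$. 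If $A = 0$, then $f(x_0) = 0$ for some unit vector $x_0$, which means $\langle x_0, \p_i\rangle = 0$ for all $i$, i.e. $x_0 \perp \spn\,\Phi = \H^N$, so $x_0 = 0$, contradicting $\|x_0\| = 1$. Hence $A > 0$, and by homogeneity $A\|x\|^2 \le f(x) \le B\|x\|^2$ for all $x \in \H^N$.

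The main obstacle — really the only place where care is needed — is justifying compactness of the unit sphere and continuity of $f$; both are standard facts about finite‑dimensional Hilbert spaces, and since the paper works throughout with $\H^N = \R^N$ or $\C^N$ with the standard inner product, I would simply invoke them without elaborate proof. An alternative to the compactness argument, if one wants to stay purely algebraic, is to use the frame operator $S$ from Example \ref{frameop}: spanning implies $S$ is injective (if $Sx = 0$ then $\langle Sx, x\rangle = \sum |\langle x,\p_i\rangle|^2 = 0$, so $x \perp \spn\,\Phi$, so $x = 0$), hence invertible on $\H^N$, and since $S$ is positive its eigenvalues $\lambda_1 \ge \cdots \ge \lambda_N > 0$ satisfy $\lambda_N \|x\|^2 \le \langle Sx, x\rangle \le \lambda_1 \|x\|^2$, giving the frame inequality with $A = \lambda_N$, $B = \lambda_1$. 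I would likely present the compactness argument as the main line since it is the most elementary, and perhaps mention the frame‑operator viewpoint as a remark.
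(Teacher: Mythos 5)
Your proposal is correct and follows essentially the same route as the paper: the Bessel bound via Cauchy--Schwarz with $B=\sum_{i=1}^M\|\p_i\|^2$, and the lower bound from compactness of the unit sphere in $\H^N$ (the paper phrases it as a sequential-compactness contradiction, you as attainment of the minimum of $x\mapsto\sum_{i=1}^M|\langle x,\p_i\rangle|^2$, which is the same idea). Your explicit treatment of the forward direction and the frame-operator remark are fine additions but do not change the substance.
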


\begin{proof}We only need to prove the {\it if} part.
For the right hand inequality,
\[ \sum_{i=1}^M |\langle x,\p_i\rangle|^2 \le \sum_{i=1}^M\|x\|^2\|\p_i\|^2 \le B \|x\|^2,\]
where 
\[ B = \sum_{i=1}^M \|\p_i\|^2.\]

For the left hand inequality, we proceed by contradiction. Suppose we can find a sequence $\{x_n\}_{n=1}^\infty$ with $\|x_n\|=1$ (by scaling) so that
\[ \sum_{i=1}^M|\langle x_n,\p_i\rangle|^2 \le \frac{1}{n},\]
and thus there is a norm convergent subsequence $\{x_{n_j}\}_{j=1}^{\infty}$ of $\{x_n\}_{n=1}^{\infty}$,
say $x_{n_j}\rightarrow x$.  Then
\[ \sum_{i=1}^M |\langle x,\p_i\rangle|^2 = \lim_{j\rightarrow \infty}\sum_{i=1}^M\big|\langle x_{n_j},\p_i\rangle\big|^2
=0.\]
That is, $x \perp \p_i$ for all $i=1,2,\ldots,M$ and so $\Phi$ does not span $\H^N$.
\end{proof}

Spanning does not necessarily imply that a sequence is a frame when the space is infinite dimensional. For example, suppose $\{e_i\}_{i = 1}^{\infty}$ is an orthonormal basis for an infinite dimensional Hilbert space $\mathbb{H}$. Then the sequence $\{ e_i/i\}_{i = 1}^\infty$ spans the space, but is not frame since a lower frame bound does not exist.

It is important to note that there are no restrictions put on the frame vectors.  For example,
if $\{e_i\}_{i=1}^{N}$ is an orthonormal basis for  
$\H^N$, then $$\{e_1,0,e_2,0,e_3,0,\ldots,e_N,0\}$$ 
and
\[ \left \{e_1,\frac{e_2}{\sqrt{2}},\frac{e_2}{\sqrt{2}},\frac{e_3}{\sqrt{3}},
\frac{e_3}{\sqrt{3}},\frac{e_3}{\sqrt{3}},\ldots,\frac{e_N}{\sqrt{N}},\cdots,\frac{e_N}{\sqrt{N}} \right \}\]
are both Parseval frames for $\H^N$. That is, zeros and repetitions are allowed. 

The smallest redundant family in $\R^2$ has three vectors and can be chosen
to be a unit norm, tight, and equiangular frame called the {\bf\hl{Mercedes Benz Frame}\index{frame!Mercedes Benz}}, given by
\[
\left\{\sqrt{\frac{2}{3}} \left(\begin{array}{r} 0 \\ 1 \end{array}\right),
\sqrt{\frac{2}{3}} \left(\begin{array}{r} \frac{\sqrt{3}}{2} \\ -\frac{1}{2} \end{array}\right),
\sqrt{\frac{2}{3}} \left(\begin{array}{r} -\frac{\sqrt{3}}{2} \\ -\frac{1}{2} \end{array}\right)
\right\}.
\]
Drawing the vectors might illuminate where it got its name.

\subsection{Frame Operators}

If $\{\p_i\}_{i=1}^M$ is a frame for $\H^N$ with frame bounds $A,B$, define
the {\bf \hl{analysis operator}\index{operator!analysis}} of the frame $T:\H^N \rightarrow \ell_2^M$ to be
\[ 
Tx = \sum_{i=1}^M\langle x,\p_i\rangle e_i = \big\{\langle x,\p_i\rangle\big\}_{i = 1}^M,\ \ \mbox{for all $x\in \H^N$},
\] 
where $\{e_i\}_{i=1}^M$ is the natural orthonormal basis of $\ell_2^M$.  It follows that
\[ \|Tx\|^2 = \sum_{i=1}^M|\langle x,\p_i\rangle|^2,\]
so $\|T\|^2$ is the optimal Bessel bound of the frame.

The
adjoint of the analysis operator is the {\bf \hl{synthesis operator}\index{operator!synthesis}} which
is given by
\[ T^{*}e_i = \p_i.\]
Note that the matrix representation of the synthesis operator 
of a frame $\{\p_i\}_{i=1}^M$ is the
$N \times M$ matrix with the frame vectors as its columns.
\[ T^*=
\left[\begin{array}{cccc}
| & | & \ldots & | \\
\p_1 & \p_2 & \ldots & \p_M\\
| & | &  \ldots & |
\end{array} \right]
\]
In practice, we often work with the matrix representation of the synthesis operator with respect to the eigenbasis of the frame operator (as defined below). It will be shown later that the rows and columns of this matrix representation must satisfy very specific properties that proves useful in constructing frames. See Proposition \ref{presyn} and Proposition \ref{synprop}.

\begin{theorem}\label{classthm}
Let $\{\p_i\}_{i=1}^M$ be a family of vectors in a Hilbert space $\H^N$.
The following are equivalent:
\begin{enumerate}
\item $\{\p_i\}_{i=1}^M$ is a frame for $\H$.

\item The operator $T^{*}$ is bounded, linear, and surjective.

\item  The operator $T$ bounded, linear, and injective.
\end{enumerate}
Moreover, $\{\p_i\}_{i=1}^M$ is a Parseval frame if and only if
the synthesis operator is a quotient map (that is, a partial isometry) if and
only if $T^* T=Id$ if and only if $T$ is an isometry.
\end{theorem}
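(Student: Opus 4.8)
The plan is to prove the cyclic implications $(1)\Rightarrow(2)\Rightarrow(3)\Rightarrow(1)$ for the first part, then handle the Parseval equivalences separately, leaning heavily on machinery already established. For $(1)\Rightarrow(2)$: linearity of $T^*$ is immediate from its defining formula $T^*e_i=\p_i$, and boundedness follows because $T$ is bounded (the upper frame bound gives $\|Tx\|^2\le B\|x\|^2$) and $\|T^*\|=\|T\|$ by the earlier proposition on adjoints. For surjectivity, I would use Proposition \ref{lem21}: it suffices to show $\ker T=\{0\}$, since then $\ran T^*=[\ker T]^\perp=\H^N$. But $Tx=0$ means $\sum_i|\langle x,\p_i\rangle|^2=0$, which by the lower frame bound forces $A\|x\|^2\le 0$, hence $x=0$. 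This same computation is exactly $(2)\Rightarrow(3)$ read in the other direction, and indeed the cleanest route is to note that $T$ injective $\Leftrightarrow$ $\ran T^*=\H^N$ $\Leftrightarrow$ $T^*$ surjective (all via Proposition \ref{lem21}), so $(2)$ and $(3)$ are equivalent almost for free once boundedness and linearity are noted.

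For $(3)\Rightarrow(1)$: boundedness of $T$ gives the upper bound $B=\|T\|^2$ directly from $\|Tx\|^2=\sum_i|\langle x,\p_i\rangle|^2$. The lower bound is the only real content. Here I would invoke the finite-dimensional compactness argument already used in the proof of Proposition \ref{lem102}: since $T$ is injective, the continuous function $x\mapsto\|Tx\|^2=\sum_i|\langle x,\p_i\rangle|^2$ is strictly positive on the compact unit sphere $\{\|x\|=1\}$, hence attains a positive minimum $A>0$, and homogeneity extends $A\|x\|^2\le\sum_i|\langle x,\p_i\rangle|^2$ to all $x$. Alternatively, and perhaps more in keeping with the operator-theoretic flavor of this section, one can observe that $T^*T$ is positive and self-adjoint, injective (since $T$ is), hence — by Corollary \ref{posgsa} and Theorem \ref{spec} — diagonalizable with strictly positive eigenvalues; taking $A$ to be the smallest eigenvalue gives $\langle T^*Tx,x\rangle\ge A\|x\|^2$, and the left side is precisely $\|Tx\|^2=\sum_i|\langle x,\p_i\rangle|^2$.

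For the Parseval addendum: $\{\p_i\}$ is Parseval means $\|x\|^2=\sum_i|\langle x,\p_i\rangle|^2=\|Tx\|^2$ for all $x$, i.e. $T$ is an isometry, which by Proposition \ref{propp2}(2) is equivalent to $T^*T=Id$. It remains to connect this with $T^*$ being a partial isometry (quotient map). Since $T^*$ restricted to $[\ker T^*]^\perp=\ran T$ (using Proposition \ref{lem21}(4)) acts as the inverse of the isometry $T:\H^N\to\ran T$, it is an isometry on that subspace, so $T^*$ is a partial isometry; conversely, if $T^*$ is a partial isometry then $TT^*$ is the orthogonal projection onto $\ran T$, and surjectivity of $T^*$ (which holds since $\{\p_i\}$ is at least a frame, using the first part) forces $\ran T^{**}=\ran T$... more directly, $T^*T$ is a positive operator with $T^*T=(T^*T)^2$ hence a projection, and injectivity of $T$ makes it the identity. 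I expect the main obstacle to be stating the partial-isometry equivalence cleanly without circularity — one must be slightly careful that "quotient map" / "partial isometry" is being used in the sense defined earlier, and that the finite-dimensional identifications $\ran T^{**}=\ran T$ and $[\ker T^*]^\perp=\ran T$ are applied correctly; everything else is a routine assembly of Propositions \ref{lem21} and \ref{propp2} together with the compactness argument from Proposition \ref{lem102}.
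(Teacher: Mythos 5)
Your proposal is correct and follows essentially the same route as the paper, whose proof is simply a citation of Propositions \ref{lem102}, \ref{lem21}, and \ref{propp2}: your argument unpacks exactly those facts (the kernel--range duality for $(2)\Leftrightarrow(3)$, the compactness/spanning content of Proposition \ref{lem102} for the frame bounds, and Proposition \ref{propp2}(2) for the Parseval part). Your explicit handling of the partial-isometry equivalence and the alternative spectral argument for the lower frame bound are just filled-in details of what the paper delegates to the cited propositions, and your care in using the frame hypothesis (injectivity of $T$) there is appropriate.
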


\begin{proof}
$(1) \Leftrightarrow (2)$ is immediate by Proposition \ref{lem102}, and $(2)\Leftrightarrow (3)$ is immediate by Proposition \ref{lem21}. Proposition \ref{propp2} gives the moreover part.
\end{proof}

The {\bf \hl{frame operator}\index{frame!operator}} for the frame is $S = T^{*}T:\H^N \rightarrow
\H^N$ given by
\[ Sx = T^{*}Tx = T^{*} \left ( \sum_{i=1}^M\langle x,\p_i\rangle e_i \right )
= \sum_{i=1}^M\langle x,\p_i\rangle T^{*}e_i = \sum_{i=1}^M\langle x,\p_i
\rangle \p_i.\]
A direct calculation, as given in Example \ref{frameop}, yields
\begin{align}\label{sxe}\langle Sx,x\rangle = \sum_{i =1}^M|\langle x,\p_i\rangle|^2.\end{align}

\begin{proposition}
The frame operator of a frame is a {\bf positive, self-adjoint, and invertible
operator} on $\H^N$.  Moreover, if $A$ and $B$ are frame bounds, then $S$ satisfies the {\bf \hl{operator inequality}\index{operator!inequality}}
\[ A\cdot Id \le S \le B \cdot Id.\]
\end{proposition}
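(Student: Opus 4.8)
The plan is to verify each asserted property of the frame operator $S = T^*T$ in turn, leaning on results already established in the excerpt. First, recall from Example \ref{frameop} (or equivalently from the displayed identity (\ref{sxe})) that $\langle Sx,x\rangle = \sum_{i=1}^M |\langle x,\p_i\rangle|^2 = \langle x,Sx\rangle$, which shows immediately that $S$ is self-adjoint and positive. (Alternatively, one may simply note that $S = T^*T$ is always self-adjoint and that $\langle T^*Tx,x\rangle = \|Tx\|^2 \geq 0$.)

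Next I would establish the operator inequality $A\cdot Id \le S \le B\cdot Id$, which will also hand us invertibility for free. Rewriting the frame inequality (\ref{E1f}) using (\ref{sxe}) gives, for every $x\in\H^N$,
\[
A\|x\|^2 \le \langle Sx,x\rangle \le B\|x\|^2,
\]
i.e. $\langle (S - A\cdot Id)x,x\rangle \ge 0$ and $\langle (B\cdot Id - S)x,x\rangle \ge 0$ for all $x$. Since $S - A\cdot Id$ and $B\cdot Id - S$ are self-adjoint, this says precisely $S - A\cdot Id \ge 0$ and $B\cdot Id - S \ge 0$, which is the claimed operator inequality by the definition of $\le$ for positive operators.

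Finally, for invertibility I would argue that $S$ is injective: if $Sx = 0$ then $0 = \langle Sx,x\rangle \ge A\|x\|^2$ forces $x = 0$ since $A > 0$. By the finite-dimensional equivalence recorded in Definition \ref{deff}(5), an injective operator on $\H^N$ is invertible, so $S^{-1}$ exists and is bounded. (One could alternatively invoke Theorem \ref{classthm}: $T$ is injective, hence $T^*$ is surjective, hence $S = T^*T$ is surjective, hence invertible.) This completes all the assertions.

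I do not anticipate a serious obstacle here; the only point requiring a little care is making explicit that the numerical inequality $A\|x\|^2 \le \langle Sx,x\rangle \le B\|x\|^2$ upgrades to the operator inequality, which needs the observation that $S - A\cdot Id$ and $B\cdot Id - S$ are self-adjoint so that "positive semidefinite quadratic form" coincides with "positive operator" in the sense of the paper's definition. Everything else is a direct application of the frame inequality and the finite-dimensional characterization of invertibility.
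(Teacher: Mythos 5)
Your proposal is correct and follows essentially the same route as the paper: positivity and self-adjointness via Example \ref{frameop}, the operator inequality by rewriting the frame inequality as $A\|x\|^2 \le \langle Sx,x\rangle \le B\|x\|^2$, and invertibility as a consequence. You are in fact slightly more careful than the paper, which merely remarks that the inequality "also shows that $S$ is invertible" without spelling out the injectivity argument you give.
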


\begin{proof}
Example \ref{frameop} shows that it is positive and self-adjoint. We check the operator inequality:
\begin{align*}\langle Ax,x\rangle &= A\|x\|^2 \le \sum_{i=1}^M|\langle x,\p_i\rangle|^2 = \langle Sx,x\rangle
\le B \|x\|^2 = \langle Bx,x\rangle 
\end{align*}
for all $x \in \mathbb{H}^N$. Note that this also shows that $S$ is invertible.
\end{proof}

The frame operator can be used to {\bf reconstruct} vectors in the space using the computation
\begin{align*}
x &= S^{-1}Sx\\
&= SS^{-1}x\\
&= \sum_{i=1}^M\langle S^{-1}x,\p_i\rangle \p_i\\
&= \sum_{i=1}^M\langle x,S^{-1}\p_i\rangle \p_i
\end{align*}
Also,
\begin{align*}
\sum_{i=1}^M\langle x,S^{-1/2}\phi_i\rangle S^{-1/2}
\phi_i
&= S^{-1/2}\left ( \sum_{i=1}^M\langle S^{-1/2}x,\phi_i\rangle \phi_i \right )\\
&= S^{-1/2}(S(S^{-1/2}x)\\
&= x.
\end{align*}
It follows that $\{S^{-1/2}\phi_i\}_{i=1}^M$ is a Parseval
frame.
Since $S$ is invertible, the family
$\{S^{-1}\p_i\}_{i=1}^M$ is also a frame for $\H^N$ called the
{\bf \hl{canonical dual frame}\index{frame!canonical dual}\index{dual frame!canonical}}. See Subsection \ref{duals} for the definition and properties of dual frames.

The following result is basically just a restatement of the definitions and known facts.

\begin{proposition}
Let $\Phi=\{\p_i\}_{i=1}^M$ be a frame for $\H^N$ with analysis operator $T$ and frame
operator $S$.  The following are equivalent:
\begin{enumerate}
\item $\{\p_i\}_{i=1}^M$ is an $A$-tight frame for $\H^N$.

\item $S=A\cdot Id$.

\item  For every $x\in \H^N$,
\[ x = \dfrac{1}{A}\sum_{i=1}^M\langle x,\p_i\rangle\p_i.\]

\item  For every $x\in \H^N$,
\[ A\|x\|^2 = \sum_{i=1}^M|\langle x,\p_i\rangle|^2.\]

\item  $T/\sqrt{A}$ is an isometry.
\end{enumerate}
Moreover, if $\Phi$ is a Parseval frame then $A=1$ above.
\end{proposition}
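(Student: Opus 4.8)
The plan is to prove the cycle of equivalences $(1)\Rightarrow(2)\Rightarrow(3)\Rightarrow(4)\Rightarrow(5)\Rightarrow(2)$, together with $(2)\Rightarrow(1)$, mostly by unwinding definitions and invoking facts already established in the excerpt. The key identities are equation (\ref{sxe}), $\langle Sx,x\rangle = \sum_{i=1}^M|\langle x,\p_i\rangle|^2$, the reconstruction formula $x = S^{-1}Sx = \sum_{i=1}^M \langle S^{-1}x,\p_i\rangle\p_i$, the fact that $S = T^*T$, and the characterization $\|Tx\|^2 = \sum_{i=1}^M|\langle x,\p_i\rangle|^2$ from the definition of the analysis operator.

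First I would handle $(1)\Leftrightarrow(2)$: by definition $\Phi$ is $A$-tight means the frame bounds $A = B$ work in (\ref{E1f}), which by the operator inequality $A\cdot Id \le S \le A\cdot Id$ forces $S = A\cdot Id$; conversely $S = A\cdot Id$ gives $\langle Sx,x\rangle = A\|x\|^2$, so (\ref{sxe}) yields the tight frame inequality. Then $(2)\Rightarrow(3)$ is the reconstruction computation with $S^{-1} = \frac{1}{A}Id$: $x = S^{-1}Sx = \frac{1}{A}\sum_{i=1}^M\langle x,\p_i\rangle\p_i$. For $(3)\Rightarrow(4)$, take the inner product of the formula in (3) with $x$ to get $\|x\|^2 = \frac{1}{A}\sum_{i=1}^M|\langle x,\p_i\rangle|^2$, i.e. $A\|x\|^2 = \sum_{i=1}^M|\langle x,\p_i\rangle|^2$. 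For $(4)\Rightarrow(5)$, note $\|Tx\|^2 = \sum_{i=1}^M|\langle x,\p_i\rangle|^2 = A\|x\|^2$, so $\|(T/\sqrt{A})x\| = \|x\|$, meaning $T/\sqrt{A}$ is an isometry. Finally $(5)\Rightarrow(2)$: if $T/\sqrt{A}$ is an isometry, then by Proposition \ref{propp2}(2) we have $(T/\sqrt{A})^*(T/\sqrt{A}) = Id$, i.e. $\frac{1}{A}T^*T = \frac{1}{A}S = Id$, so $S = A\cdot Id$. The moreover part is immediate: a Parseval frame has $A = B = 1$ by definition.

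There is no real obstacle here — every step is a one-line consequence of a result or definition already in the excerpt — so I would keep the write-up terse. The only thing to be slightly careful about is that $(4)$ gives the tight frame inequality (hence $(1)$, closing the circuit back to $(2)$ if one prefers that route), and that in $(5)\Rightarrow(2)$ one needs $A > 0$, which holds since $A$ is a (lower) frame bound. I would present it as a clean chain, perhaps folding $(2)\Rightarrow(1)$ into the $(1)\Leftrightarrow(2)$ step and then running $(2)\Rightarrow(3)\Rightarrow(4)\Rightarrow(5)\Rightarrow(2)$, so that all five statements are seen to be equivalent.

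\begin{proof}
$(1)\Leftrightarrow(2)$: If $\Phi$ is $A$-tight, then $A$ is both a lower and upper frame bound, so the operator inequality $A\cdot Id \le S \le A \cdot Id$ forces $S = A\cdot Id$. Conversely, if $S = A \cdot Id$ then by (\ref{sxe}), $\sum_{i=1}^M|\langle x,\p_i\rangle|^2 = \langle Sx,x\rangle = A\|x\|^2$ for all $x$, so $\Phi$ is $A$-tight.

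$(2)\Rightarrow(3)$: If $S = A\cdot Id$ then $S^{-1} = \frac{1}{A}Id$, and the reconstruction formula gives
\[
x = S^{-1}Sx = \frac{1}{A}Sx = \frac{1}{A}\sum_{i=1}^M\langle x,\p_i\rangle\p_i.
\]

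$(3)\Rightarrow(4)$: Taking the inner product of the identity in (3) with $x$ yields
\[
\|x\|^2 = \frac{1}{A}\sum_{i=1}^M\langle x,\p_i\rangle\langle\p_i,x\rangle = \frac{1}{A}\sum_{i=1}^M|\langle x,\p_i\rangle|^2,
\]
which is (4).

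$(4)\Rightarrow(5)$: Since $\|Tx\|^2 = \sum_{i=1}^M|\langle x,\p_i\rangle|^2 = A\|x\|^2$, we have $\|(T/\sqrt{A})x\| = \|x\|$ for all $x$, so $T/\sqrt{A}$ is an isometry.

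$(5)\Rightarrow(2)$: If $T/\sqrt{A}$ is an isometry, then by Proposition \ref{propp2}(2), $(T/\sqrt{A})^*(T/\sqrt{A}) = Id$, that is, $\frac{1}{A}T^*T = \frac{1}{A}S = Id$, so $S = A \cdot Id$.

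For the moreover part, a Parseval frame is by definition a frame with $A = B = 1$, hence an $A$-tight frame with $A = 1$.
\end{proof}
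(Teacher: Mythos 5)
Your proof is correct; the paper itself offers no argument for this proposition, remarking only that it is "basically just a restatement of the definitions and known facts," and your chain $(1)\Leftrightarrow(2)\Rightarrow(3)\Rightarrow(4)\Rightarrow(5)\Rightarrow(2)$ supplies exactly the routine verifications intended, using (\ref{sxe}), the reconstruction formula, $S=T^*T$, and Proposition \ref{propp2}(2). The only step deserving one more word is $(1)\Rightarrow(2)$: from $A\cdot Id\le S\le A\cdot Id$ you get $\langle (S-A\cdot Id)x,x\rangle=0$ for all $x$, and the conclusion $S=A\cdot Id$ then follows because $S-A\cdot Id$ is self-adjoint (e.g.\ by Lemma \ref{lem5}); this is worth saying since for non-self-adjoint operators on a real space a vanishing quadratic form does not force the operator to vanish, cf.\ Example \ref{ssnotpos}.
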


Tight frames have the property that all of the eigenvalues of the frame operator coincide and are equal the frame bound. For an arbitrary frame, it turns out the smallest and largest eigenvalues of the frame operator are the optimal lower and upper frame bounds, respectively.

\begin{theorem}
Suppose $\{\p_i\}_{i=1}^M$ is a frame for $\mathbb{H}^N$ with frame operator $S$ with eigenvalues $\lambda_1 \geq \lambda_2 \geq \cdots \geq \lambda_N$. Then $\lambda_1$ is the optimal upper frame bound and $\lambda_N$ is the optimal lower frame bound.
\end{theorem}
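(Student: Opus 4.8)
The plan is to use the spectral decomposition of the frame operator $S$ together with the characterization of frame bounds via the operator inequality $A \cdot Id \le S \le B \cdot Id$ and equation \eqref{sxe}. Since $S$ is positive and self-adjoint (hence diagonalizable by Corollary \ref{posgsa} and Theorem \ref{spec}), we may fix an orthonormal basis $\{e_i\}_{i=1}^N$ of eigenvectors with $Se_i = \lambda_i e_i$ and $\lambda_1 \ge \cdots \ge \lambda_N > 0$. Using Parseval's Identity we write, for any $x \in \mathbb{H}^N$,
\[
\sum_{i=1}^M |\langle x,\p_i\rangle|^2 = \langle Sx,x\rangle = \sum_{i=1}^N \lambda_i |\langle x,e_i\rangle|^2.
\]

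First I would prove $\lambda_N$ is a lower frame bound and $\lambda_1$ is an upper frame bound. From the display above,
\[
\lambda_N \|x\|^2 = \lambda_N \sum_{i=1}^N |\langle x,e_i\rangle|^2 \le \sum_{i=1}^N \lambda_i |\langle x,e_i\rangle|^2 \le \lambda_1 \sum_{i=1}^N |\langle x,e_i\rangle|^2 = \lambda_1 \|x\|^2,
\]
so $\lambda_N$ and $\lambda_1$ are valid lower and upper frame bounds respectively. Next I would show optimality by testing against the extreme eigenvectors: taking $x = e_N$ gives $\sum_{i=1}^M |\langle e_N,\p_i\rangle|^2 = \langle Se_N,e_N\rangle = \lambda_N = \lambda_N\|e_N\|^2$, so no lower frame bound $A > \lambda_N$ can hold; similarly $x = e_1$ gives $\sum_{i=1}^M |\langle e_1,\p_i\rangle|^2 = \lambda_1 = \lambda_1\|e_1\|^2$, so no upper frame bound $B < \lambda_1$ can hold. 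Hence $\lambda_N$ is the largest lower frame bound and $\lambda_1$ is the smallest upper frame bound, which is precisely the definition of optimal.

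I do not anticipate a genuine obstacle here; the only thing to be slightly careful about is invoking diagonalizability of $S$, which requires either working over $\mathbb{C}$ (Corollary \ref{posgsa} plus Theorem \ref{spec}) or noting that a frame operator on a real Hilbert space is in fact self-adjoint by construction (Example \ref{frameop}) and thus diagonalizable by Theorem \ref{spec} directly. Either way the spectral theorem applies, so the proof goes through uniformly. The whole argument is essentially bookkeeping on top of \eqref{sxe} and the spectral theorem, with the optimality half being the only part that requires an idea, namely plugging in the relevant eigenvector.
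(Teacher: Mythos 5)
Your proof is correct and follows essentially the same route as the paper: expand $\langle Sx,x\rangle$ in the orthonormal eigenbasis of $S$ via equation \eqref{sxe} to get the bounds, then plug in the extreme eigenvectors $e_1$ and $e_N$ to show the bounds are attained and hence optimal. The only differences are cosmetic — you write out the lower-bound case explicitly where the paper says ``similarly,'' and you add a careful remark on diagonalizability of $S$.
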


\begin{proof}
Let $\{e_j\}_{j=1}^N$ be an orthonormal eigenbasis of the frame operator $S$ with associated eigenvalues $\{\lambda_j\}_{j=1}^N$ given in decreasing order. Now, given an $x \in \mathbb{H}^N$, write
\[
x = \sum_{j=1}^N \langle x, e_j \rangle e_j
\]
to obtain
\[
Sx = \sum_{j=1}^N \lambda_j \langle x, e_j \rangle e_j.
\]
By Equation (\ref{sxe}), this gives that
\begin{align*}
\sum_{i=1}^M |\langle x, \p_i\rangle|^2 &= \langle Sx, x\rangle = \sum_{i,j=1}^N \big\langle \lambda_i \langle x, e_i\rangle e_i, \langle x, e_j\rangle e_j \big \rangle\\
&= \sum_{j=1}^N \lambda_j | \langle x, e_j \rangle |^2 \leq \lambda_1 \|x\|^2
\end{align*}
proving that $\lambda_1$ is an upper frame bound. To see that it is optimal, note 
\begin{align*}
\sum_{i=1}^M |\langle e_1, \p_i\rangle|^2 &= \langle Se_1, e_1\rangle = \lambda_1.
\end{align*}
The lower bound is proven similarly.
\end{proof}

Another type of sequence that we often deal with are \emph{Riesz bases}, which rids of the orthonormality assumption, but retains a unique composition.

\begin{definition}
A family of vectors $\{\p_i\}_{i=1}^N$ in a Hilbert space $\H^N$
is a {\bf \hl{Riesz basis}} if there are constants $A,B>0$ so
that for all families of scalars $\{a_i\}_{i=1}^N$ we have
\[ A \sum_{i=1}^N|a_i|^2 \le \bigg\|\sum_{i=1}^N a_i\p_i\bigg\|^2 \le
B \sum_{i=1}^N|a_i|^2.\]
\end{definition}

The following gives an equivalent formulation, in particular, Riesz bases are precisely sequences of vectors that are images of orthonormal bases under an invertible map. It follows directly from the definitions.

\begin{proposition}
  Let $\Phi=\{\p_i\}_{i = 1}^N$ be a family vectors in $\mathbb{H}^N$. Then the following are equivalent.
\begin{enumerate}
\item $\Phi$ is a Riesz basis for $\mathbb{H}^N$ with Riesz bounds $A$ and $B$.
\item For any orthonormal basis $\{e_i\}_{i =1}^N$ for $\mathbb{H}^N$, the operator $F$ on $\mathbb{H}^N$ given by $Fe_i = \p_i$ for all $i = 1,2,\ldots,N$ is an invertible operator with $\|F\|^2 \leq B$ and $\|F^{-1}\|^{-2} \geq A$.
\end{enumerate}
It follows that if $\Phi$ is a Riesz basis with bounds $A$ and $B$, then it is a frame with these same bounds.
\end{proposition}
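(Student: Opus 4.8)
The plan is to identify the operator $F$ with the map that sends a scalar sequence to the corresponding linear combination of the $\p_i$, and then to translate the Riesz inequalities directly into operator-norm estimates for $F$ and $F^{-1}$. Fix an orthonormal basis $\{e_i\}_{i=1}^N$ and let $F$ be the (well-defined, since $\{e_i\}$ is a basis) linear operator with $Fe_i = \p_i$. Then $F\big(\sum_{i=1}^N a_i e_i\big) = \sum_{i=1}^N a_i \p_i$, and by Parseval's Identity (Proposition \ref{parsevalid}) we have $\big\|\sum_{i=1}^N a_i e_i\big\|^2 = \sum_{i=1}^N |a_i|^2$. Since every $v \in \H^N$ is uniquely $\sum a_i e_i$, the Riesz condition in (1) is exactly the statement $A\|v\|^2 \le \|Fv\|^2 \le B\|v\|^2$ for all $v \in \H^N$.

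For $(1)\Rightarrow(2)$: the upper estimate gives $\|F\|^2 \le B$; the lower estimate forces $\ker F = \{0\}$, hence $F$ is invertible in the finite-dimensional setting, and writing an arbitrary $w$ as $w = Fv$ we get $\|w\|^2 = \|Fv\|^2 \ge A\|v\|^2 = A\|F^{-1}w\|^2$, so $\|F^{-1}\| \le A^{-1/2}$, i.e. $\|F^{-1}\|^{-2} \ge A$. For $(2)\Rightarrow(1)$: from invertibility and the norm bounds, $\|Fv\|^2 \le \|F\|^2\|v\|^2 \le B\|v\|^2$, while $\|v\| = \|F^{-1}Fv\| \le \|F^{-1}\|\,\|Fv\|$ rearranges to $\|Fv\|^2 \ge \|F^{-1}\|^{-2}\|v\|^2 \ge A\|v\|^2$; translating back through $v = \sum a_i e_i$ recovers the Riesz inequalities with bounds $A,B$.

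For the final assertion, suppose $\Phi$ is a Riesz basis with bounds $A,B$, so the associated $F$ is invertible, hence surjective; thus $\Phi = \{Fe_i\}_{i=1}^N$ spans $\H^N$ and is a frame by Proposition \ref{lem102}. To identify the frame bounds, compute the frame coefficients: $\langle x,\p_i\rangle = \langle x, Fe_i\rangle = \langle F^*x, e_i\rangle$, so by Parseval's Identity $\sum_{i=1}^N |\langle x,\p_i\rangle|^2 = \|F^*x\|^2$. Since taking adjoints preserves both invertibility (with $(F^*)^{-1} = (F^{-1})^*$) and operator norm, we have $\|F^*\| = \|F\|$ and $\|(F^*)^{-1}\| = \|F^{-1}\|$, and hence $\|F\|^2 \le B$, $\|F^{-1}\|^{-2} \ge A$ yield $A\|x\|^2 \le \|F^*x\|^2 \le B\|x\|^2$, which is the frame inequality (\ref{E1f}) with bounds $A$ and $B$.

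I do not expect a genuine obstacle here; the whole argument is bookkeeping with operator norms. The one point that needs care is the estimate involving $F^{-1}$: it must be run in the correct direction, namely $\|v\| \le \|F^{-1}\|\,\|Fv\|$ (not the reverse), and in the last paragraph one must remember that passing to the adjoint transfers the quantitative bounds on $F$ and $F^{-1}$ verbatim to $F^*$ and $(F^*)^{-1}$.
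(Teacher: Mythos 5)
Your proof is correct, and it is essentially the argument the paper intends: the paper omits the proof entirely, remarking only that the proposition ``follows directly from the definitions,'' and your translation of the Riesz inequalities into the two-sided operator bound $A\|v\|^2 \le \|Fv\|^2 \le B\|v\|^2$ via Parseval's Identity, together with the passage to $F^*$ for the frame bounds, is exactly the standard bookkeeping being alluded to. All the delicate points (the direction of the $\|F^{-1}\|$ estimate, and transferring the bounds to $F^*$ and $(F^*)^{-1}$) are handled correctly.
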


Next we see that applying an invertible operator to a frame still gives a frame.

\begin{proposition}\label{1.9}
Let $\Phi=\{\p_i\}_{i=1}^M$ be a sequence of vectors in $\H^N$ with analysis operator
$T$ and let $F$ be a linear operator on $\H^N$.  Then the analysis operator of the sequence
$F\Phi=\{F\p_i\}_{i=1}^M$ is given by
\[T_{F\Phi}=TF^*.\]
Moreover, if $\Phi$ is a frame for $\mathbb{H}^N$ and $F$ is invertible, then $F\Phi$ is a also a frame for $\mathbb{H}^N$.
\end{proposition}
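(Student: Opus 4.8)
The plan is to compute the analysis operator of $F\Phi$ directly from the definition and recognize the result as a composition of operators already understood. First I would fix the natural orthonormal basis $\{e_i\}_{i=1}^M$ of $\ell_2^M$ and write, for arbitrary $x \in \H^N$,
\[
T_{F\Phi}x = \sum_{i=1}^M \langle x, F\p_i\rangle e_i = \sum_{i=1}^M \langle F^*x, \p_i\rangle e_i = T(F^*x) = TF^*x,
\]
where the middle equality is just the definition of the adjoint $F^*$ and the last equality is the definition of the analysis operator $T$ of $\Phi$. Since $x$ was arbitrary, $T_{F\Phi} = TF^*$, which is the first claim.

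For the moreover part, suppose $\Phi$ is a frame and $F$ is invertible. By Theorem \ref{classthm}, it suffices to show that the analysis operator $T_{F\Phi} = TF^*$ is bounded, linear, and injective. Boundedness and linearity are immediate since both $T$ and $F^*$ are bounded linear operators and compositions of such are bounded and linear. For injectivity: $F$ invertible implies $F^*$ is invertible (by Proposition \ref{propp2}(3)-type reasoning, or simply because $(F^{-1})^* = (F^*)^{-1}$ from $(ST)^* = T^*S^*$ applied to $FF^{-1} = F^{-1}F = Id$), hence $F^*$ is injective; and $T$ is injective since $\Phi$ is a frame, again by Theorem \ref{classthm}. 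A composition of injective maps is injective, so $TF^*$ is injective, and therefore $F\Phi$ is a frame.

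I do not expect any real obstacle here: the statement is, as the paper notes, essentially a direct consequence of the definitions. The only point requiring a moment's care is that invertibility of $F$ passes to $F^*$, so that $F^*$ is injective — but this follows immediately from $(ST)^* = T^*S^*$, which is available in the excerpt. One could alternatively phrase the argument at the level of synthesis operators, noting that the synthesis operator of $F\Phi$ is $FT^*$ and is surjective as a composition of surjections, but the analysis-operator route is cleaner. If desired, one can also extract explicit frame bounds: if $\Phi$ has bounds $A, B$, then a short computation with $\|F^*x\|$ bounded above and below in terms of $\|F\|$ and $\|F^{-1}\|^{-1}$ shows $F\Phi$ has bounds $A\|F^{-1}\|^{-2}$ and $B\|F\|^2$, but this refinement is not needed for the stated claim.
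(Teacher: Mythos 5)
Your proposal is correct and follows essentially the same route as the paper: the same one-line adjoint computation $\langle x,F\p_i\rangle=\langle F^*x,\p_i\rangle$ giving $T_{F\Phi}=TF^*$, and the moreover part via Theorem \ref{classthm}, for which you merely spell out the injectivity of $TF^*$ that the paper leaves implicit. No issues.
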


\begin{proof}
For $x\in \H^N$,
\[ T_{F\Phi}x=\big \{ \langle x,F\p_i\rangle\big \}_{i=1}^M = \big \{ \langle F^*x,\p_i\rangle \big\}_{i=1}^M = TF^*x.\]
The moreover part follows from Theorem \ref{classthm}.
\end{proof}

Furthermore, if we apply an invertible operator to a frame,
there is a specific form that
the frame operator of the new frame must take.

\begin{proposition}\label{propp1}
Suppose $\Phi = \{\p_i\}_{i=1}^M$ is a frame for $\H^N$ 
with frame operator $S$ and $F$
is an invertible operator on $\H^N$.  Then the frame
operator of the frame $F\Phi = \{F\p_i\}_{i=1}^M$ is the operator $FSF^*$.
\end{proposition}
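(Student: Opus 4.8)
The plan is to compute the frame operator of $F\Phi$ directly from the definition and recognize the result as $FSF^*$. The frame operator $S_{F\Phi}$ of a sequence is $S_{F\Phi} = T_{F\Phi}^* T_{F\Phi}$, so the cleanest route is to invoke Proposition \ref{1.9}, which already tells us $T_{F\Phi} = TF^*$. Then $S_{F\Phi} = (TF^*)^*(TF^*) = F T^* T F^* = F S F^*$, using $(TF^*)^* = F^{**}T^* = FT^*$ and the definition $S = T^*T$. This is essentially a one-line computation once Proposition \ref{1.9} is in hand.

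Alternatively, and perhaps more in the spirit of making the statement self-contained, I would verify it by evaluating both sides on an arbitrary $x \in \H^N$. The frame operator of $F\Phi$ acts by
\[
S_{F\Phi}x = \sum_{i=1}^M \langle x, F\p_i\rangle F\p_i = \sum_{i=1}^M \langle F^*x, \p_i\rangle F\p_i = F\bigg(\sum_{i=1}^M \langle F^*x,\p_i\rangle \p_i\bigg) = F(S(F^*x)) = FSF^*x,
\]
where the second equality uses the definition of the adjoint and the fourth uses the defining formula $Sy = \sum_{i=1}^M \langle y,\p_i\rangle \p_i$ for the frame operator of $\Phi$. Since this holds for all $x$, we conclude $S_{F\Phi} = FSF^*$.

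There is essentially no obstacle here; the only point requiring a word of care is that $F\Phi$ is genuinely a frame (so that ``its frame operator'' is meaningful), but this is exactly the ``moreover'' part of Proposition \ref{1.9}, which we may assume. I would also note in passing that $FSF^*$ is automatically positive, self-adjoint, and invertible — consistent with the general fact about frame operators — since $S$ has these properties and $F$ is invertible; but this is a remark rather than part of the proof. I would present the second (direct) computation as the main argument since it keeps the section self-contained, and mention the shortcut via Proposition \ref{1.9} as an alternative.
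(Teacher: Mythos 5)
Your proposal is correct, and your first route is exactly the paper's proof: the paper simply states that the result ``follows immediately from Proposition \ref{1.9} and the definition,'' i.e.\ $S_{F\Phi}=(TF^*)^*(TF^*)=FT^*TF^*=FSF^*$. Your direct computation on an arbitrary $x$ is just this same argument unwound, so there is nothing genuinely different to compare.
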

\begin{proof}
This follows immediately from Proposition \ref{1.9} and the definition.
\end{proof}

Corollary \ref{cor20} concerning the trace formula for diagonalizable operators,
has a corresponding result for Parseval frames.

\begin{proposition}[\hl{Trace formula for Parseval frames}]
Let $\Phi = \{\p_i\}_{i=1}^M$ be a parseval frame on $\H^N$ and let $F$ be a linear
operator on $\H^N$.  Then
\[ Tr(F) = \sum_{i=1}^M \langle F\p_i,\p_i\rangle.\]
\end{proposition}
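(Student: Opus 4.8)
The plan is to exploit the defining property of a Parseval frame, namely that its analysis operator $T$ is an isometry with $T^*T = Id$ (Theorem \ref{classthm}), together with the basis-independence of the trace. The key observation is that the frame coefficient maps behave like an orthonormal expansion: for a Parseval frame we have $x = T^*Tx = \sum_{i=1}^M \langle x,\p_i\rangle \p_i$ for all $x$, so $S = Id$.

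First I would pick an orthonormal basis $\{e_j\}_{j=1}^N$ for $\H^N$ and write $\mathrm{Tr}(F) = \sum_{j=1}^N \langle Fe_j, e_j\rangle$ using the definition of trace. Next, for each fixed $j$, I would expand $Fe_j$ using the Parseval reconstruction formula, writing $\langle Fe_j, e_j\rangle = \langle Fe_j, \sum_{i=1}^M \langle e_j,\p_i\rangle \p_i\rangle = \sum_{i=1}^M \overline{\langle e_j,\p_i\rangle}\,\langle Fe_j,\p_i\rangle$; alternatively, and more symmetrically, I would expand $e_j$ itself as $e_j = \sum_{i=1}^M \langle e_j,\p_i\rangle\p_i$ on one side of the inner product. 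Then I would sum over $j$ and interchange the (finite) sums over $i$ and $j$:
\[
\mathrm{Tr}(F) = \sum_{j=1}^N \sum_{i=1}^M \langle Fe_j,\p_i\rangle \langle \p_i,e_j\rangle = \sum_{i=1}^M \sum_{j=1}^N \langle Fe_j,\p_i\rangle \langle \p_i,e_j\rangle.
\]
For each fixed $i$, the inner sum over $j$ is recognized as $\langle F(\sum_j \langle \p_i,e_j\rangle e_j), \p_i\rangle = \langle F\p_i,\p_i\rangle$ after reassembling $\p_i = \sum_j \langle \p_i,e_j\rangle e_j$ in the first slot of the inner product and using linearity of $F$; care is needed with the complex conjugates, but using $\langle \p_i,e_j\rangle e_j$ as the expansion of $\p_i$ makes this clean. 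Summing the result over $i$ gives $\sum_{i=1}^M \langle F\p_i,\p_i\rangle$, as desired.

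There is no serious obstacle here: this is essentially the same double-sum interchange used in the proposition establishing well-definedness of the trace, with the orthonormal basis on one side replaced by the Parseval frame $\{\p_i\}$. The only point requiring mild attention is bookkeeping of conjugates when the Hilbert space is complex — one must be consistent about which slot carries the expansion coefficients — but since both $M$ and $N$ are finite all interchanges of summation are unconditionally valid, so the argument is a short computation rather than a real difficulty.
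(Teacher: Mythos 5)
Your proposal is correct and follows essentially the same route as the paper: both reduce $\mathrm{Tr}(F)=\sum_j\langle Fe_j,e_j\rangle$ to the double sum $\sum_{j}\sum_{i}\langle Fe_j,\p_i\rangle\langle\p_i,e_j\rangle$ via the Parseval reconstruction formula, interchange the finite sums, and reassemble $\p_i=\sum_j\langle\p_i,e_j\rangle e_j$ to obtain $\sum_i\langle F\p_i,\p_i\rangle$. The only difference is cosmetic (you expand $e_j$ in the second slot where the paper expands $Fe_j$ in the first and passes through $F^*$), and your handling of the conjugates is consistent.
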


\begin{proof}
If $\{e_i\}_{i=1}^N$ is an orthonormal basis for $\H^N$ then by definition
\[ Tr(F)= \sum_{j=1}^N \langle Fe_i,e_i\rangle.\]
This along with the fact that $\Phi$ is Parseval gives that
\begin{align*}
Tr(F) &= \sum_{j=1}^N\left \langle \sum_{i=1}^M  \langle Fe_j, \p_i\rangle \p_i,e_j\right \rangle\\
&=\sum_{j=1}^N\sum_{i=1}^M\langle e_j,F^*\p_i\rangle \langle \p_i,e_j\rangle\\
&=\sum_{i=1}^M \left \langle \sum_{j=1}^N\langle \p_i,e_j\rangle e_j,F^*\p_i\right \rangle\\
&= \sum_{i=1}^M \langle \p_i,F^*\p_i\rangle\\
&= \sum_{i=1}^M \langle F\p_i,\p_i\rangle. \qedhere
\end{align*}
\end{proof}

\begin{definition}
 Two frames
$\{\p_i\}_{i=1}^M$ and $\{\psi_i\}_{i=1}^M$ in a Hilbert space
$\H^N$ are {\bf \hl{isomorphic frames}\index{frame!isomorphic}} if
there exists a bounded, invertible operator $L: \mathbb{H}^N \to \mathbb{H}^N$ so that $L\p_i=\psi_i$ for all $1\leq i \leq M$.
We say they are {\bf \hl{unitarily isomorphic frames}\index{isomorphic frames!unitarily}\index{frame!unitarily isomorphic}} if $L$ is a unitary operator.
\end{definition}

\begin{proposition}
Let $\Phi=\{\p_i\}_{i=1}^M$ and $\Psi=\{\psi_i\}_{i=1}^M$ be frames for $\H^N$ with analysis operators
$T_1$ and $T_2$ respectively.  The following are equivalent:
\begin{enumerate}
\item $\Phi$ and $\Psi$ are isomorphic.

\item  $\ran T_1=\ran T_2$.

\item $\ker T_1^*= \ker T_2^*$.
\end{enumerate}
Moreover, in this case $F=T_2^*(T_1^*|_{\ran T_1})^{-1}$ satisfies $F\p_i = \psi_i$ for all $i=1,2,\ldots,M.$
\end{proposition}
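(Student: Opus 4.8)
The plan is to prove the cycle $(1)\Rightarrow(2)\Rightarrow(3)\Rightarrow(1)$ and then verify the explicit formula for $F$. The key observation throughout is that for a frame $\Phi$ with analysis operator $T$, the range $\ran T$ records exactly how the frame coefficient sequences sit inside $\ell_2^M$, and an isomorphism of frames should correspond precisely to an identification of these ranges. Recall from Theorem \ref{classthm} that the analysis operator of a frame is injective, so $T_1^*|_{\ran T_1}$ and $T_2^*|_{\ran T_2}$ are bijections onto $\H^N$, which will make the formula for $F$ well-defined.

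First, for $(1)\Rightarrow(2)$: if $F$ is bounded and invertible with $F\p_i = \psi_i$, then by Proposition \ref{1.9} the analysis operator of $\Psi = F\Phi$ is $T_2 = T_1 F^*$. Since $F^*$ is invertible (as $F$ is), $\ran T_2 = \ran(T_1 F^*) = \ran T_1$, because an invertible operator is surjective onto $\H^N$. Next, $(2)\Leftrightarrow(3)$ is immediate from Proposition \ref{lem21}(3), which gives $\ker T_i^* = [\ran T_i]^\perp$; two subspaces are equal if and only if their orthogonal complements are equal. Finally, for $(2)\Rightarrow(1)$ I would exhibit the operator $F = T_2^*(T_1^*|_{\ran T_1})^{-1}$ and check both that it is bounded and invertible and that it carries $\p_i$ to $\psi_i$. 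Here $T_1^*|_{\ran T_1}: \ran T_1 \to \H^N$ is a bijection (it is injective because $\ker T_1^* = [\ran T_1]^\perp$ meets $\ran T_1$ only in $0$, and surjective because $T^*$ is surjective for a frame by Theorem \ref{classthm}), so its inverse is a well-defined bounded operator on $\H^N$; composing with $T_2^*$, which maps $\ran T_2 = \ran T_1$ into $\H^N$, gives a bounded operator $F$ on $\H^N$. To see $F\p_i = \psi_i$: note $\p_i = T_1^* e_i$ where $e_i$ is the standard basis of $\ell_2^M$, but $e_i$ need not lie in $\ran T_1$, so I would instead write $\p_i = T_1^* P e_i$ where $P$ is the orthogonal projection onto $\ran T_1$ (valid since $e_i - Pe_i \in [\ran T_1]^\perp = \ker T_1^*$), and similarly $\psi_i = T_2^* P e_i$ using $\ran T_2 = \ran T_1$; then $F\p_i = T_2^*(T_1^*|_{\ran T_1})^{-1} T_1^* Pe_i = T_2^* Pe_i = \psi_i$. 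Invertibility of $F$ follows by symmetry: the same construction with the roles of $\Phi$ and $\Psi$ swapped produces a two-sided inverse.

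The main obstacle is the bookkeeping around the fact that $e_i \notin \ran T_1$ in general, so one cannot naively write $\p_i = T_1^* e_i$ and cancel; the clean fix is to insert the projection $P$ onto $\ran T_1$ and use $\ker T_1^* = [\ran T_1]^\perp$ to absorb the discrepancy. Once that is handled, everything else is a direct application of Propositions \ref{lem21} and \ref{1.9} and Theorem \ref{classthm}, together with the elementary fact that invertible operators preserve ranges.
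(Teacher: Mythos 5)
Your proposal is correct and follows essentially the same route as the paper: the equivalence of (2) and (3) via Proposition \ref{lem21}, the implication from (1) via $T_2 = T_1F^*$ from Proposition \ref{1.9}, and for (2)$\Rightarrow$(1) the identical projection trick $\p_i = T_1^*Pe_i$, $\psi_i = T_2^*Pe_i$ with $F = T_2^*(T_1^*|_{\ran T_1})^{-1}$. The only cosmetic differences are that you argue (1)$\Rightarrow$(2) through ranges where the paper goes (1)$\Rightarrow$(3) through kernels, and you get invertibility of $F$ by symmetry where the paper notes directly that $F$ is a composition of bijections; both are fine.
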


\begin{proof}
The equivalence of (2) and (3) follows by Proposition \ref{lem21}.

$(1) \Rightarrow (3)$:  Let $F\p_i=\psi_i$ be a well-defined invertible operator on $\H^N$.  Then 
by Proposition \ref{1.9} we have that $T_2=T_1F^*$ and hence $FT_1^*=T_2^*$.  Since
$F$ is invertible, (3) follows.

$(2)\Rightarrow (1)$:  Let $P$ be the orthogonal projection onto $W=\ran T_1 = \ran T_2$.  Then $(Id-P)$ is an orthogonal projection onto $W^\perp = \ker T_1^* = \ker T_2^*$ so that 
$$\p_i = T_1^*e_i = T_1^*Pe_i + T_1^*(Id-P)e_i = T_1^*Pe_i$$ 
and similarly $\psi_i = T_2^*Pe_i$.  The operators $T_1^*$ and $T_2^*$ both map $W$ bijectively onto $\H^N$.  Therefore, the operator $F:=T_2^*(T_1^*|_{W})^{-1}$ maps $\H^N$ bijectively onto itself.  Consequently, for all $i=1,2,\ldots,M$ we have
\begin{align*}
 F\p_i &= T_2^*(T_1^*|_{W})^{-1}T_1^*Pe_i = T_2^*Pe_i=\psi_i. \qedhere
 \end{align*}
\end{proof}

As a consequence of Proposition \ref{propp1} we have:

\begin{theorem}\label{Tight}
Every frame $\{\p_i\}_{i=1}^M$ (with frame operator $S$) is isomorphic to
the Parseval frame $\{S^{-1/2}\p_i\}_{i\in I}$.
\end{theorem}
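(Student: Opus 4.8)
The plan is to exhibit an explicit isomorphism between $\Phi = \{\p_i\}_{i=1}^M$ and $\{S^{-1/2}\p_i\}_{i=1}^M$ and then verify it has the required properties. The natural candidate is $F = S^{-1/2}$ itself: it sends $\p_i$ to $S^{-1/2}\p_i$ by definition, so the only real content is that $F$ is a bounded, invertible operator and that the image sequence is actually a Parseval frame. I would proceed as follows.

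First I would recall that $S$ is a positive, self-adjoint, invertible operator on $\H^N$ (established in the proposition on the frame operator), so by Theorem \ref{spec} together with Corollary \ref{posgsa} it is diagonalizable with strictly positive eigenvalues, and hence the power $S^{-1/2}$ is well-defined via the corollary on powers of an operator. That corollary also tells us $S^{-1/2}$ is itself a positive operator, and it is invertible with inverse $S^{1/2}$ since $S^{-1/2}S^{1/2} = S^0 = Id$. In particular $S^{-1/2}$ is a bounded, invertible linear operator on $\H^N$, so it qualifies as the operator $L$ in the definition of isomorphic frames, and it visibly satisfies $L\p_i = S^{-1/2}\p_i$ for all $i$.

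Next I would confirm that $\{S^{-1/2}\p_i\}_{i=1}^M$ is a Parseval frame. By Proposition \ref{propp1}, applying the invertible operator $F = S^{-1/2}$ to the frame $\Phi$ produces a frame whose frame operator is $FSF^* = S^{-1/2}S(S^{-1/2})^*$. Since $S^{-1/2}$ is self-adjoint, $(S^{-1/2})^* = S^{-1/2}$, so this equals $S^{-1/2}SS^{-1/2} = S^{-1/2}S^{1/2}S^{1/2}S^{-1/2}$; using $S^aS^b = S^{a+b}$ this collapses to $S^0 = Id$. A frame whose frame operator is the identity has $\sum_{i=1}^M |\langle x, S^{-1/2}\p_i\rangle|^2 = \langle x, x\rangle = \|x\|^2$ for all $x$ by Equation (\ref{sxe}), which is exactly the Parseval condition with $A = B = 1$. (This is essentially the computation already displayed in the text just before the statement, showing $\sum_i \langle x, S^{-1/2}\p_i\rangle S^{-1/2}\p_i = x$.)

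The argument is genuinely short and the only step requiring any care is the bookkeeping with fractional powers — specifically justifying that $S^{-1/2}$ exists, is self-adjoint, is invertible, and that $S^{-1/2} S S^{-1/2} = Id$. All of these rest on the diagonalizability of $S$ (which needs Corollary \ref{posgsa} in the complex case, or the hypothesis that we work with a frame operator which is in any case diagonalizable) and the relations for powers. There is no real obstacle beyond making sure those cited results are invoked cleanly; once $S^{-1/2}$ is in hand, Proposition \ref{propp1} does the rest essentially for free.
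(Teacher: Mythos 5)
Your proposal is correct and follows essentially the same route as the paper: the paper's proof likewise invokes Proposition \ref{propp1} with $F=S^{-1/2}$ and computes $S^{-1/2}S(S^{-1/2})^*=Id$, so that the image frame is Parseval and the invertible operator $S^{-1/2}$ realizes the isomorphism. Your additional remarks justifying the existence, self-adjointness, and invertibility of $S^{-1/2}$ simply make explicit the background the paper leaves implicit.
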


\begin{proof}
The frame operator of $\{S^{-1/2}\p_i\}_{i=1}^N$ is $S^{-1/2}S(S^{-1/2})^*=Id$.
\end{proof}

As a consequence, only unitary operators can map Parseval frames to Parseval
frames.

\begin{corollary}
If two Parseval frames $\Phi=\{\p_i\}_{i=1}^M$ and $\Psi=\{\psi_i\}_{i=1}^M$ are isomorphic, then they are unitarily isomorphic.
\end{corollary}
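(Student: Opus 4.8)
The plan is to exploit the isomorphism already established for general frames together with the rigidity of Parseval frames under invertible maps. Suppose $\Phi = \{\p_i\}_{i=1}^M$ and $\Psi = \{\psi_i\}_{i=1}^M$ are isomorphic Parseval frames, so there is a bounded invertible operator $L$ on $\H^N$ with $L\p_i = \psi_i$ for all $i$. The goal is to show $L$ is in fact unitary.

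First I would compute the frame operator of $\Psi = L\Phi$ using Proposition \ref{propp1}: it equals $L S_\Phi L^*$, where $S_\Phi$ is the frame operator of $\Phi$. But $\Phi$ is Parseval, so $S_\Phi = Id$, and $\Psi$ is Parseval, so $L S_\Phi L^* = Id$ as well. Combining these gives $L L^* = Id$. Thus $L^*$ is a right inverse of $L$; since $L$ is invertible with a genuine two-sided inverse $L^{-1}$, uniqueness of inverses forces $L^{-1} = L^*$, and hence also $L^* L = Id$. Therefore $L$ satisfies $L L^* = L^* L = Id$, which is precisely the definition of a unitary operator, so $\Phi$ and $\Psi$ are unitarily isomorphic.

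The only step requiring any care is the passage from $L L^* = Id$ to $L$ unitary, but in the finite-dimensional setting this is immediate: $L$ is already assumed bounded and invertible, so multiplying $L L^* = Id$ on the left by $L^{-1}$ yields $L^* = L^{-1}$, and then $L^* L = L^{-1} L = Id$ as well. (Alternatively one can invoke Proposition \ref{propp2}(3), since $L L^* = Id$ makes $L^*$ an isometry with $L$ injective, hence $L$ unitary.) No genuine obstacle arises; the content of the corollary is simply that Parseval frames have trivial frame operator, so the intertwining operator between two of them must preserve the inner product structure. I would present this as a three-line argument with the reference to Proposition \ref{propp1} doing the real work.
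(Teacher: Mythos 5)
Your proposal is correct and follows essentially the same route as the paper: both apply Proposition \ref{propp1} to the Parseval frames to obtain $LL^* = Id$ and then conclude unitarity (the paper cites Proposition \ref{propp2}, while you also spell out the elementary step $L^* = L^{-1}$ from invertibility). No substantive difference.
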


\begin{proof}
Since both frames have the identity as their frame operator, if $F$ maps one Parseval
frame to another and is invertible, then by Propositon \ref{propp1},
\[ Id = F(Id)F^* = FF^*.\]
Since $F^*F$ is injective, $F$ is a unitary operator by Proposition \ref{propp2}.
\end{proof}

We can always ``move'' one frame operator to another.

\begin{proposition}
Let $\Phi=\{\p_i\}_{i=1}^M$ and $\Psi=\{\psi_i\}_{i=1}^M$ be frames for $\H^N$ with frame
operators $S_1$ and $S_2$ respectively.  Then there exists an invertible operator
$F$ on $\H^N$ so that $S_1$ is the frame operator of the frame $F\Psi = \{F\psi_i\}_{i=1}^M$.
\end{proposition}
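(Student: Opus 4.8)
The statement asks: given frames $\Phi$ and $\Psi$ with frame operators $S_1$ and $S_2$, find an invertible $F$ so that $S_1$ is the frame operator of $F\Psi = \{F\psi_i\}_{i=1}^M$. By Proposition~\ref{propp1}, the frame operator of $F\Psi$ is $FS_2F^*$, so the task reduces to solving the operator equation $FS_2F^* = S_1$ for an invertible $F$.

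\textbf{Key steps.} First I would recall that $S_1$ and $S_2$ are positive, self-adjoint, and invertible, hence (using Corollary~\ref{posgsa} together with Theorem~\ref{spec} in the complex case, and the hypothesis that these particular operators are diagonalizable) they admit positive square roots $S_1^{1/2}$ and $S_2^{1/2}$ and positive inverse square roots $S_2^{-1/2}$, all of which are self-adjoint and invertible. Second, I would simply set
\[
F = S_1^{1/2} S_2^{-1/2}.
\]
Third, I would verify invertibility: $F$ is a composition of two invertible operators, hence invertible, with inverse $S_2^{1/2}S_1^{-1/2}$. Fourth, I would compute $F^* = (S_2^{-1/2})^* (S_1^{1/2})^* = S_2^{-1/2} S_1^{1/2}$, using that both square roots are self-adjoint. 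Fifth, I would plug into the formula from Proposition~\ref{propp1}:
\[
FS_2F^* = S_1^{1/2} S_2^{-1/2} \, S_2 \, S_2^{-1/2} S_1^{1/2} = S_1^{1/2} \, S_2^{-1/2} S_2^{1/2} S_2^{1/2} S_2^{-1/2} \, S_1^{1/2} = S_1^{1/2} \, Id \, S_1^{1/2} = S_1,
\]
where I use $S_2^{-1/2}S_2^{1/2} = Id$ and $S_2^{1/2}S_2^{-1/2} = Id$ from the power rules $T^aT^b = T^{a+b}$. This shows $S_1$ is the frame operator of $F\Psi$, as required.

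\textbf{Main obstacle.} There is essentially no hard step here — the proof is a one-line choice of $F$ followed by a routine verification. The only point requiring care is making sure the square roots and inverse square roots are well-defined: this needs $S_1$ and $S_2$ to be positive, invertible, and diagonalizable, which holds automatically in the complex case and holds for frame operators generally since the frame operator of a frame is always diagonalizable (it is positive and self-adjoint, and one can invoke Theorem~\ref{spec}). Once that is in hand, the self-adjointness of the (inverse) square roots and the composition rule $T^aT^b = T^{a+b}$ finish everything. I would also note in passing that $F$ is far from unique: any $F$ of the form $S_1^{1/2} U S_2^{-1/2}$ with $U$ unitary works equally well, since then $FS_2F^* = S_1^{1/2}UU^*S_1^{1/2} = S_1$.
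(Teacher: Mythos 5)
Your proof is correct and is essentially identical to the paper's: both choose $F = S_1^{1/2}S_2^{-1/2}$ and verify $FS_2F^* = S_1$ via Proposition \ref{propp1}. Your extra care about why the square roots exist (self-adjointness of frame operators plus Theorem \ref{spec}) and the remark on non-uniqueness are fine additions but do not change the argument.
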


\begin{proof}
If $S$ is the frame operator of $F\Psi$, letting $F=S_1^{1/2}S_2^{-1/2}$ we have
\begin{align*}
S = FS_2F^* &= (S_1^{1/2}S_2^{-1/2})S_2(S_1^{1/2}S_2^{-1/2})^* = S_1.\qedhere
 \end{align*}
\end{proof}

\subsection{Dual Frames}\label{duals}

We begin with the definition.

\begin{definition}
If $\Phi=\{\p_i\}_{i=1}^M$ is a frame for $\H^N$, a frame $\{\psi_i\}_{i=1}^M$ for
$\H^N$ is called a {\bf \hl{dual frame}\index{frame!dual}} for $\Phi$ if
\[ \sum_{i=1}^M\langle x,\psi_i\rangle \p_i=x,\mbox{ for all }x\in \H^N.\]
\end{definition}

It follows that the \hl{canonical dual frame}\index{dual frame!canonical}\index{frame!canonical dual} $\{S^{-1}\p_i\}_{i=1}^M$, where $S$ is
the frame operator of $\Phi$, is a dual frame.  But there are many other dual
frames in general.  

\begin{proposition}\label{PAP1}
Let $\Phi=\{\p_i\}_{i=1}^M $ and $\Psi=\{\psi_i\}_{i=1}^M$ be frames for $\H^N$ with analysis operators
$T_1$, $T_2$ respectively.  The following are equivalent:
\begin{enumerate}
\item  $\Psi$ is a dual frame of $\Phi$.

\item $T_1^*T_2=Id$.
\end{enumerate}
\end{proposition}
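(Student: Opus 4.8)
The plan is to unwind both conditions into statements about the analysis operators $T_1, T_2$ and the synthesis operators $T_1^*, T_2^*$, and then recognize that they coincide. Recall that for a frame $\Phi$ with analysis operator $T_1$, the synthesis operator $T_1^*$ satisfies $T_1^* e_i = \p_i$, where $\{e_i\}_{i=1}^M$ is the standard orthonormal basis of $\ell_2^M$; similarly $T_2^* e_i = \psi_i$. The key observation is that for any $x \in \H^N$, the sum $\sum_{i=1}^M \langle x, \psi_i \rangle \p_i$ is exactly $T_1^* T_2 x$: indeed $T_2 x = \{\langle x, \psi_i\rangle\}_{i=1}^M = \sum_{i=1}^M \langle x, \psi_i\rangle e_i$, and applying $T_1^*$ gives $\sum_{i=1}^M \langle x, \psi_i \rangle T_1^* e_i = \sum_{i=1}^M \langle x, \psi_i \rangle \p_i$.

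First I would establish this identity $T_1^* T_2 x = \sum_{i=1}^M \langle x, \psi_i \rangle \p_i$ for all $x \in \H^N$ by the short computation above, using linearity and boundedness of $T_1^*$. Then the equivalence is immediate: condition (1) says $\sum_{i=1}^M \langle x, \psi_i\rangle \p_i = x$ for all $x$, which by the identity is the same as $T_1^* T_2 x = x = \mathrm{Id}\, x$ for all $x$, i.e. $T_1^* T_2 = \mathrm{Id}$, which is condition (2). Conversely, $T_1^* T_2 = \mathrm{Id}$ plugged back into the identity gives $\sum_{i=1}^M \langle x, \psi_i\rangle \p_i = x$ for all $x$, which is condition (1).

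There is essentially no obstacle here — the whole content is the bookkeeping identity relating the composition $T_1^* T_2$ to the reconstruction-type sum, together with Proposition \ref{idfyop} (two operators agreeing on all vectors are equal), which is used implicitly when passing from ``$T_1^* T_2 x = x$ for all $x$'' to ``$T_1^* T_2 = \mathrm{Id}$''. If I wanted to be slightly more careful I would note that both $T_1^*$ and $T_2$ are genuinely bounded linear operators (by Theorem \ref{classthm}, since $\Phi$ and $\Psi$ are frames), so the composition and the manipulations are legitimate. The only mild subtlety worth mentioning is that the statement does not require $\Psi$ to be the canonical dual; any frame satisfying $T_1^* T_2 = \mathrm{Id}$ works, and the proof makes clear why the canonical dual $\{S^{-1}\p_i\}$ is one instance — there $T_2 = T_1 S^{-1}$ so $T_1^* T_2 = T_1^* T_1 S^{-1} = S S^{-1} = \mathrm{Id}$ — but this remark is optional and need not be in the proof proper.

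Here is the plan in compact form. \emph{Step 1:} For $x \in \H^N$, expand $T_2 x = \sum_{i=1}^M \langle x, \psi_i\rangle e_i$ and apply $T_1^*$ to get $T_1^* T_2 x = \sum_{i=1}^M \langle x, \psi_i\rangle \p_i$. \emph{Step 2:} Observe that (1) is precisely the assertion $T_1^* T_2 x = x$ for all $x \in \H^N$. \emph{Step 3:} Invoke Proposition \ref{idfyop} to conclude $T_1^* T_2 x = x$ for all $x$ is equivalent to $T_1^* T_2 = \mathrm{Id}$, which is (2). This completes the proof of the equivalence.
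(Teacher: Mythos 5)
Your proposal is correct and follows essentially the same route as the paper: both establish the identity $T_1^*T_2x = \sum_{i=1}^M \langle x,\psi_i\rangle \p_i$ for all $x$ and then read off the equivalence directly from the definition of a dual frame. Your version just spells out a few more of the implicit details (boundedness, the appeal to Proposition \ref{idfyop}), which the paper leaves to the reader.
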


\begin{proof}
Note that for any $x \in \mathbb{H}^N$,
\[ T_1^*T_2x = T_1^*\left [ (\langle x,\psi_i\rangle)_{i=1}^M\right ]
= \sum_{i=1}^M \langle x,\psi_i\rangle \p_i.\]
The result is now immediate.
\end{proof} 

\begin{theorem}
Suppose $\Phi=\{\p_i\}_{i=1}^M$ is a frame with analysis operator $T_1$ and frame operator $S$. The class of all dual frames of $\Phi$  are frames
of the form  $\{\eta_i\}_{i=1}^M:=\{S^{-1}\p_i + \psi_i\}_{i=1}^M$, where if $T_2$
is the analysis operator of $\Psi=\{\psi_i\}_{i=1}^M$, 
then $T_1^*T_2=0$.  That is, $\ran T_1 \perp \ran T_2$.
\end{theorem}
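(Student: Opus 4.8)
The plan is to characterize all dual frames by translating the defining equation $\sum_i \langle x,\eta_i\rangle\p_i = x$ into an operator identity and then describing the solution set. By Proposition \ref{PAP1}, a frame $\{\eta_i\}_{i=1}^M$ with analysis operator $R$ is a dual of $\Phi$ if and only if $T_1^*R = Id$. So the content of the theorem is the description of all operators $R:\H^N\to\ell_2^M$ satisfying $T_1^*R = Id$ that happen to be analysis operators of frames. First I would note that the canonical dual $\{S^{-1}\p_i\}_{i=1}^M$ has analysis operator $T_1 S^{-1}$ (indeed $x\mapsto\{\langle x,S^{-1}\p_i\rangle\}_i = \{\langle S^{-1}x,\p_i\rangle\}_i = T_1 S^{-1}x$, using that $S^{-1}$ is self-adjoint), and this satisfies $T_1^*(T_1 S^{-1}) = S S^{-1} = Id$.

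Next I would show the two inclusions. Given any dual $\{\eta_i\}_{i=1}^M$ with analysis operator $R$, set $T_2 := R - T_1 S^{-1}$, which is the analysis operator of $\Psi := \{\eta_i - S^{-1}\p_i\}_{i=1}^M$ (analysis operators are linear in the frame vectors — this is the additive analogue of Proposition \ref{1.9}). Then $T_1^*T_2 = T_1^*R - T_1^*T_1 S^{-1} = Id - Id = 0$, so $\eta_i = S^{-1}\p_i + \psi_i$ with $T_1^*T_2 = 0$. Conversely, given any Bessel sequence $\Psi = \{\psi_i\}_{i=1}^M$ with analysis operator $T_2$ satisfying $T_1^*T_2 = 0$, the sequence $\{S^{-1}\p_i + \psi_i\}_{i=1}^M$ has analysis operator $T_1 S^{-1} + T_2$, and $T_1^*(T_1 S^{-1} + T_2) = Id + 0 = Id$; by Proposition \ref{PAP1} this is a dual frame of $\Phi$ — and in particular it is a frame, so one does not need to separately assume $\Psi$ is a frame, only Bessel (or one can simply observe that $\Psi$ is automatically a frame when it is a difference of two frames, but that is false in general, so the cleanest statement allows $\Psi$ Bessel). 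Finally, the condition $T_1^*T_2 = 0$ means $\ran T_2 \subseteq \ker T_1^* = [\ran T_1]^\perp$ by Proposition \ref{lem21}(3), i.e. $\ran T_1 \perp \ran T_2$.

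The main obstacle, such as it is, is bookkeeping about which objects are assumed to be frames versus merely Bessel: the theorem as stated says the duals are ``frames of the form $\{S^{-1}\p_i + \psi_i\}$ where $T_1^*T_2 = 0$,'' and one must be careful that $\Psi$ need only be Bessel for the construction to produce a dual frame, while every actual dual does decompose this way with $\Psi$ a genuine frame's worth of vectors (though $\Psi$ itself may fail to span). I would state the argument so that the forward direction (every dual has this form) and the backward direction (every such sum is a dual) are both clean, and then close with the geometric reformulation $\ran T_1 \perp \ran T_2$. No delicate estimates are needed; everything reduces to the identities $T_1^*T_1 = S$, $S^{-1}$ self-adjoint, linearity of the analysis map in the frame vectors, and Propositions \ref{lem21} and \ref{PAP1}.
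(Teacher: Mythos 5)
Your proposal is correct and follows essentially the same route as the paper: both directions reduce to Proposition \ref{PAP1} via the decomposition $\eta_i = S^{-1}\p_i + \psi_i$ and the operator identity $T_1^*T_2 = 0$, with the geometric reformulation coming from Proposition \ref{lem21}. The only difference is cosmetic — you write the canonical dual's analysis operator explicitly as $T_1S^{-1}$ where the paper abbreviates it as $(T_1^*)^{-1}$, and your remark on the Bessel-versus-frame bookkeeping is a reasonable clarification rather than a change of argument.
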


\begin{proof}
Note that the analysis operator of $\{\eta_i\}_{i=1}^M$ is $(T_1^*)^{-1}+T_2$.  Now,
\[ T_1^*\big((T_1^*)^{-1}+T_2\big) = T_1^*(T_1^*)^{-1}+T_1^*T_2 = Id+0 =Id.\]
By Proposition \ref{PAP1}, $\{\eta_i\}_{i=1}^M$ is a dual frame of $\Phi$.

Conversely, if $\{\eta_i\}_{i=1}^M$ is a dual frame for $\Phi$, let
$\psi_i = \eta_i-S^{-1}\p_i$, for all $i=1,2,\ldots,M$.  Then for all $x\in \H^N$,
\begin{align*}
T_1^* T_2 x &= \sum_{i=1}^M \langle x,\psi_i\rangle \p_i \\
&= \sum_{i=1}^M\langle x,\eta_i-S^{-1}\p_i\rangle \p_i\\
&= \sum_{i=1}^M \langle x,\eta_i\rangle \p_i-\sum_{i=1}^M \langle x,S^{-1}\p_i\rangle \p_i\\
&= x-x=0.
\end{align*}
This implies that for all $x,y\in \H^N$,
\[ \langle T_1x,T_2y\rangle = \langle x, T_1^*T_2 y \rangle = 0,\quad\]
which is precisely $\ran T_1 \perp \ran T_2$.
\end{proof}

\begin{proposition}
Let $\Phi=\{\p_i\}_{i=1}^M$ be a frame for $\H^N$ with frame operator $S$.  Then the only dual frame
of $\Phi$ which is isomorphic to $\Phi$ is $\{S^{-1}\p_i\}_{i=1}^M$.
\end{proposition}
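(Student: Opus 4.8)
Let $T$ denote the analysis operator of $\Phi$, so that $S=T^*T$. The plan is to prove uniqueness by a one-line computation that combines the analysis-operator description of images of frames (Proposition \ref{1.9}) with the analysis-operator characterization of dual frames (Proposition \ref{PAP1}). First I would note that the statement is genuinely an assertion of uniqueness: $\{S^{-1}\p_i\}_{i=1}^M$ is the canonical dual frame, and it is the image $S^{-1}\Phi$ of $\Phi$ under the invertible operator $S^{-1}$, hence it is a dual frame of $\Phi$ that is isomorphic to $\Phi$.

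Now suppose $\Psi=\{\psi_i\}_{i=1}^M$ is any dual frame of $\Phi$ that is isomorphic to $\Phi$, say $L\p_i=\psi_i$ for all $i$ with $L$ bounded and invertible on $\H^N$. By Proposition \ref{1.9}, the analysis operator of $\Psi=L\Phi$ is $TL^*$. By Proposition \ref{PAP1}, the fact that $\Psi$ is a dual frame of $\Phi$ is equivalent to $T^*(TL^*)=Id$, that is, $SL^*=Id$. Therefore $L^*=S^{-1}$; since $S$ is positive and self-adjoint, $S^{-1}$ is self-adjoint, so $L=(L^*)^*=S^{-1}$, and hence $\psi_i=L\p_i=S^{-1}\p_i$ for every $i$, which is exactly the claim.

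There is essentially no hard step here; the only points requiring care are invoking Propositions \ref{1.9} and \ref{PAP1} with the operators composed in the correct order, and the harmless remark that self-adjointness of $S^{-1}$ upgrades $L^*=S^{-1}$ to $L=S^{-1}$. An alternative, slightly longer route would start from the preceding theorem's parametrization $\psi_i=S^{-1}\p_i+\xi_i$ with $\ran T\perp\ran T_\xi$ (where $T_\xi$ is the analysis operator of $\{\xi_i\}_{i=1}^M$) and show that the existence of an invertible $L$ with $L\p_i=\psi_i$ forces $T_\xi=0$; the direct argument above sidesteps this.
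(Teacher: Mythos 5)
Your proof is correct. It reaches the same conclusion as the paper but packages the computation differently: the paper writes the hypothesized isomorphism relative to the canonical dual, setting $\psi_i = F S^{-1}\p_i$ with $F$ invertible, and then checks pointwise that $F^*x = \sum_{i=1}^M \langle F^*x, S^{-1}\p_i\rangle \p_i = \sum_{i=1}^M \langle x,\psi_i\rangle\p_i = x$, i.e.\ $F = Id$, using only the canonical reconstruction formula and the definition of a dual frame. You instead parametrize directly by the definition of isomorphic frames, $\psi_i = L\p_i$, and work at the operator level: Proposition \ref{1.9} gives $T_\Psi = TL^*$ and Proposition \ref{PAP1} turns the duality of $\Psi$ into $T^*TL^* = SL^* = Id$, so $L^* = S^{-1}$ and then $L = S^{-1}$ by self-adjointness of $S^{-1}$. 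The two arguments are the same identity read at different levels (your $T^*T_\Psi = Id$, unwound on vectors, is exactly the paper's displayed computation), but yours buys a one-line operator equation that makes the uniqueness transparent and cites the already-established machinery, at the small cost of the extra remark that $S^{-1}$ is self-adjoint; the paper's version is self-contained and never needs to pass from $L^*$ back to $L$. Your opening observation that $\{S^{-1}\p_i\}_{i=1}^M$ is itself a dual frame isomorphic to $\Phi$ is a harmless addition the paper leaves implicit, having noted earlier that the canonical dual is a dual frame.
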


\begin{proof}
Let $\{\psi_i\}_{i=1}^M$ be a dual frame for $\Phi$ and assume there is an invertible operator $F$
so that $\psi_i = FS^{-1}\p_i$ for all $i=1,2,\ldots,M$.  Then, for every $x\in \H^N$ we have
\begin{align*}
F^*x &= \sum_{i=1}^M\langle F^*x,S^{-1}\p_i\rangle\p_i\\
&= \sum_{i=1}^M \langle x,FS^{-1}\p_i\rangle \p_i\\
&= \sum_{i=1}^M\langle x,\psi_i\rangle\p_i\\
&= x.
\end{align*}
It follows that $F^*=Id$ and so $F=Id$.
\end{proof}

\subsection{Redundancy}

The main property of frames which makes them so useful in applied problems
is their {\bf \hl{redundancy}}.  That is, each vector in the space has infinitely
many representations with respect to the frame but it also has one natural
representation given by the frame coefficients.  The role played
by redundancy varies with specific applications.  One important role is its
{\bf \hl{robustness}}.  That is, by spreading our information over a wider range
of vectors, we are better able to sustain {\bf losses} (called 
{\bf \hl{erasures}} in this setting) and still have
accurate reconstruction.  This shows up in 
internet coding (for transmission losses), distributed processing
(where ``sensors'' are constantly fading out),
modeling the brain (where memory cells are constantly
dying out) and a host of other applications.  Another advantage of 
spreading our information over a wider range is to mitigate the effects
of noise in our signal or to make it prominent enough so it can be removed
as in signal/image processing.  A further upside of redundancy is in
areas such as quantum tomography where we need classes of orthonormal bases
which have ``constant'' interactions with one another or we need vectors
to form a Parseval frame but have the absolute values of their inner 
products with all other vectors the same.  In speech recognition, we need
a vector to be determined by the absolute value of its frame coefficients.
This is a very natural frame theory problem since this is impossible for
a linearly independent set to achieve.  Redundancy is a fundamental issue
in this setting.
\vskip12pt

Our next proposition shows the relationship between the frame elements
and the frame bounds.

\begin{proposition}\label{New}
Let $\{\p_{i}\}_{i=1}^M$ be a frame for $\mathbb{H}^N$ with frame bounds $A,B$.  
Then we have 
$\|\p_{i}\|^{2}\le B$ for all $1\le i\le M$, and if $\|\p_{i}\|^{2} = B$ holds for some $i$, then
 $\p_{i}\perp \spn\,\{\p_{j}\}_{j\not= i}$.  If $\|\p_{i}\|^{2}< A$,
then $\p_{i}\in \overline{\spn}\,\{\p_{j}\}_{j\not= i}$.
\end{proposition}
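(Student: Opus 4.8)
The plan is to exploit the frame operator $S$ and the fact that $\langle S x, x\rangle = \sum_{i=1}^M |\langle x,\p_i\rangle|^2$ lies between $A\|x\|^2$ and $B\|x\|^2$. For the first claim, fix an index $i$ and test the upper frame inequality on the vector $x = \p_i$ itself. Then
\[
\|\p_i\|^4 = |\langle \p_i,\p_i\rangle|^2 \le \sum_{j=1}^M |\langle \p_i,\p_j\rangle|^2 \le B\|\p_i\|^2,
\]
and dividing by $\|\p_i\|^2$ (the case $\p_i = 0$ being trivial) gives $\|\p_i\|^2 \le B$. This is the easy part and requires no new idea beyond plugging in the right test vector.

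For the equality case $\|\p_i\|^2 = B$, I would look at when the chain of inequalities above is forced to be an equality. If $\|\p_i\|^2 = B$, then
\[
\|\p_i\|^4 = B\|\p_i\|^2 \ge \sum_{j=1}^M |\langle \p_i,\p_j\rangle|^2 = \|\p_i\|^4 + \sum_{j\ne i} |\langle \p_i,\p_j\rangle|^2,
\]
which forces $\sum_{j\ne i}|\langle \p_i,\p_j\rangle|^2 = 0$, hence $\langle \p_i,\p_j\rangle = 0$ for all $j\ne i$, i.e. $\p_i \perp \spn\{\p_j\}_{j\ne i}$. So the equality case is really just bookkeeping once the first computation is in place.

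The last claim — if $\|\p_i\|^2 < A$ then $\p_i \in \overline{\spn}\{\p_j\}_{j\ne i}$ — is the one I expect to be the main obstacle, since a contrapositive/decomposition argument is needed rather than a one-line substitution. I would argue by contraposition: suppose $\p_i \notin \overline{\spn}\{\p_j\}_{j\ne i}$. Write $\p_i = u + v$ where $u$ is the orthogonal projection of $\p_i$ onto $\overline{\spn}\{\p_j\}_{j\ne i}$ and $v = \p_i - u \ne 0$ is orthogonal to every $\p_j$ with $j\ne i$. Now apply the lower frame inequality to the vector $v$: since $\langle v,\p_j\rangle = 0$ for all $j\ne i$, we get
\[
A\|v\|^2 \le \sum_{j=1}^M |\langle v,\p_j\rangle|^2 = |\langle v,\p_i\rangle|^2 = |\langle v, u+v\rangle|^2 = \|v\|^4,
\]
using $v\perp u$. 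Dividing by $\|v\|^2$ gives $A \le \|v\|^2 \le \|\p_i\|^2$ (the last step by the Pythagorean theorem, $\|\p_i\|^2 = \|u\|^2 + \|v\|^2 \ge \|v\|^2$). This contradicts $\|\p_i\|^2 < A$, completing the proof. The subtlety to get right is choosing to test the frame inequality on $v$ rather than on $\p_i$, so that all the cross terms drop out and the lower bound $A$ gets pinned against $\|v\|^2$.
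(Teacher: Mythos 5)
Your proof is correct and follows essentially the same route as the paper's: the first two claims come from testing the frame inequality at $x=\p_i$, and the last from replacing $\p_i$ by its projection onto $\bigl[\overline{\spn}\,\{\p_j\}_{j\ne i}\bigr]^{\perp}$ and applying the lower frame bound to that projection. The only difference is that you spell out the bookkeeping the paper leaves as ``immediate.''
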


\begin{proof}
 If we replace $x$ in the frame definition by $\p_{i}$ we see that
$$
A\|\p_{i}\|^{2}\le \|\p_{i}\|^{4}+\sum_{j\not= i}|\langle\p_{i},\p_{j}\rangle|^{2} 
\le B\|\p_{i}\|^{2}.
$$
The first part of the result is now immediate.  For the second part, assume
to the contrary that $E = \overline{\spn}\,\{\p_{j}\}_{j\not= i}$ is a proper subspace
of $\H^N$.  Replacing $\p_{i}$ in the above inequality by $P_{E^{\perp}}\p_{i}$ and
using the left hand side of the inequality yields an immediate contradiction.  
\end{proof}

As a particular case of Proposition \ref{New} we have for a Parseval frame
$\{\p_{i}\}_{i=1}^M$ that $\|\p_{i}\|^{2}\le 1$ for all $i$, and $\|\p_{i}\|=1$  for some $i$ if and only if
$\p_{i}\perp \overline{\spn}\,\{\p_{j}\}_{j\not= i}$.  
We call $\{\p_i\}_{i=1}^M$ an {\bf {exact frame}\index{frame!exact}} if
it ceases to be a frame when any one of its vectors is
removed.  If $\{\p_{i}\}_{i=1}^M$ is an exact frame
then $\langle S^{-1}\p_{i},\p_{j}\rangle = \langle S^{-1/2}\p_{i},S^{-1/2}\p_{j}\rangle = {\delta}_{ij}$ 
(where ${\delta}_{ij}$ is the Kronecker delta) since $\{S^{-1/2}\p_{i}\}_{i=1}^M$
is now an orthonormal basis for $\H^N$.  That is, $\{S^{-1}\p_{i}\}_{i=1}^M$ and
$\{\p_{i}\}_{i=1}^M$ form a {\bf \hl{biorthogonal system}}.  Also, it follows that 
$\{e_{i}\}_{i=1}^N$
is an orthonormal basis for $\H^N$ if and only if it is an exact, Parseval
frame.  Another consequence of Proposition \ref{New} is the following.

\begin{proposition}
The removal of a vector from a frame leaves either a frame or an incomplete
set.
 \end{proposition}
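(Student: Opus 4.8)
The plan is to use the dichotomy already recorded in Proposition \ref{New}: for a frame $\{\p_i\}_{i=1}^M$ with lower bound $A$, every vector satisfies either $\p_i \perp \spn\,\{\p_j\}_{j\neq i}$ or $\p_i \in \overline{\spn}\,\{\p_j\}_{j\neq i}$ — actually I should be more careful, since Proposition \ref{New} only gives these conclusions in the extreme cases $\|\p_i\|^2 = B$ and $\|\p_i\|^2 < A$. So I would instead argue directly. Suppose we remove $\p_k$ from the frame and the remaining family $\{\p_i\}_{i\neq k}$ is \emph{complete}, i.e.\ $\overline{\spn}\,\{\p_i\}_{i\neq k} = \H^N$. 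By Proposition \ref{lem102}, in the finite-dimensional setting a complete family is automatically a frame, so $\{\p_i\}_{i\neq k}$ is a frame. The only other possibility is that $\{\p_i\}_{i\neq k}$ is not complete, i.e.\ it is an incomplete set. These two cases are mutually exclusive and exhaustive, which is exactly the assertion.

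First I would state the finite-dimensional fact precisely: a subset $\{\p_i\}_{i\in J}$ of $\H^N$ is either complete (hence a frame, by Proposition \ref{lem102}) or not complete (hence incomplete). Then I would apply this to $J = \{1,\ldots,M\}\setminus\{k\}$. That is genuinely all there is to it; the content is entirely in Proposition \ref{lem102}, which upgrades "spanning" to "frame" in finite dimensions. I would not even need Proposition \ref{New} for this — the reference to it in the surrounding text is a bit of a red herring, or at most motivational.

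The main (and only) subtlety worth flagging is that this proposition is \emph{false} in infinite dimensions, where removing a vector from a frame can leave a complete set that fails to be a frame (no lower bound), just as the excerpt notes with the example $\{e_i/i\}_{i=1}^\infty$. So the proof must genuinely invoke finite-dimensionality, and the cleanest way to do that is precisely through Proposition \ref{lem102}. I would keep the write-up to two or three sentences: remove $\p_k$; if the rest spans $\H^N$ it is a frame by Proposition \ref{lem102}; otherwise it is by definition incomplete; these exhaust the possibilities.

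\begin{proof}
Remove a vector, say $\p_k$, from the frame $\{\p_i\}_{i=1}^M$ and consider the remaining family $\{\p_i\}_{i\neq k}$. Either $\spn\,\{\p_i\}_{i\neq k} = \H^N$ or not. In the first case, Proposition \ref{lem102} shows that $\{\p_i\}_{i\neq k}$ is a frame for $\H^N$. In the second case, $\{\p_i\}_{i\neq k}$ is by definition an incomplete set. Since these two cases are mutually exclusive and cover all possibilities, the result follows.
\end{proof}
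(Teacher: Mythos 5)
Your proof is correct for the statement as it stands: in $\H^N$ any family of vectors is either spanning or incomplete, and Proposition \ref{lem102} upgrades spanning to being a frame, which is all the proposition asserts. But your route is genuinely different from the paper's. The paper first reduces to a Parseval frame via Theorem \ref{Tight} and then invokes the dichotomy of Proposition \ref{New}: for a Parseval frame either $\|\p_i\|=1$, in which case $\p_i\perp\spn\,\{\p_j\}_{j\not=i}$ and the remaining vectors are incomplete, or $\|\p_i\|<1$, in which case $\p_i\in\overline{\spn}\,\{\p_j\}_{j\not=i}$ and the remaining vectors still span, hence form a frame. Your argument is shorter and entirely elementary, but it leans completely on the finite-dimensional equivalence of spanning and frame, whereas the paper's argument identifies \emph{which} alternative occurs for a given vector (a norm test in the Parseval picture, equivalently whether $\langle S^{-1}\p_i,\p_i\rangle$ equals $1$) and is the argument that survives in infinite dimensions.

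That last point matters because the one substantive side claim in your discussion is wrong: the proposition does \emph{not} fail in infinite dimensions. It is a standard fact that if $\{\p_i\}_{i\in I}$ is a frame with frame operator $S$, then $\{\p_i\}_{i\not=k}$ is a frame when $\langle S^{-1}\p_k,\p_k\rangle\not=1$ and is incomplete when $\langle S^{-1}\p_k,\p_k\rangle=1$ --- exactly the dichotomy the paper's proof exhibits. The sequence $\{e_i/i\}_{i=1}^{\infty}$ only shows that complete does not imply frame in infinite dimensions; it does not arise by deleting one vector from a frame (adding a single vector cannot repair the failing lower bound), so it is not a counterexample. Thus the reference to Proposition \ref{New} is not a red herring: it is the device that makes the result dimension-free, while your Proposition \ref{lem102} shortcut is precisely the step that genuinely needs finite dimensionality.
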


\begin{proof}  By Theorem \ref{Tight}, we may assume that $\{\p_{i}\}_{i=1}^M$
 is a 
Parseval frame.  Now by Proposition \ref{New}, for any i, either
$\|\p_{i}\|=1$ and $\p_{i}\perp \spn\,\{\p_{j}\}_{j\not= i}$, or 
$\|\p_{i}\|< 1$ and $\p_{i}\in \overline{\spn}\,\{\p_{j}\}_{j\not= i}$. 
\end{proof}

\subsection{\hl{Minimal Moments}}

Since a frame is not independent (unless it is a Riesz basis)
a vector in the space will have many representations relative to the
frame besides the natural one given by the frame coefficients.  However,
the natural representation of a vector is the unique representation
of minimal ${\ell}_{2}$-norm as the following result of Duffin
and Schaeffer \cite{DS} shows.

\begin{theorem}\label{T2}
Let $\Phi = \{\p_{i}\}_{i=1}^M$ be a frame for a Hilbert space $\H^N$ with frame operator $S$ and $x\in \H^N$.  
If $\{b_{i}\}_{i=1}^M$ is any
sequence of scalars such that
$$
x=\sum_{i=1}^M b_{i}\p_{i},
$$
then
\begin{equation}\label{Moment}
\sum_{i=1}^M|b_{i}|^{2} = \sum_{i=1}^M|\langle S^{-1}x,\p_{i}\rangle|^{2} +
\sum_{i=1}^M|\langle S^{-1}x,\p_{i}\rangle-b_{i}|^{2}.
\end{equation}
\end{theorem}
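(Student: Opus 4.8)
The plan is to reduce the identity to the Pythagorean theorem by observing that the coefficient sequence $\{b_i\}_{i=1}^M$ and the canonical sequence $\{\langle S^{-1}x,\p_i\rangle\}_{i=1}^M$ differ by a sequence lying in the kernel of the synthesis operator, which is orthogonal (in $\ell_2^M$) to the canonical sequence. Concretely, let $T$ be the analysis operator of $\Phi$, so $T^*$ is the synthesis operator with $T^*e_i = \p_i$. The canonical coefficient sequence is exactly $Tx'$ where $x' = S^{-1}x$, since $(Tx')_i = \langle S^{-1}x, \p_i\rangle$; equivalently it equals $T S^{-1} x = (T^*)^{\dagger}x$ in the notation of the Moore--Penrose inverse, and this vector lies in $\ran T$. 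On the other hand, the hypothesis $x = \sum_i b_i \p_i$ says precisely that $T^* b = x$, where $b = \{b_i\}_{i=1}^M \in \ell_2^M$.

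The key step is then to write $b = Tx' + (b - Tx')$ and to check that these two summands are orthogonal in $\ell_2^M$. For the first summand we have $Tx' \in \ran T = [\ker T^*]^{\perp}$ by Proposition \ref{lem21}(4) (or (3)). For the second summand, we compute $T^*(b - Tx') = T^* b - T^* T S^{-1} x = x - S S^{-1} x = x - x = 0$, so $b - Tx' \in \ker T^*$. Since $\ker T^* \perp \ran T$, the vectors $Tx'$ and $b - Tx'$ are orthogonal. Applying the Pythagorean Theorem in $\ell_2^M$ gives
\[
\|b\|^2 = \|Tx'\|^2 + \|b - Tx'\|^2,
\]
which is exactly Equation (\ref{Moment}) once one expands the norms coordinatewise: $\|b\|^2 = \sum_i |b_i|^2$, $\|Tx'\|^2 = \sum_i |\langle S^{-1}x,\p_i\rangle|^2$, and $\|b - Tx'\|^2 = \sum_i |\langle S^{-1}x,\p_i\rangle - b_i|^2$.

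I do not expect a genuine obstacle here; the proof is essentially an orthogonal-decomposition argument. The one point that needs a little care is making sure the "canonical" sequence is correctly identified as $T S^{-1} x$ rather than, say, $S^{-1} T x$ — i.e. that one applies $S^{-1}$ to $x$ first and then takes frame coefficients — and confirming that this sequence indeed lies in $\ran T$ (it does, trivially, being in the image of $T$). After that, the verification $T^*(b - T S^{-1}x) = 0$ uses only $T^*T = S$ and the hypothesis $T^* b = x$, and the rest is the Pythagorean Theorem applied in $\ell_2^M$ together with the fact that $\ran T$ and $\ker T^*$ are orthogonal complements.
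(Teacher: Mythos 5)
Your proposal is correct and follows essentially the same route as the paper's proof: both decompose the coefficient sequence $b$ as the canonical sequence $TS^{-1}x \in \ran T$ plus a remainder shown to lie in $\ker T^* = [\ran T]^{\perp}$, and then apply the Pythagorean theorem in $\ell_2^M$. The only cosmetic difference is that you verify the remainder lies in $\ker T^*$ by computing $T^*(b - TS^{-1}x) = 0$ directly, while the paper reads this off from equating the two reconstruction formulas for $x$.
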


\begin{proof}  We have by assumption
$$
\sum_{i=1}^M\langle S^{-1}x,\p_{i}\rangle \p_{i} =  x = \sum_{i=1}^Mb_{i}\p_{i}.
$$
Therefore, the sequence $\{\langle S^{-1}x,\p_{i}\rangle - b_i\}_{i =1}^M \in \ker T^* = [\ran T]^\perp$, where $T$ is the analysis operator of $\Phi$. Now, writing
\[
b_i = \langle S^{-1}x,\p_{i}\rangle - (\langle S^{-1}x,\p_{i}\rangle - b_i)
\]
and noting that  the sequence $\{\langle S^{-1}x,\p_{i}\rangle\}_{i = 1}^M \in \ran T$ and therefore perpendicular to $\{\langle S^{-1}x,\p_{i}\rangle - b_i\}_{i =1}^M$ gives (\ref{Moment}).
\end{proof}

\subsection{Orthogonal Projections and Naimark's Theorem}

A major advantage of frames over wavelets is that orthogonal
projections take frames to frames but do
not map wavelets to wavelets.

\begin{proposition}\label{P4}
Let $\{\p_{i}\}_{i=1}^M$ be a frame for $\H^N$ with frame bounds $A,B$,
 and let $P$
be an orthogonal projection on $H$.  Then $\{P\p_{i}\}_{i=1}^M$ is a frame for
$P(H)$ with frame bounds $A,B$.  In particular, an orthogonal
projection of a orthonormal basis (or a Parseval frame)
is a Parseval frame.  
\end{proposition}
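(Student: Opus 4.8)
The plan is to verify the frame inequality directly for $\{P\p_i\}_{i=1}^M$ as a family in the Hilbert space $P(H)$, using the self-adjointness and idempotence of $P$. The single key observation is that for any $x \in P(H)$ we have $Px = x$, so that $\langle x, P\p_i\rangle = \langle Px, \p_i\rangle = \langle x, \p_i\rangle$ for every $i$. This collapses the frame coefficients of $x$ with respect to $\{P\p_i\}$ onto the frame coefficients of $x$ (viewed as an element of the ambient space $H$) with respect to the original frame $\{\p_i\}$.

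First I would fix $x \in P(H)$ and write out $\sum_{i=1}^M |\langle x, P\p_i\rangle|^2$. Moving $P$ across the inner product (using $P = P^*$) and then using $Px = x$, this equals $\sum_{i=1}^M |\langle x, \p_i\rangle|^2$. Next I would apply the hypothesis that $\{\p_i\}_{i=1}^M$ is a frame for $H$ with bounds $A, B$, which gives
\[
A\|x\|^2 \le \sum_{i=1}^M |\langle x, \p_i\rangle|^2 \le B\|x\|^2.
\]
Since $x \in P(H) \subseteq H$, this inequality applies verbatim, and combining the two displays yields $A\|x\|^2 \le \sum_{i=1}^M |\langle x, P\p_i\rangle|^2 \le B\|x\|^2$ for all $x \in P(H)$, which is exactly the statement that $\{P\p_i\}_{i=1}^M$ is a frame for $P(H)$ with bounds $A, B$. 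For the last sentence of the statement, I would note that an orthonormal basis is a Parseval frame (as remarked in the excerpt, e.g. via Parseval's Identity), and a Parseval frame has $A = B = 1$, so the general result specializes immediately.

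There is essentially no obstacle here — the proof is a two-line computation. The only point that requires a word of care is making sure the frame vectors $P\p_i$ are genuinely being regarded as elements of $P(H)$ (they are, since $P\p_i \in \ran P = P(H)$) and that the test vectors range over $P(H)$ rather than all of $H$; the identity $Px = x$ is precisely what is used and it holds only on $P(H)$. I would also remark in passing that the frame bounds $A, B$ need not be optimal for the new frame, since restricting to a subspace can only increase the lower bound and decrease the upper bound — but this is a side remark, not needed for the statement as phrased.
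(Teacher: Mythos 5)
Your proposal is correct and is essentially the same argument as the paper's: move $P$ across the inner product via self-adjointness, use $Px = x$ for $x \in P(H)$ to reduce to the frame inequality for $\{\p_i\}_{i=1}^M$, and conclude. Nothing further is needed.
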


\begin{proof}
For any $x\in P(H)$ we have
$$
\sum_{i=1}^M|\langle x,P\p_{i}\rangle|^{2} = \sum_{i=1}^M|\langle Px,\p_{i}\rangle|^{2} 
= \sum_{i=1}^M|\langle x,\p_{i}\rangle|^{2}.
$$
The result is now immediate. 
\end{proof}

Proposition \ref{P4} gives that an orthogonal projection $P$ applied an orthonormal basis $\{e_i\}_{i=1}^M$ for $\mathbb{H}^M$ leaves a Parseval frame $\{Pe_{i}\}_{i=1}^M$ for $P(\mathbb{H}^M)$.  The
converse of this is also true and is a result of Naimark (see \cite{CK}
and Han and Larson \cite{HL}).     

\begin{theorem}[\hl{Naimark's Theorem}]\label{HL}
A sequence $\{\p_{i}\}_{i=1}^M$ is a Parseval frame for a Hilbert space $\H^N$
if and only if there is a larger Hilbert space $\H^M \supset \H^N$ and an orthonormal
basis $\{e_{i}\}_{i=1}^M$ for $\H^M$ so that the orthogonal projection $P$ 
of $\H^M$
onto $\H^N$ satisfies $Pe_{i}=\p_{i}$ for all $i=1,2,\ldots,M.$
\end{theorem}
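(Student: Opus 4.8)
The plan is to prove Naimark's Theorem by splitting into the two implications, using the analysis and synthesis operators of the Parseval frame together with the structural results already established. For the backward direction, suppose $\H^M \supset \H^N$ with orthonormal basis $\{e_i\}_{i=1}^M$ and orthogonal projection $P$ of $\H^M$ onto $\H^N$ satisfying $Pe_i = \p_i$. Since an orthonormal basis is trivially a Parseval frame for $\H^M$, Proposition \ref{P4} immediately gives that $\{Pe_i\}_{i=1}^M = \{\p_i\}_{i=1}^M$ is a Parseval frame for $P(\H^M) = \H^N$. This direction is essentially just a citation of Proposition \ref{P4}.

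The forward direction is the substantive one. Suppose $\{\p_i\}_{i=1}^M$ is a Parseval frame for $\H^N$ with analysis operator $T:\H^N \to \ell_2^M$. By Theorem \ref{classthm}, since the frame is Parseval, $T$ is an isometry and $T^*T = Id$. Thus $T$ embeds $\H^N$ isometrically as a subspace $W := \ran T \subseteq \ell_2^M$. The plan is to take $\H^M := \ell_2^M$ with its natural orthonormal basis $\{e_i\}_{i=1}^M$, and to identify $\H^N$ with $W$ via the isometry $T$. The orthogonal projection $P$ of $\ell_2^M$ onto $W$ is then $P = TT^*$: indeed $TT^*$ is self-adjoint, $(TT^*)^2 = T(T^*T)T^* = TT^*$ so it is an orthogonal projection, and its range is $\ran T = W$. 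Now I would compute $Pe_i = TT^*e_i = T\p_i$, using that $T^*$ is the synthesis operator with $T^*e_i = \p_i$. Under the identification of $\H^N$ with $W$ by the isometry $T$, the vector $T\p_i \in W$ corresponds precisely to $\p_i \in \H^N$, so $P e_i = \p_i$ as required.

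The main obstacle — really a bookkeeping point rather than a deep one — is making the identification of $\H^N$ with the subspace $W = \ran T \subseteq \ell_2^M$ rigorous and compatible with the inner product, so that the statement "$Pe_i = \p_i$" literally makes sense. Since $T$ is an isometry onto $W$ (Proposition \ref{propp2}(2) with $T^*T = Id$), it is a Hilbert space isomorphism from $\H^N$ onto $W$, so $W$ is a genuine copy of $\H^N$ sitting inside $\ell_2^M$, and all inner products are preserved. One should note that $\dim W = N$ since $T$ is injective, so $\H^M = \ell_2^M$ genuinely contains (a copy of) $\H^N$ as claimed, with $M \geq N$. With this identification in hand, the displayed computation $Pe_i = TT^*e_i = T\p_i \leftrightarrow \p_i$ completes the argument. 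The only care needed is to state once and for all that we transport the frame $\{\p_i\}$ to $\{T\p_i\}$ in $W$ and work there; then "larger Hilbert space" means $\ell_2^M \supseteq W \cong \H^N$.
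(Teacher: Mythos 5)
Your proposal is correct and follows essentially the same route as the paper: both directions match, with the backward implication cited from Proposition \ref{P4} and the forward implication obtained by identifying $\H^N$ with $T(\H^N)\subset \ell_2^M$ via the isometry $T$ and showing $Pe_i = T\p_i$. The only cosmetic difference is that you exhibit the projection explicitly as $P = TT^*$ and compute $Pe_i$ directly, whereas the paper verifies $Pe_i = T\p_i$ by testing inner products against all vectors $Tx$ in the range; both are valid and equivalent.
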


\begin{proof}
The ``if'' part follows from Proposition \ref{P4}.  For the ``only if'' part,
note that if $\{\p_{i}\}_{i=1}^M$ is a Parseval for $\H^N$,
then the synthesis operator
$T^{*}:{\ell}_{2}^M\rightarrow \H^N$ is a partial isometry.  Let $\{e_{i}\}_{i=1}^M$ be 
an orthonormal
basis for ${\ell}_{2}^M$ for which $T^{*}e_{i} = \p_{i}$ for all $i =1,2,\ldots,M$.   
Since the analysis operator $T$ is
 an isometry we can identify $\H^N$ with $T(\H^N)$. Now let 
$\H^M={\ell}_{2}^M$ and let $P$
be the orthogonal projection of $\H^M$ onto $T(\H^N)$.  Then for all 
$i=1,2,\ldots,M$ 
and all $y=Tx\in T(\H^N)$ we have
$$
\langle Tx,Pe_{i}\rangle = \langle PTx,e_{i}\rangle = \langle Tx,e_{i}\rangle = \langle x,T^{*}e_{i}\rangle = \langle x,\p_{i}\rangle
= \langle Tx,T\p_{i}\rangle.
$$
It follows that $Pe_{i} = T\p_{i}$, and the result follows from the association of $\H^N$ with
$T(\H^N)$.
\end{proof}

\begin{definition}
The {\bf standard $N$-\hl{simplex}} (or {\bf {\bf regular
$N$-gon}}) is
the subset of $\R^{N-1}$ given by 
unit norm $\{\p_i\}_{i=1}^{N}$ which are equiangular.
\end{definition}

It follows from Proposition \ref{P4}:

\begin{corollary}
 A regular simplex is an equiangular tight frame.
\end{corollary}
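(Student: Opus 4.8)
The plan is to exhibit the regular simplex concretely as the (normalized) orthogonal projection of an orthonormal basis onto a hyperplane, and then read off tightness directly from Proposition \ref{P4}. First I would identify $\R^{N-1}$ with the hyperplane $W = [v]^{\perp} \subset \R^{N}$, where $\{e_i\}_{i=1}^N$ is the standard orthonormal basis of $\R^{N}$ and $v = \frac{1}{\sqrt N}\sum_{i=1}^{N} e_i$. Let $P$ be the orthogonal projection of $\R^{N}$ onto $W$. By Proposition \ref{P4}, $\{Pe_i\}_{i=1}^{N}$ is a Parseval frame for $W$.

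Next I would compute the projected vectors explicitly. Since $Pe_i = e_i - \langle e_i, v\rangle v = e_i - \frac{1}{N}\sum_{j=1}^{N} e_j$, we get $\|Pe_i\|^{2} = \langle Pe_i, e_i\rangle = 1 - \frac{1}{N} = \frac{N-1}{N}$ for every $i$, and $\langle Pe_i, Pe_k\rangle = \langle Pe_i, e_k\rangle = -\frac{1}{N}$ whenever $i \neq k$. So $\{Pe_i\}$ is an equal-norm Parseval frame, and normalizing by setting $\p_i := \sqrt{\frac{N}{N-1}}\,Pe_i$ gives a unit-norm family. Because scaling a frame by a constant $c$ scales its frame operator by $c^{2}$ (Proposition \ref{propp1} with $F = c\cdot Id$), the family $\{\p_i\}_{i=1}^{N}$ is an $\frac{N}{N-1}$-tight frame for $W \cong \R^{N-1}$; moreover $|\langle \p_i, \p_k\rangle| = \frac{N}{N-1}\cdot\frac{1}{N} = \frac{1}{N-1}$ for all $i \neq k$, so it is equiangular. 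Hence this explicit configuration is an equiangular tight frame.

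Finally I would match this against the definition of the regular $N$-simplex as an abstract unit-norm equiangular family $\{\p_i\}_{i=1}^{N}$ in $\R^{N-1}$. The remaining point is that the construction above is not special: for any such family the Gram matrix has the form $Id + \alpha E$, where $E$ is the off-diagonal all-ones matrix and $|\alpha|$ is the common inner-product magnitude; since the $N$ vectors lie in an $(N-1)$-dimensional space this matrix is singular, which pins down $|\alpha| = \frac{1}{N-1}$ and, after an orthogonal change of coordinates absorbing signs, identifies the Gram matrix with that of the family built above. Thus every regular simplex is unitarily isomorphic to the explicit one, and since a unitary $F$ conjugates the frame operator to $FSF^{*}$ (Proposition \ref{propp1}), tightness and equal norms are preserved, so every regular simplex is an equiangular tight frame. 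The substantive content is tightness — equiangularity is part of the hypothesis — and the step that needs the most care is this last identification of the abstract simplex with the concrete projected basis; everything else is the short computation of the projected inner products.
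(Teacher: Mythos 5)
Your first two paragraphs are, in substance, exactly the paper's proof: the paper also realizes the simplex as the rescaled family $\{(Id-P)e_i\}_{i=1}^N$, where $P$ is the projection onto the span of $x=\sum_{i=1}^N e_i$, and invokes Proposition \ref{P4} to conclude Parsevalness of the projected basis. Your explicit computation of $\|Pe_i\|^2=\frac{N-1}{N}$ and $\langle Pe_i,Pe_k\rangle=-\frac{1}{N}$, and the resulting tight frame bound $\frac{N}{N-1}$ and angle $\frac{1}{N-1}$, is a useful elaboration that the paper omits. Up to that point the argument is correct and complete for the concrete configuration.

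The final paragraph, however, contains a genuine error. Equiangularity only fixes $|\langle \p_i,\p_k\rangle|=c$, so the Gram matrix is $Id+c\,\Sigma$ for a symmetric matrix $\Sigma$ with zero diagonal and $\pm 1$ off-diagonal entries, not $Id+\alpha E$; and the only sign-absorbing moves available (replacing $\p_i$ by $-\p_i$) change $\Sigma$ by a switching equivalence, which does not reduce every such $\Sigma$ to the all-minus pattern. Singularity of the Gram matrix therefore does not pin down $c=\frac{1}{N-1}$. A concrete counterexample to your identification claim: take unit vectors along any $4$ of the $6$ diagonals of the icosahedron. These are $4$ unit-norm vectors spanning $\R^3$ with $|\langle \p_i,\p_k\rangle|=\frac{1}{\sqrt{5}}\neq\frac{1}{3}$ for all $i\neq k$; their $4\times 4$ Gram matrix is positive semidefinite of rank $3$ (hence singular), yet the family is not a tight frame. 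So not every unit-norm equiangular $N$-family in $\R^{N-1}$ is unitarily isomorphic to the projected basis, and the last step of your argument fails. The paper sidesteps this entirely by treating ``regular simplex'' as the specific configuration obtained from the projected orthonormal basis (``it can be realized by\dots''), for which your first two paragraphs already suffice; read against the paper's definition taken verbatim, the extra identification you attempt is not just unproved but false, which signals that the definition, not your computation, is what needs tightening (e.g.\ by requiring the common inner product to equal $-\frac{1}{N-1}$).
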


\begin{proof}
Given a regular $N$-simplex, it can be realized by
letting $\{e_i\}_{i=1}^N$ be an orthonormal basis for $\R^N$
and letting 
\[ x = \sum_{i=1}^N \varepsilon_i e_i,\mbox{ for any }\varepsilon_i
= 1, \mbox{ for }i=1,2,\ldots,N.\]
Now let $P$ be the orthogonal projection onto the span
of $x$, which
is given by:
\[ Py = \left [ \frac{1}{N}\sum_{i=1}^N \varepsilon_i \langle y,e_i\rangle
\right ] x\]
Then the $N$-simplex is a scaling by $1/\|(Id-P)e_i\|$ of $\{(Id-P)e_i\}_{i=1}^N$, which is
$N$-vectors in an $N-1$-dimensional Hilbert space and is
a Parseval frame by Proposition \ref{P4}.
\end{proof}

\subsection{Frame Representations}

Another important property of frames is given in the next proposition.

\begin{proposition}\label{presyn}
Let $\Phi = \{\p_i\}_{i=1}^M$ be a sequence of vectors in $\H^N$, let $\{e_j\}_{j=1}^N$ be
an orthonormal basis for $\H^N$, and let $\{\lambda_j\}_{j=1}^N$ be
positive real numbers.  The following are equivalent:
\begin{enumerate}
\item $\Phi$ is a frame for $\H^N$ with frame operator $S$
having eigenvectors $\{e_j\}_{j=1}^N$ and respective eigenvalues
$\{\lambda_j\}_{j=1}^N$.

\item  The following hold:
\begin{enumerate}
\item  If for all $j=1,2,\ldots ,N$,
\[ \psi_j := (\langle \p_1,e_j\rangle, \langle \p_2,e_j\rangle ,
\ldots , \langle \p_M,e_j\rangle),\]
then for all $1\leq j\not= k\leq N$ we have
\[ \langle \psi_j,\psi_k\rangle = 0.\]

\item For all $j=1,2,\ldots ,N$ we have $\|\psi_j\|_2^2
= \lambda_j$.
\end{enumerate}
\end{enumerate}
\end{proposition}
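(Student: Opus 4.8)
The plan is to recognize the vectors $\psi_j \in \ell_2^M$ as the rows of the matrix of the synthesis operator $T^*$ written in the orthonormal basis $\{e_j\}_{j=1}^N$, and then to observe that the ``Gram matrix'' of the $\psi_j$ is exactly the matrix of the frame operator $S = T^*T$ in that basis. Concretely, for any $1 \le j, k \le N$ I would compute
\[
\langle S e_k, e_j \rangle = \Big\langle \sum_{i=1}^M \langle e_k, \p_i\rangle \p_i,\, e_j \Big\rangle = \sum_{i=1}^M \langle e_k,\p_i\rangle \langle \p_i, e_j\rangle = \sum_{i=1}^M \langle \p_i, e_j\rangle \overline{\langle \p_i, e_k\rangle} = \langle \psi_j, \psi_k\rangle,
\]
using that the inner product on $\ell_2^M$ is $\langle u,v\rangle = \sum_i u_i \overline{v_i}$, that $\langle e_k, \p_i\rangle = \overline{\langle \p_i, e_k\rangle}$, and that the $i$th coordinate of $\psi_j$ is $\langle \p_i, e_j\rangle$. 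This single identity is the crux of the proposition; everything else is bookkeeping.

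For $(1) \Rightarrow (2)$: assuming $S e_j = \lambda_j e_j$ for all $j$, the identity above gives $\langle \psi_j, \psi_k\rangle = \lambda_k \langle e_k, e_j\rangle = \lambda_k \delta_{jk}$. Reading off $j \ne k$ yields (a), and $j = k$ yields $\|\psi_j\|_2^2 = \lambda_j$, which is (b).

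For $(2) \Rightarrow (1)$: assuming (a) and (b), the identity above together with orthonormality of $\{e_j\}$ gives $\langle S e_k, e_j\rangle = \lambda_k \delta_{jk}$, so $S e_k = \sum_{j=1}^N \langle S e_k, e_j\rangle e_j = \lambda_k e_k$; thus each $e_k$ is an eigenvector of $S$ with eigenvalue $\lambda_k$. Here $S$ is the operator $Sx = \sum_{i=1}^M \langle x,\p_i\rangle \p_i$, which is automatically positive and self-adjoint by Example \ref{frameop}; since all $\lambda_k > 0$, $S$ is invertible, hence $T^*$ is surjective (as $\ran S \subseteq \ran T^*$), so $\spn\,\Phi = \H^N$ and $\Phi$ is a frame by Proposition \ref{lem102}. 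By construction its frame operator is then $S$, with the prescribed eigenvectors and eigenvalues.

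The only point demanding care is the complex conjugation in the $\ell_2^M$ inner product — one must keep the order of $j$ and $k$ straight so that $\langle \psi_j,\psi_k\rangle$ matches $\langle S e_k, e_j\rangle$ and not its conjugate; in the real case this issue disappears. Beyond that there is no analytic obstacle: once the matrix of $S$ in the basis $\{e_j\}$ is identified with the Gram matrix of the $\psi_j$'s, both implications are immediate.
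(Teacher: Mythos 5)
Your proof is correct, and its core is the same computation the paper uses: identifying $\langle \psi_j,\psi_k\rangle$ with the $(j,k)$ entry of the matrix of $S=T^*T$ in the basis $\{e_j\}_{j=1}^N$. The difference is one of completeness: the paper's proof only carries out $(1)\Rightarrow(2)$, reading off $\langle \psi_j,\psi_k\rangle = \langle \lambda_j e_j,e_k\rangle$, and never addresses the converse, whereas you also prove $(2)\Rightarrow(1)$. Your converse argument is the right one: (a) and (b) say exactly that the (always positive, self-adjoint) operator $Sx=\sum_{i=1}^M\langle x,\p_i\rangle\p_i$ is diagonalized by $\{e_j\}_{j=1}^N$ with strictly positive eigenvalues $\lambda_j$, so $S$ is invertible; since $\ran S\subseteq \ran T^*=\spn\Phi$, the family spans and is a frame by Proposition \ref{lem102}, and its frame operator is $S$ by definition (alternatively, invertibility gives the lower bound directly from $\sum_i|\langle x,\p_i\rangle|^2=\langle Sx,x\rangle\ge \lambda_N\|x\|^2$). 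Your attention to the conjugation is also warranted: with the paper's convention $\langle u,v\rangle=\sum_i u_i\overline{v_i}$, one has $\langle\psi_j,\psi_k\rangle=\sum_i\langle\p_i,e_j\rangle\overline{\langle\p_i,e_k\rangle}=\langle Se_k,e_j\rangle$ exactly as you write; the paper's displayed computation in fact evaluates $\langle\psi_k,\psi_j\rangle$, which is harmless there only because the target values are $0$ and the real numbers $\lambda_j$.
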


\begin{proof}
If $\{\p_i\}_{i=1}^M$ is a frame for $\H^N$ with frame operator $S$
having eigenvectors $\{e_j\}_{j=1}^N$ and respective eigenvalues
$\{\lambda_j\}_{j=1}^N$, then for all $j=1,2,\ldots ,N$ we have
\[ \sum_{i=1}^M \langle e_j,\p_i\rangle \p_i = \lambda_j e_j.\]
Hence, for $1 \leq j \neq k \leq N$ we have
\[ \langle \psi_j,\psi_k \rangle = \sum_{i=1}^M 
\overline{\langle \p_i,e_j\rangle} \langle \p_i,e_k\rangle  =\sum_{i=1}^M \langle e_j,\p_i\rangle \langle \p_i,e_k\rangle 
= \langle \lambda_j e_j , e_k \rangle = 0.\]
Similarly,
\begin{align*}
 \|\psi_j\|_2^2 &= \langle \lambda_j e_j , e_j \rangle = \lambda_j. \qedhere
\end{align*}
\end{proof}

This gives some important properties of the matrix representation of the synthesis operator. As stated before, these criterion are often used in constructing frames with a given synthesis operator.

\begin{proposition}\label{synprop}
If $\{\p_i\}_{i=1}^M$ is a frame for $\H^N$ and
$T^* = [a_{ij}]_{i=1,j=1}^{N\ ,M}$ is the synthesis matrix with respect to the eigenvectors of the frame operator, then the following hold.
\begin{enumerate}
\item The rows of $T^*$ are orthogonal.

\item The square sum of the columns are the square norm of the frame vectors.

\item The square sum of the rows are the eigenvalues of the frame operator.
\end{enumerate}
\end{proposition}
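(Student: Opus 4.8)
The plan is to observe that, once the entries of $T^*$ relative to the eigenbasis of the frame operator are written out explicitly, all three assertions follow immediately from Proposition \ref{presyn} and Parseval's Identity, with essentially no new computation. First I would unwind the definition of the matrix representation of the synthesis operator. Since $T^*e_i = \p_i$, where $\{e_i\}_{i=1}^M$ denotes the natural orthonormal basis of $\ell_2^M$, the entry of $T^*$ in the row indexed by the eigenvector $e_j$ of $S$ and the column indexed by $i$ is $\langle T^*e_i, e_j\rangle = \langle \p_i, e_j\rangle$. Hence the $j$-th row of $T^*$ is precisely the vector $\psi_j = (\langle \p_1, e_j\rangle, \ldots, \langle \p_M, e_j\rangle)$ appearing in Proposition \ref{presyn}, and the $i$-th column of $T^*$ is the coordinate vector $(\langle \p_i, e_1\rangle, \ldots, \langle \p_i, e_N\rangle)$ of the frame vector $\p_i$ with respect to the orthonormal basis $\{e_j\}_{j=1}^N$ of $\H^N$.

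With this dictionary in hand, the three parts are read off directly. For part (1), the inner product of row $j$ with row $k$ (for $j\neq k$) is $\langle \psi_j, \psi_k\rangle$, which vanishes by the orthogonality condition (2)(a) of Proposition \ref{presyn}. For part (3), the squared $\ell_2$-norm of row $j$ is $\|\psi_j\|_2^2$, which equals $\lambda_j$ by the norm condition (2)(b) of Proposition \ref{presyn}. For part (2), the squared sum of the entries in column $i$ is $\sum_{j=1}^N |\langle \p_i, e_j\rangle|^2$, and since $\{e_j\}_{j=1}^N$ is an orthonormal basis of $\H^N$, Parseval's Identity (Proposition \ref{parsevalid}) identifies this quantity with $\|\p_i\|^2$. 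As an alternative for parts (1) and (3) together, one may note that $T^*(T^*)^* = T^*T = S$, which in the eigenbasis is $\mathrm{diag}(\lambda_1,\ldots,\lambda_N)$, and that the $(j,k)$ entry of $T^*(T^*)^*$ equals $\langle \psi_j, \psi_k\rangle$.

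There is no real obstacle here, since every claim reduces to a result already proved; the only point needing care is the index bookkeeping — keeping straight that rows are indexed by the eigenbasis ($j \in \{1,\ldots,N\}$) and columns by the frame index ($i \in \{1,\ldots,M\}$), and tracking the complex conjugate inside $\langle \psi_j, \psi_k\rangle$ so that it is consistent with the computation in the proof of Proposition \ref{presyn}. Once the identification of rows with the $\psi_j$ and columns with the coordinate vectors of the $\p_i$ is made, nothing further remains.
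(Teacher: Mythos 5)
Your proof is correct and is exactly the argument the paper intends: the paper states this proposition without a separate proof, as an immediate consequence of Proposition \ref{presyn} (rows are the vectors $\psi_j$, giving (1) and (3)) together with Parseval's Identity applied to the columns (giving (2)), which is precisely your identification. The only remaining detail, the index bookkeeping and conjugates, you have handled consistently with the computation in the proof of Proposition \ref{presyn}.
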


\section{Constants Related to Frames}\label{Intro}
\setcounter{equation}{0}

Let $\{\p_i\}_{i=1}^M$ be a frame for $\H^N$ with frame operator $S$. 

\smallskip
\noindent {\bf General Frame}:  The sum of the eigenvalues of $S$
equals the sum of the squares of the lengths of the frame vectors:
\[ \sum_{j=1}^N \lambda_j = \sum_{i=1}^M\|\p_i\|^2. \]
 \vskip12pt

\noindent {\bf Equal Norm Frame}:  For an equal norm frame in which
$\|\p_i\|=c$ holds for all $i=1,2,\ldots ,M$ we have
\[ \sum_{j=1}^N \lambda_j = \sum_{i=1}^M \|\p_i\|^2 = M \cdot c^2.\]
\vskip12pt

\noindent {\bf Tight Frame}:  Since tightness means $A=B$, we have
\[ \sum_{i=1}^M |\langle x,\p_i\rangle|^2 = A\|x\|^2,\ \ 
\mbox{for all $x\in \H^N$}.\]
We have that $S = A \cdot I_N$ and thus the sum of the eigenvalues becomes:
\[ N\cdot A = \sum_{j=1}^N \lambda_j = \sum_{i=1}^M \|\p_i\|^2.\]

\vskip12pt

\noindent {\bf Parseval Frame}:  If the frame is Parseval, then
$A=B=1$ and so
\[ \sum_{i=1}^M |\langle x,\p_i\rangle|^2 = \|x\|^2,\ \ 
\mbox{for all $x\in \HN$}.\]
We have that $S = Id$ and
\[ N = \sum_{j=1}^N \lambda_j = \sum_{i=1}^M \|\p_i\|^2.\]
\vskip12pt

\noindent {\bf Equal Norm Tight Frame}:  For an equal norm $A$-tight
frame in which
$\|\p_i\|=c$ holds for all $i=1,2,\ldots ,M$ we have
\[ N \cdot A = \sum_{j=1}^N \lambda_j = \sum_{i=1}^M \|\p_i\|^2 
= M \cdot c^2.\]
Hence $A = M \cdot c^2/N$ and thus
\[ \sum_{i=1}^M |\langle x,\p_i\rangle|^2 = \frac{M}{N}c^2\|x\|^2,\ \ 
\mbox{for all $x\in \H^N$.}\]

\vskip12pt

\noindent {\bf Equal Norm Parseval Frame}:  For an equal norm
Parseval frame we have
\[ N = \sum_{j=1}^M \lambda_j = \sum_{i=1}^M \|\p_i\|^2 = c^2M.\]
\vskip12pt

\section{Constructing Finite Frames}\label{Intro}
\setcounter{equation}{0}

For applications, we need to construct finite frames with extra
properties such as:
\begin{enumerate}
\item Prescribing in advance the norms of the frame vectors.
(See for example \cite{C2,CL,CL2}).
\vskip12pt

\item  Constructing equiangular frames.  That is, frames $\{\p_i\}_{i=1}^M$
for which there is a constant $c>0$ and 
\[ |\langle \p_i,\p_j\rangle| =c,\ \ \mbox{for all $i\not= j$}.\]
(See for example \cite{CRT,HP,SH,STDH}).
\vskip12pt

\item Frames for which the operator 
\[ \pm x \mapsto \{|\langle x,\p_i\rangle|\}_{i=1}^M,\ \ \mbox{is
one-to-one}.\]
(See for example \cite{BCE,BCEB}). 
\end{enumerate}

\noindent For a good introduction to constructive methods for frames see
\cite{C2} 

\subsection{Finding Parseval Frames}
There is a unique way to get Parseval frames of $M$ vectors
in $\H^N$.  Take a $M\times M$
unitary matrix $U = (a_{ij})_{i,j=1}^M$.  Take the submatrix
\[V = (a_{ij})_{i=1,j=1}^{M\ , N}.\]
The rows of $V$ form a parseval frame for $\H^N$, since these
vectors are just the rows of the matrix $U$ (which are an orthonormal
basis for $\H^M$) projected onto $\H^N$ and so form a Parseval
frame.  The converse to this is also true - and follows directly from
{\bf Naimark's Theorem} (Theorem \ref{HL}).

\subsection{Adding vectors to a frame to make it tight}

Every finite frame for $\HN$ can be turned into a tight frame with the
addition of at most $N-1$-vectors.

\begin{proposition}
If $\{\p_i\}_{i=1}^M$ is a frame for $\H^N$, then there are vectors
$\{\psi_j\}_{j=2}^N$ so that $\{\p_i\}_{i=1}^M \cup \{\psi_j\}_{j=2}^N$ is
a tight frame.
\end{proposition}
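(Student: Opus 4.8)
The plan is to use the spectral decomposition of the frame operator $S$ of $\{\p_i\}_{i=1}^M$ and then append one new vector for each eigenvalue below the top eigenvalue. Concretely, let $\{e_j\}_{j=1}^N$ be an orthonormal eigenbasis of $S$ with eigenvalues $\lambda_1 \geq \lambda_2 \geq \cdots \geq \lambda_N > 0$ (these are positive since $S$ is invertible). I claim the collection $\{\p_i\}_{i=1}^M \cup \{\sqrt{\lambda_1 - \lambda_j}\, e_j\}_{j=2}^N$ works, with tight frame bound $\lambda_1$. Note that when $\lambda_j = \lambda_1$ the corresponding added vector is zero, which is harmless since the excerpt explicitly allows zeros; so in fact one never needs more than $N-1$ genuinely nonzero vectors, matching the claim.

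First I would write down the frame operator of the enlarged family. By the formula $Sx = \sum_i \langle x,\p_i\rangle\p_i$ and additivity of the frame operator under concatenation of sequences, the frame operator $\tilde S$ of $\{\p_i\}_{i=1}^M \cup \{\sqrt{\lambda_1-\lambda_j}\,e_j\}_{j=2}^N$ is
\[
\tilde S x = Sx + \sum_{j=2}^N (\lambda_1 - \lambda_j)\langle x, e_j\rangle e_j.
\]
Then I would evaluate $\tilde S$ on the eigenbasis: for $x = e_1$, the added sum contributes nothing and $\tilde S e_1 = \lambda_1 e_1$; for $x = e_k$ with $k \geq 2$, we get $\tilde S e_k = \lambda_k e_k + (\lambda_1 - \lambda_k) e_k = \lambda_1 e_k$. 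Hence $\tilde S = \lambda_1 \cdot Id$, so by the characterization of tight frames (the proposition stating $S = A\cdot Id$ iff the frame is $A$-tight) the enlarged family is a $\lambda_1$-tight frame. It remains a frame at all (the lower bound) because it contains the original frame $\{\p_i\}_{i=1}^M$, whose span is already all of $\H^N$, so spanning is preserved and Proposition \ref{lem102} applies; alternatively this is immediate once $\tilde S = \lambda_1 \cdot Id$ with $\lambda_1 > 0$.

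There is essentially no serious obstacle here; the only thing to be careful about is confirming that $\{(\lambda_1-\lambda_j)\langle\cdot,e_j\rangle e_j\}_{j=2}^N$ really is the frame operator contribution of the sequence $\{\sqrt{\lambda_1-\lambda_j}\,e_j\}_{j=2}^N$ — this is just Example \ref{frameop} applied to those vectors, using $\|\sqrt{\lambda_1-\lambda_j}\,e_j\| = \sqrt{\lambda_1-\lambda_j}$ — and that the count is at most $N-1$, which holds because we only add vectors indexed by $j = 2, \ldots, N$. If one wants every added vector to be genuinely nonzero, one simply discards those $j$ with $\lambda_j = \lambda_1$, which only decreases the count. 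The key idea, worth stating explicitly in the proof, is that "making a frame tight" amounts to flattening the eigenvalue profile of $S$ up to the largest eigenvalue, and each deficient eigenvalue can be repaired independently by a single rank-one addition along the corresponding eigenvector.
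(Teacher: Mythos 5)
Your construction is exactly the paper's: append $\psi_j = \sqrt{\lambda_1-\lambda_j}\,e_j$ for $j=2,\ldots,N$ along the eigenbasis of $S$, yielding a $\lambda_1$-tight frame. Your verification that the new frame operator equals $\lambda_1\cdot Id$ is correct and simply spells out the computation the paper leaves implicit.
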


\begin{proof}
Let $S$ be the frame operator for the frame with eigenvectors
$\{e_j\}_{j=1}^N$ and respective eigenvalues $\{\lambda_j\}_{j=1}^N$
that satisfy $\lambda_1 \ge \lambda_2 \ge \cdots \ge \lambda_N$.
We define $\psi_j$ for $j=2,3,\ldots ,N$ by
\[ \psi_j = \sqrt{\lambda_1-\lambda_j}\,e_j.\]
This family $\{\p_i\}_{i=1}^M \cup \{\psi_j\}_{j=2}^N$ is a $\lambda_1$-tight frame.
\end{proof}

\subsection{Majorization}

One of the main constructive methods for frames is
due to Casazza and Leon \cite{CL,CL2}.  They gave a construction
for the important results of Benedetto and Fickus \cite{BF} and
Casazza, Fickus, Kova\v{c}evi\'c, Leon, Tremain \cite{CFK}.

\begin{theorem}\cite{CFK}\label{AT1}
Fix $N\le M$ and $a_1\ge a_2 \ge \cdots \ge a_M >0$.  The following
are equivalent:
\begin{enumerate}
\item There is a tight frame $\{\p_i\}_{i=1}^M$ for $\H^N$ satisfying
$\|\p_i\|=a_i$, for all $i=1,2,\ldots ,M$.

\item For all $1\le n < N$ we have
\[ a_n^2 \le \dfrac{\sum_{i=n+1}^Ma_i^2}{N-n}.\]

\item We have
\[ \sum_{i=1}^M a_i^2 \ge Na_1^2.\]

\item If
\[ \lambda =  \sqrt{\dfrac{N}{\sum_{i=1}^Ma_i^2}},\]
then
\[ \lambda a_i \le 1,\ \ \mbox{for all $i=1,2,\ldots ,M$}.\]
\end{enumerate}
\end{theorem}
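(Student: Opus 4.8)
The plan is to establish the four conditions as equivalent by proving: $(2)\Leftrightarrow(3)\Leftrightarrow(4)$ by elementary manipulation of the scalars $a_i$; then $(1)\Rightarrow(3)$; and finally $(3)\Rightarrow(1)$, which is the only step with genuine content. Throughout, set $A:=\tfrac1N\sum_{i=1}^M a_i^2$, so that the $\lambda$ appearing in $(4)$ is $1/\sqrt A$.

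For the arithmetic equivalences: condition $(4)$ says exactly that $a_i^2\le A$ for every $i$, and since $a_1=\max_i a_i$ this is the single inequality $a_1^2\le A$, i.e. $Na_1^2\le\sum_{i=1}^M a_i^2$, which is $(3)$; hence $(3)\Leftrightarrow(4)$. Taking $n=1$ in $(2)$ gives $(N-1)a_1^2\le\sum_{i=2}^M a_i^2$, which is $(3)$, so $(2)\Rightarrow(3)$. Conversely, assume $(3)$ and fix $1\le n<N$; using $a_i\le a_1$ we get $(N-n)a_n^2+\sum_{i=1}^n a_i^2\le (N-n)a_1^2+na_1^2=Na_1^2\le\sum_{i=1}^M a_i^2$, and subtracting $\sum_{i=1}^n a_i^2$ yields $(N-n)a_n^2\le\sum_{i=n+1}^M a_i^2$, which is $(2)$. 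For $(1)\Rightarrow(3)$: if $\Phi=\{\p_i\}_{i=1}^M$ is an $A'$-tight frame with $\|\p_i\|=a_i$, then its frame operator is $S=A'\cdot Id$, so the trace of $S$, computed as $\sum_j\langle Se_j,e_j\rangle$ over an orthonormal basis, equals both $NA'$ and $\sum_{i=1}^M\|\p_i\|^2=\sum_{i=1}^M a_i^2$; thus $A'=A$, and by Proposition \ref{New} (with upper frame bound $B=A$) we have $a_1^2=\|\p_1\|^2\le A$, which is $(3)$.

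It remains to prove $(3)\Rightarrow(1)$, the construction; this is the main obstacle. Put $d_i:=a_i^2/A$, so that $\sum_{i=1}^M d_i=N$ and, by $(3)\Leftrightarrow(4)$, $0<d_i\le1$. It suffices to produce an orthogonal projection $P$ on $\mathbb{H}^M$ of rank $N$ whose diagonal entries with respect to the standard orthonormal basis $\{e_i\}_{i=1}^M$ are the $d_i$: for then Proposition \ref{P4} shows $\{Pe_i\}_{i=1}^M$ is a Parseval frame for the $N$-dimensional space $P(\mathbb{H}^M)$ with $\|Pe_i\|^2=\langle Pe_i,e_i\rangle=d_i$, hence $\{\sqrt A\,Pe_i\}_{i=1}^M$ is an $A$-tight frame there with $\|\sqrt A\,Pe_i\|^2=a_i^2$, and transporting it by a Hilbert space isomorphism $P(\mathbb{H}^M)\cong\mathbb{H}^N$ completes the argument. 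Now a rank-$N$ orthogonal projection on $\mathbb{H}^M$ is precisely a self-adjoint operator with eigenvalue list $(\underbrace{1,\dots,1}_{N},0,\dots,0)$, so the existence of such a $P$ with prescribed diagonal $(d_i)$ is exactly the conclusion of the (real, resp. complex) Schur--Horn theorem, whose hypothesis is that $(d_i)$ be majorized by that eigenvalue list: indeed $\sum_i d_i=N$, and since the $d_i$ are nonincreasing, $\sum_{i=1}^k d_i\le k$ for $k\le N$ (each term is $\le1$) and $\sum_{i=1}^k d_i\le\sum_{i=1}^M d_i=N$ for $k\ge N$.

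I expect the realization of the prescribed diagonal to be the hard part. If one prefers to avoid invoking Schur--Horn, it can instead be handled by the explicit inductive construction of Casazza and Leon \cite{CL,CL2}: when $M=N$, hypothesis $(3)$ forces $a_1=\cdots=a_N$ (since $\sum_{i=1}^N a_i^2\le Na_1^2$ always holds), and a scaled orthonormal basis is the desired tight frame; when $M>N$ one passes to a shorter list of prescribed norms still satisfying $(3)$ in dimension $N$ and reassembles a tight frame for the original list from a tight frame for the shorter one. Either route isolates the same genuinely nontrivial point: prescribing the diagonal of a self-adjoint operator whose rank and tightness scalar are fixed in advance.
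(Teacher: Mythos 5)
Your proof is correct. Note that the paper itself gives no argument for Theorem \ref{AT1} --- it is stated with a citation to \cite{CFK} and the proof is left to the references --- so there is nothing internal to compare against step by step. The arithmetic equivalences $(2)\Leftrightarrow(3)\Leftrightarrow(4)$ and the trace argument for $(1)\Rightarrow(3)$ are exactly right (the only degenerate case, $N=1$, makes $(2)$ vacuous and $(3)$ automatic, so nothing breaks). For the substantive implication $(3)\Rightarrow(1)$, your reduction --- normalize to $d_i=a_i^2/A$ with $\sum_i d_i=N$ and $0<d_i\le 1$, realize $(d_i)$ as the diagonal of a rank-$N$ orthogonal projection on $\mathbb{H}^M$ via Schur--Horn, then apply Proposition \ref{P4} and rescale by $\sqrt{A}$ --- is sound, and it is in fact the same mechanism the paper gestures at: your majorization check that $(d_i)$ is dominated by $(1,\dots,1,0,\dots,0)$ is precisely condition $(2)$ of Theorem \ref{tthm1} specialized to $\lambda_1=\cdots=\lambda_N=A$, so your argument can be read as deriving Theorem \ref{AT1} as a corollary of the generalization the paper states next, with Schur--Horn (or the Casazza--Leon induction you sketch) supplying the existence step that the paper outsources to \cite{CFK,CL}. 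The one place where you are explicitly sketchy --- the inductive alternative for $M>N$ --- is clearly flagged as optional and is not needed, since the Schur--Horn route is complete.
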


This result was generalized by Casazza and Leon \cite{CL} to:

\begin{theorem}\label{tthm1}
Let $S$ be a positive self-adjoint operator on $\H^N$ and  let $\lambda_1 \ge \lambda_2
\ge \cdots \ge \lambda_N>0$ be the eigenvalues of $S$.  Fix $M\ge N$ and real numbers
$a_1\ge a_2\ge \cdots \ge a_M>0$.  The following are equivalent:
\begin{enumerate}
\item[(1)]  There is a frame $\{\p_i\}_{i=1}^M$ for $\H^N$ with frame operator $S$
satisfying $\|\p_i\|= a_i$ for all $i=1,2,\ldots,M$.
\item[(2)] For every $1\le k \le N$ we have
\[ \sum_{j=1}^ka_j^2 \le \sum_{j=1}^k \lambda_j,\]
and
\[ \sum_{i=1}^M a_i^2 = \sum_{j=1}^N \lambda_j.\]
\end{enumerate}
\end{theorem}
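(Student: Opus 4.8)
The plan is to prove the equivalence by induction on $M$, peeling off one frame vector at a time. The direction $(1) \Rightarrow (2)$ is the easy one: if $\{\p_i\}_{i=1}^M$ is a frame with frame operator $S$ and $\|\p_i\| = a_i$, then $\sum_{i=1}^M a_i^2 = \sum_{i=1}^M \|\p_i\|^2 = \tr S = \sum_{j=1}^N \lambda_j$ by the trace formula (Corollary \ref{cor20} together with the constants listed in the ``General Frame'' paragraph). For the partial-sum majorization inequalities $\sum_{j=1}^k a_j^2 \le \sum_{j=1}^k \lambda_j$, I would let $P$ be the orthogonal projection onto a $k$-dimensional subspace; then $\tr(PSP) = \sum_{i=1}^M \|P\p_i\|^2$, and choosing $P$ to project onto the span of the top $k$ eigenvectors of $S$ makes $\tr(PSP) = \sum_{j=1}^k \lambda_j$, while choosing $P$ to project onto the span of $\{\p_1,\dots,\p_k\}$ (or using a rearrangement/pigeonhole argument since the $a_i$ are in decreasing order) forces $\sum_{i=1}^M \|P\p_i\|^2 \ge \sum_{j=1}^k a_j^2$; combining gives the inequality. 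This is a standard Schur-Horn type argument.

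The substantive direction is $(2) \Rightarrow (1)$: constructing the frame. Here I would argue by induction on $M - N$. The base case $M = N$ is precisely the classical Schur--Horn theorem: given a positive operator $S$ with eigenvalues $\lambda_1 \ge \cdots \ge \lambda_N$ and a target diagonal $a_1^2 \ge \cdots \ge a_N^2$ with equal sums and the majorization inequalities, there is an orthonormal basis $\{u_j\}$ such that the vectors $\p_j = a_j u_j'$ (suitably rotated) have the prescribed norms and frame operator $S$; equivalently, $S$ has a representation as $T^*T$ with the columns of $T^*$ having the prescribed norms, which is the statement that a Hermitian matrix with given spectrum and given diagonal exists iff the diagonal is majorized by the spectrum. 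For the inductive step $M > N$, I would split off the last vector $\p_M$: choose $\p_M$ with $\|\p_M\| = a_M$ cleverly so that $S - \langle \cdot, \p_M\rangle \p_M$ is still positive with eigenvalues $\mu_1 \ge \cdots \ge \mu_N$ satisfying the hypotheses of the theorem relative to $a_1 \ge \cdots \ge a_{M-1}$. The key point is an interlacing/intermediate-value argument: as $\p_M$ ranges over vectors of norm $a_M$, the eigenvalue list of $S - \langle \cdot,\p_M\rangle\p_M$ varies continuously and one can hit a list $\mu$ that both majorizes $\{a_j^2\}_{j=1}^{M-1}$ partial-sum-wise and has the correct total $\sum \mu_j = \sum_{j=1}^N \lambda_j - a_M^2 = \sum_{i=1}^{M-1} a_i^2$; then apply the inductive hypothesis to get $\{\p_i\}_{i=1}^{M-1}$ with frame operator $S - \langle\cdot,\p_M\rangle\p_M$, and adjoin $\p_M$.

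The main obstacle is exactly this inductive step: showing that a rank-one perturbation of the prescribed norm can be chosen to land the residual spectrum in the region cut out by the majorization constraints. This requires verifying that the constraint polytope is nonempty and that the map $\p_M \mapsto \operatorname{spec}(S - \langle\cdot,\p_M\rangle\p_M)$ sweeps across it — a convexity-plus-continuity argument (the set of achievable residual spectra is, by a Schur-Horn type result, exactly the set of $\mu$ interlacing appropriately with $\{\lambda_j\}$ and summing correctly, and one checks the majorization hypotheses on $\{a_i\}_{i=1}^{M}$ guarantee a compatible $\mu$ exists). Everything else — the trace identities, the projection computations, invariance of the frame operator under the constructions — follows from results already established, in particular Proposition \ref{propp1} and the spectral theorem (Theorem \ref{spec}).
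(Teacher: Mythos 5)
The paper itself does not prove Theorem \ref{tthm1}; it quotes the result from Casazza--Leon \cite{CL}, so your proposal has to stand on its own. Your direction $(1)\Rightarrow(2)$ has the right ingredients but is mis-assembled: you evaluate $\tr (PSP)$ at two \emph{different} projections (the projection onto the top $k$ eigenvectors of $S$, and the projection onto $\spn\{\p_1,\ldots,\p_k\}$) and then ``combine,'' which proves nothing. What the argument actually needs is Ky Fan's maximum principle, i.e.\ that $\tr(PSP)\le \lambda_1+\cdots+\lambda_k$ for \emph{every} rank-$k$ orthogonal projection $P$; applying this with $P$ the projection onto $\spn\{\p_1,\ldots,\p_k\}$, for which $\sum_{i=1}^M\|P\p_i\|^2\ge\sum_{i=1}^k\|\p_i\|^2=\sum_{i=1}^k a_i^2$, gives the inequality. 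That slip is fixable; the trace identity via Corollary \ref{cor20} is fine.

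The genuine gap is in $(2)\Rightarrow(1)$, and it sits exactly where you admit the main obstacle lies: the inductive step is left as an unverified claim, and that claim is the whole content of the theorem. To close it you would have to prove two things: (i) for $\|\p_M\|^2=a_M^2$, the achievable spectra of $S-\langle\cdot,\p_M\rangle\p_M$ are precisely the sequences $\mu_1\ge\cdots\ge\mu_N\ge 0$ interlacing $\lambda$ (that is, $\lambda_{j+1}\le\mu_j\le\lambda_j$) with $\sum_j\mu_j=\sum_j\lambda_j-a_M^2$; and (ii) the hypotheses on $(a_i^2)_{i=1}^M$ force the existence of such a $\mu$ which in addition satisfies $\sum_{j=1}^k\mu_j\ge\sum_{j=1}^k a_j^2$ for all $k\le N$ and $\mu_N>0$, so that the inductive hypothesis applies to $a_1\ge\cdots\ge a_{M-1}$. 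Neither is carried out, and (ii) is a concrete construction of $\mu$, not something delivered by a soft ``convexity-plus-continuity'' remark. Since you already invoke Schur--Horn for the base case, note that one application of it removes the induction entirely: condition (2) says precisely that $(a_1^2,\ldots,a_M^2)$ is majorized by $(\lambda_1,\ldots,\lambda_N,0,\ldots,0)\in\R^M$, so Horn's theorem gives an $M\times M$ positive semidefinite matrix $G$ with this spectrum and diagonal $(a_i^2)_{i=1}^M$; taking $\p_i=G^{1/2}e_i$ (as in the Gramian section of the paper) produces vectors with $\|\p_i\|^2=a_i^2$ spanning an $N$-dimensional space whose frame operator has eigenvalues $\lambda_1,\ldots,\lambda_N$, and conjugating by a unitary (Proposition \ref{propp1}) turns that frame operator into $S$ itself. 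Either prove (i) and (ii), or restructure the argument along these lines.
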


Theorem \ref{tthm1}(2) is the so called {\bfseries \hl{majorization}} of one sequence over another. The next result follows readily from the above results.

\begin{corollary}\label{AC20}
Let $ S $ be a positive self-adjoint operator on $\H^N$.  For any
$M\ge N$ there is an equal norm sequence $\{\p_i\}_{i=1}^{M}$
in $\H^N$ which has $S$ as its frame operator.
\end{corollary}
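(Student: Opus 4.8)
The plan is to deduce this immediately from Theorem \ref{tthm1} by making the obvious choice of prescribed norms and checking that the hypotheses are met. Since $S$ is positive and — being a prospective frame operator — invertible, write its eigenvalues as $\lambda_1 \ge \lambda_2 \ge \cdots \ge \lambda_N > 0$. I would set
\[ c := \sqrt{\frac{1}{M}\sum_{j=1}^N \lambda_j} \]
and take $a_1 = a_2 = \cdots = a_M = c$. The goal is then to verify condition (2) of Theorem \ref{tthm1} for this sequence $(a_i)_{i=1}^M$ and this operator $S$; the theorem then produces a frame $\{\p_i\}_{i=1}^M$ with frame operator $S$ and $\|\p_i\| = a_i = c$ for all $i$, which is exactly an equal norm sequence with frame operator $S$.

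For the verification there are two things to check. The trace identity $\sum_{i=1}^M a_i^2 = Mc^2 = \sum_{j=1}^N \lambda_j$ holds by the very definition of $c$. The partial-sum (majorization) inequalities $\sum_{j=1}^k a_j^2 \le \sum_{j=1}^k \lambda_j$ for $1 \le k \le N$ amount to $c^2 \le \frac{1}{k}\sum_{j=1}^k \lambda_j$. Here I would invoke the elementary fact that, because $\lambda_1 \ge \cdots \ge \lambda_N$, the running averages $\frac{1}{k}\sum_{j=1}^k \lambda_j$ are non-increasing in $k$ (each new term $\lambda_{k+1}$ is at most the average of the previous ones), so their minimum over $1 \le k \le N$ is attained at $k = N$ and equals $\frac{1}{N}\sum_{j=1}^N \lambda_j$. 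Since $M \ge N$,
\[ c^2 = \frac{1}{M}\sum_{j=1}^N \lambda_j \le \frac{1}{N}\sum_{j=1}^N \lambda_j \le \frac{1}{k}\sum_{j=1}^k \lambda_j, \]
which is precisely the required bound. Applying Theorem \ref{tthm1} then completes the proof.

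I do not expect a genuine obstacle here: the whole content is the choice $c^2 = \tr(S)/M$ together with the monotonicity of running averages of a decreasing sequence, and the latter is a one-line observation. The only point worth flagging is interpretive rather than technical — ``positive self-adjoint operator'' must be read as positive definite (equivalently invertible), since otherwise $S$ could not be a frame operator at all and Theorem \ref{tthm1}, as stated, requires $\lambda_N > 0$.
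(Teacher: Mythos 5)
Your proposal is correct and follows essentially the same route as the paper: the same choice $c^2 = \frac{1}{M}\sum_{j=1}^N\lambda_j$ of common squared norm, the same trace identity, and the same application of Theorem \ref{tthm1}. The only difference is in verifying the partial-sum inequalities $kc^2 \le \sum_{j=1}^k \lambda_j$: you do it directly via the monotonicity of the running averages of a non-increasing sequence, while the paper checks $k=1$ and then rules out a first failing index $k$ by a contradiction argument against the trace equality --- your verification is a bit cleaner, but it is the same argument in substance, and your closing remark that $S$ must be read as positive definite (so $\lambda_N>0$, as Theorem \ref{tthm1} requires) is a fair observation about a hypothesis the paper leaves implicit.
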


\begin{proof}
Let $ \lambda_1 \ge \lambda_2 \ge \dots \lambda_N > 0$ be the 
eigenvalues of $S$ and let
\begin{equation}\label{AE11}
a^2 = \frac{1}{M}\sum_{j=1}^N \lambda_j.
\end{equation}
Now we check condition (2) of Theorem \ref{tthm1} to see that there
is a sequence $\{\p_i\}_{i=1}^{M}$ in $\H^N$ with $\|\p_i\| = a$ for
all $i=1,2,\ldots ,M$.  That is, we check the condition with $a_1=a_2=\ldots a_M = a$.  
To check the equality in Theorem \ref{tthm1}(2), note that by Equation (\ref{AE11}) we have
\begin{equation}\label{AE12}
\sum_{i=1}^{M}a_i^2   = Ma^2 =  \sum_{i=1}^N \lambda_i.
\end{equation}
For the first inequality with $k = 1$ in Theorem \ref{tthm1}(2), we note that by
Equation (\ref{AE11}) we have that 
$$
a_1^2 = a^2 = \frac{1}{M}\sum_{i=1}^N \lambda_i 
\le \frac{1}{N}\sum_{i=1}^N \lambda_i
\le \lambda_1.
$$
So our inequality holds for $k=1$.  Suppose there is an $1< k\le N$ for
which this inequality fails and $k$ is the first time this fails.  So,
$$
\sum_{i=1}^{k-1}a_i^2 = (k-1)a^2 \le \sum_{i=1}^{k-1}\lambda_i,
$$
while
$$
\sum_{i=1}^{k}a_i^2 = ka^2 > \sum_{i=1}^{k}\lambda_i.
$$
It follows that
$$ 
a_{k+1}^2 = \cdots = a_N^2 = a^2 > \lambda_k \ge \lambda_{k+1} \geq \cdots \ge \lambda_N.
$$
Hence,
\begin{align*}
Ma^2 = \sum_{i=1}^{M}a_i^2  &\ge \sum_{i=1}^{k} a_i^2 + \sum_{i=k+1}^{N}a_i^2\\
&> \sum_{i=1}^{k}\lambda_i + \sum_{i=k+1}^{N}a_i^2 \\
&> \sum_{i=1}^{k} \lambda_i + \sum_{i=k+1}^{N}\lambda_i\\
&= \sum_{i=1}^{N}\lambda_i.
\end{align*}
But this contradicts Equation (\ref{AE12}).
\end{proof}

There was a recent {\bf significant advance} on this subject due to due to Cahill/Fickus/Mixon/Poteet/Strawn
\cite{CFMPS,FMPS} where they give an algorithm for constructing all self-adjoint matrices with prescribed
spectrum and diagonal and all finite frames with prescribed spectrum and
diagonal.  This work technically contains the solution to many of our frame questions.
That is, if we could carry out their construction with an additional restriction on it (e.g. requiring ``equiangular'' - see Subsection \ref{equiangular}) then we could construct equiangular tight frames.

\subsection{Spectral Tetris}

Another significant advance for frame theory came when {\bf \hl{Spectral Tetris}} was introduced by
Casazza/Fickus/Mixon/Wang/Zhou \cite{CFMWZ}.  This is now a massive subject and we refer the reader
to a comprehensive survey of Casazza/Woodland \cite{CW}.  We will just give an illustrative example here.

Before we begin our example, let us go over a few necessary facts for construction. Recall, that in order to construct an $M$-element unit norm tight frame (UNTF) in $\H^N$, we will construct an $N \times M$ synthesis matrix having the following properties:
\begin{enumerate}
\item The columns square sum to one, to obtain unit norm vectors.
\item The rows are orthogonal, which is equivalent to the frame operator, $S$, being a diagonal $N \times N$ matrix.
\item The rows have constant norm, to obtain tightness, meaning that $S=A\cdot Id$ for some constant $A$.
\end{enumerate}

\begin{remark} Since we will be constructing $M$-element UNTFs in $\H^N$, recall that the frame bound will be $A=\frac{M}{N}$. \end{remark}

Also, before the construction of a frame is possible, we must first ensure that such a frame exists by checking that the spectrum of the frame majorizes the square vector norms of the frame. However, this is not the only constraint. For Spectral Tetris to work, we also require that the frame has redundancy of at least 2, that is $M \geq 2N$, where $M$ is the number of frame elements and $N$ is the dimension of the Hilbert space. For a UNTF, since our unique eigenvalue is $\frac{M}{N}$, we see that this is equivalent to the requirement that the eigenvalue of the frame is greater than or equal to 2.  

The main idea of Spectral Tetris is to iteratively construct a synthesis matrix, $T^*$, for a UNTF one to two vectors at a time, which satisfies properties (1) and (2) at each step and gets closer to and eventually satisfies property (3) when complete. When it is necessary to build two vectors at a time throughout the Spectral Tetris process, we will utilize the following key $2 \times 2$ matrix as a building block for our construction.

Spectral Tetris relies on the existence of $2 \times 2$ matrices $A\left(x\right)$, for given $0\leq x \leq 2$, such that:
\begin{enumerate}
\item the columns of $A\left(x\right)$ square sum to $1$,
\item $A\left(x\right)$ has orthogonal rows,
\item the square sum of the first row is $x$.
\end{enumerate} 
These properties combined are equivalent to $$A\left(x\right)A^*\left(x\right)=\left[\begin{array}{cc}
x&0\vspace{.2cm}\\
0&2-x
\end{array}\right].$$
A matrix which satisfies these properties and which is used as a building block in Spectral Tetris is: 
$$A\left(x\right)=\left[\begin{array}{cc}
\sqrt{\frac{x}{2}}& \sqrt{\frac{x}{2}}\vspace{.2cm}\\
\sqrt{1-\frac{x}{2}}&-\sqrt{1-\frac{x}{2}}
\end{array}\right].
$$
We are now ready to give the example.

\begin{example}\label{stcex}
We would like to use Spectral Tetris to construct a sparse, unit norm, tight frame with 11 elements in $\H^4$, so our tight frame bound will be $\frac{11}{4}$.

To do this we will create a $4 \times 11$ matrix $T^*$, which satisfies the following conditions:
\begin{enumerate}
\item The columns square sum to $1$.
\item $T^*$ has orthogonal rows.
\item The rows square sum to $\frac{11}{4}$.
\item $S=T^*T=\frac{11}{4}\cdot Id$.
\end{enumerate}

Note that (4) follows if (1), (2) and (3) are all satisfied. Also notice that the sequence of eigenvalues $\{\lambda_j\}_{j=1}^4=\{\frac{11}{4},\frac{11}{4},\frac{11}{4},\frac{11}{4}\}$ majorizes the sequence of square norms $\{a_i^2\}_{i=1}^{11}=\{1,1,1,1,1,1,1,1,1,1,1\}$, which, in general, is necessary for such a frame to exist. 
 
Define $t_{i,j}$ to be the entry in the $i^{th}$ row and $j^{th}$ column of $T^*$. With an empty $4 \times 11$ matrix, we start at $t_{1,1}$ and work our way left to right to fill out the matrix. By requirement (1), we need the square sum of column one to be 1 and by requirement (2) we need the square sum of row one to be $\frac{11}{4} \geq 1$. Hence, we will start by being greedy and put the maximum weight of 1 in $t_{1,1}$. This forces the rest of the entries in column 1 to be zero, from requirement (1). We get:
$$T^*=\left[\begin{array}{ccccccccccc}
1&\cdot&\cdot&\cdot&\cdot&\cdot&\cdot&\cdot&\cdot&\cdot&\cdot\\
0&\cdot&\cdot&\cdot&\cdot&\cdot&\cdot&\cdot&\cdot&\cdot&\cdot\\
0&\cdot&\cdot&\cdot&\cdot&\cdot&\cdot&\cdot&\cdot&\cdot&\cdot\\
0&\cdot&\cdot&\cdot&\cdot&\cdot&\cdot&\cdot&\cdot&\cdot&\cdot
\end{array} \right].$$

Next, since row one needs to square sum to $\frac{11}{4}$, by (3), and we only have a total weight of 1 in row one, then we need to add $\frac{11}{4}-1=\frac{7}{4}=1+\frac{3}{4}\geq 1$ more weight to row one. So we will again be greedy and add another 1 in $t_{1,2}$. This forces the rest of the entries in column 2 to be zero, by (1). Also note that we have a total square sum of 2 in row one. We get:
$$T^*=\left[\begin{array}{ccccccccccc}
1&1&\cdot&\cdot&\cdot&\cdot&\cdot&\cdot&\cdot&\cdot&\cdot\\
0&0&\cdot&\cdot&\cdot&\cdot&\cdot&\cdot&\cdot&\cdot&\cdot\\
0&0&\cdot&\cdot&\cdot&\cdot&\cdot&\cdot&\cdot&\cdot&\cdot\\
0&0&\cdot&\cdot&\cdot&\cdot&\cdot&\cdot&\cdot&\cdot&\cdot
\end{array} \right].$$

In order to have a total square sum of $\frac{11}{4}$ in the first row, we need to add a total of $\frac{11}{4}-2=\frac{3}{4}<1$ more weight. If the remaining unknown entries are chosen so that $T^*$ has orthogonal rows, then $S$ will be a diagonal matrix. Currently, the diagonal entries of $S$ are mostly unknowns, having the form $\{2+?,\cdot,\cdot,\cdot\}$. Therefore we need a way to add $\frac{3}{4}$ more weight in the first row without compromising the orthogonality of the rows of $T^*$ nor the normality of its columns. That is, if we get ``greedy'' and try to add $\sqrt{\frac{3}{4}}$ to position $t_{1,3}$ then the rest of row one must be zero, yielding:

$$T^*=\left[\begin{array}{ccccccccccc}
1&1&\sqrt{\frac{3}{4}}&0&0&0&0&0&0&0&0\\
0&0&\cdot&\cdot&\cdot&\cdot&\cdot&\cdot&\cdot&\cdot&\cdot\\
0&0&\cdot&\cdot&\cdot&\cdot&\cdot&\cdot&\cdot&\cdot&\cdot\\
0&0&\cdot&\cdot&\cdot&\cdot&\cdot&\cdot&\cdot&\cdot&\cdot
\end{array} \right].$$

In order for column three to square sum to one, at least one of the entries $t_{2,3}, t_{3,3}$ or $t_{4,3}$ is non-zero. But then, it is impossible for the rows to be orthogonal and thus we cannot proceed. Hence, we need to instead add two columns of information in attempts to satisfy these conditions. The key idea is to utilize our $2 \times 2$ building block, $A\left(x\right)$, as defined at (a).

We define the third and fourth columns of $T^*$ according to such a matrix $A(x)$, where $x=\frac{11}{4}-2=\frac{3}{4}$. Notice that by doing this, column three and column four now square sum to one  within the first two rows, hence the rest of the unknown entries in these two columns will be zero. We get:
$$T^*=\left[\begin{array}{ccccccccccc}
1&1&\sqrt{\frac{3}{8}}&\sqrt{\frac{3}{8}}&\cdot&\cdot&\cdot&\cdot&\cdot&\cdot&\cdot\\
0&0&\sqrt{\frac{5}{8}}&-\sqrt{\frac{5}{8}}&\cdot&\cdot&\cdot&\cdot&\cdot&\cdot&\cdot\\
0&0&0&0&\cdot&\cdot&\cdot&\cdot&\cdot&\cdot&\cdot\\
0&0&0&0&\cdot&\cdot&\cdot&\cdot&\cdot&\cdot&\cdot
\end{array} \right].$$

The diagonal entries of $S$ are now $\{\frac{11}{4}, \frac{5}{4}+?,\cdot,\cdot\}$. The first row of $T^*$, and equivalently the first diagonal entry of $S$, now have sufficient weight and so its remaining entries are set to zero. The second row, however, is currently falling short by $\frac{11}{4}-\left(\left(\sqrt{\frac{5}{8}}\right)^2 + \left(-\sqrt{\frac{5}{8}}\right)^2\right)=\frac{6}{4}=1+\frac{2}{4}$. Since $1+\frac{2}{4} \geq 1$, we can be greedy and add a weight of 1 in $t_{2,5}$. Hence, column five becomes $e_2$. Next, with a weight of $\frac{2}{4}<1$ left to add to row two we utilize our $2 \times 2$ building block $A\left(x\right)$, with $x=\frac{2}{4}$. Adding this $2 \times 2$ block in columns six and seven yields sufficient weight in these columns and hence we finish these two columns with zeros. We get: 

$$T^*=\left[\begin{array}{ccccccccccc}
1&1&\sqrt{\frac{3}{8}}&\sqrt{\frac{3}{8}}&0&0&0&0&0&0&0\\
0&0&\sqrt{\frac{5}{8}}&-\sqrt{\frac{5}{8}}&1&\sqrt{\frac{2}{8}}&\sqrt{\frac{2}{8}}&0&0&0&0\\
0&0&0&0&0&\sqrt{\frac{6}{8}}&-\sqrt{\frac{2}{8}}&\cdot&\cdot&\cdot&\cdot\\
0&0&0&0&0&0&0&\cdot&\cdot&\cdot&\cdot
\end{array} \right].$$

The diagonal entries of $T^*$ are now $\{\frac{11}{4}, \frac{11}{4},\frac{6}{4}+?,\cdot\}$, where the third diagonal entry, and equivalently the third row, are falling short by $\frac{11}{4}-\frac{6}{4}=\frac{5}{4}=1+\frac{1}{4}$. Since $1+\frac{1}{4} \geq 1$, then we take the eighth column of $T^*$ to be $e_3$. We will complete our matrix following these same strategies, by letting the ninth and tenth columns arise from $A\left(\frac{1}{4}\right)$, and making the final column $e_4$, yielding the desired UNTF:

$$T^*=\left[\begin{array}{ccccccccccc}
1&1&\sqrt{\frac{3}{8}}&\sqrt{\frac{3}{8}}&0&0&0&0&0&0&0\\
0&0&\sqrt{\frac{5}{8}}&-\sqrt{\frac{5}{8}}&1&\sqrt{\frac{2}{8}}&\sqrt{\frac{2}{8}}&0&0&0&0\\
0&0&0&0&0&\sqrt{\frac{6}{8}}&-\sqrt{\frac{2}{8}}&1&\sqrt{\frac{7}{8}}&\sqrt{\frac{7}{8}}&0\\
0&0&0&0&0&0&0&0&\sqrt{\frac{7}{8}}&-\sqrt{\frac{7}{8}}&1
\end{array} \right].$$

\end{example}

In this construction, column vectors are either introduced one at a time, such as columns $1,2,5,8,$ and $11$, or in pairs, such as columns $\{3,4\}, \{6,7\},$ and $\{9,10\}$. Each singleton contributes a value of 1 to a particular diagonal entry of $S$, while each pair spreads two units of weight over two entries. Overall, we have formed a flat spectrum, $\{\frac{11}{4},\frac{11}{4},\frac{11}{4},\frac{11}{4}\}$, from blocks of area one or two. This construction is reminiscent of the game Tetris, as we fill in blocks of mixed area to obtain a flat spectrum.

\section{Gramian Operators}

If $\{\p_i\}_{i=1}^M$ is a frame for $\H^N$ with analysis operator $T$, the {\bf \hl{Gramian operator}\index{operator!Gramian}}
is defined as
\[ G :=TT^{*},\]
which has matrix representation
\[
G = \left [ \langle \p_j,\p_i\rangle \right ]_{i,j=1}^M,
\]
called the {\bfseries \hl{Gramian matrix}}. Since we know $T^*T$ and $TT^*$ have the same non-zero eigenvalues,
we have:
\begin{proposition}\label{sameev}
Let $\{\p_i\}_{i=1}^M$ be a frame for $\H^N$ with 
frame bounds $A,B$ and frame operator $S$.
The Gramian operator has the same non-zero eigenvalues as $S$.
That is, the largest eigenvalue of $G$ is less than or equal to $B$ and the smallest non-zero eigenvalue is greater than or equal to $A$.
\end{proposition}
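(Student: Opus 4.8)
The plan is to reduce everything to the earlier proposition stating that, for any linear operator $T$, the operators $T^*T$ and $TT^*$ have the same nonzero eigenvalues, counted with multiplicity. Here $G = TT^*$ and the frame operator is $S = T^*T$, so that proposition immediately gives that $G$ and $S$ have exactly the same nonzero eigenvalues with the same multiplicities.

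Next I would record that $S$ is a positive, invertible operator on $\H^N$ (as already established), so its eigenvalues $\lambda_1 \ge \lambda_2 \ge \cdots \ge \lambda_N$ are all strictly positive. Hence the list of nonzero eigenvalues of $S$ is exactly $\lambda_1,\ldots,\lambda_N$, and therefore these are precisely the nonzero eigenvalues of $G$ as well. (The operator $G$ acts on $\ell_2^M$, and the remaining $M-N$ of its eigenvalues are $0$, since $\rank G = \rank T = N$ by injectivity of $T$.)

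Finally, to pin down the numerical bounds, I would invoke the operator inequality $A\cdot Id \le S \le B\cdot Id$ proved earlier (equivalently, the theorem identifying $\lambda_1$ and $\lambda_N$ as the optimal upper and lower frame bounds), which yields $A \le \lambda_N$ and $\lambda_1 \le B$. Combining, the largest eigenvalue of $G$ equals $\lambda_1 \le B$ and the smallest nonzero eigenvalue of $G$ equals $\lambda_N \ge A$, as claimed.

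There is essentially no real obstacle here; the only point requiring a moment's care is the bookkeeping that $G$ genuinely has $0$ as an eigenvalue when $M > N$, so one must speak of the \emph{smallest nonzero} eigenvalue rather than the smallest eigenvalue. This is precisely why the statement is phrased that way, and it is justified by $\rank(TT^*) = \rank(T) = N$.
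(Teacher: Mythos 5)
Your argument is correct and matches the paper's own reasoning: the paper deduces this proposition directly from the earlier fact that $T^*T$ and $TT^*$ share the same nonzero eigenvalues, combined with the identification of the extreme eigenvalues of $S$ with the optimal frame bounds (equivalently, $A\cdot Id \le S \le B\cdot Id$). Your additional remark about $\rank(TT^*)=N$ and the zero eigenvalues when $M>N$ is a harmless, accurate piece of bookkeeping that the paper leaves implicit.
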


\begin{theorem}
If $\{\p_i\}_{i=1}^M$ is a Parseval frame with analysis operator $T$,
then the Gramian operator is an orthogonal projection.
\end{theorem}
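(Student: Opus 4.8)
The plan is to reduce everything to the single identity $T^*T = Id$, which characterizes Parseval frames. By Theorem \ref{classthm}, since $\{\p_i\}_{i=1}^M$ is a Parseval frame, its analysis operator $T$ is an isometry and satisfies $T^*T = Id$. This is the only ingredient I expect to need, so the first step is just to invoke that equivalence and record $T^*T = Id$.

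Next I would verify the two defining properties of an orthogonal projection for $G = TT^*$. For self-adjointness, compute $G^* = (TT^*)^* = T^{**}T^* = TT^* = G$, using $T^{**} = T$ from Definition \ref{deff}; in fact this was already observed in the text (``$TT^*$ and $T^*T$ are self-adjoint for any operator $T$''), so it requires no Parseval hypothesis. For idempotency, insert $T^*T = Id$ in the middle:
\[
G^2 = (TT^*)(TT^*) = T(T^*T)T^* = T\,Id\,T^* = TT^* = G.
\]
Thus $G^2 = G$, so $G$ is a projection, and since $G = G^*$ it is an orthogonal projection by the definition given after Proposition \ref{lem21}.

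There is essentially no obstacle here; the entire content is the characterization $T^*T = Id$ for Parseval frames from Theorem \ref{classthm}, after which both properties fall out of a one-line computation. If one wanted to add interpretive remarks, this also meshes with Proposition \ref{sameev}: the nonzero eigenvalues of $G$ coincide with those of $S = T^*T = Id$, so they are all $1$, consistent with $G$ being a projection onto the $N$-dimensional range $\ran T$; but that is a corollary rather than part of the proof.
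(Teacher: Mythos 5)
Your proof is correct and matches the paper's argument exactly: self-adjointness of $G=TT^*$ holds for any operator, and idempotency follows by inserting $T^*T=Id$ (the Parseval condition) into $(TT^*)(TT^*)=T(T^*T)T^*$. Nothing further is needed.
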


\begin{proof}
It is clear that $TT^*$ is self-adjoint and
\begin{align*} (TT^*)(TT^*)&= T(T^*T)T^* = T(I)T^* = TT^*. \qedhere \end{align*}
\end{proof}

\begin{corollary}
Let $\Phi =\{\p_i\}_{i=1}^M$ be vectors in $\H^N$.  The {Gramian} of $\Phi$ is invertible if and only if $\Phi$ is a 
Riesz basis  (that is, when $M = N$).
\end{corollary}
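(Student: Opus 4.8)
The plan is to pass to the analysis operator $T:\H^N\to\ell_2^M$ of $\Phi$, for which $G=TT^{*}$ by definition, and to translate invertibility of $G$ into injectivity/surjectivity statements about $T$ and $T^{*}$. As in the rest of this section, $\Phi$ is taken to be a frame (this is what makes the Gramian meaningful in the first place; the statement does fail if one allows $M<N$), so that $\spn\,\Phi=\H^N$ and hence $M\ge N$ by Proposition \ref{lem102}.

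For the forward implication I would argue as follows. Suppose $G=TT^{*}$ is invertible. Then $T^{*}$ is injective: if $T^{*}x=0$ then $Gx=TT^{*}x=0$, so $x=0$. Since $T^{*}e_i=\p_i$ for the standard basis $\{e_i\}_{i=1}^M$ of $\ell_2^M$, injectivity of $T^{*}$ says precisely that $\sum_i a_i\p_i=0$ forces all $a_i=0$, i.e. $\{\p_i\}_{i=1}^M$ is linearly independent; a linearly independent family in $\H^N$ has at most $N$ members, so $M\le N$. Combining with $M\ge N$ gives $M=N$, whence $\{\p_i\}_{i=1}^N$ is a linearly independent spanning set, i.e. a basis of $\H^N$. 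Picking any orthonormal basis $\{e_i\}_{i=1}^N$ of $\H^N$, the operator $F$ determined by $Fe_i=\p_i$ sends a basis to a basis and is therefore invertible, so by the characterization of Riesz bases stated after the definition of Riesz basis, $\Phi$ is a Riesz basis.

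For the converse, suppose $\Phi$ is a Riesz basis. By definition this is a family of $N$ vectors, so $M=N$, and a Riesz basis is in particular a frame, so by Theorem \ref{classthm} the analysis operator $T:\H^N\to\ell_2^N$ is injective. An injective linear operator between Hilbert spaces of the same finite dimension is bijective, hence invertible; then $T^{*}$ is invertible as well, with inverse $(T^{-1})^{*}$ (using $(ST)^{*}=T^{*}S^{*}$), and therefore $G=TT^{*}$ is a composition of invertible operators and is invertible.

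I do not expect any real obstacle here, as the argument is essentially bookkeeping. The one point that must be handled carefully is the twofold role of invertibility of the single operator $G=TT^{*}$: in the forward direction it must be squeezed to yield injectivity of $T^{*}$ (and nothing more is needed, once the frame hypothesis supplies $M\ge N$), while in the converse it must be produced from invertibility of \emph{both} $T$ and $T^{*}$, where the passage from the injectivity supplied by Theorem \ref{classthm} to full invertibility of $T$ rests on the finite-dimensional fact that an injection between spaces of equal dimension is a bijection.
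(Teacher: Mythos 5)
Your proof is correct, and while it travels through the same objects ($T$, $T^{*}$, and $G=TT^{*}$), the forward direction uses a genuinely different mechanism from the paper's. The paper transfers invertibility of $G=TT^{*}$ to the frame operator $S=T^{*}T$ by invoking Proposition \ref{sameev} (equality of the nonzero spectra of $TT^{*}$ and $T^{*}T$), concludes from this that $\Phi$ is a frame, and then upgrades $T^{*}$ to an isomorphism of $\ell_2^M$ onto $\H^N$. You instead extract only injectivity of $T^{*}$ directly from invertibility of $G$, read that as linear independence of $\Phi$, and finish with a dimension count ($M\le N$ from independence, $M\ge N$ from spanning), identifying $\Phi$ as the image of an orthonormal basis under an invertible operator. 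Your route is more elementary, avoiding the spectral correspondence entirely, and it is more explicit about where the frame hypothesis enters: as you note, a linearly independent non-spanning family with $M<N$ has an invertible Gram matrix but is not a Riesz basis for $\H^N$, and correspondingly the paper's step ``$G$ invertible implies $T^{*}T$ invertible'' is only valid because the Gramian is defined for frames, so that $M\ge N$ is already in force. The converse directions of the two arguments are essentially identical.
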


\begin{proof}
If $G = TT^{*}$ is invertible, by Proposition \ref{sameev}, $T^{*}T$ is
invertible.  Hence, $\{\p_i\}_{i=1}^M$ is a frame for $\H^N$.  Also,
$T^{*}$ is one-to-one, and further $T^{*}$ is bounded, linear
and onto.  Hence, it is an isomorphism.  

If $\{\p_i\}_{i=1}^M$ is a Riesz basis then $T^{*}$ is an
isomorphism and we have that $T^{*}$ is invertible and so $G = TT^{*}$
is invertible.
\end{proof}

\begin{proposition}
Let $F = [a_{ij}]_{i,j=1}^M$ be a positive, self-adjoint matrix
(operator) on $\H^N$ with $\dim\ker\,F=M-N$.  Then $\{F^{1/2}e_i\}_{i=1}^M$
spans an $N$-dimensional space and
\[ \langle F^{1/2}e_i,F^{1/2}e_j\rangle = \langle Fe_i,e_j\rangle
= a_{ij}.\]
Hence, $F$ is the Gramian matrix for the vectors $\{F^{1/2}e_i\}_{i=1}^M$.
Moreover, 
\[\|F^{1/2}e_i\|^2 = a_{ii},\]
and so if $a_{ii}=1$ for all $i=1,2,\ldots ,M$ then $\{F^{1/2}e_i\}_{i=1}^M$
is a unit norm family.
\end{proposition}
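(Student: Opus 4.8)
The plan is to take $F^{1/2}$, the positive square root of $F$, and observe that it is precisely the synthesis operator of the sought‑after family, so that the family's Gramian is $(F^{1/2})(F^{1/2})^{*}=F$. Concretely: regard the $M\times M$ matrix $F$ as a positive, self‑adjoint operator on $\H^M$ with standard orthonormal basis $\{e_i\}_{i=1}^M$. By Theorem \ref{spec} it is diagonalizable, say with orthonormal eigenbasis $\{u_j\}_{j=1}^M$ and eigenvalues $\mu_1\ge\cdots\ge\mu_M\ge 0$; the hypothesis $\dim\ker F=M-N$ forces exactly $M-N$ of the $\mu_j$ to vanish, so $\rank F=N$ by the rank--nullity theorem. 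First I would invoke the construction of nonnegative powers of a positive diagonalizable operator (valid even for non‑invertible $F$, as remarked after the corollary on operator powers) to obtain $F^{1/2}$: it is positive and self‑adjoint, satisfies $(F^{1/2})^2=F$, and has the $u_j$ as eigenvectors with eigenvalues $\mu_j^{1/2}$.

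From this the assertions follow quickly. Since $F^{1/2}$ has the same zero eigenvalues as $F$, we get $\ker F^{1/2}=\ker F$, and hence, since $F^{1/2}$ is self‑adjoint, $\ran F^{1/2}=[\ker F^{1/2}]^{\perp}=[\ker F]^{\perp}$ (Proposition \ref{lem21}), a subspace of dimension $N$. As $\{e_i\}_{i=1}^M$ spans $\H^M$ and $F^{1/2}$ is linear, $\spn\{F^{1/2}e_i\}_{i=1}^M=\ran F^{1/2}$, so this family spans an $N$‑dimensional space. For the inner products, using $(F^{1/2})^{*}=F^{1/2}$ and $(F^{1/2})^2=F$,
\[ \langle F^{1/2}e_i,F^{1/2}e_j\rangle=\langle (F^{1/2})^{*}F^{1/2}e_i,e_j\rangle=\langle Fe_i,e_j\rangle=a_{ij}, \]
which is exactly the statement that $F$ is the Gramian of $\{F^{1/2}e_i\}_{i=1}^M$; equivalently, $F^{1/2}$ is the synthesis operator of this family, so its Gramian is $(F^{1/2})(F^{1/2})^{*}=F$. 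Taking $i=j$ gives $\|F^{1/2}e_i\|^2=a_{ii}$, whence $a_{ii}=1$ for all $i$ yields a unit norm family.

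There is no serious obstacle here: once the square root is available, everything is a one‑line computation. The only point requiring attention is the existence and behaviour of $F^{1/2}$ in the rank‑deficient case — specifically, that the nonnegative‑power construction applies (needing $F$ diagonalizable, which we get from self‑adjointness via Theorem \ref{spec}) and that $F^{1/2}$ has the same kernel, hence the same range, as $F$, so the dimension count $\dim\ran F^{1/2}=N$ goes through via rank--nullity. One should also note the mild convention issue that the Gramian as defined has $(i,j)$ entry $\langle F^{1/2}e_j,F^{1/2}e_i\rangle=a_{ji}$, which coincides with $a_{ij}$ in the real symmetric case and with $\overline{a_{ij}}$ in general; this is the usual transpose/conjugate bookkeeping and does not affect the substance.
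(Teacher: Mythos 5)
Your proof is correct, and it follows exactly the route the paper intends: the paper states this proposition without proof, the argument being the one-line computation $\langle F^{1/2}e_i,F^{1/2}e_j\rangle=\langle (F^{1/2})^*F^{1/2}e_i,e_j\rangle=\langle Fe_i,e_j\rangle$ once $F^{1/2}$ is available from the spectral theorem (Theorem \ref{spec}) applied to the positive self-adjoint $M\times M$ matrix $F$ acting on $\H^M$. Your added care about the rank-deficient case ($\ker F^{1/2}=\ker F$, hence $\dim\ran F^{1/2}=N$ and the span claim) and the conjugate/transpose bookkeeping for the Gramian convention are exactly the details the paper leaves implicit.
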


\section{Fusion Frames}\label{Intro}
\setcounter{equation}{0}

A number of new applications have emerged which cannot be modeled
naturally by one single frame system. Generally they share a common 
property that requires
distributed processing. Furthermore, we are often overwhelmed by a 
deluge of data assigned
to one single frame system, which becomes simply too large to be 
handled numerically. In these
cases it would be highly beneficial to split a large frame system 
into a set of (overlapping)
much smaller systems, and to process locally within each sub-system 
effectively.

A distributed frame theory for a set of local frame systems is therefore in 
demand.
A variety of applications require distributed processing.  Among them there 
are, for 
instance,
wireless sensor networks \cite{IB05}, geophones in geophysics measurements 
and studies \cite{CG06},
and the physiological structure of visual and hearing systems \cite{RJ05b}.
To understand the nature, the constraints, and related problems of these 
applications, let us elaborate
a bit further on the example of wireless sensor networks.

In wireless sensor networks, sensors of limited capacity and power are 
spread in an area sometimes as
large as an entire forest to measure the temperature, sound, vibration, 
pressure, motion, and/or pollutants.
In some applications, wireless sensors are placed in a geographical area 
to detect and characterize chemical,
biological, radiological, and nuclear material. Such a sensor system is 
typically redundant, and there is no
orthogonality among sensors, therefore each sensor functions as a frame 
element in the system.
Due to practical and cost reasons, most sensors employed in such 
applications have severe constraints in
their processing power and transmission bandwidth. They often have 
strictly metered power supply as well.
Consequently, a typical large sensor network necessarily divides the 
network into redundant
sub-networks -- forming a set of subspaces.
The primary goal is to have local measurements transmitted to a local 
sub-station within a subspace for
a subspace combining. An entire sensor system in such applications 
could have a number of such local
processing centers.  They function as relay stations, and have the 
gathered information further submitted
to a central processing station for final assembly.

In such applications, distributed/local processing is built in the 
problem formulation.
A staged processing structure is prescribed.  We will have to be able 
to process the
information stage by stage from local information and to eventually 
fuse them together
at the central station. We see therefore that a mechanism of coherently 
collecting
sub-station/subspace information is required.

Also, due to the often unpredictable nature of geographical factors, 
certain local
sensor systems are less reliable than others.   While facing the task of 
combining
local subspace information coherently, one has also to consider 
weighting the more
reliable sets of substation information more than suspected less 
reliable ones. Consequently,
the coherent combination mechanism we just saw as necessary often 
requires a weighted structure
as well. This all leads naturally to what is called a fusion frame.

\begin{definition}
Let $I$ be a countable index set, let $\{W_i\}_{i =1}^M$ be a family of closed subspaces
in $\H^N$, and let $\{v_i\}_{i =1}^M$ be a family of weights, i.e. $v_i > 0$ for all $i \in I$.
Then $\{(W_i,v_i)\}_{i=1}^M$ is a {\bfseries \hl{fusion frame}\index{frame!fusion}}, if there exist
constants $0 < C \le D < \infty$ such that
\begin{equation} \label{deffos}
C\|x\|^2 \le \sum_{i=1}^M v_i^2 \|{P}_{W_i}(x)\|^2 \le D\|x\|^2
\ \ \mbox{for all $x\in \H^N$},
\end{equation}
where ${P}_{W_i}$ is the orthogonal projection onto the subspace $W_i$.
We call $C$ and $D$ the {\bfseries fusion frame bounds\index{fusion frame!bounds}}.
The family $\{(W_i,v_i)\}_{i=1}^M$ is called a {\bfseries $C$-tight fusion frame\index{fusion frame!tight}}, if in (\ref{deffos})
the constants $C$ and $D$ can be chosen so that $C=D$, a {\bfseries Parseval fusion frame\index{fusion frame!Parseval}} provided that $C=D=1$,
and an {\bfseries \hl{orthonormal fusion basis}} if $\H^N = \bigoplus_{i=1}^M W_i$.
If $\{(W_i,v_i)\}_{i=1}^M$ possesses an upper fusion frame bound, but not necessarily a lower bound,
we call it a {\bfseries Bessel fusion sequence\index{Bessel!fusion sequence}} with {\bfseries Bessel fusion bound} $D$.
\end{definition}

Often it will become essential to consider a fusion frame together with a set
of local frames for its subspaces. In this case we speak of a {\rm fusion 
frame system}.

\begin{definition}
Let $\{(W_i,v_i)\}_{i=1}^M$  be a fusion frame for $\H^N$, and let 
the sequence of vectors $\{x_{ij}\}_{j =1}^{J_i}$
be a frame for $W_i$ for each $1 \leq i \leq M$. Then we call
$\{(W_i,v_i,\{\p_{ij}\}_{j =1}^{J_i})\}_{i=1}^M$ a {\bfseries {fusion frame system}\index{fusion frame!system}} 
for $\H^N$.
The constants, $C$ and $D$, are the associated {\bfseries {fusion frame bounds}} if they are the 
fusion frame bounds for
$\{(W_i,v_i)\}_{i=1}^M$, and $A$ and $B$ are the {\bfseries \hl{local frame bounds}\index{frame!local frame bounds}} 
if these
are the common frame bounds for the {\bfseries \hl{local frames}\index{frame!local}} 
$\{\p_{ij}\}_{j =1}^{ J_i}$ for each $1 \leq i \leq M$.
A collection of dual frames, $\{\psi_{ij}\}_{j =1}^{ J_i}$ for each $1 \leq i \leq M$,
associated with the local frames
will be called {\bfseries \hl{local dual frames}\index{dual frame!local}}.
\end{definition}

To provide a quick inside-look at some intriguing relations between properties
of the associated fusion frame and the sequence consisting of all local frame 
vectors,
we present the following theorem from \cite{CK04} that provides a link
between local and global properties. 

\begin{theorem}\cite[Thm. 3.2]{CK04} \label{theo:frame_local_global}
For each $1 \leq i \leq M$, let $v_i > 0$, let $W_i$ be a closed subspace
of $\H^N$, and let $\{\p_{ij}\}_{j =1}^{ J_i}$ be a frame for $W_i$
with frame bounds $A_i$ and $B_i$. Suppose
that
$0 < A = \inf_{1 \leq i \leq M} A_i \le \sup_{1 \leq i \leq M} B_i = B < \infty.$
Then the following conditions are equivalent.
\begin{enumerate}
\item $\{(W_i,v_i)\}_{i=1}^M$ is a fusion frame for $\H^N$.
\item $\{v_i \p_{ij}\}_{j =1,\, i = 1}^{J_i, \, M}$ is a frame for $\H^N$.
\end{enumerate}
In particular, if $\{(W_i,v_i,\{\p_{ij}\}_{j =1}^{J_i})\}_{i=1}^M$ is a 
fusion frame system for
$\H^N$ with fusion frame bounds $C$ and $D$, then $\{v_i \p_{ij}\}_{j 
=1,\, i =1}^{J_i, \, M}$ is a
frame for $\H^N$ with frame bounds $AC$ and $BD$.
Conversely, if $\{v_i \p_{ij}\}_{j =1, \, i = 1}^{J_i, \, M}$ is a frame for
$\H^N$ with frame bounds $C$ and $D$, then $\{(W_i,v_i,
\{\p_{ij}\}_{j=1}^{ J_i})\}_{i=1}^M$ is
a fusion frame system for $\H^N$ with fusion frame bounds 
$\frac{C}{B}$ and $\frac{D}{A}$.
\end{theorem}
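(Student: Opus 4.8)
The plan is to reduce both implications to a single chain of inequalities obtained by summing the local frame inequalities weight by weight. The one genuinely useful observation is that each local frame vector $\p_{ij}$ lies in $W_i$, so that $\langle x,\p_{ij}\rangle = \langle P_{W_i}x,\p_{ij}\rangle$ for every $x\in\H^N$, because $x - P_{W_i}x \in W_i^\perp$.

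First I would fix $x\in\H^N$ and rewrite the frame-type sum for the candidate global system $\{v_i\p_{ij}\}$ as
\[
\sum_{i=1}^M\sum_{j=1}^{J_i}|\langle x,v_i\p_{ij}\rangle|^2 = \sum_{i=1}^M v_i^2\sum_{j=1}^{J_i}|\langle P_{W_i}x,\p_{ij}\rangle|^2 .
\]
Since $\{\p_{ij}\}_{j=1}^{J_i}$ is a frame for $W_i$ with bounds $A_i,B_i$ and $P_{W_i}x\in W_i$, the inner sum lies between $A_i\|P_{W_i}x\|^2$ and $B_i\|P_{W_i}x\|^2$. Now the uniformity hypotheses $A\le A_i$ and $B_i\le B$ let me replace $A_i,B_i$ by $A,B$, producing the key double inequality
\[
A\sum_{i=1}^M v_i^2\|P_{W_i}x\|^2 \le \sum_{i=1}^M\sum_{j=1}^{J_i}|\langle x,v_i\p_{ij}\rangle|^2 \le B\sum_{i=1}^M v_i^2\|P_{W_i}x\|^2 .
\]
This is the entire engine of the proof; everything else is substitution.

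For $(1)\Rightarrow(2)$: with fusion frame bounds $C,D$ we have $C\|x\|^2\le\sum_i v_i^2\|P_{W_i}x\|^2\le D\|x\|^2$, and feeding this into the two ends of the key inequality gives $AC\|x\|^2\le\sum_{i,j}|\langle x,v_i\p_{ij}\rangle|^2\le BD\|x\|^2$; since $AC>0$ and $BD<\infty$ this is exactly the frame condition with the asserted bounds. For $(2)\Rightarrow(1)$: with frame bounds $C,D$ for $\{v_i\p_{ij}\}$, combining $C\|x\|^2\le\sum_{i,j}|\langle\cdot\rangle|^2$ with the upper half of the key inequality yields $\sum_i v_i^2\|P_{W_i}x\|^2\ge(C/B)\|x\|^2$, and combining $\sum_{i,j}|\langle\cdot\rangle|^2\le D\|x\|^2$ with the lower half yields $\sum_i v_i^2\|P_{W_i}x\|^2\le(D/A)\|x\|^2$; as $C/B>0$ and $D/A<\infty$ these are admissible fusion frame bounds. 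The unquantified equivalence of $(1)$ and $(2)$ is immediate, since a fusion frame (resp. a frame) is by definition one for which some admissible pair of bounds exists, and the argument converts any such pair for one object into one for the other. In the finite-dimensional, finite-$M$ setting here the sum $\sum_i v_i^2\|P_{W_i}x\|^2$ is automatically finite, so no separate completeness or convergence argument is needed; the hypotheses $\inf_i A_i>0$ and $\sup_i B_i<\infty$ are used precisely once each, exactly to justify the uniform replacement of $A_i,B_i$ by $A,B$.

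The main obstacle is conceptual rather than computational: spotting that the orthogonal projection $P_{W_i}$ appears for free once a global inner product is evaluated against a local frame vector, and that local frame inequalities may simply be added across $i$ with the weights $v_i^2$. Once the key double inequality is in hand, both implications and all four constants fall out by direct substitution.
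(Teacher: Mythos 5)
Your proof is correct: the identity $\langle x,\p_{ij}\rangle=\langle P_{W_i}x,\p_{ij}\rangle$, the weighted summation of the local frame inequalities, and the substitution of the fusion (resp.\ global) bounds into the resulting double inequality give all four constants $AC$, $BD$, $C/B$, $D/A$ exactly as claimed. Note that this paper states the theorem without proof, citing \cite{CK04}; your argument is essentially the standard one given there, so there is nothing to correct or add.
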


Tight frames play a vital role in frame theory due to the fact that they
provide easy reconstruction formulas.  Tight fusion frames will turn
out to be particularly useful for distributed reconstruction as well.
Notice, that the previous theorem also implies that $\{(W_i,v_i)\}_{i=1}^{M}$ 
is a $C$-tight
fusion frame for $\H^N$ if and only if $\{v_i f_{ij}\}_{j=1,\, i=1}^{J_i, \, M}$ 
is a $C$-tight
frame for $\H^N$.

The following result 
from \cite{CKL} proves that the fusion frame bound $C$ of a $C$-tight
fusion
frame can be interpreted as the {\bfseries \hl{redundancy}} of this fusion frame.

\begin{proposition}
\label{prop:tight_redundancy} If $\{(W_i,v_i)\}_{i=1}^M$ is a
$C$-tight fusion frame for $\H^N$, then
\[ C = \frac{\sum_{i=1}^M v_i^2 \dim W_i}{N}.\]
\end{proposition}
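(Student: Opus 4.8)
The plan is to introduce the \emph{fusion frame operator} $S_W := \sum_{i=1}^M v_i^2 P_{W_i}$, show that $C$-tightness forces $S_W = C\cdot Id$, and then extract the identity by taking traces of both sides.

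First I would record that each orthogonal projection $P_{W_i}$ is positive and self-adjoint (as already observed in the excerpt, $\langle P_{W_i}x,x\rangle = \|P_{W_i}x\|^2 \ge 0$), so $S_W$ is a positive, self-adjoint operator on $\H^N$ with $\langle S_W x, x\rangle = \sum_{i=1}^M v_i^2 \|P_{W_i}x\|^2$ for every $x$. The defining inequality of a $C$-tight fusion frame then says exactly that $\langle S_W x, x\rangle = C\|x\|^2 = \langle (C\cdot Id)x,x\rangle$ for all $x\in\H^N$, i.e. $\langle (S_W - C\cdot Id)x,x\rangle = 0$ for all $x$. Since $S_W - C\cdot Id$ is self-adjoint, Lemma \ref{lem5} gives $\|S_W - C\cdot Id\| = \sup_{\|x\|=1}|\langle (S_W - C\cdot Id)x,x\rangle| = 0$, hence $S_W = C\cdot Id$. (In the complex case one may instead invoke Lemma \ref{tx0} directly.)

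Next I would take traces. On one side $\tr(C\cdot Id) = C\,\tr(Id) = CN$. On the other, by linearity of the trace, $\tr(S_W) = \sum_{i=1}^M v_i^2\,\tr(P_{W_i})$. To finish I would compute $\tr(P_{W_i})$ using the remark in the excerpt that a projection onto a subspace of dimension $\dim W_i$ has eigenvalue $1$ with multiplicity $\dim W_i$ and eigenvalue $0$ with multiplicity $N-\dim W_i$; being diagonalizable, Corollary \ref{cor20} gives $\tr(P_{W_i}) = \dim W_i$. Equating the two expressions for $\tr(S_W)$ yields $CN = \sum_{i=1}^M v_i^2 \dim W_i$, and dividing by $N$ gives the claim.

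There is no genuine obstacle; the only step needing a little care is deducing $S_W = C\cdot Id$ from the scalar identity $\langle S_W x,x\rangle = C\|x\|^2$, which in the real case is not valid for arbitrary operators but is valid here because $S_W - C\cdot Id$ is self-adjoint (so one appeals to Lemma \ref{lem5}, not to a polarization argument). Everything else is routine bookkeeping with the trace.
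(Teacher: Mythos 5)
Your proof is correct and complete: the paper itself states this proposition without proof (deferring to \cite{CKL}), and your argument --- identifying the fusion frame operator $S_{\mathcal{W}}=\sum_{i=1}^M v_i^2 P_{W_i}$, using self-adjointness together with Lemma \ref{lem5} to upgrade the scalar identity $\langle S_{\mathcal{W}}x,x\rangle = C\|x\|^2$ to $S_{\mathcal{W}}=C\cdot Id$, and then equating traces via $\tr P_{W_i}=\dim W_i$ --- is exactly the standard route to this identity. You also correctly flag the only delicate point, namely that in a real Hilbert space the passage from $\langle (S_{\mathcal{W}}-C\cdot Id)x,x\rangle=0$ to $S_{\mathcal{W}}=C\cdot Id$ needs self-adjointness rather than polarization.
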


Let $\cW = \{(W_i,v_i) \}_{i=1}^M$ be a fusion frame for $\H^N$. In order to map a signal
to the representation space, i.e., to analyze it, the {\bfseries \hl{fusion analysis operator}\index{analysis operator!fusion}\index{operator!fusion anaylsis}} $T_\cW$
is employed, which is defined by
\[ T_\cW:\H^N \rightarrow \Bigg(\sum_{i=1}^M\oplus W_i \Bigg)_{{\ell}_{2}}
\mbox{ with } \; T_\cW(x) = \big\{v_i P_{W_i}(x)\big\}_{i=1}^M.\]
It can easily be shown that the {\bfseries \hl{fusion synthesis operator}\index{synthesis operator!fusion}\index{operator!fusion synthesis}} $T_\cW^*$,
which is defined to be the adjoint operator of the analysis operator, is given by
\[T_\cW^*: \Bigg( \sum_{i=1}^M \oplus W_i \Bigg)_{{\ell}_{2}} \rightarrow \H^N\]
with
\[
T_\cW^*(x) = \sum_{i=1}^M v_i x_i, \mbox{ where }
x = \{x_i \}_{i=1}^M \in \Bigg( \sum_{i=1}^M \oplus W_i \Bigg)_{{\ell}_{2}}.\]
The {\bfseries \hl{fusion frame operator}\index{frame operator!fusion}} $S_\cW$ for $\cW$ is defined by
\[S_\cW(x) = T_\cW^*T_\cW(x)  = \sum_{i\in I}v_i^2 P_{W_i}(x).\]

Interestingly, a fusion frame operator exhibits properties similar to a frame operator
concerning invertibility.  In fact, if $\{(W_i,v_i)\}_{i=1}^M$ is a fusion frame for $\H^N$
with fusion frame bounds $C$ and $D$, then the associated fusion frame operator $S_\cW$ is
positive and invertible on $\H^N$, and
\begin{equation}
   C \cdot Id \le S_\cW \le D \cdot Id.  \label{eqn:CI_S_DI}
\end{equation}
We refer the reader to \cite[Prop. 3.16]{CK04} for details.

There has been a significant amount of recent work on fusion frames.
This topic now has its own website and we recommend visiting it
for the latest developments on fusion frames, distributed processing and
sensor networks. 
\vskip12pt

\begin{center}
{\bfseries http://www.fusionframe.org/}
\end{center}
{\bf }
\vskip12pt
But also visit the Frame Research Center Website:

\vskip12pt
\begin{center}
{\bfseries http://www.framerc.org/}
\end{center}
\section{Infinite Dimensional Hilbert Spaces}

We work with two standard infinite dimensional Hilbert Spaces.

\subsection{Hilbert Spaces of Sequences}

We being with the definition.

\begin{definition}
We define $\ell_2$ by:
\[ \{x = \{a_i\}_{i=1}^{\infty}:\|x\|:= \sum_{i=1}^{\infty}|a_i|^2 <\infty.\}\]
\end{definition}

The {\bf \hl{inner product}} of $x=\{a_i\}_{i=1}^{\infty}$ and $y=\{b_i\}_{i=1}^{\infty}$
is given by
\[ \langle x,y\rangle = \sum_{i=1}^{\infty}a_i\overline{b_i}.\]

The space $\ell_2$ has a natural orthonormal basis $\{e_i\}_{i=1}^{\infty}$
where $$e_i=(0,0,\ldots,0,1,0,\ldots)$$ where the $1$ is in the $i^{th}$-coordinate.
Most of the results on finite dimensional Hilbert spaces carry over here.  The one
major exception is that operators here may not have eigenvalues or be diagonalizable.
We give an example in the next subsection.  But all of the ``identities'' hold here.
Another difference is the notion of {\bf linear independence}.  

\begin{definition}
A family of vectors $\{x_i\}_{i=1}^{\infty}$ in $\ell_2$ is {\bf \hl{linearly independent}}
if for every finite subset $I\subset \N$ and any scalars $\{a_i\}_{i\in I}$ we have
\[ 
\sum_{i\in I}a_ix_i=0 \Rightarrow a_i=0,\mbox{ for all } i\in I.
\]
The family is $\omega$-independent if for any family of scalars $\{a_i\}_{i=1}^{\infty}$,
satisfying $\sum_{i=1}^{\infty}|a_i|^2 < \infty$,
we have
\[ \sum_{i=1}^{\infty}a_ix_i =0 \Rightarrow a_i =0 \mbox{ for all }i=1,2,\ldots.\]
\end{definition}

An orthonormal basis $\{e_i\}_{i=1}^{\infty}$ is clearly $\omega$-independent.
But, linearly independent vectors may not be $\omega$-independent.
For example, if we let
\[ x_i = e_i+ \frac{1}{2^i}e_{i+1},\ \ i=1,2,\ldots,\]
it is easily checked that this family is finitely linearly independent.  But,
\[ \sum_{i=1}^{\infty} \frac{(-1)^{i-1}}{2^i}x_i=0,\]
and so this family is not $\omega$-independent.

\subsection{Hilbert Spaces of Functions}

We define a Hilbert space of functions:

\begin{definition}
If $A\subset \R \,(\mbox{or }\C)$ we define
\[ L^2(A) =\bigg\{f:A \rightarrow \R \,(\mbox{or }\C):\|f\|^2 :=\int_{A}|f(t)|^2 dt < \infty\bigg\}.\]
The inner product of $f,g\in L^2(A)$ is
\[ \langle f,g\rangle = \int_{I}f(t)\overline{g(t)}dt.\]
\end{definition}
The two cases we work with the most are $L^2([0,1])$ and $L^2(\R)$.

The space $L^2([0,1])$ has a natural orthonormal basis given by the complex
exponentials:
\[ \{e^{2 \pi int}\}_{n\in \Z}.\]

If we choose $A \subset [0,1]$, the orthogonal projection of $L^2([0,1])$ onto
$L^2(A)$ satisfies:
\[ P(e^{2\pi int}) = \chi_A e^{2 \pi int}.\]
This family of vectors is a Parseval frame (since it is the image of an
orthonormal basis under an orthogonal projection) called the {\bf \hl{Fourier
frame}\index{frame!Fourier} on $A$}.  Recently, Marcus/Spielman/Srivastava \cite{MSS} solved
the {\bf Feichtinger Conjecture} for this class of frames (See subsection
\ref{feiconj}).

To get a flavor of things that don't go so nicely in the infinite dimensional setting, we note that there are positive, self-adjoint, invertible operators
on infinite dimensional Hilbert spaces which have no
eigenvectors. For example, consider the operator $S:L^2([0,1]) \to L^2([0,1])$ defined by $$S(f)(x) := (1+x)f(x).$$ For any
$f\in L^2[0,1]$ we have:
\[ \langle f,Sf\rangle = \int_0^1 (1+x)f^2(x)dx \ge\int_0^1f^2(x)dx = \|f\|^2.\]
So $S$ is a positive, self-adjoint, invertible operator.
However, in order for $Sf = \lambda f$ we would need to
have
\[ (1+x)f(x) = \lambda f(x) \mbox{ almost everywhere on } [0,1].\]
This is clearly impossible unless $f=0$, so $S$ has no eigenvectors.

\section{Major Open Problems in Frame Theory}
\setcounter{equation}{0}

In this section we look at some of the major open problems in Frame Theory.

\subsection{Equiangular Frames}\label{equiangular}

One of the simplest stated yet deepest problems in mathematics is the equiangular line
problem.

\begin{problem}
How many equiangular lines can be drawn through the origin in $\R^N$ or $\C^N$?
\end{problem}

The easiest way to describe {\bf \hl{equiangular lines}} is to put a unit norm vector
starting at the origin on each line, say $\{\p_i\}_{i=1}^M$, and the lines are
{\bf \hl{equiangular}} if there is a constant $0<c\le 1$ so that
\[ |\langle \p_i,\p_j\rangle | = c,\mbox{ for all }i\not= j.\]
    These inner products
represent the cosine of the acute angle between the lines.
The problem of constructing any number (especially, the maximal number)
of equiangular lines in ${\mathbb R}^N$ is one of the most elementary
and at the same time one of the most difficult problems in mathematics.
After sixty years of research, we do not know the answer for all dimensions
$\le 20$ in either the real or complex case.
  This line of
research was started in 1948 by Hanntjes \cite{H} in the setting of elliptic
geometry where he identified the maximal
number of equiangular lines in ${\mathbb R}^N$
for $n=2,3$.  Later, Van Lint and Seidel \cite{LiS} classified the
largest number of equiangular lines in ${\mathbb R}^N$ for dimensions
$N\le 7$ and at the same time emphasized the relations to
discrete mathematics.  
In 1973, Lemmens and Seidel \cite{LS} made a comprehensive
study of real equiangular line sets which is still today a fundamental
piece of work.  Gerzon \cite{LS} gave an upper bound for the maximal
number of equiangular lines in ${\mathbb R}^N$:

\begin{theorem}[Gerzon] \label{T2}If we have $M$ 
equiangular lines in ${\mathbb R}^N$ then
$$M\le \frac{N(N+1)}{2}.$$
\end{theorem}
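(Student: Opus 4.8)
The plan is to realize each of the $M$ equiangular lines as a rank-one orthogonal projection and then to carry out a dimension count in the space of self-adjoint operators on $\mathbb{R}^N$. Concretely, put a unit vector $\p_i$ on the $i$-th line and let $P_i$ be the orthogonal projection onto its span, so $P_i x = \langle x,\p_i\rangle \p_i$ as in the discussion of nearest point projections. Each $P_i$ is self-adjoint, hence lies in the real vector space $\mathcal{S}$ of self-adjoint operators on $\mathbb{R}^N$; choosing a fixed orthonormal basis and passing to matrix representations identifies $\mathcal{S}$ with the symmetric $N\times N$ matrices, so $\dim\mathcal{S} = N(N+1)/2$. Thus it suffices to prove that $\{P_i\}_{i=1}^M$ is a linearly independent subset of $\mathcal{S}$.

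The first real step is a trace computation. Equip $\mathcal{S}$ with the inner product $\langle A,B\rangle = \tr(AB)$ (this is an inner product since $\tr(A^2) = \sum_{k}\|Ae_k\|^2 \ge 0$ with equality only when $A = 0$). A direct calculation, expanding in the orthonormal basis, gives $\tr(P_iP_j) = |\langle \p_i,\p_j\rangle|^2$; hence $\tr(P_i^2) = 1$ and, by equiangularity, $\tr(P_iP_j) = c^2$ for $i\ne j$. Therefore the Gram matrix of $\{P_i\}_{i=1}^M$ with respect to $\langle\cdot,\cdot\rangle$ is $G = (1-c^2)I_M + c^2 J_M$, where $J_M$ is the $M\times M$ all-ones matrix.

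The second step is to deduce linear independence from positivity of $G$. We may assume $c < 1$, since $c = 1$ forces all $M$ lines to coincide and then $M = 1 \le N(N+1)/2$ trivially. The matrix $J_M$ has eigenvalues $M$ (once, eigenvector all-ones) and $0$ (multiplicity $M-1$), so $G$ has eigenvalues $1 - c^2 + c^2 M$ and $1 - c^2$, both strictly positive; hence $G$ is positive definite and the $P_i$ are linearly independent. (Equivalently, one can argue directly: if $\sum_i a_iP_i = 0$, applying $\tr(\,\cdot\,P_j)$ gives $(1-c^2)a_j + c^2\sum_i a_i = 0$ for every $j$; summing over $j$ forces $(1 - c^2 + c^2 M)\sum_i a_i = 0$, so $\sum_i a_i = 0$, whence $a_j = 0$ for all $j$.) Consequently $M \le \dim\mathcal{S} = N(N+1)/2$, which is the claimed bound.

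I do not expect a genuine obstacle here: the argument is essentially bookkeeping plus one eigenvalue computation. The only points requiring care are the identity $\tr(P_iP_j) = |\langle\p_i,\p_j\rangle|^2$ (so that $G$ takes the stated rank-one-plus-scalar form) and the exclusion of the degenerate case $c = 1$, which is exactly what guarantees the eigenvalue $1 - c^2$ of $G$ is positive and hence that $G$ is nonsingular.
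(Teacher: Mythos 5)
Your proof is correct, and in fact the paper offers no proof of this statement at all: Gerzon's bound is only quoted (via Lemmens--Seidel), so there is nothing internal to compare against. Your argument is the classical one: send each line to the rank-one orthogonal projection $P_i=\p_i\p_i^{T}$, note that these live in the $\frac{N(N+1)}{2}$-dimensional space of symmetric operators, compute $\mathrm{tr}(P_iP_j)=|\langle\p_i,\p_j\rangle|^2$ so the Gram matrix is $(1-c^2)I_M+c^2J_M$, and observe its eigenvalues $1-c^2$ and $1-c^2+c^2M$ are positive when $c<1$, forcing linear independence and hence $M\le\frac{N(N+1)}{2}$. The trace identity, the positive-definiteness of the trace form, and the dismissal of the degenerate case $c=1$ (which would make the lines coincide) are all handled correctly, so the argument is complete.
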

It is known that we cannot reach this maximum in most cases.  
It is also known that the maximal number of equiangular lines in $\C^N$ is less
than or equal to $N^2$.  It is believed that this number of lines exists in $\C^N$ for
all $N$ but until now a positive answer does not exist for all $N\le 20$.  
\vskip8pt
  Also, P. Neumann \cite{LS} produced a fundamental result
in the area:

\begin{theorem}[P. Neumann]\label{T1}
If ${\mathbb R}^N$ has
$M$ equiangular lines at angle $1/\alpha$ and $M>2N$, then $\alpha$ is an odd
integer.  
\end{theorem}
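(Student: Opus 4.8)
The plan is to convert the geometric hypothesis into a statement about the spectrum of an integer matrix, and then to read off arithmetic information by reducing modulo $2$. First I would record the Gram matrix $G = [\langle \p_i,\p_j\rangle]_{i,j=1}^M$ of the unit vectors $\{\p_i\}_{i=1}^M$ chosen on the lines. Since $|\langle \p_i,\p_j\rangle| = 1/\alpha$ for $i \neq j$ and $\langle \p_i,\p_i\rangle = 1$, we may write $G = Id + \frac{1}{\alpha}S$, where $S$ is a symmetric $M \times M$ integer matrix with zero diagonal and every off-diagonal entry equal to $\pm 1$. Because $G$ is the Gram matrix of $M$ vectors lying in $\R^N$, it is positive semidefinite of rank at most $N$, so $0$ is an eigenvalue of $G$ of some multiplicity $m \geq M - N$; equivalently, $-\alpha$ is an eigenvalue of $S$ of multiplicity $m \geq M - N$.

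Next I would exploit that $S$ has integer entries, so its characteristic polynomial $p(x) = \det(xId - S)$ is monic with integer coefficients; in particular $\alpha$ is an algebraic integer. The crucial step is to show that $\alpha$ is rational, and this is where $M > 2N$ is used. If $d = [\mathbb{Q}(\alpha):\mathbb{Q}] \geq 2$, then the minimal polynomial of $\alpha$ divides $p$ over $\mathbb{Q}$; since $p$ has rational coefficients, the Galois group permutes the roots of $p$ preserving multiplicities, so each of the $d$ distinct conjugates of $-\alpha$ is an eigenvalue of $S$ with the same multiplicity $m$. Counting eigenvalues of $S$ with multiplicity gives $M \geq d\,m \geq 2(M - N)$, that is $M \leq 2N$, contradicting the hypothesis. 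Hence $\alpha \in \mathbb{Q}$, and a rational algebraic integer is an ordinary integer, so $\alpha \in \Z$.

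Finally, to determine the parity of $\alpha$ I would reduce modulo $2$. Reducing $S$ entrywise gives $\bar S = J - Id$ over $\mathbb{F}_2$, where $J$ is the all-ones matrix (every $\pm 1$ becomes $1$, the diagonal stays $0$). The characteristic polynomial of $J$ over any field is $x^{M-1}(x - M)$, so the characteristic polynomial of $\bar S = J - Id$ over $\mathbb{F}_2$ is $(x+1)^{M-1}(x + 1 - M)$, which is $(x+1)^M$ when $M$ is even and $(x+1)^{M-1}x$ when $M$ is odd; in both cases $x$ divides this polynomial to multiplicity at most $1$. On the other hand, since $\alpha \in \Z$ we may write $p(x) = (x+\alpha)^m r(x)$ with $r \in \Z[x]$ monic by Gauss's lemma, and reducing mod $2$ shows that $(x + \bar\alpha)^m$ divides the mod-$2$ reduction of $p$. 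If $\alpha$ were even then $\bar\alpha = 0$, giving $x^m$ as a divisor; but $M > 2N$ forces $m \geq M - N \geq N + 1 \geq 2$, so $x^2$ would divide the reduced characteristic polynomial, a contradiction. Therefore $\alpha$ is odd.

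The heart of the argument — and the step I expect to be the main obstacle — is the rationality step: recognizing that the large multiplicity $M - N$ of the eigenvalue $-\alpha$, together with the Galois-stability of the spectrum of an integer matrix, is precisely what the inequality $M > 2N$ buys us. The Gram-matrix reformulation and the mod-$2$ computation (which only needs the characteristic polynomial of the all-ones matrix) are routine once that idea is in place; the one technical care point is to invoke Gauss's lemma correctly so that the factor $(x+\alpha)^m$ genuinely survives reduction modulo $2$ with an integer cofactor.
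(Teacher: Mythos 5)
Your proof is correct. Note that the paper does not actually prove this theorem: it only states it, attributing the result to P.\ Neumann and citing the Lemmens--Seidel paper, so there is no in-paper argument to compare against. What you have written is the classical Seidel-matrix proof: passing to $G = Id + \frac{1}{\alpha}S$, using $\operatorname{rank} G \le N$ to force $-\alpha$ to be an eigenvalue of the integer matrix $S$ with multiplicity $m \ge M-N$, using $M>2N$ together with Galois-invariance of the spectrum to conclude $\alpha\in\Z$, and then ruling out even $\alpha$ by reducing the characteristic polynomial modulo $2$, where $\bar S = J - Id$ has $0$ as an eigenvalue of multiplicity at most one while an even $\alpha$ would force multiplicity $m\ge 2$. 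All the steps check out, including the two points that need care: the quotient $r = p/(x+\alpha)^m$ stays monic in $\Z[x]$ (division by a monic integer polynomial already guarantees this), and the identity $\det(xId - J) = x^{M-1}(x-M)$ holds over $\Z[x]$ and hence reduces correctly to $\mathbb{F}_2$.
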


Finally, there is a lower bound on the angle formed by equiangular
line sets.  
\begin{theorem}\label{T5}
If $\{\p_m\}_{m=1}^M$ is a set of norm one vectors in 
${\mathbb R}^N$, then
\begin{equation}\label{E1}
\max_{i\not= j}|\langle \p_i,\p_j\rangle| \ge \sqrt{\frac{M-N}{N(M-1)}}.
\end{equation}
Moreover, we have equality if and only if $\{\p_i\}_{i=1}^M$ is an
equiangular tight frame and in this case the tight frame bound
is $\frac{M}{N}$.      
\end{theorem}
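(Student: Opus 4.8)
The plan is to run the two standard trace computations for the frame operator $S = T^*T$ of $\Phi = \{\p_i\}_{i=1}^M$ on $\R^N$, where $T$ is the analysis operator, together with its Gramian $G = TT^*$ on $\ell_2^M$. Since every $\|\p_i\|=1$ we have
\[ \tr S = \tr G = \sum_{i=1}^M \ip{\p_i}{\p_i} = M . \]
On the other hand, the diagonal entries of $G^2$ are $\sum_{k=1}^M \ip{\p_i}{\p_k}\ip{\p_k}{\p_i} = \sum_{k=1}^M \absip{\p_i}{\p_k}^2$, so
\[ \tr (S^2) = \tr(G^2) = \sum_{i,k=1}^M \absip{\p_i}{\p_k}^2 = M + \sum_{i \ne k} \absip{\p_i}{\p_k}^2 , \]
the first equality holding because $S$ and $G$ share their nonzero eigenvalues with multiplicity (Proposition \ref{sameev}) and both are self-adjoint, hence have equal sums of squared eigenvalues.

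Now let $\lambda_1 \ge \cdots \ge \lambda_N \ge 0$ be the eigenvalues of the positive operator $S$. Applying the Cauchy--Schwarz inequality to $(\lambda_1,\dots,\lambda_N)$ against $(1,\dots,1)$ gives
\[ M^2 = \Big(\sum_{i=1}^N \lambda_i\Big)^2 \le N \sum_{i=1}^N \lambda_i^2 = N\, \tr(S^2) = N\Big(M + \sum_{i\ne k}\absip{\p_i}{\p_k}^2\Big), \]
so $\sum_{i\ne k}\absip{\p_i}{\p_k}^2 \ge \frac{M^2}{N} - M = \frac{M(M-N)}{N}$. Since there are exactly $M(M-1)$ ordered pairs $(i,k)$ with $i \ne k$, the largest term dominates the average:
\[ \Big(\max_{i\ne j}\absip{\p_i}{\p_j}\Big)^2 \ge \frac{1}{M(M-1)}\sum_{i\ne k}\absip{\p_i}{\p_k}^2 \ge \frac{M-N}{N(M-1)}, \]
and taking square roots gives (\ref{E1}). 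We may assume $M\ge N$, since otherwise the right side of (\ref{E1}) is not a nonnegative real and there is nothing to prove; and if $\Phi$ does not span $\R^N$, running the same argument inside its span, of dimension $N'<N$, only improves the bound, since $t\mapsto \frac{M-t}{t(M-1)}$ is strictly decreasing.

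For the equality statement, observe that equality in (\ref{E1}) forces equality in each of the two inequalities just used. Equality in the Cauchy--Schwarz step occurs exactly when $\lambda_1 = \cdots = \lambda_N$, i.e. $\lambda_i \equiv M/N$, i.e. $S = \frac{M}{N} \cdot Id$; thus $\Phi$ is a tight frame (in particular spanning) with frame bound $M/N$. Equality in the averaging step occurs exactly when all the numbers $\absip{\p_i}{\p_j}$ with $i \ne j$ coincide, i.e. $\Phi$ is equiangular. Conversely, if $\Phi$ is an equiangular tight frame, then it is equiangular and its tight frame bound $A$ satisfies $NA = \tr(A\cdot Id) = \tr S = \sum_i \|\p_i\|^2 = M$, so $A = M/N$; both displayed inequalities then hold with equality, hence so does (\ref{E1}). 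The only real subtlety here is the equality analysis --- in particular, that equality automatically excludes the non-spanning case --- since the inequality itself is just the two trace identities followed by Cauchy--Schwarz and an averaging bound.
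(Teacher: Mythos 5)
Your proof is correct. Note that the paper itself supplies no proof of Theorem \ref{T5}; it only attributes the inequality to Welch and the ``moreover'' part to Strohmer--Heath and Holmes--Paulsen. What you have written is exactly the standard trace argument from those references: compute $\tr S=\tr G=M$ and $\tr(S^2)=\tr(G^2)=M+\sum_{i\ne k}|\langle\p_i,\p_k\rangle|^2$, apply Cauchy--Schwarz to the eigenvalue vector against $(1,\dots,1)$, and bound the maximum off-diagonal Gram entry below by the average. Each step checks out, including the two places where care is needed: the identity $\tr(S^2)=\tr(G^2)$ (which also follows directly from cyclicity of the trace, $\tr(T^*TT^*T)=\tr(TT^*TT^*)$, if you prefer to avoid invoking Proposition \ref{sameev} plus diagonalizability), and the equality analysis, where you correctly observe that equality in Cauchy--Schwarz forces all eigenvalues of $S$ to equal $M/N>0$, so $S=\frac{M}{N}\cdot Id$ is invertible and the family necessarily spans --- which is precisely the point the paper makes when it remarks that the spanning hypothesis sometimes added to the ``moreover'' part is redundant. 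Your handling of the degenerate cases ($M<N$ making the right side of (\ref{E1}) non-real, and the non-spanning case only strengthening the inequality) is a reasonable bit of housekeeping that the literature usually leaves implicit.
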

\vskip8pt

This inequality goes back to Welch \cite{W}.  Strohmer
and Heath \cite{SH} and Holmes and Paulsen
\cite{HP} give more direct arguments which also yields
the ``moreover'' part.  For some reason, in the literature there is a further
assumption added to the ``moreover'' part of Theorem \ref{T5} that
the vectors span ${\mathbb R}^N$.  This assumption is not necessary.
That is, equality in inequality \ref{E1} already implies that the
vectors span the space \cite{CRT}.

\bigskip
\noindent {\bf Equiangular Tight Frames}:

Good references for real eqiuangular frames are \cite{CRT,HP,SH,STDH}.
A unit norm frame
with the property that there is a constant $c$ so that
$$
|\langle \p_i,\p_j\rangle| = c,\ \ \mbox{for all $i\not= j$},
$$
is called an {\bf \hl{equiangular frame}\index{frame!equiangular}} at angle $c$.
{Equiangular tight frames}\index{equiangular tight frames}\index{frame!equiangular tight} first appeared in discrete geometry \cite{LiS}
but today (especially the complex case)
have applications in signal processing, communications,
coding theory and more \cite{HSP, SH}.  A detailed study of this class
of frames was initiated by Strohmer and Heath \cite{SH}
and Holmes and Paulsen \cite{HP}.  
Holmes and Paulsen \cite{HP} showed that equiangular tight frames
give error correction codes that are robust against two erasures.
Bodmann and Paulsen \cite{BP} analyzed arbitrary numbers of
erasures for equiangular tight frames.  Recently, Bodmann,
Casazza, Edidin and Balan \cite{BCEB} showed that equiangular
tight frames are useful for signal reconstruction when all
phase information is lost.
Recently, Sustik, Tropp, Dhillon and Heath \cite{STDH} made an
important advance on this subject (and on the complex version).
Other applications
 include the construction of capacity achieving signature
sequences for multiuser communication systems in wireless communication
theory \cite{W}.  The tightness condition allows
equiangular tight frames to achiece the capacity of a Gaussian channel and their
equiangularity allows them to satisfy an interference invariance property.
Equiangular tight frames potentially have many 
more practical and theoretical applications.
Unfortunately, we know very few of them and so their usefulness
is largely untapped.  

The main problem:

\begin{problem}
Classify all equiangular tight frames, or find large classes of them.
\end{problem}

Fickus/Jasper/Mixon  \cite{FJM14} gave a large class of {\bf \hl{Kirkman equiangular tight
frames}} and used them in coding theory.

\begin{theorem}\label{theo1}
The following are equivalent:
\begin{enumerate}
\item The space ${\mathbb R}^N$ has an equiangular tight frame with $M$ elements
at angle $1/\alpha$.

\item We have
\[ M = \frac{(\alpha^2-1)N}{\alpha^2-N},\]
and there exist $M$ equiangular lines in ${\mathbb R}^N$ at angle $1/\alpha$.

\smallskip
\noindent Moreover, in this case we have:
\begin{enumerate}
\item $\alpha \le N \le \alpha^2-2$.

\item  $N=\alpha$ if and only if $M=N+1$.

\item  $N=\alpha^2-2$ if and only if  $M = \frac{N(N+1)}{2}$.

\item  $M=2N$ if and only if
\[ \alpha^2= 2N-1 = a^2+b^2,\ \ \mbox{a,b integers}.\]
\end{enumerate}

\noindent If $M\not= N+1,2N$ then:

\begin{enumerate}
\item[(e)]  $\alpha$ is an odd integer.

\item[(f)]  M is even.

\item[(g)] $\alpha$ divides M-1.

\item[(h)]  $\beta = \frac{M-1}{\alpha}$ is the angle for the
complementary equiangular tight frame.
\end{enumerate}
\end{enumerate}
\end{theorem}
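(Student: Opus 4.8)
The plan is to let the Welch bound (Theorem~\ref{T5}) carry the whole equivalence $(1)\Leftrightarrow(2)$, then read off the ``moreover'' items from Gerzon's inequality (Theorem~\ref{T2}), P.~Neumann's theorem (Theorem~\ref{T1}), Naimark's theorem (Theorem~\ref{HL}), and a short analysis of the Seidel (signature) matrix of the lines. For $(1)\Rightarrow(2)$: an equiangular tight frame $\{\varphi_i\}_{i=1}^M$ for $\mathbb{R}^N$ at angle $1/\alpha$ is a unit norm sequence with $\max_{i\neq j}|\langle\varphi_i,\varphi_j\rangle|=1/\alpha$, so by Theorem~\ref{T5} it attains equality in \eqref{E1}; hence $1/\alpha=\sqrt{(M-N)/(N(M-1))}$, i.e. $\alpha^2=N(M-1)/(M-N)$, and this relation, being linear in $M$, rearranges to $M=(\alpha^2-1)N/(\alpha^2-N)$, while the tight bound is $M/N$ by Theorem~\ref{T5}. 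For $(2)\Rightarrow(1)$: if there are $M$ equiangular lines at angle $1/\alpha$ with $M=(\alpha^2-1)N/(\alpha^2-N)$, then the displayed formula is exactly the statement that $1/\alpha$ equals the right side of \eqref{E1}, so equality holds in Welch's bound and the ``moreover'' part of Theorem~\ref{T5} forces an equiangular tight frame. Along the way I record the two companion identities that drop out of the formula for $M$, namely $\alpha^2(M-N)=N(M-1)$ and, with $\beta:=(M-1)/\alpha$, also $\beta^2=(M-1)(M-N)/N$; both get reused below.

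For the parameter restrictions I treat $M=N+N(N-1)/(\alpha^2-N)$ as a strictly decreasing function of $\alpha^2$ on $(N,\infty)$ (so $\alpha^2>N$ necessarily). A genuine over-complete frame has $M\geq N+1$ (if $M=N$ the tight frame is a scaled orthonormal basis, so $c=0$), and $M\geq N+1$ rearranges to $N^2\geq\alpha^2$, giving $\alpha\leq N$, with equality iff $M=N+1$; that is the lower bound in (a) and all of (b). Feeding Gerzon's bound $M\leq N(N+1)/2$ (Theorem~\ref{T2}) into the formula and clearing the positive factor $\alpha^2-N$ yields $(N-1)(\alpha^2-N-2)\geq0$, whence (since $N\geq2$) $N\leq\alpha^2-2$, with equality precisely when $M=N(N+1)/2$: that is the rest of (a) together with (c). The equivalence $M=2N\Leftrightarrow\alpha^2=2N-1$ in (d) is pure algebra; for the extra assertion that $2N-1$ is then a sum of two integer squares, observe that when $M=2N$ the Gram matrix $I+\tfrac{1}{\alpha}\Sigma$ of the frame (with $\Sigma$ the Seidel matrix: zero diagonal, $\pm1$ off it) has eigenvalues $M/N=2$ and $0$ each of multiplicity $N$, so $\Sigma$ has eigenvalues $\pm\alpha$ of equal multiplicity and $\Sigma^2=(2N-1)\,Id$; thus $\Sigma$ is a symmetric conference matrix of order $2N$, and the classical necessary conditions for such a matrix (order $\equiv 2 \bmod 4$ and order minus one a sum of two squares) give the claim.

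For the last block (the case $M\neq N+1,2N$) I combine Naimark's theorem with the Seidel matrix. Applying Theorem~\ref{HL} (via Proposition~\ref{P4}) to the Parseval rescaling of the frame produces a complementary equiangular tight frame of $M$ vectors in $\mathbb{R}^{M-N}$; a one-line Gram-matrix computation together with $\alpha^2(M-N)=N(M-1)$ shows its angle is $1/\beta$ with $\beta=(M-1)/\alpha$, which is (h). Next, since $M\neq2N$, the two eigenvalues $-\alpha$ and $\alpha(M-N)/N=\beta$ of $\Sigma$ occur with unequal multiplicities $M-N$ and $N$; as $\Sigma$ is an integer matrix, an irrational eigenvalue would have a Galois conjugate among the eigenvalues, and since there are only two distinct ones this would force equal multiplicities --- so $\alpha$ and $\beta$ are (rational algebraic integers, hence) integers, and $\alpha\mid M-1$: that is (g). Finally, because $M\neq2N$, exactly one of the frame and its complement has more vectors than twice its dimension, and for that one P.~Neumann's theorem (Theorem~\ref{T1}) says its angle parameter is an odd integer; now reduce $\Sigma$ modulo $2$, where $\Sigma\equiv J+Id$ ($J$ the all-ones matrix) so its characteristic polynomial is $(x+1)^{M-1}(x+M+1)$ over $\mathbb{F}_2$, and match this with $(x+\alpha)^{M-N}(x-\beta)^N$ modulo $2$: using $M\neq N+1$ and $N\geq2$ one checks that the parameter not covered by Neumann cannot be even, so both $\alpha$ and $\beta$ are odd, whence $M-1=\alpha\beta$ is odd and $M$ is even. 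This gives (e) and (f).

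I expect the genuine obstacle to be organizing the number-theoretic items (d)--(h), not the core equivalence, which is merely Welch's inequality with its equality case: one must keep the case $M=2N$ scrupulously isolated so the unequal-multiplicity argument for integrality of the eigenvalues is available, correctly transfer P.~Neumann's theorem through the Naimark complement (only one of the two mutually complementary frames lies in the regime $M>2\dim$ where Neumann applies), and push through the modulo-$2$ characteristic-polynomial bookkeeping that pins down the parity of $M$; the sum-of-two-squares statement in (d) additionally leans on the existence theory of symmetric conference matrices, which I would simply quote.
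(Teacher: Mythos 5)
The paper itself states Theorem \ref{theo1} without proof---it is a survey item, with the arguments deferred to the references on real equiangular frames---so there is no in-paper proof to compare against; judged on its own, your outline is correct and is essentially the standard argument from that literature. The equivalence via the equality case of Theorem \ref{T5} is right: equality in (\ref{E1}) at angle $1/\alpha$ is exactly $\alpha^2(M-N)=N(M-1)$, which rearranges to $M=(\alpha^2-1)N/(\alpha^2-N)$, and conversely that identity plus $M$ equiangular lines forces equality in Welch, hence an ETF. Your algebra for (a)--(d) checks out ($M\ge N+1$ gives $\alpha\le N$ with equality iff $M=N+1$; Gerzon's bound gives $(N-1)(\alpha^2-N-2)\ge 0$, hence $N\le\alpha^2-2$ with equality iff $M=N(N+1)/2$; and $M=2N\Leftrightarrow\alpha^2=2N-1$ is immediate). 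The Seidel-matrix bookkeeping is also sound: the Gram matrix of the unit-norm tight frame has eigenvalues $M/N$ (multiplicity $N$) and $0$ (multiplicity $M-N$), so $\Sigma$ has eigenvalues $\beta=(M-1)/\alpha$ and $-\alpha$ with multiplicities $N$ and $M-N$; when $M\neq 2N$ the unequal multiplicities plus the rational-coefficient characteristic polynomial force $\alpha,\beta\in\mathbb{Z}$, which is (g), and the Naimark-complement Gram computation giving angle $1/\beta$ is (h). Two remarks. First, the sum-of-two-squares clause in (d) genuinely rests on the classical necessary condition for symmetric conference matrices (order $\equiv 2 \bmod 4$ and order minus one a sum of two squares), which lies outside this paper and outside your argument---you are quoting, not proving, that piece, and it should be flagged as such. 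Second, once $\alpha,\beta$ are integers, your mod-$2$ comparison of $(x+\alpha)^{M-N}(x+\beta)^{N}$ with $(x+1)^{M-1}(x+M+1)$ already excludes every parity pattern except ``both odd'' (an even $\alpha$ or $\beta$ forces $M=N$, $M=N+1$, $N\le 1$, or $M=1$), so it yields (e) and (f) by itself and $M-1=\alpha\beta$ odd gives $M$ even; the detour through Neumann's Theorem \ref{T1} and the complement, which you listed as a main obstacle, is harmless but not actually needed.
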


\subsection{The Scaling Problem}

The \hl{scaling problem} is one of the deepest problems in frame theory.

\begin{problem}[Scaling Problem]
Classify the frames $\{\p_i\}_{i=1}^M$ for $\H^N$ so that there are scalars
$\{a_i\}_{i=1}^M$ for which $\{a_i\p_i\}_{i=1}^M$ is a Parseval frame?
Give an algorithm for finding $\{a_i\}_{i=1}^M$.
\end{problem}

This is really a special case of an even deeper problem.  

\begin{problem}
Given a frame $\{\p_i\}_{i=1}^M$ for $\H^N$, find the scalars
$\{a_i\}_{i=1}^M$ so that $\{a_i\p_i\}_{i=1}^M$ has the minimal
condition number with respect to all such scalings.
\end{problem}

For recent results on the scaling problem see
Chen/Kutyniok/Okoudjou/ Philipp/Wang \cite{XC},
Cahill/Chen \cite{CX} and Kutyniok/Okoudjou/Philipp/Tuley \cite{KOP}.

\subsection{Sparse Orthonormal Bases for Subspaces}

We will look at two questions concerning the construction of sparse
orthonormal bases.

\begin{definition}
Given a vector $x=(a_1,a_2,\ldots,a_N)\in \H^N$, we let
\[ \|x\|_0 = |\{1\le i \le N: a_i \not= 0\}.\]
\end{definition}

A natural question in Hilbert space theory is:

\begin{problem}\label{prob100}
Given a Hilbert space $\H^N$ with orthonormal basis $\{e_i\}_{i=1}^N$
and a $K$ dimensional subspace $W$, find the sparsest orthonormal basis for $W$
with respect to $\{e_i\}_{i=1}^N$.
That is, find a orthonormal basis $\{g_i\}_{i=1}^K$ for $W$ so that
\[ \sum_{i=1}^K\|g_i\|_0,\mbox{ is a minimum with respect to all orthonormal bases for W}.\]
\end{problem}

\noindent {\bfseries Sparse Gram-Schmidt Orthogonalization}:

There is a basic notion for turning a linearly independent set
into an orthonormal set with the same partial spans:
Gram-Schmidt Orthogonalization.  Given a set $\{\p_i\}_{i=1}^M$
of linearly independent vectors in $\H^N$, first let
\[ e_1 = \frac{\p_1}{\|\p_1\|}.\]
Assume we have constructed $\{e_i\}_{i=1}^K$ satisfying:
\begin{enumerate}
\item  $\{e_i\}_{i=1}^K$ is orthonormal.

\item We have
\[ \spn_{1\le i \le j}e_i = \spn_{1\le i \le j}\p_i,\mbox{ for all }j=1,2,\ldots,K.\]
\end{enumerate}
We then let
\[ \psi_{K+1} = \p_{K+1}-\sum_{i=1}^N \langle \p_{K+1},e_i\rangle e_i\]
and let
\[ e_{K+1}= \frac{\psi_{K+1}}{\|\psi_{K+1}\|}.\]

If we have a fixed basis $\{g_i\}_{i=1}^N$ for $\H^N$, we can compute
\[ \sum_{i=1}^K\|e_i\|_0\mbox{ with respect to the basis }\{g_i\}_{i=1}^N.\]
But this sum is different for different orderings of the original vectors
$\{\p_i\}_{i=1}^M$.
Related to Problem \ref{prob100} we have:
\begin{problem}
What is the correct ordering of $\{\p_i\}_{i=1}^M$ so that Gram-Schmidt
Orthogonalization produces the sparsest orthonormal sequence with
respect to all possible orderings?
\end{problem}

\subsection{The Paulsen Problem}

To state the \hl{Paulsen Problem}, we need some definitions.

\begin{definition}
A frame $\{\p_i\}_{i=1}^M$ for $\H^N$ with frame operator $S$
is said to be {\bf $\varepsilon$-{nearly equal norm}\index{frame!nearly equal norm}} if
\[ (1-\varepsilon)\frac{N}{M} \le \|\p_i\|^2 \le (1+\varepsilon)\frac{N}{M},
\mbox{ for all }i=1,2,\ldots,M,\]
and it is {\bf $\varepsilon$-{nearly Parseval}\index{frame!nearly Parseval}} if
\[ (1-\varepsilon) \cdot Id \le S \le (1+\varepsilon)\cdot Id.\]
\end{definition}

\begin{definition}
The {\bfseries \hl{distance between two frames}\index{frame!distance between}} $\Phi=\{\p_i\}_{i=1}^M$ and $\Psi = \{\psi_i\}_{i=1}^M$ is given by:
\[ d(\Phi,\Psi) = \sum_{i=1}^M\|\p_i-\psi_i\|^2.\]
\end{definition}
Because we did not take the square-root on the right-hand-side of the
above inequality, the function $d$ is not really a distance function.  

\bigskip
The {\bf Paulsen Problem} now states:
\begin{problem}
How close in terms of $d$ is an $\varepsilon$-nearly equal norm and $\varepsilon$-nearly
Parseval frame to an equal norm Parseval frame?
\end{problem}

The importance of this problem is that we have algorithms for finding
frames which are equal norm and nearly Parseval.  But we do not know
that these are actually close to any equal norm Parseval frame.

The closest equal norm frame to a frame is known and the closest
Parseval frame to a frame is known:

The closest equal norm frame to a frame $\{\p_i\}_{i=1}^M$ is
\[ \left \{ C \frac{\p_i}{\|\p_i\|}\right \}_{i=1}^M\mbox{ where } 
C := \frac{\sum_{i=1}^M\|\p_i\|}{M}.\]

If $\Phi=\{\p_i\}_{i=1}^M$ is a frame with frame operator $S$, the closest
Parseval frame to $\Phi$ is the canonical Parseval frame $\{S^{-1/2}\p_i\}_{i=1}^M$
\cite{CK07}
(see \cite{BC} for a better calculation). 

Also, there is an algorithm for turning a frame into an equal norm frame without
changing the frame operator \cite{C2}. 

Casazza/Cahill \cite{CC} showed that the Paulsen Problem is equivalent to an
old deep problem in Operator Theory.

\begin{problem}
Given a projection $P$ on $\H^N$ with ${\varepsilon}$-nearly equal diagonal
elements of its matrix, what is the closest constant diagonal projection to
$P$?
\end{problem}

\subsection{Concrete Construction of RIP Matrices}

{\bf \hl{Compressive Sensing}} is one of the most active areas of research
today. See the book \cite{FR} for an exhaustive coverage of this subject.
A fundamental tool in this area matrices with the {\bf {Restricted
Isometry Property}}, denoted {\bf RIP}.  Compressive sensing is a method
for solving {\bf underdetermined systems} if we have some form of
{\bf \hl{sparsity}} of the incoming signal.

\begin{definition}
A vector $x=(a_1,a_2,\ldots,a_N)\in \H^N$ is {\bf K-\hl{sparse}} if
\[ |\{1\le i \le N: a_i \not= 0\}|\le K.\]
\end{definition}  
The fundamental tool in compressive sensing is the class of {\bf \hl{Restricted
Isometry Property}} ({\bf RIP}) matrices.

\begin{definition}
A matrix $\Phi$ has the $(K,\delta)${\bf - Restricted
Isometry Property}, {\bf RIP} if
\[ (1-\delta)\|x\|^2 \le \|\Phi x\|^2 \le (1_+\delta)\|x\|^2,
\]
for every $K$-sparse vector $x$.  The smallest $\delta$
for which $\Phi$ is $(K,\delta)$-RIP is the {\bf restricted
isometry constant} {\bf (RIC)} $\delta_K$.
\end{definition}

The main result here is (see \cite{FR}):

\begin{theorem}
Given $\delta <1$, there exist $N \times M$ matrices with restricted isometry constant
$\delta_K \le \delta$ for 
\[ K \le c \frac{N}{\ln(N/K)},\]
for a universal constant $c$.
\end{theorem}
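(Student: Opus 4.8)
The plan is to invoke the \emph{probabilistic method}: instead of writing down a matrix explicitly, I would draw $\Phi$ at random from a suitable ensemble and show that the RIP holds with positive probability, which forces such a matrix to exist. Concretely, take the $N \times M$ matrix $\Phi$ whose entries are independent mean-zero subgaussian random variables of variance $1/N$ (the prototype being $\mathcal{N}(0,1/N)$ Gaussians, or $\pm 1/\sqrt{N}$ Rademacher variables). The first ingredient is a concentration estimate: for every fixed $x \in \R^M$ and every $t \in (0,1)$,
\[
\mathbb{P}\big( \big| \|\Phi x\|^2 - \|x\|^2 \big| > t\,\|x\|^2 \big) \le 2\, e^{-c_0 N t^2},
\]
for a universal constant $c_0>0$. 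This is the Johnson--Lindenstrauss type bound; it follows by recognizing $\|\Phi x\|^2$ as a sum of $N$ i.i.d.\ subexponential random variables with mean $\|x\|^2$ and applying a Bernstein-type tail inequality.

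Next comes a covering argument at the level of a single support. Fix $S \subseteq \{1,\dots,M\}$ with $|S| = K$ and consider the unit sphere of the coordinate subspace $\R^S$. This sphere admits an $\varepsilon$-net $\mathcal{N}_S$ of cardinality $|\mathcal{N}_S| \le (1 + 2/\varepsilon)^K$. Applying the concentration estimate with $t = \delta/2$ and a union bound over $\mathcal{N}_S$, with probability at least $1 - 2(1+2/\varepsilon)^K e^{-c_0 N \delta^2/4}$ one has $\big|\, \|\Phi u\|^2 - 1 \,\big| \le \delta/2$ for all $u \in \mathcal{N}_S$. A standard approximation step then upgrades this to $\big|\, \|\Phi x\|^2 - \|x\|^2 \,\big| \le \delta\,\|x\|^2$ for \emph{all} $x$ with $\mathrm{supp}\,x \subseteq S$, once $\varepsilon$ is fixed to a small absolute constant such as $1/4$.

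The final step is a union bound over all $\binom{M}{K} \le (eM/K)^K$ choices of $S$. The total failure probability is bounded by
\[
\binom{M}{K} \cdot 2 (1+2/\varepsilon)^K e^{-c_0 N \delta^2/4} \le 2\exp\!\Big( K\ln(eM/K) + K\ln(1+2/\varepsilon) - c_0 N \delta^2/4 \Big),
\]
which is strictly less than $1$ as soon as $N \ge C \delta^{-2} K \ln(eM/K)$ for an appropriate absolute constant $C$. Rearranging (and, in the regime of interest where $M$ is polynomially related to $N$, replacing $\ln(eM/K)$ by $\ln(N/K)$ at the cost of the constant, and folding the $\delta$-dependence into $c$), this is exactly the asserted range $K \le c\, N/\ln(N/K)$. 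For such $K$ the random $\Phi$ is $(K,\delta)$-RIP with positive probability, so a matrix with $\delta_K \le \delta$ exists.

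I expect the main obstacle to be the bookkeeping in the net-to-sphere passage. The clean way to organize it is to set $\mu_S := \sup\{\,\big|\,\|\Phi x\|^2 - 1\,\big| : \|x\|=1,\ \mathrm{supp}\,x\subseteq S\,\}$, choose for a maximizing (or near-maximizing) $x$ a net point $u$ with $\|x-u\|\le\varepsilon$, and bound $\big|\,\|\Phi x\|^2 - \|\Phi u\|^2\,\big|$ by polarization together with $\|\Phi x\|,\|\Phi u\| \le \sqrt{1+\mu_S}$. This yields an inequality of the shape $\mu_S \le \delta/2 + (2\varepsilon+\varepsilon^2)(1+\mu_S)$, which for small $\varepsilon$ solves to $\mu_S \le \delta$. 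Everything else is a matter of choosing the constants so that the three competing exponential factors --- the count of supports, the net cardinality, and the concentration bound --- combine in the right direction.
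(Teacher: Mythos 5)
The paper itself does not prove this theorem; it simply cites the book of Foucart and Rauhut \cite{FR}, and the argument you outline --- subgaussian random matrix, Johnson--Lindenstrauss-type concentration for a fixed vector, an $\varepsilon$-net on the unit sphere of each $K$-dimensional coordinate subspace, and a union bound over the $\binom{M}{K}\le (eM/K)^K$ supports --- is exactly the canonical proof that citation points to (Baraniuk--Davenport--DeVore--Wakin style). Your handling of the slightly loose statement in the survey (the $\ln(N/K)$ versus $\ln(eM/K)$ and the $\delta$-dependence folded into $c$) is also the standard reading.

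There is, however, one quantitative slip in the net-to-sphere passage as you wrote it. From your own inequality $\mu_S \le \delta/2 + (2\varepsilon+\varepsilon^2)(1+\mu_S)$, a \emph{fixed} absolute choice such as $\varepsilon = 1/4$ gives $2\varepsilon+\varepsilon^2 = 9/16$, and solving yields only $\mu_S \lesssim 1.3 + \delta$, not $\mu_S\le\delta$: the additive error term does not shrink with $\delta$, so for small $\delta$ the self-improvement fails. Two standard repairs: (i) take the net resolution proportional to $\delta$ (e.g.\ $\varepsilon = \delta/14$ as in the original argument), which enlarges the net only to $(C/\delta)^K$ and merely changes the $\delta$-dependent constant in the final condition $N \ge C\delta^{-2}K\ln(eM/K)$; or (ii) keep $\varepsilon = 1/4$ but exploit self-adjointness, using the lemma that for a self-adjoint $A$ one has $\|A\| \le (1-2\varepsilon)^{-1}\sup_{u\in\mathcal{N}}|\langle Au,u\rangle|$ applied to $A = \Phi_S^*\Phi_S - Id$, so that control at level $\delta/2$ on the net gives $\|\Phi_S^*\Phi_S - Id\|\le\delta$ on the whole sphere. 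With either fix the rest of your counting goes through verbatim and the proof is complete.
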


This means, for example, in an $N$-dimensional Hilbert space, we can find a set
of $100N$ norm one vectors $\{\p_i\}_{i=1}^{100N}$ for which every subset
of size $N/100$ is a $\delta$-Riesz basic sequence.  This is a quite amazing
result.  We know that in an $N$-dimensional Hilbert space, any orthogonal
set must have $\le N$ elements.  This result says that if we relax this requirement
just a little, we can find huge sets of vectors for which every subset of size a
proportion of the dimension of the space is nearly orthogonal.  

In the language of frame theory, we are looking for a family of unit norm vectors
$\Phi=\{\p_i\}_{i=1}^M$ in $\H^N$ so that every subset of $\Phi$ of size
a proportion of $N$ is nearly orthogonal and $M$ is much larger than $N$.
The existence of such matrices has been carried out by random matrix
theory.  Which means we do not know concretely a single such matrix,
despite the fact that these are essential for compressive sensing.
For years, the closest thing to {\bf concrete} here 
was a result of DeVore \cite{Dev} which constructed $N\times M$ matrices
for which subsets of size $\sqrt{N}$ were $\delta$-Riesz.  But this is far from
what we know is true which is subsets of size $cN$ for $0<c<1$ independent of
$N$.  Bandira/Fickus/Mixon/Wong \cite{BFMW} investigated various methods
for constructing RIP matrices.
Bourgain \cite{Bour} then {\bf broke the square root barrier} by showing we
can concretely construct RIP matrices with subsets of size $N^{1/2+\varepsilon}$
being $\delta$-Riesz. 
There is also an important result of Rudelson/Vershynin \cite{RV} which
says that if we take a random selection of rows from the Discrete Fourier Transform
Matrix,
then this submatrix will be a RIP matrix. 
  Since these matrices are fundamental to compressive
sensing, a longstanding, important and fundamental problem
here is:

\begin{problem}
Give a concrete construction of RIP matrices.
\end{problem}

\subsection{An Algorithm for the \hl{Feichtinger Conjecture}}\label{feiconj}

For nearly 50 years the Kadison-Singer problem \cite{KS} has
defied the best efforts of some of the most talented mathematicians
of our time.  It was just recently solved by Marcus/Spielman/Srivastava
\cite{MSS}.  For a good summary of the history of this problem and
consequences of this
achievement see \cite{C2014}.

In his work on time-frequency analysis, Feichtinger \cite{G,CT}
noted that all of the Gabor frames he was using 
had the property that they could be divided into a finite number
of subsets which were Riesz basic sequences.  This led to a 
conjecture known as the {\bf Feichtinger Conjecture} \cite{CT}.   
There is a significant body of work on this conjecture and we
refer the reader to \cite{C2014} for the best reference.

First we need:

\begin{definition}
A family of vectors $\{\p_i\}_{i\in I}$ is an $\varepsilon$-Riesz basic sequence
for $0<\varepsilon <1$ if for every family of scalars $\{a_i\}_{i\in I}$ we have
\[ (1-\varepsilon)\sum_{i\in I}|a_i|^2 \le \bigg\|\sum_{i\in I}a_i\p_i\bigg\|^2 \le
(1+\varepsilon)\sum_{i\in I}|a_i|^2.\]
\end{definition}

 The following theorem gives the best
quantative solution to
the {\bf Feichtinger Conjecture} from the results of \cite{MSS}.

\begin{theorem}[Marcus/Spielman/Srivastave]
Every unit norm $B$-Bessel sequence can be partitioned into
$r$-subsets each of which is a $\varepsilon$-Riesz basic sequence,
where
\[ r= \left ( \frac{6(B+1)}{\varepsilon}\right )^2 \mbox{ in the real case } ,\]
and
\[ r= \left ( \frac{6(B+1)}{\varepsilon}\right )^4 \mbox{ in the complex case } .\]
\end{theorem}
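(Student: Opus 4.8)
The plan is to deduce the statement from the central probabilistic theorem of Marcus, Spielman and Srivastava --- the real content --- after a frame-theoretic reduction using only the tools developed above, and then to sketch how that theorem is proved by the method of interlacing families of polynomials. To set up the reduction, fix a unit norm $B$-Bessel sequence $\{\varphi_i\}_{i=1}^M$ in $\mathbb{H}^N$ with analysis operator $T$ and Gramian $G=TT^*$ on $\ell_2^M$. Since $G$ and the frame operator $T^*T$ have the same nonzero eigenvalues, the Bessel condition forces $0\preceq G\preceq B\cdot Id$, while unit norm forces every diagonal entry of $G$ to equal $1$; hence $G-Id$ is Hermitian with zero diagonal and $\|G-Id\|\le B+1$. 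For $A\subseteq\{1,\dots,M\}$ the principal submatrix $G|_A$ is exactly the Gramian of $\{\varphi_i\}_{i\in A}$, and since the synthesis operator $T_A^*$ of that subfamily satisfies $T_AT_A^*=G|_A$, the subfamily is an $\varepsilon$-Riesz basic sequence precisely when $(1-\varepsilon)Id\preceq G|_A\preceq(1+\varepsilon)Id$, i.e.\ when $\|(G-Id)|_A\|\le\varepsilon$. So the theorem is equivalent to a paving statement: the zero-diagonal Hermitian matrix $G-Id$, of norm at most $B+1$, admits a partition of $\{1,\dots,M\}$ into $r$ blocks $A_1,\dots,A_r$ with $\|(G-Id)|_{A_k}\|\le\varepsilon$ for every $k$.

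This paving is an instance of Anderson's paving conjecture, and it is exactly what the Marcus--Spielman--Srivastava theorem delivers: if $v_1,\dots,v_m$ are independent random vectors in $\mathbb{C}^d$, each with finite support, $\sum_i\mathbb{E}[v_iv_i^*]=Id$ and $\mathbb{E}\|v_i\|^2\le\delta$ for all $i$, then $\mathbb{P}\big(\big\|\sum_i v_iv_i^*\big\|\le(1+\sqrt{\delta})^2\big)>0$. A tensoring argument --- set $v_i=\sqrt{r}\,u_i\otimes e_{\xi_i}$ for $\xi_i$ independent and uniform on $\{1,\dots,r\}$, so that $\sum_i v_iv_i^*$ is block diagonal with $k$-th block $r\sum_{\xi_i=k}u_iu_i^*$ --- upgrades this to: any $u_1,\dots,u_m$ with $\sum u_iu_i^*=Id$ and $\|u_i\|^2\le\delta$ can be split into $r$ blocks with $\big\|\sum_{i\in S_k}u_iu_i^*\big\|\le\big(1/\sqrt{r}+\sqrt{\delta}\big)^2$ for each $k$. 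Applying this to the rows of a projection (which realize it as a sum of rank-one matrices whose norms are its diagonal entries), via Weaver's reduction of the paving problem to the case of projections with uniformly small diagonal, gives: every zero-diagonal self-adjoint matrix of norm $c$ is paved to within $\varepsilon$ by $r\approx(6c/\varepsilon)^2$ blocks in the real case, and by $(6c/\varepsilon)^4$ blocks in the complex case (where one also paves the real and imaginary parts separately and refines the two partitions, squaring the block count). Taking $c=B+1$ yields the asserted bound.

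The real work is the Marcus--Spielman--Srivastava theorem, which I would prove by interlacing families. Reducing to finitely supported $v_i$, form the expected characteristic polynomial $p(x)=\mathbb{E}\det\big(xId-\sum_i v_iv_i^*\big)$. The scheme is: (i) the characteristic polynomials $\chi\big(\sum_i v_iv_i^*\big)$ over the finitely many realizations form an interlacing family, so some realization has $\lambda_{\max}$ at most the largest root of $p$; this needs the partial averages to be real-rooted; (ii) $p$ coincides with the mixed characteristic polynomial $\mu[A_1,\dots,A_m](x)=\big(\prod_i(1-\partial_{z_i})\big)\det\big(xId+\sum_i z_iA_i\big)\big|_{z=0}$, $A_i=\mathbb{E}[v_iv_i^*]$, and $\mu$ is real-rooted because $\det(xId+\sum_i z_iA_i)$ is real stable and real stability survives each operator $1-\partial_{z_i}$ and each specialization $z_i=0$; (iii) the largest root of $\mu[A_1,\dots,A_m]$ is at most $(1+\sqrt{\delta})^2$ whenever $\sum_i A_i=Id$ and $\mathrm{tr}\,A_i\le\delta$. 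Step (iii) is the hard estimate, where essentially all the difficulty lives: it is a multivariate barrier-function calculus. One tracks the barrier functions $\partial_x\log Q$ and $\partial_{z_i}\log Q$ of the real stable polynomial $Q$ produced after partially applying the operators, starting from $z_1=\dots=z_m=\sqrt{\delta}$ with $x$ just above the largest root, and shows that applying each $1-\partial_{z_i}$ together with a well-chosen shift in $x$ keeps a chosen upper bound for the largest root below $(1+\sqrt{\delta})^2$; the convexity and monotonicity of these barriers above the root region is exactly what lets the induction close. Steps (i) and (ii) should be routine given the Borcea--Br\"and\'en theory of real stable polynomials; step (iii) is the crux.
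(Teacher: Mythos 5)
The paper does not actually prove this theorem: it is quoted from Marcus/Spielman/Srivastava \cite{MSS}, with the explicit constants as worked out in the survey \cite{C2014}, so there is no internal argument to compare yours against step by step. Your outline is the standard derivation from that literature and is essentially correct. The reduction is right: for a unit norm $B$-Bessel family the Gramian $G=TT^*$ satisfies $0\le G\le B\cdot Id$ and has unit diagonal, a subfamily indexed by $A$ is an $\varepsilon$-Riesz basic sequence exactly when $\|(G-Id)|_A\|\le \varepsilon$ (since $\|\sum_{i\in A}a_i\p_i\|^2=\langle G|_A a,a\rangle$), and $\|G-Id\|\le B+1$ (in fact $\le \max(1,B-1)$, but $B+1$ is the crude bound that reproduces the stated value of $r$). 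Your tensoring step correctly converts the MSS positive-probability theorem into the Weaver-type statement with blocks bounded by $(1/\sqrt{r}+\sqrt{\delta})^2$, and the passage to paving of projections with small diagonal, then to arbitrary zero-diagonal Hermitian matrices, with the complex case handled by paving the real and imaginary parts separately and intersecting the two partitions (hence the fourth power versus the square), is exactly how the exponents $2$ and $4$ arise. Two places are schematic rather than proved: the bookkeeping that turns the Weaver bound into the specific constant $6$ (your ``$r\approx(6c/\varepsilon)^2$'' needs the choices of $r$ and $\delta$ made explicit to yield the theorem exactly as stated), and the MSS theorem itself, of which your interlacing-families/mixed-characteristic-polynomial/barrier-function sketch is an accurate roadmap but not a proof. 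Since the paper itself only cites \cite{MSS} for this result, that level of reliance on the cited theorem is appropriate here; just be clear that what you have written is a correct frame-theoretic reduction plus a faithful summary of \cite{MSS}, not a self-contained proof.
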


This theorem could be quite useful, except that it is an {\bf existence proof}.
Now what we really need is:

\begin{problem}
Find an implementable algorithm for proving the Feichtinger Conjecture.
\end{problem}

\subsection{Classifying Gabor Frames}

Gabor frames form the basis for time-frequency analysis which is
the mathematics behind signal processing.  This is a huge
subject which cannot be covered here except for a few remarks.  We
recommend the excellent book of Gr\"ochenig \cite{G1} for a comprehensive
coverage of this subject. We first define {\bfseries \hl{translation}} and {\bfseries \hl{modulation}}:

\begin{definition}
Fix $a,b >0$.
For a function $f\in L^2(\R)$ we define
\[ Translation \ \ by \ \ a:\ \ T_af(x) = f(x-a),\]
\[ Modulation \ \ by\ \  b:\ \ M_bf(x) = e^{2\pi ibx}f(x).\]
\end{definition}

In 1946, Gabor \cite{G} formulated a fundamental approach to signal
decomposition in terms of elementary signals.  Gabor's approach
quickly became a paradigm for the spectral analysis associated with
time-frequency methods, such as the short-time Fourier transform and
the Wigner transform.  For Gabor's method, we need to fix a
{\bf \hl{window function}} $g\in L^{\infty}(\R)$ and $a,b\in \R^{+}$.  If the
family
\[ \G(g,a,b)=\{ M_{mb}T_{na}g\}_{m,n\in \Z} \]
is a frame for $L^2(\R)$ we call this a {\bf \hl{Gabor frame}\index{frame!Gabor}}.  Gabor frames
are used in signal processing.  It is a
very deep question which values of $a,b,g$ give Gabor frames.  There
are some necessary requirements however.

\begin{theorem}
If the family given by $(g,a,b)$ yields a Gabor frame then:

(1)  $ab \le 1$.

(2)  If $ab =1$ then this family is a frame if and only if it
is a Riesz basis.  
\end{theorem}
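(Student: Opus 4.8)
The plan is to handle the two assertions with different machinery: (2) falls out of the Zak transform, while (1) is the Gabor density theorem, which I would reduce to a single clean statement about the canonical tight window.

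For (2), first reduce to $a=b=1$. The unitary dilation $Uf(x)=a^{1/2}f(ax)$ satisfies $U(M_{mb}T_{na}g)=M_{mab}T_n(Ug)$, and since $ab=1$ this is $M_mT_n(Ug)$; so $U$ carries $\G(g,a,b)$ onto $\G(Ug,1,1)$, and being a frame and being a Riesz basis are both preserved by a unitary. Thus it suffices to treat $\G(g,1,1)$. Here I would bring in the Zak transform $Zf(t,\omega)=\sum_{k\in\Z}f(t-k)e^{2\pi ik\omega}$, a unitary map $L^2(\R)\to L^2([0,1)^2)$ with $Z(M_mT_ng)(t,\omega)=e^{2\pi i(mt-n\omega)}Zg(t,\omega)$. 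Since $\{e^{2\pi i(mt-n\omega)}\}_{m,n\in\Z}$ is an orthonormal basis of $L^2([0,1)^2)$, a short computation shows that $ZSZ^{-1}$ is multiplication by $|Zg|^2$, where $S$ is the frame operator of $\G(g,1,1)$. Using the operator inequality $A\cdot Id\le S\le B\cdot Id$ from the excerpt, $\G(g,1,1)$ is a frame exactly when $0<A\le|Zg(t,\omega)|^2\le B<\infty$ almost everywhere. In that case multiplication by $Zg$ is a bounded invertible operator on $L^2([0,1)^2)$, so the synthesis operator of $\G(g,1,1)$ is bounded and invertible, which means $\G(g,1,1)$ is a Riesz basis; and conversely every Riesz basis is a frame, as recorded earlier. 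This gives (2).

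For (1), the first move is to replace $\G(g,a,b)$ by its canonical tight frame. From $M_bT_a=e^{2\pi iab}T_aM_b$ one checks directly that the frame operator $Sf=\sum_{m,n}\langle f,M_{mb}T_{na}g\rangle M_{mb}T_{na}g$ commutes with every $T_{na}$ and every $M_{mb}$; hence so does $S^{-1/2}$ (functional calculus), so $M_{mb}T_{na}h=S^{-1/2}M_{mb}T_{na}g$ for $h:=S^{-1/2}g$, i.e. $\G(h,a,b)$ is precisely the canonical Parseval frame $\{S^{-1/2}\p_i\}$ of the excerpt. Two facts about this Parseval Gabor frame then finish the proof. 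First, applying Parseval's identity to the vector $h$ itself and keeping only the $(m,n)=(0,0)$ term gives $\|h\|^4\le\sum_{m,n}|\langle h,M_{mb}T_{na}h\rangle|^2=\|h\|^2$, so $\|h\|^2\le 1$. Second, and this is the real content, $\|h\|^2=ab$; I would get this from Janssen's representation of the Gabor frame operator, $Id=S_h=\frac{1}{ab}\sum_{k,l}\langle h,M_{l/a}T_{k/b}h\rangle M_{l/a}T_{k/b}$, by comparing the ``diagonal'' parts of the two sides, the terms with $(k,l)\ne(0,0)$ contributing nothing to the diagonal, which leaves $1=\frac{1}{ab}\|h\|^2$. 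Combining the two facts, $ab\le 1$.

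The hard part will be exactly the identity $\|h\|^2=ab$ --- that is, the density theorem; the rest is bookkeeping over results already in the excerpt. Justifying Janssen's representation rigorously means controlling the absolute convergence of the double operator series and legitimizing the passage to the diagonal, which is cleanest to carry out first for windows in a nice dense class (Schwartz functions, or Feichtinger's algebra) and then extend by a perturbation/limiting argument. An alternative that stays closer to the Zak-transform machinery used for (2) is to establish $ab\le 1$ first for rational $ab=p/q$ using the Zibulski--Zeevi matrix --- whose format forces $p\le q$ once it is required to be boundedly invertible --- and then pass to irrational $ab$ by continuity in $(a,b)$. In a survey at this level one would most plausibly just cite the density theorem (Rieffel; Ramanathan--Steger; Janssen; see Gr\"ochenig), but the reduction above pins down exactly which classical input is being used.
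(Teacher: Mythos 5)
This theorem appears in the paper's survey of Gabor frames as a classical fact with no proof supplied (the reader is pointed to Gr\"ochenig's book), so there is no in-paper argument to compare yours against; judged on its own, your outline follows the standard textbook route. Part (2) is essentially complete: the dilation reduction to $a=b=1$, the Zak-transform covariance $Z(M_mT_ng)=e^{2\pi i(mt-n\omega)}Zg$, the identification of the conjugated frame operator with multiplication by $|Zg|^2$, and the conclusion that the Gabor system is the image of the orthonormal basis $\{e^{2\pi i(mt-n\omega)}\}_{m,n\in\Z}$ under the bounded invertible operator of multiplication by $Zg$ are all correct; the only point worth recording is that Besselness forces $Zg\in L^\infty$, which is what legitimizes pulling $\overline{Zg}$ across the inner product in that computation.

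For (1), however, the crucial identity $\|h\|^2=ab$ is not actually established, and this is the entire content of the statement. The ``compare diagonal parts'' step has no literal meaning on the bounded operators of $L^2(\R)$ --- there is no trace with which to isolate the $(0,0)$ term of Janssen's series --- and the rigorous surrogate, unconditional convergence of Janssen's representation, requires Tolimieri--Orr's Condition A, which a general Parseval window in $L^2(\R)$ need not satisfy; a dense-subclass-plus-limiting argument must also contend with the fact that the canonical tight window $S^{-1/2}g$ does not vary continuously with $g$ without extra hypotheses. Your alternative route has the same defect: the Gabor frame property is not stable under perturbation of $(a,b)$ for general $L^2$ windows (lattice-perturbation stability is known for windows in Feichtinger's algebra, not for arbitrary $g$), so ``pass to irrational $ab$ by continuity'' fails as stated. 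The clean way to complete your own reduction is the Wexler--Raz biorthogonality relations, which hold for any pair of dual Gabor Bessel windows: since the Parseval window $h$ is its own dual, $\langle h, M_{l/a}T_{k/b}h\rangle = ab\,\delta_{k,0}\delta_{l,0}$, and the $(0,0)$ instance is exactly $\|h\|^2=ab$; combined with your Parseval estimate $\|h\|^4\le\|h\|^2$ this gives $ab\le 1$. So the skeleton of your argument is the standard one, but, as you yourself concede, the density-theorem input is cited rather than proved, and each of the two routes you sketch for supplying it contains a genuine gap.
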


Also, the Balian-Low Theorem puts some restrictions on the function
$g\in L^2(\R)$ for the case $ab=1$.

\begin{theorem}[\hl{Balian-Low Theorem}]
If $g\in L^2(\R)$, $ab=1$ and $(g,a,b)$ generates a
Gabor frame, then either $xg(x) \notin L^2(\R)$ or $g' \notin 
L^2(\R)$.
\end{theorem}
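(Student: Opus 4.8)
The plan is to reduce to the critically-sampled case $a=b=1$ and transport the whole problem to the Zak transform, where ``being a Gabor frame'' turns into a two-sided pointwise bound on a single function, the hypotheses ``$xg\in L^2$'' and ``$g'\in L^2$'' turn into Sobolev regularity of that function, and the conclusion drops out of a winding-number obstruction hidden in the quasi-periodicity of the Zak transform. \textbf{Normalization.} Conjugating by the unitary dilation $f\mapsto \sqrt{a}\,f(a\cdot)$ carries $\G(g,a,1/a)$ onto $\G(\widetilde g,1,1)$ with $\widetilde g=\sqrt a\,g(a\cdot)$, and this dilation preserves each of the two conditions $xg\in L^2(\R)$ and $g'\in L^2(\R)$; so we may assume $a=b=1$.

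\textbf{Zak transform.} Introduce $Zg(t,\omega)=\sum_{k\in\Z}g(t+k)e^{-2\pi i k\omega}$, which is a unitary map $L^2(\R)\to L^2([0,1)^2)$ sending the modulation $M_m$ to multiplication by $e^{2\pi i mt}$ and the translation $T_n$ to multiplication by $e^{2\pi i n\omega}$. Hence $Z$ carries $\G(g,1,1)$ onto $\{e^{2\pi i(mt+n\omega)}\,Zg(t,\omega)\}_{m,n\in\Z}$, i.e.\ onto the trigonometric orthonormal basis of $L^2([0,1)^2)$ multiplied by the fixed function $Zg$. Using the fact (already stated in the excerpt) that at critical density a Gabor frame is automatically a Riesz basis, it follows that $\G(g,1,1)$ is a frame if and only if there are constants $0<A\le B<\infty$ with $A\le |Zg(t,\omega)|^2\le B$ for a.e.\ $(t,\omega)$; in particular $Zg$ is essentially bounded and bounded away from $0$.

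\textbf{Regularity.} Suppose, toward a contradiction, that $xg\in L^2(\R)$ and $g'\in L^2(\R)$. Differentiating the defining series gives, as weak derivatives, $\partial_t (Zg)=Z(g')$ and $\partial_\omega (Zg)=2\pi i\big(t\,Zg-Z(xg)\big)$, and both right-hand sides lie in $L^2([0,1)^2)$ since $Z$ is unitary and $t$ is bounded on $[0,1)$. Thus $Zg\in H^1([0,1)^2)$, and by the previous step it is an $H^1$ function that is bounded and bounded away from $0$.

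\textbf{Topological obstruction and the main difficulty.} The Zak transform is quasi-periodic: $Zg(t+1,\omega)=e^{2\pi i\omega}Zg(t,\omega)$ and $Zg(t,\omega+1)=Zg(t,\omega)$. Were $Zg$ continuous and nonvanishing, this would already be contradictory: on the simply connected square one could choose a continuous argument of $Zg$, and then the winding of $Zg$ about $0$ along the boundary of the unit square would have to be $0$, whereas the two quasi-periodicity relations force that winding to equal $1$. The real content of the theorem is to run this argument with only $H^1$ regularity available. Since $Zg\in H^1\cap L^\infty$ is bounded below, $|Zg|^2\in H^1$ and is bounded below, so $U:=Zg/|Zg|\in H^1([0,1)^2)$ is unimodular; because $|U|=1$, the one-form $\bar U\,dU$ is purely imaginary, so $\mathrm{Im}(\bar U\,dU)$ is a well-defined closed $L^1$ one-form, locally $d(\arg U)$. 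One then checks that the quasi-periodicity forces this closed form to have a nonzero period around one of the generating cycles of the torus while also being exact, which is impossible; hence the essential infimum of $|Zg|$ is $0$, contradicting the previous step. I expect \emph{this} last step --- promoting the continuous winding argument to $H^1$ regularity --- to be the crux: $H^1(\R^2)$ does not embed into $C^0$, so the argument function exists only as an $H^1$ function modulo $2\pi\Z$, and the period computation must be carried out with the closed one-form (or via mollification, controlling the periods in the limit, or via the theory of $H^1$ maps into $S^1$ and their degree) rather than with a global continuous branch. A topology-free alternative I would keep in reserve for this step is Battle's argument: pass to the dual window $\widetilde g$ (for which $Z\widetilde g=1/\overline{Zg}$, so that $x\widetilde g,\widetilde g'\in L^2$ as well) and combine the symmetry $\langle Qg,P\widetilde g\rangle=\langle Pg,Q\widetilde g\rangle$, forced by biorthogonality together with the good commutation of the position operator $Qf=2\pi xf$ and the momentum operator $Pf=-if'$ with the time-frequency shifts, against the commutator computation $\langle Qg,P\widetilde g\rangle-\langle Pg,Q\widetilde g\rangle=2\pi i\langle g,\widetilde g\rangle=2\pi i\neq0$, which yields the contradiction directly.
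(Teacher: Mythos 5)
The paper states the Balian--Low Theorem only as a survey item, with no proof given, so there is no internal argument to compare yours against; what you have written is the standard proof strategy from the literature, and as an outline it is essentially correct. The normalization by dilation, the unitarity of the Zak transform, the equivalence of the frame property at critical density with $0<A\le |Zg|^2\le B$ a.e., and the weak-derivative identities $\partial_t(Zg)=Z(g')$ and $\partial_\omega(Zg)=2\pi i\,(tZg-Z(xg))$ are all right, and you correctly identify the genuine crux: the winding-number obstruction is immediate only if $Zg$ were continuous, and $H^1$ in two variables does not embed in $C^0$. Historically this is exactly where the original Balian and Low arguments were incomplete, and your sketch of the fix (the closed one-form $\mathrm{Im}(\bar U\,dU)$ for the unimodular $H^1$ map $U=Zg/|Zg|$, with periods computed distributionally or by mollification) is asserted rather than carried out --- the claim that quasi-periodicity forces a nonzero period while exactness forces zero needs the degree theory for Sobolev maps into $S^1$ (Coifman--Semmes type arguments) to be made rigorous, and as written it is the one step of your proof that is a plan rather than a proof.

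Your fallback is therefore doing real work, and it is the standard rigorous completion (Battle's uncertainty-principle argument, extended from orthonormal bases to frames by Daubechies--Janssen): since the frame at $ab=1$ is a Riesz basis, the canonical dual window satisfies $Z\widetilde g=1/\overline{Zg}$, and because $Zg\in H^1$ is bounded above and below, $Z\widetilde g$ inherits the same regularity, so $x\widetilde g,\widetilde g'\in L^2(\R)$; then the biorthogonality relations give the symmetry $\langle Qg,P\widetilde g\rangle=\langle Pg,Q\widetilde g\rangle$, while self-adjointness of $Q$ and $P$ and the canonical commutation relation give $\langle Qg,P\widetilde g\rangle-\langle Pg,Q\widetilde g\rangle=2\pi i\,\langle g,\widetilde g\rangle=2\pi i\neq 0$, a contradiction. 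To make this route airtight you should verify the symmetry identity carefully: it comes from expanding $\langle Qg, M_mT_n\widetilde g\rangle$ and $\langle Pg, M_mT_n\widetilde g\rangle$ and using how $Q$ and $P$ shift indices under the time-frequency lattice, which requires exactly the integrability hypotheses $xg,g',x\widetilde g,\widetilde g'\in L^2$ that you have arranged. With either the Sobolev degree argument completed or the Battle route written out, the proof is correct.
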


The Balian-Low Theorem implies that Gaussian functions $e^{-ax^2}$
cannot yield Gabor frames for $ab=1$.

The main problem here is:

\begin{problem}
Find all functions $g$ and positive constants $a,b$ so that $(g,a,b)$ forms a Gabor frame
for $L^2(\R)$.
\end{problem}

Recently, a significant advance was made on this problem by Dai/Sun
\cite{DS} when they solved the old and famous {\bf \hl{abc-problem}}.
We refer to \cite{DS} for the history of the problem.  In particular,
Dai/Sun classified all triples $(a,b,c)$ so that
\[ \G(\chi_I,a,b)\mbox{ is a Gabor frame  when } |I|=c.\]

\subsection{Phase Retrieval}

Phase retrieval is one of the largest areas of engineering with applications to
{\bf x-ray crystallography, Electron Microscopy, Coherence Theory, Diffractive
Imaging, Astronomical Imaging, x-ray tomography, Optics, Digital Holography, Speech
Recognition}
and more \cite{BM,BR,F78,F82,PDH,RJ,RBSC}.  
For an introduction to this subject see \cite{CCPW}.

{\bfseries Phase retrieval} is the problem of recovering a signal from the absolute values of linear measurement coefficients called {\bfseries intensity measurements}.  Note multiplying a signal by a global phase factor does not affect these coefficients, so we seek signal recovery mod a global phase factor.

There are two main approaches to this problem of phase retrieval.  One is to restrict the problem to a subclass of signals on which the intensity measurements become injective.  The other is to use a larger family of measurements so that the intensity measurements map any signal injectively.  The latter approach in phase retrieval first appears in \cite{BCE} where the authors examine injectivity of intensity measurements for finite Hilbert spaces.  The authors completely characterize measurement vectors in the real case which yield such injectivity, and they provide a surprisingly small upper bound on the minimal number of measurements required for the complex case.  This sparked an incredible volume of current phase retrieval research \cite{ABFM, BBCE, BCMN, CESV, CL, CSV, DH, EM} focused on algorithms and conditions guaranteeing injective and stable intensity measurements.

\begin{definition}
A family of vectors $\Phi=\{\p_i\}_{i=1}^M$ does {\bf \hl{phase retrieval}} on $\H^N$ if whenever $x,y \in\H^N$ satisfy
\[ |\langle x,\p_i\rangle| = |\langle y,\p_i\rangle|,\mbox{ for all }i=1,2\ldots,M,\]
then $x=cy$ where $|c|=1$.
\end{definition}

A fundamental result in phase retrieval involves the {\bf complement property}.

\begin{definition}
A family of vectors $\{\p_i\}_{i=1}^M$ in $\H^N$ has the {\bf \hl{complement property}} if
whenever we choose $I \subset \{1,2,\ldots,M\}$, at least one of the sets
$\{\p_i\}_{i\in I}$ or $\{\p_i\}_{i\in I^c}$ spans $\H^N$.
\end{definition}
Note that the complement property implies $M\ge 2N-1$.  For if $M\le 2N-2$ then we can
choose $I=1,2,\ldots,N-1$ and since the two induced subsets of our vectors each has only
$N-1$ vectors, neither can span $\H^N$.

The fundamental result here is due to Balan/Casazza/Edidin \cite{BCE}:

\begin{theorem}
In $\R^N$, a family of vectors $\{\phi_i\}_{i=1}^M$ does
phase retrieval if and only if it has the complement 
property.  Moreover,
there is a dense set of families of vectors $\{\p_i\}_{i=1}^{2N-1}$
which do phase retrieval.  
\end{theorem}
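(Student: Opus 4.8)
The plan is to prove the biconditional by establishing each implication separately (one of them by contraposition), and then to derive the density statement from the characterization together with a genericity (``full spark'') argument.

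First I would show that if $\{\p_i\}_{i=1}^M$ does phase retrieval then it has the complement property, arguing the contrapositive. Suppose the complement property fails: there is $I\subset\{1,\ldots,M\}$ such that neither $\{\p_i\}_{i\in I}$ nor $\{\p_i\}_{i\in I^c}$ spans $\R^N$. Choose nonzero $u\perp\spn\{\p_i\}_{i\in I}$ and nonzero $v\perp\spn\{\p_i\}_{i\in I^c}$; after rescaling $v$ if necessary we may assume $u+v\neq 0$ and $u-v\neq 0$. Put $x=u+v$ and $y=u-v$. For $i\in I$ we get $\langle x,\p_i\rangle=\langle v,\p_i\rangle=-\langle y,\p_i\rangle$, and for $i\in I^c$ we get $\langle x,\p_i\rangle=\langle u,\p_i\rangle=\langle y,\p_i\rangle$, so $|\langle x,\p_i\rangle|=|\langle y,\p_i\rangle|$ for every $i$; yet $x\neq \pm y$ since $u,v\neq 0$, so $\{\p_i\}$ does not do phase retrieval.

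Next I would show that the complement property implies phase retrieval. Suppose $|\langle x,\p_i\rangle|=|\langle y,\p_i\rangle|$ for all $i$, and set $I=\{i:\langle x,\p_i\rangle=\langle y,\p_i\rangle\}$. In the real case, for every $i\notin I$ we must have $\langle x,\p_i\rangle=-\langle y,\p_i\rangle$, so $x-y\perp\spn\{\p_i\}_{i\in I}$ while $x+y\perp\spn\{\p_i\}_{i\in I^c}$. The complement property applied to this $I$ gives two cases: if $\{\p_i\}_{i\in I}$ spans $\R^N$ then $x-y=0$; if $\{\p_i\}_{i\in I^c}$ spans $\R^N$ then $x+y=0$. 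In either case $x=cy$ with $|c|=1$, so $\{\p_i\}$ does phase retrieval.

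For the ``moreover'' part, by the equivalence it suffices to produce a dense set of $(2N-1)$-element families with the complement property. I would check that every \emph{full spark} family --- one in which every $N$ of the $2N-1$ vectors is linearly independent --- has the complement property: for any $I$, one of $I,I^c$ has cardinality at least $N$, hence contains an $N$-element linearly independent subset, which spans $\R^N$. It remains to see full spark families are dense in $(\R^N)^{2N-1}$. Writing the family as an $N\times(2N-1)$ matrix, each of the finitely many $N\times N$ minors is a polynomial in the entries; a Vandermonde-type choice $\p_i=(1,t_i,t_i^2,\ldots,t_i^{N-1})$ with distinct $t_i$ makes all these minors nonzero at once, so none of the polynomials is identically zero. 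The common nonvanishing set of finitely many nonzero polynomials is open and dense, so full spark families --- hence phase retrieval families of size $2N-1$ --- are dense.

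The one genuinely delicate point is the sign-partitioning step in the ``complement property $\Rightarrow$ phase retrieval'' direction: it uses that over $\R$ the condition $|a|=|b|$ forces $a=\pm b$, which is exactly where the real hypothesis is essential (and why the complex case is subtler). The density part has no serious obstacle beyond exhibiting a single full spark family, after which the conclusion is the elementary fact that a finite intersection of complements of proper real algebraic sets is dense.
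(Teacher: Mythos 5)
Your proof is correct and complete: the sign-partition argument gives both directions of the equivalence cleanly, and the full-spark/Vandermonde genericity argument correctly yields the density of $(2N-1)$-element phase retrieval families. The paper itself states this theorem without proof, citing Balan--Casazza--Edidin \cite{BCE}; your argument is precisely the standard one from that reference, so there is nothing further to compare.
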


In the complex case, \cite{BCE} showed that a dense sent
of families of $(4N-2)$-vectors does phase retrieval.
Later, Bodmann \cite{Bod} showed that phase retrieval
can be done in the complex case with $(4N-4)$ vectors.
This was then improved  
 by Conca/Edidin/Hering/Vinzant \cite{CEHV}.

\begin{theorem}
In $\C^N$, there are families (in fact a dense set of families) of vectors $\{\p_i\}_{i=1}^{4N-4}$
which do phase retrieval.
\end{theorem}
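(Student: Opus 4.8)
The plan is to lift the problem to a statement about Hermitian matrices and then run a dimension count on an incidence variety. Writing $\langle x,\p\rangle=\p^{*}x$, we have $|\langle x,\p_i\rangle|^{2}=\p_i^{*}(xx^{*})\p_i=\tr\big((xx^{*})\p_i\p_i^{*}\big)$, so the intensity measurements of $\Phi=\{\p_i\}_{i=1}^{M}$ agree on $x$ and $y$ exactly when the Hermitian matrix $Q:=xx^{*}-yy^{*}$ lies in the real subspace
\[ L_{\Phi}:=\{Q=Q^{*}:\tr(Q\,\p_i\p_i^{*})=0\text{ for }i=1,\dots,M\}\subset\mathrm{Herm}_{N}(\C). \]
Since $xx^{*}=yy^{*}$ iff $x=cy$ with $|c|=1$, the family $\Phi$ fails phase retrieval precisely when $L_{\Phi}$ contains a nonzero matrix of the form $xx^{*}-yy^{*}$. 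A Weyl eigenvalue inequality shows any such difference has at most one positive and at most one negative eigenvalue, hence $\rank(xx^{*}-yy^{*})\le 2$. Therefore it suffices for phase retrieval that $L_{\Phi}$ meet the real cone $R_{2}$ of Hermitian matrices of rank $\le 2$ only in $0$; the set of such $\Phi$ is a subset of the set of $\Phi$ doing phase retrieval, so proving the former dense proves the latter dense.

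First I would record the relevant dimensions. The space $\mathrm{Herm}_{N}(\C)$ has real dimension $N^{2}$. The cone $R_{2}$ is the union over signatures of images of maps such as $(\C^{N})^{2}\to\mathrm{Herm}_{N}(\C)$, $(x,y)\mapsto xx^{*}-yy^{*}$ (resp. $xx^{*}+yy^{*}$), whose generic fibre over a rank-$2$ matrix is an orbit of a $4$-dimensional group ($U(1,1)$ or $U(2)$ together with the two phase circles), so $\dim_{\R}R_{2}=4N-4$ and $\dim_{\R}\mathbb{P}(R_{2})=4N-5$. For a generic $M$-tuple $\Phi$ the subspace $L_{\Phi}$ has real dimension $N^{2}-M$; the naive expectation is that $\mathbb{P}(L_{\Phi})$, of projective dimension $N^{2}-1-M$, misses $\mathbb{P}(R_{2})$ once $(N^{2}-1-M)+(4N-5)<N^{2}-1$, i.e. $M\ge 4N-4$. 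The real content is to verify that the subspaces coming from frames are generic enough for this to hold, which I would do with an incidence variety rather than a transversality argument.

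Then I would set
\[ \mathcal{Z}=\Big\{\big(\Phi,[Q]\big)\in(\C^{N})^{M}\times\mathbb{P}(R_{2}):\p_i^{*}Q\p_i=0\text{ for }i=1,\dots,M\Big\}, \]
and estimate $\dim_{\R}\mathcal{Z}$ fibrewise over $\mathbb{P}(R_{2})$. For fixed $[Q]$ with $\rank Q=2$, each condition $\p_i^{*}Q\p_i=0$ is \emph{one nontrivial} real quadratic equation on $\p_i\in\C^{N}\cong\R^{2N}$ — nontrivial because a positive eigenvector of $Q$ makes $\p_i^{*}Q\p_i>0$ — so the fibre over $[Q]$ has real dimension at most $M(2N-1)$, with equality on the (open) signature-$(1,1)$ stratum. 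Hence
\[ \dim_{\R}\mathcal{Z}\;\le\;(4N-5)+M(2N-1)\;=\;2NM+(4N-5-M). \]
Projecting $\mathcal{Z}$ to $(\C^{N})^{M}\cong\R^{2NM}$ covers the set of frames for which $L_{\Phi}\cap R_{2}\neq\{0\}$, hence covers all ``bad'' frames; this set is therefore contained in a semialgebraic subset of dimension at most $2NM+(4N-5-M)$, which is strictly less than $2NM$ as soon as $M\ge 4N-4$. The complement of a proper semialgebraic subset of $\R^{2NM}$ is dense (in fact open and of full Lebesgue measure), which gives exactly the claim: for $M=4N-4$ an open, dense, full-measure set of families $\{\p_i\}_{i=1}^{4N-4}$ in $\C^{N}$ does phase retrieval.

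The step I expect to be the crux is the dimension bookkeeping: pinning down $\dim_{\R}R_{2}=4N-4$ (equivalently, that writing a rank-$2$ Hermitian matrix as a difference $xx^{*}-yy^{*}$ has exactly a $4$-parameter ambiguity) and verifying that for a genuine rank-$2$ matrix $Q$ the equation $\p^{*}Q\p=0$ lowers dimension by exactly one. These are precisely what make the threshold land on $4N-4$ rather than something larger; no lower-bound or optimality statement is needed, only existence and density.
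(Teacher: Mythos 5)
The paper does not actually prove this theorem --- it is stated as a quoted result of Conca/Edidin/Hering/Vinzant \cite{CEHV} --- so there is no in-paper proof to compare against; judged on its own, your argument is essentially the published one and is sound. Passing from pairs $(x,y)$ to the Hermitian matrix $Q=xx^*-yy^*$, noting that a failure of phase retrieval is witnessed by a nonzero element of the rank-$\le 2$ cone lying in $L_\Phi$, and then running a dimension count on the incidence set fibered over the projectivized rank-$\le 2$ locus is exactly the mechanism behind the $4N-4$ bound; the two facts that buy the improvement over the older $4N-2$ count of \cite{BCE} are the ones you isolate, namely $\dim_{\R}R_2=4N-4$ (the four-parameter $U(1,1)$-ambiguity in writing $Q=xx^*-yy^*$, rather than just the two phase circles) and that for each fixed nonzero $Q$ the condition $\p^*Q\p=0$ is a single nontrivial real quadric in $\p$, so the fiber over $[Q]$ is a product of dimension at most $M(2N-1)$. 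Your bookkeeping is correct: $(4N-5)+M(2N-1)<2NM$ precisely when $M\ge 4N-4$, the bad frames lie in the projection of $\mathcal{Z}$, hence in a semialgebraic set of positive codimension, and its complement is dense. Three points to tighten. First, the fiber bound, the inequality $\dim\pi(\mathcal{Z})\le\dim\mathcal{Z}$, and the density of the complement must be justified by the standard dimension theorems for semialgebraic sets (Tarski--Seidenberg plus the fiber-dimension bound), since na\"ive ``expected dimension'' or transversality reasoning is not valid over $\R$; alternatively one complexifies, as is done in \cite{CEHV}, and uses that real dimension is bounded by the complex dimension of the complexification. Second, the quadric $\p^*Q\p=0$ is nontrivial because a nonzero Hermitian $Q$ has \emph{some} eigenvector with nonzero eigenvalue; your appeal to ``a positive eigenvector'' literally fails when $Q$ is negative semidefinite (e.g.\ $Q=-yy^*$), though the fix is immediate. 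Third, your closing parenthetical that the good set is itself open requires a separate (easy) compactness argument in $\mathbb{P}(R_2)$ showing the bad set is closed; the theorem as stated only asserts density, which your argument does deliver.
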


Again, phase retrieval cannot be done with fewer vectors than $4N-4$.  

\bigskip
Given a signal $x$ in a Hilbert space, intensity measurements may also be thought of as norms of $x$ under rank one projections.  Here the spans of measurement vectors serve as the one dimensional range of the projections.  In some applications, however, a signal must be reconstructed from the norms of higher dimensional components.  In X-ray crystallography for example, such a problem arises with crystal twinning \cite{D}.  In this scenario, there exists a similar phase retrieval problem: given subspaces $\{W_n\}_{n=1}^M$ of an $N$-dimensional Hilbert space $\H^N$ and orthogonal projections $P_n:\H^N\rightarrow W_n$, can we recover any $x\in \H^N$ (up to a global phase factor) from the measurements $\{\norm{P_nx}\}_{n=1}^M$?  This problem was recently studied in \cite{BE} where the authors use semidefinite programming to develop a reconstruction algorithm for when the $\{W_n\}_{n=1}^M$ are equidimensional random subspaces.  Most results using random intensity measurements require the cardinality of measurements to scale linearly with the dimension of the signal space along with an additional logarithmic factor \cite{CSV}, but this logarithmic factor was recently removed in \cite{CL}.  Similarly, signal reconstruction from the norms of equidimensional random subspace components are possible with the cardinality of measurements scaling linearly with the dimension \cite{BE}.

In \cite{CCPW} it was shown:

\begin{theorem}
Phase retrieval can be done on $\R^N$ with $2N-1$ orthogonal projections of arbitrary rank.
\end{theorem}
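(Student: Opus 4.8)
My plan is to reduce the statement, via the polarization identity of Section 2, to a spanning (``complement-type'') condition on the subspaces and then to verify that condition for a generic choice of subspaces of the prescribed dimensions by a dimension count. \textbf{Step 1 (characterization).} Let $P_1,\dots,P_M$ be the orthogonal projections onto subspaces $W_1,\dots,W_M$ of $\R^N$. I claim $\{P_i\}_{i=1}^M$ does phase retrieval on $\R^N$ if and only if for every nonzero $u\in\R^N$ the vectors $\{P_iu\}_{i=1}^M$ span $\R^N$. Since each $P_i$ is self-adjoint and idempotent, $\|P_ix\|^2=\langle P_ix,x\rangle$, and the real polarization computation gives $\|P_ix\|^2-\|P_iy\|^2=\langle P_i(x+y),x-y\rangle$ for all $x,y\in\R^N$. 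Hence $\|P_ix\|=\|P_iy\|$ for all $i$ if and only if $x-y\perp\spn\{P_i(x+y):1\le i\le M\}$. If the spanning condition holds and $x\ne -y$, then $u=x+y\ne 0$ forces $x-y=0$, so $x=\pm y$, which is phase retrieval. Conversely, if the condition fails, pick nonzero $u$ with $\spn\{P_iu\}\ne\R^N$ and nonzero $v\perp\spn\{P_iu\}$; then $x=\tfrac12(u+v)$ and $y=\tfrac12(u-v)$ satisfy $\|P_ix\|=\|P_iy\|$ for all $i$ but $x\ne\pm y$. (When the $W_i$ are lines this is exactly the complement property, so Step 1 generalizes the vector case stated earlier, where $M=2N-1$ is known to be optimal.)

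\textbf{Step 2 (reduction to genericity).} Fix any ranks $r_1,\dots,r_{2N-1}$, say with $1\le r_i\le N-1$ (a full-rank projection equals the identity and merely supplies the vector $u$ itself; at most one such factor is harmless). Let $X=\prod_{i=1}^{2N-1}\mathrm{Gr}(r_i,\R^N)$, an irreducible smooth real variety, and let $Z\subseteq X$ consist of those $(W_1,\dots,W_{2N-1})$ for which some nonzero $u$ has $\{P_{W_i}u\}$ non-spanning; equivalently, for which there are nonzero $u,v$ with $\langle P_{W_i}u,v\rangle=0$ for all $i$. By Step 1 it suffices to prove $Z$ is a proper closed subvariety of $X$: then any tuple in the nonempty Zariski-open complement $X\setminus Z$ gives $2N-1$ projections of the prescribed ranks that do phase retrieval, and such a real tuple exists since the real points of $X$ are Zariski dense (in fact $X(\R)\setminus Z(\R)$ has full measure in $X(\R)$).

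\textbf{Step 3 (the dimension count, the main obstacle).} $Z$ is the image in $X$ of the incidence variety $\widehat Z=\{((W_i),[u],[v])\in X\times\mathbb{P}^{N-1}\times\mathbb{P}^{N-1}:\langle P_{W_i}u,v\rangle=0\ \text{for all }i\}$, which is closed because its defining equations are regular in the $W_i$ and bilinear in $(u,v)$ and $X\times\mathbb{P}^{N-1}\times\mathbb{P}^{N-1}$ is projective; hence $Z$ is closed. Project $\widehat Z$ onto the $([u],[v])$-factor: the fibre over any $([u],[v])$ is $\prod_i\{W\in\mathrm{Gr}(r_i,\R^N):\langle P_Wu,v\rangle=0\}$. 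The crucial point is that for fixed nonzero $u,v$ and $1\le r\le N-1$, the regular function $W\mapsto\langle P_Wu,v\rangle$ on $\mathrm{Gr}(r,\R^N)$ is not identically zero: if $\langle u,v\rangle\ne 0$ take $W\ni u$, so $P_Wu=u$; if $\langle u,v\rangle=0$ take $W=\spn(u+v)\oplus V$ with $V\perp\{u,v\}$ and $\dim V=r-1$, giving $\langle P_Wu,v\rangle=\|u\|^2\|v\|^2/(\|u\|^2+\|v\|^2)\ne 0$. So its zero set is a proper closed subset of the irreducible $\mathrm{Gr}(r_i,\R^N)$, of codimension $\ge 1$, whence the fibre has codimension $\ge 2N-1$ in $X$; by the fibre-dimension bound $\dim\widehat Z\le 2(N-1)+(\dim X-(2N-1))=\dim X-1$, so $\dim Z\le\dim\widehat Z<\dim X$ and, $X$ being irreducible, $Z\subsetneq X$. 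The estimate is tight -- it closes exactly because $2N-1>2(N-1)$, which is why $2N-1$ projections suffice while $2N-2$ in general do not, consistent with optimality of $2N-1$ already in the rank-one case -- and the only loose ends are routine bookkeeping: making genericity precise over $\R$ and absorbing the degenerate full-rank case, neither of which disturbs the skeleton of the argument.
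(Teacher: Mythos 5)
Your argument is essentially correct, but note that the survey itself gives no proof of this theorem: it is quoted from the reference \cite{CCPW} (``Phase retrieval by projections''), so there is no in-paper argument to compare against. Your Step 1 is precisely the key characterization used in that line of work: since each $P_i$ is self-adjoint and idempotent, $\|P_ix\|^2-\|P_iy\|^2=\langle P_i(x+y),x-y\rangle$ in the real case, so phase retrieval is equivalent to $\spn\{P_iu\}_{i=1}^M=\R^N$ for every $u\neq 0$, and your verification of both directions is correct. Steps 2--3 then get existence by a genericity/dimension count on a product of $2N-1$ Grassmannians, and the count is sound: for every pair of nonzero $u,v$ and every $1\le r\le N-1$ the function $W\mapsto\langle P_Wu,v\rangle$ is a not-identically-zero polynomial on the connected Grassmannian (your two witnesses cover $\langle u,v\rangle\neq 0$ and $\langle u,v\rangle=0$), so each fibre of the incidence set over $([u],[v])$ drops at least $2N-1$ dimensions, which beats the $2(N-1)$ dimensions of the pair $([u],[v])$; compactness of the Grassmannians and projective spaces makes the bad set closed. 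The loose ends you flag really are routine if phrased semialgebraically (fibre-dimension bound for semialgebraic maps, and the fact that the zero set of a nonzero polynomial on a connected real Grassmannian has positive codimension), so no complexification --- and hence no isotropic-vector issue --- is needed. Two small caveats: state explicitly what ``arbitrary rank'' means (ranks prescribed in advance with $1\le r_i\le N-1$, plus your remark about a single identity factor), and soften the aside that $2N-2$ projections ``in general do not'' suffice --- that is known only for rank-one projections, and the survey explicitly poses the higher-rank question as an open problem, so the fact that your estimate closes exactly at $2N-1$ should not be presented as evidence of optimality.
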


This theorem raises an important question:

\begin{problem}
Can phase retrieval be done on $\R^N$ (respectively, $\C^N$) with fewer than $2N-1$ 
(respectively, $4N-4$) projections?  If so, what are the fewest number of projections
needed in both the real and complex case?
\end{problem}

\printindex

\end{document}